\documentclass[a4paper]{amsart}
\usepackage[english]{babel}
\usepackage[utf8]{inputenc}
\usepackage[T1]{fontenc}
\usepackage{lmodern}

\usepackage[shortlabels]{enumitem}
\usepackage{booktabs}
\setlist[enumerate]{label={\arabic*.}}
\setlist[description]{font=\normalfont\itshape}

\usepackage[dvipsnames]{xcolor}
\usepackage[backref,hyperfootnotes=false]{hyperref}
\hypersetup{
  colorlinks=true,
  linkcolor=Maroon,
  citecolor=OliveGreen
}

\usepackage{todonotes}

\usepackage{cleveref}
\usepackage[nopatch=footnote]{microtype}

\usepackage{setspace}
\usepackage{amsmath,amsfonts,amsthm,mathtools,amssymb}
\usepackage{mleftright}
\mleftright % \left/\right without the spurious \mathinner spacing
\usepackage{aliascnt}
\usepackage{tikz}
\usetikzlibrary{arrows.meta}

\def\Z{\mathbf{Z}}

\def\C{\mathbf{C}}
\def\F{\mathbf{F}}
\def\PP{\mathbf{P}}

\DeclareMathOperator{\Irr}{Irr}
\DeclareMathOperator{\Rep}{rep}
\DeclareMathOperator{\ab}{ab}
\DeclareMathOperator{\Ind}{Ind}
\DeclareMathOperator{\Res}{Res}
\DeclareMathOperator{\Aut}{Aut}
\DeclareMathOperator{\Out}{Out}
\DeclareMathOperator{\Inn}{Inn}
\DeclareMathOperator{\Hom}{Hom}
\DeclareMathOperator{\Epi}{Epi}
\DeclareMathOperator{\End}{End}
\DeclareMathOperator{\supp}{supp}
\DeclareMathOperator{\St}{St}
\DeclareMathOperator{\Sch}{Sch}

\newcommand{\GL}{\mathrm{GL}}
\newcommand{\SL}{\mathrm{SL}}
\newcommand{\PGL}{\mathrm{PGL}}
\newcommand{\PSL}{\mathrm{PSL}}
\newcommand{\PSU}{\mathrm{PSU}}

\newcommand{\Sp}{\mathrm{Sp}}
\newcommand{\Alt}{\mathrm{Alt}}
\newcommand{\Sym}{\mathrm{Sym}}
\newcommand{\Cay}{\mathrm{Cay}}
\newcommand{\EL}{\mathrm{EL}}
\newcommand{\mon}{\mathrm{mon}}
\newcommand{\qp}{\mathrm{qp}}
\newcommand{\QP}{\mathrm{QP}}
\newcommand{\qs}{\mathrm{qs}}
\newcommand{\QS}{\mathrm{QS}}

\makeatletter
\newcommand{\fact}[1]{\item[(#1)]\phantomsection\def\@currentlabel{(#1)}\label{fact:#1}}
\makeatother
\newcommand{\factref}[1]{\ref{fact:#1}}

\newtheorem{theorem}{Theorem}[section]
\newtheorem*{theorem*}{Theorem}
\newaliascnt{lemma}{theorem}
\newtheorem{lemma}[lemma]{Lemma}
\aliascntresetthe{lemma}
\newaliascnt{proposition}{theorem}
\newtheorem{proposition}[proposition]{Proposition}
\aliascntresetthe{proposition}
\newaliascnt{corollary}{theorem}
\newtheorem{corollary}[corollary]{Corollary}
\aliascntresetthe{corollary}
\newaliascnt{conjecture}{theorem}

\aliascntresetthe{conjecture}

\theoremstyle{definition}
\newaliascnt{question}{theorem}
\newtheorem{question}[question]{Question}
\aliascntresetthe{question}
\newaliascnt{remark}{theorem}
\newtheorem{remark}[remark]{Remark}
\aliascntresetthe{remark}
\newaliascnt{definition}{theorem}

\aliascntresetthe{definition}
\newaliascnt{example}{theorem}
\newtheorem{example}[example]{Example}
\aliascntresetthe{example}

\crefname{theorem}{theorem}{theorems}
\Crefname{theorem}{Theorem}{Theorems}
\crefname{lemma}{lemma}{lemmas}
\Crefname{lemma}{Lemma}{Lemmas}
\crefname{proposition}{proposition}{propositions}
\Crefname{proposition}{Proposition}{Propositions}
\crefname{corollary}{corollary}{corollaries}
\Crefname{corollary}{Corollary}{Corollaries}
\crefname{question}{question}{questions}
\Crefname{question}{Question}{Questions}
\crefname{remark}{remark}{remarks}
\Crefname{remark}{Remark}{Remarks}
\crefname{definition}{definition}{definitions}
\Crefname{definition}{Definition}{Definitions}
\crefname{example}{example}{examples}
\Crefname{example}{Example}{Examples}
\crefname{conjecture}{conjecture}{conjectures}
\Crefname{conjecture}{Conjecture}{Conjectures}

\title[Representation and abelianization growth]{Representation and abelianization growth, particularly along expanding sequences}
\author{Urban Jezernik}
\thanks{This work has been supported by the Slovenian Research Agency program P1-0222 and grants J1-50001, J1-70033.}

\begin{document}
\baselineskip=13pt % comfortable reading

\begin{abstract}
We study two obstructions to a finite group $G$ having a generating $d$-tuple whose Cayley graph has spectral gap at least $\epsilon$. 
Lubotzky and Weiss proved that such a gap forces $\ab_k(G)$, the largest order of the abelianization of a subgroup of index at most $k$, to be $\exp(O_\epsilon(dk))$,
while
de la Harpe, Robertson and Valette proved that it forces $\Rep_k(G)$, the number of irreducible complex representations of degree at most $k$, to be $\exp(O_\epsilon(dk^2))$.
We show that the first bound implies the second, and in fact implies it with $k^2$ replaced by $k\log k$.
For this, we prove that every finite $d$-generated group satisfies the inequality
$\Rep_k(G)\leq\exp((d-1)k\log k+O(dk))\cdot\ab_k(G)$,
regardless of its spectral gap. 
The leading term $(d-1)k\log k$ is attained by powers of alternating groups, and it disappears for groups without large alternating sections. 
The proof reduces to counting the finite primitive subgroups of $\GL_m(\C)$, and we show that there are at most $\exp(O(\log^4m))$ of them up to conjugacy and scalars.
Wigderson asked whether $k^2$ in the second bound can be lowered to $k$, so our result answers his question up to a logarithm.
We do not know whether the logarithm is actually needed.
For powers of alternating groups, the natural candidates, we show that a spectral gap keeps the number of factors a fixed power below the maximum allowed by $d$-generation, and that random generators have spectral gap tending to $0$ already for superpolynomially many factors.
\end{abstract}

\maketitle

\section{Introduction}\label{sec:intro}

\subsection{Expanding sequences}\label{sub:expanding}

Expander graphs are sparse graphs that are nevertheless highly connected. More precisely, a sequence of finite regular graphs of fixed degree, with the number of vertices tending to infinity, is a family of \emph{expanders} if the \emph{spectral gap} $1-\lambda_2$ is bounded away from $0$ along the sequence, where $\lambda_2$ is the second largest eigenvalue of the normalized adjacency operator. Expanders have many applications in mathematics and computer science \cite{HLW06,Kow19,Lub12}.

Many explicit families of expanders arise as Cayley graphs of finite groups \cite{Lub94,Tao15}.
For a finite group $G$ and a tuple $\Sigma=(s_1,\dots,s_d)$ of elements of $G$, the \emph{Cayley graph} $\Cay(G,\Sigma)$ has vertex set $G$ and an undirected edge joining $g$ and $gs_i$ for each $g\in G$ and each $i$. It is $2d$-regular, with loops counted twice. Its normalized adjacency operator acts on functions $f\colon G\to\C$ by
\[
    (Af)(g) = \frac{1}{2d}\sum_{i=1}^d\left(f(gs_i)+f(gs_i^{-1})\right) .
\]
The spectral gap of $\Cay(G,\Sigma)$ is $1-\lambda_2$, where $\lambda_2$ is the second largest eigenvalue of $A$. Equivalently, it is the smallest eigenvalue of the normalized Laplacian $L=1-A$ on the functions orthogonal to the constants. It is positive exactly when $\Sigma$ generates $G$.
We call a sequence $(G_n)_{n\geq1}$ of finite groups with $|G_n|\to\infty$ \emph{$d$-expanding} if there is $\epsilon>0$ such that every $G_n$ has a generating $d$-tuple whose Cayley graph has spectral gap at least $\epsilon$. It is an \emph{expanding sequence} if it is $d$-expanding for some $d$. Some strong positive results are known. All nonabelian finite simple groups together form an expanding sequence \cite{KLN06,BGT10}. The groups $\PSL_2(\F_p)$ for primes $p$ also expand with their standard two generators, by Selberg's theorem \cite[Th.~4.4.2]{Lub94}, and even with two random generators with high probability, by the work of Bourgain and Gamburd \cite{BG08}.

By contrast, few tools are available to show that a given sequence of groups is \emph{not} expanding. Besides bounded generation, the main known obstructions bound the sizes of the abelian sections, due to Lubotzky and Weiss \cite{LW93}, and the number of small representations of the groups, due to de la Harpe, Robertson and Valette \cite{HRV93}. Both have since been studied by Meshulam and Wigderson \cite{MW04}, Pisier \cite{Pis17} and Sabatini \cite{Sab23,Sab25}. In this paper, we further explore these two obstructions and the relation between them. In particular, answering \cite[Question~4.1]{Sab25}, we show that for $d$-generated groups the obstruction of Lubotzky and Weiss is stronger than that of de la Harpe, Robertson and Valette.

\subsection{Representation and abelianization growth}\label{sub:obstructions}

Both obstructions concern two functions of a finite group $G$ and a parameter $k\geq1$. The first is the \emph{representation growth} of $G$, given by the number $\Rep_k(G)$ of irreducible complex representations of $G$ of degree at most $k$. It has been studied extensively for infinite groups, see the survey of Klopsch \cite{Klo13}. The second is the \emph{abelianization growth} of $G$, defined by Lubotzky and Martin \cite[Def.~2.4]{LM04} and studied in detail by Sabatini \cite{Sab23}, given by
\[
    \ab_k(G) = \sup_{|G:H|\leq k} |H:H'| .
\]
Here $H'=[H,H]$ is the commutator subgroup, and $|H:H'|$ is the size of the corresponding abelian quotient $H/H'$ of the subgroup $H$. Quotients of subgroups are called \emph{sections} of $G$, so $\ab_k(G)$ measures the largest abelian section of $G$ coming from a subgroup of small index.
We are interested in how these two functions grow along a sequence of finite groups $(G_n)_{n\geq1}$, for instance in the asymptotic behaviour of $\Rep_k(G_n)$ as a function of $k$ when $n\to\infty$, particularly when the sequence is expanding.

\subsubsection*{The two growths along expanding sequences}

We follow Pisier \cite[Sec.~2]{Pis17} and write $\mathcal{C}_{d,\epsilon}$ for the class of finite groups that have a generating $d$-tuple whose Cayley graph has spectral gap at least $\epsilon$. So a sequence is $d$-expanding exactly when it lies in $\mathcal{C}_{d,\epsilon}$ for some $\epsilon>0$. The best known bounds on the two functions for a group $G \in \mathcal{C}_{d,\epsilon}$ are due to Lubotzky--Weiss and de la Harpe--Robertson--Valette:\footnote{Implied constants in $O(\cdot)$ and $\Omega(\cdot)$ are absolute unless they carry a subscript, such as $O_\epsilon$.}
\begin{alignat*}{2}
  \ab_k(G) &\leq \exp(O_\epsilon(dk)) &\quad& \text{\cite[Th.~3.6]{LW93}}, \\
  \Rep_k(G) &\leq \exp(O_\epsilon(dk^2)) && \text{\cite[Prop.~IV]{HRV93}}.
\end{alignat*}
Both bounds come from packing arguments, and the exponents count dimensions (we recall them in \Cref{prop:LW} and \Cref{rem:methods}).
The first bound is sharp up to the constant in the exponent, as remarked by Lubotzky and \.Zuk \cite[Th.~5.13(1)]{LZ03}, see also Sabatini \cite[Sec.~2.3]{Sab25}.

\begin{example}[Lubotzky--\.Zuk, see {\Cref{ex:lower}} for details]\label{ex:lz}
The groups $\F_5[T]\rtimes T$ with $T=\PSL_2(\F_p)$ for primes $p\geq7$ with $p\equiv2\pmod5$ form an expanding sequence with respect to generating tuples of some bounded length $d_0$. The abelian normal subgroup $\F_5[T]$ has index $k=|T|$, so $\ab_k\geq5^k$. Each of its $5^k$ linear characters induces to a representation of degree $k$, so by Frobenius reciprocity it occurs in the restriction of an irreducible representation of degree at most $k$. Such a representation restricts to at most $k$ linear characters, so $\Rep_k\geq5^k/k$.
\end{example}

The argument in the example works in any finite group $G$ with a subgroup $H$ of index at most $k$. Applied to one with $|H:H'|=\ab_k(G)$, it gives the inequality of Lubotzky and Martin \cite[Lem.~2.6(a)]{LM04}, namely
\begin{equation}\label{eq:lower}
    \Rep_k(G) \geq \frac{1}{k} \cdot \ab_k(G).
\end{equation}
So representation growth is always at least abelianization growth, up to a factor $k$. The question is how much larger it can be.

\subsubsection*{Wigderson's question}

Along expanding sequences, the example above shows that $\log\Rep_k$ can grow linearly, and by the de la Harpe--Robertson--Valette bound, it grows at most quadratically. The actual order of growth is the subject of the following question.

\begin{question}[Wigderson, see {\cite[Sec.~1]{Pis17}} and {\cite[Question~4.3]{Sab25}}]\label{q:wigderson}
Let $d\geq2$ and $\epsilon>0$. What is the order of growth of
\[
    \sup_{G\in\mathcal{C}_{d,\epsilon}}\log\Rep_k(G)
\]
as $k\to\infty$? In particular, does it grow like $k$ rather than like $k^2$?
\end{question}

Meshulam and Wigderson \cite[Th.~1.4]{MW04} showed that the growth is linear for groups all of whose irreducible representations are induced from representations of bounded degree. This includes the monomial groups, but these are solvable by Taketa's theorem, so it does not reach the general case. More recently, Pisier \cite{Pis17} placed the question in a more general setting. There $G$ is an arbitrary (not necessarily finite) group with a fixed generating $d$-tuple, the regular representation of $G$ is replaced by a finite-dimensional unitary representation $\pi$ such that $\pi\otimes\bar\pi$ has a spectral gap, and one counts the irreducible constituents of $\pi$ of degree at most $k$.\footnote{For a finite group $G$ and its regular representation $\lambda$, this recovers $\Rep_k(G)$ and the spectral gap of the Cayley graph, since $\lambda\otimes\bar\lambda$ is a multiple of $\lambda$.} He showed that in this setting the growth is at least quadratic for all large $d$ \cite[Th.~2.11]{Pis17}. That is, for every $k$ there are a group $G$, possibly infinite, and a representation $\pi$ as above with $\exp(\Omega_\epsilon(dk^2))$ irreducible constituents of degree $k$. One might read this as evidence that $k^2$ is also the right order in Wigderson's question. We show, however, that this is not the case.

\subsection{Representation growth of expanders}\label{sub:expanders-main}

We answer \Cref{q:wigderson} up to a logarithm. Along expanding sequences, $\log\Rep_k$ grows at most like $k\log k$, and not like $k^2$.

\begin{theorem}\label{cor:quasilinear}
Let $d\geq2$ and $\epsilon>0$. Every $G\in\mathcal{C}_{d,\epsilon}$ satisfies
\[
    \Rep_k(G) \leq \exp\left((d-1)k\log k+O_\epsilon(dk)\right)
    \quad \text{for all } k\geq1 .
\]
\end{theorem}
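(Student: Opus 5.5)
The plan is to combine two ingredients. The first is the Lubotzky--Weiss bound $\ab_k(G)\leq\exp(O_\epsilon(dk))$ for $G\in\mathcal{C}_{d,\epsilon}$, recalled in the introduction and in \Cref{prop:LW}. The second is the general comparison inequality announced in the abstract: for every finite $d$-generated group $G$ and every $k\geq1$,
\[
    \Rep_k(G)\leq\exp\bigl((d-1)k\log k+O(dk)\bigr)\cdot\ab_k(G).
\]
I take this inequality as the main theorem established in the body of the paper, and I do not reprove it here. The corollary should then be a matter of checking hypotheses and multiplying the two bounds.

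The first step is to note that membership in $\mathcal{C}_{d,\epsilon}$ makes $G$ a finite group with a generating $d$-tuple. Indeed, the spectral gap of $\Cay(G,\Sigma)$ is positive exactly when $\Sigma$ generates $G$, and $\epsilon>0$. So the comparison inequality applies to $G$ with this same $d$.

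The second step is to apply the Lubotzky--Weiss bound. Looking at its packing proof (\Cref{prop:LW}), the implied constant depends on $\epsilon$ only, and the $d$-dependence is linear. This gives $\ab_k(G)\leq\exp(C_\epsilon dk)$ for all $k\geq1$.

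Multiplying the two bounds gives
\[
    \Rep_k(G)\leq\exp\bigl((d-1)k\log k+O(dk)+C_\epsilon dk\bigr)=\exp\bigl((d-1)k\log k+O_\epsilon(dk)\bigr).
\]
This is the claimed bound, valid uniformly in $k\geq1$ and in $G\in\mathcal{C}_{d,\epsilon}$.

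The only point that needs care is the uniformity of the constants. The comparison inequality must have an absolute implied constant (independent of $d$ and $k$), and the Lubotzky--Weiss bound must hold for every $k$ and not just asymptotically. If the cited form of Lubotzky--Weiss gave the bound only for large $k$, I would handle small $k$ separately. For $k$ below any fixed threshold, the trivial bound $\ab_k(G)\leq|H:H'|$ over subgroups $H$ of index at most $k$ is still controlled by the same packing argument, because the argument works for each fixed $k$. So no genuine obstacle arises, and the real content lies in the comparison inequality.
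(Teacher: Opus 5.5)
Your proposal is correct and is exactly the paper's proof: multiply the bound of \Cref{thm:main} by the Lubotzky--Weiss bound \eqref{eq:LW}, $\log\ab_k(G)\leq dk\log(\lceil 2\pi/\epsilon\rceil+1)$. Since \Cref{prop:LW} already holds for every $k\geq1$ with an explicit constant, your fallback for small $k$ is not needed.
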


It follows from the example above that for $d\geq d_0$ and $\epsilon$ small enough, we can sandwich the values of $\log\Rep_k$ along $\mathcal{C}_{d,\epsilon}$ between $\Omega(k)$ and $O(k\log k)$, so the growth is at least linear and at most quasilinear. We do not know whether the logarithm is really needed, and we return to this question in \Cref{sub:altfree-intro}.

\subsubsection*{An inequality between the two growths}
\Cref{cor:quasilinear} follows from an inequality between abelianization and representation growth in the direction opposite to \eqref{eq:lower}. Along expanding sequences, $\ab_k$ grows at most exponentially in $k$ by the bound of Lubotzky and Weiss. So it suffices to show that $\Rep_k$ cannot exceed $\ab_k$ by much more than that. We show that this indeed holds in every $d$-generated finite group, without any expansion assumption.

\begin{theorem}\label{thm:main}
Every finite $d$-generated group $G$ satisfies
\[
    \Rep_k(G) \leq \exp\left((d-1)k\log k+O(dk)\right)\cdot \ab_k(G)
    \quad \text{for all } k\geq1 .
\]
\end{theorem}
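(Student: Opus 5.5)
We will bound $\Rep_k(G)$ by sorting irreducible representations according to how they are induced from primitive ones. Every irreducible $\rho$ of degree $n\le k$ is induced from a primitive irreducible representation $\sigma$ of a subgroup $H$ of index $r$, where $n=rm$ and $m=\deg\sigma$. We write $\Rep_k(G)\le\sum_{r m\le k} N(r,m)$, where $N(r,m)$ counts pairs $(H,\sigma)$ with $|G:H|=r$ and $\sigma$ primitive of degree $m$. This overcounts each $\rho$ at least once. The factor $\exp(O(dk))$ absorbs the sum over the $O(k\log k)$ pairs $(r,m)$, so it suffices to bound each $N(r,m)$.

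First we count the subgroups. A $d$-generated group has at most $r\,(r!)^{d-1}\le\exp((d-1)r\log r+O(dr))$ subgroups of index $r$, via transitive actions on $r$ points and the choice of images of the $d$ generators. This is the source of the leading $(d-1)k\log k$ term, attained when $m=1$ and $r=k$. Each such $H$ is $O(dr)$-generated by Schreier's lemma; we use the bound $dr+1$.

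Next, for fixed $H$ we count primitive $\sigma$ of degree $m$. The image $\sigma(H)$ is a finite primitive subgroup of $\GL_m(\C)$. Modulo scalars, the projective image $\bar\sigma(H)\le\PGL_m(\C)$ is one of at most $\exp(O(\log^4 m))$ groups $P$ up to conjugacy, by the counting result for primitive linear groups announced in the abstract, which we invoke here. For each $P$, the number of surjections $H\to P$ is at most $|P|^{\mathrm{gens}(H)}$. Primitive groups modulo scalars have order at most $\exp(O(m\log m))$ by the standard bound (Collins/Feit, via Jordan), up to a possibly large alternating section; we return to this below. The lift of a projective representation to a linear one is then determined up to tensoring with a linear character of $H$, that is, up to a character of $H/H'$. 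This gives at most $|H:H'|\le\ab_{r}(G)$ choices. Altogether,
\[
N(r,m)\le \exp\big((d-1)r\log r+O(dr)\big)\cdot \exp(O(\log^4 m))\cdot \max_P |\Hom(H,P)|\cdot \ab_k(G).
\]

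The main obstacle is the factor $|\Hom(H,P)|$. A naive bound gives $\exp(O(dr\cdot m\log m))$, which is too large when both $r$ and $m$ are large. The loss comes from raising $|P|$ to the power of the rank of $H$ rather than of $G$. To fix this, we will instead parametrize directly by the induced datum in $G$. The projective representation $\rho=\Ind\sigma$ gives a homomorphism from $G$ to the wreath-type group $\bar P\wr\Sym_r$, which is determined by the images of the $d$ generators of $G$. Its number is at most $(|P|^r r!)^d$, with the transversal normalized to quotient out one factor of $r!$. We then need $|P|^{rd}\le\exp(O(dk))$ when $rm\le k$.

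This forces the real use of the structure of primitive groups. Modulo scalars and the normal extraspecial/tensor part, which contributes $\exp(O(\log^2 m))$ per generator, a primitive $P$ of degree $m$ has order bounded polynomially in $m$ unless it involves a large alternating group acting on a $\Theta(m)$-dimensional deleted permutation module. We will handle that alternating case by absorbing it into the "$\Sym$" count. Replacing $r$ by $r'=r(m+1)$ points in the permutation structure still gives only $(d-1)k\log k$. Making this reduction precise, so that alternating primitive constituents are counted as if they were imprimitive permutation data, is where I expect the main difficulty. The remaining primitive types contribute $\exp(O(d r\log^{O(1)} m))\le\exp(O(dk))$, since $r\log^{O(1)}m\le O(rm)$.
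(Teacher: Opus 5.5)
Your proposal has a genuine gap, and it comes from misplacing the leading term. The subgroup count contributes only $(d-1)r\log r$, which reaches $(d-1)k\log k$ only at $r=k$, $m=1$. You nevertheless conclude that the surjections must cost $\exp(O(dk))$, and set the target $|P|^{rd}\le\exp(O(dk))$ whenever $rm\le k$. That target is false. At $r=1$, $m=k$ and $P=\Sym(k+1)$ you get $|P|^d=\exp(dk\log k+O(dk))$. This cost is genuine: in \Cref{ex:altpower} (powers of $\Alt(k+1)$ at Hall's ceiling) the relevant representations are primitive with $H=G$, so the entire leading term is paid by surjections onto $Q$.

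Rewriting the data as a homomorphism $G\to\bar P\wr\Sym_r$ does not change this, since at $r=1$ it is the same count $|P|^d$. Your final step is neither proved nor accurate as stated. It consists of a dichotomy (quasiprimitive groups are polynomially bounded except for alternating groups on deleted permutation modules) plus a reduction of that case to permutation data. But components of Lie type already give $\log|Q|=\Theta(\log^2 m)$, and alternating components can sit in tensor products with other factors. You flag this step yourself as the main difficulty, and it is left open.

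The missing idea is that no such reduction is needed. The costs of the subgroup and of the surjection are complementary, and you only have to count them sharply. First, use Schreier's formula as $d(H)\le 1+r(d-1)$ rather than $dr+1$. Second, note that $Q$ acts freely on the pairs $(\varphi,\sigma)$, where $\varphi\colon H\twoheadrightarrow Q$ is a surjection and $\sigma$ a lift, via $x\cdot(\varphi,\sigma)=(c_x\circ\varphi,X\sigma X^{-1})$. The action is free because a fixing element centralizes the irreducible $\sigma(H)$ and so is scalar. All pairs in one orbit induce the same $\rho$, so you may divide by $|Q|$.

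The surjections then cost $|Q|^{d(H)-1}\le J(m)^{r(d-1)}$. With $\log J(m)\le m\log m+O(m)$, the logarithm of subgroups times surjections is at most
\[
(d-1)\bigl(r\log r+rm\log m\bigr)+O(dk)\le(d-1)\bigl(r\log r+k\log(k/r)\bigr)+O(dk)\le(d-1)k\log k+O(dk),
\]
since $rm\le k$ forces $\log m\le\log(k/r)$. So large $r$ and large $m$ never occur together. Your naive count $\exp(O(dr\,m\log m))$ was never too large in order of magnitude; only its constant needed these two corrections.

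Combine this with the at most $|H:H'|\le\ab_k(G)$ lifts and with $\log\QP(k)=O(\log^4k)$ (\Cref{thm:count}, which you invoke). This proves the theorem without any case analysis on the type of $Q$.
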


Multiplying the bound of \Cref{thm:main} by the bound $\ab_k(G)\leq\exp(O_\epsilon(dk))$ of Lubotzky and Weiss gives \Cref{cor:quasilinear}. More generally, \Cref{thm:main} settles how the two obstructions are related \cite[Question~4.1]{Sab25}. For a sequence of $d$-generated groups, the condition $\ab_k\leq\exp(O(k))$ of Lubotzky and Weiss implies $\Rep_k\leq\exp(O(k\log k))$, while the converse fails (\Cref{ex:window}).

\Cref{thm:main} improves a bound of Sabatini \cite[Prop.~2]{Sab23}, who proved that every $d$-generated finite group satisfies
\begin{equation}\label{eq:upper}
    \Rep_k(G) \leq \left( k!\cdot \ab_{(k+1)!}(G) \right)^{O(d)\cdot(\log \ab_{(k+1)!}(G)+k\log k)} .
\end{equation}
The latter requires control of the abelianization growth at index $(k+1)!$ rather than at $k$, and it depends on it through $\ab^{O(d\log\ab)}$. Improving \eqref{eq:upper} is \cite[Question~4.2]{Sab25}, and \Cref{thm:main} does so in both respects. The abelianization growth is evaluated at index $k$, and the dependence on it is linear.
Moreover, \Cref{thm:main} is sharp in three ways. First, the linear dependence on $\ab_k(G)$ cannot be improved, by \eqref{eq:lower}. Second, the index $k$ in $\ab_k(G)$ cannot be lowered to $k-1$ (\Cref{ex:sharpindex}). Third, the leading term $(d-1)k\log k$ in the exponent is attained, as we explain in the following subsection.

\subsection{Two sources of small representations}\label{sub:method}

The lower bound \eqref{eq:lower} comes from inducing the linear characters of a single subgroup of small index. The factor $\exp((d-1)k\log k)$ in \Cref{thm:main} measures how far $\Rep_k$ can go beyond this, and it comes from two independent sources, namely the fact that there might be many subgroups of small index and that there might be representations that abelianization growth does not see. The proof of \Cref{thm:main} combines these two. We first look at each source on its own.

\subsubsection*{Monomial representations}
A representation is \emph{monomial} if it is induced from a linear character of a subgroup. A monomial representation of degree at most $k$ is determined by a subgroup $H$ of index at most $k$ and one of the at most $\ab_k(G)$ linear characters of $H$. A $d$-generated group has at most $j(j!)^{d-1}$ subgroups of index $j$ \cite[Cor.~1.1.2]{LS03}, so the number of irreducible monomial representations of degree at most $k$ is at most
\[
    k^2(k!)^{d-1}\cdot\ab_k(G) = \exp\left((d-1)k\log k+O(dk)\right)\cdot\ab_k(G)
\]
(\Cref{thm:monomial}). Here the whole factor comes from counting subgroups, and the following example shows that it is attained up to the constant in the exponent.

\begin{example}[see {\Cref{ex:monomialsharp}} for details]\label{ex:monomial-intro}
Let $W=\Alt(5)\wr\Alt(m)$, with $\Alt(m)$ acting naturally on $m$ points, and put $k=5m$. The irreducible representation of $\Alt(5)$ of degree $5$ is induced from a nontrivial linear character of $\Alt(4)$, and it gives a monomial irreducible representation of $W$ of degree $k$. List the generating $d$-tuples of $W$ up to automorphisms, and let the \emph{free subdirect power} $\Pi_d(W)$ be the subgroup of the corresponding direct power of $W$ generated by the $d$ elements whose coordinates are the entries of these tuples. It is $d$-generated and maps onto $W$ in each coordinate, so each of its $\exp((1/5-o(1))(d-1)k\log k)$ coordinates pulls back its own monomial representation of degree $k$. On the other hand, $\Pi_d(W)$ has no abelian composition factors for $m \geq 5$, so $\ab_k(\Pi_d(W))\leq k^{O(1)}$ by \cite[Th.~3]{Sab23}.
\end{example}

\subsubsection*{Primitive representations}
On the other hand, abelianization growth does not see all representations. It misses the \emph{primitive} representations, those not induced from a representation of a proper subgroup. A primitive $\rho\in\Irr(G)$ of degree $m$ can be recovered from three pieces of data. 
The first is the image $Q$ of $\rho(G)$ in $\PGL_m(\C)$, up to conjugacy, which we call a \emph{quasiprimitive} group. The second is the surjection of $G$ onto $Q$, and the third is the lift of the resulting projective representation to an ordinary one. 
We count these in reverse order.

Two lifts differ by a linear character of $G$, so there are at most $|G:G'|\leq\ab_k(G)$ of them. 
There are at most $|Q|^{d-1}$ choices for the surjection that give different $\rho$ (\Cref{lem:orbits}), and $\log|Q|\leq m\log m+O(m)$ by Jordan's theorem in the sharp form of Collins \cite{Col07,Col08}. For $m\leq k$, the surjections thus cost at most $\exp((d-1)k\log k+O(dk))$. 
The following example shows that this factor is attained.

\begin{example}[see {\Cref{ex:altpower}} for details]\label{ex:altpower-intro}
For $k\geq6$, the standard representation of $\Alt(k+1)$ of degree $k$ is primitive and $\ab_k(\Alt(k+1))=1$, since every proper subgroup of $\Alt(k+1)$ has index at least $k+1$. Take the free subdirect power of $\Alt(k+1)$ as in \Cref{ex:monomial-intro}. By a theorem of Hall \cite{Hal36}, it is the full direct power, with one factor for each generating $d$-tuple up to automorphisms, so it still has $\ab_k=1$. Each factor pulls back its own standard representation, and by a theorem of Dixon \cite{Dix69} there are $\exp((d-1)k\log k+O(dk))$ factors. This attains the leading term of \Cref{thm:main}.
\end{example}

It remains to count the groups $Q$, which do not depend on $G$. Write $\QP(m)$ for the number of conjugacy classes of quasiprimitive subgroups of $\PGL_{m'}(\C)$ with $m'\leq m$. For \Cref{thm:main}, the bound $\log\QP(m)=O(m)$ would be enough. A soft count does not suffice for this, since there are $\exp(\Theta(m^3\log^3m))$ groups of order at most $(m+1)!$ \cite[Th.~4.5 and Cor.~16.21]{BNV07}, so the count has to use the structure of primitive linear groups. We prove a much stronger bound than we need, which may be of independent interest.

\begin{theorem}\label{thm:count}
Let $m\geq2$. The quasiprimitive subgroups of $\PGL_{m'}(\C)$ with $m'\leq m$ fall into at most $\exp\left(O(\log^4m)\right)$ conjugacy classes.
\end{theorem}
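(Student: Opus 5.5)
The count has to go through the structure of primitive linear groups. Let $Q\leq\PGL_m(\C)$ be quasiprimitive, and let $\hat G\leq\SL_m(\C)$ be its full preimage intersected with $\SL_m$. Then $\hat G$ is a finite primitive group with centre of order dividing $m$. Since $Q$ and $\hat G$ determine each other up to conjugacy, it suffices to count finite primitive subgroups of $\SL_m(\C)$ up to conjugacy.

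\textbf{Step 1: the generalized Fitting subgroup.} Every abelian normal subgroup of $\hat G$ acts by scalars, because $\hat G$ is primitive. Hence $F^*(\hat G)=Z\cdot E_1\cdots E_r\cdot L_1\cdots L_s$, where:
\begin{itemize}
\item each $E_i$ is a $p_i$-group of symplectic type with $|E_i/Z(E_i)|=p_i^{2a_i}$;
\item each $L_j$ is a quasisimple component;
\item the natural module decomposes as a tensor product
\[
\C^m=\bigotimes_i V_{E_i}\otimes\bigotimes_{j} W_j ,
\]
where $\hat G$ permutes isomorphic tensor factors attached to components, $\dim V_{E_i}=p_i^{a_i}$, and each $W_j$ carries a tensor power of a faithful irreducible representation of $L_j$.
\end{itemize}
The number of factorizations $m=\prod p_i^{a_i}\cdot\prod n_j^{e_j}$ is $m^{O(1)}$, since $m$ has $O(\log m)$ prime factors with multiplicity. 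Given the factorization, $F^*(\hat G)$ is determined up to conjugacy by:
\begin{itemize}
\item the isomorphism types of the $E_i$, of which there are $O(1)$ each;
\item the quasisimple groups $L_j$ with a faithful irreducible representation of degree $n_j$, together with that representation.
\end{itemize}
For the second item I would use the classification of finite simple groups together with Landazuri–Seitz type lower bounds on minimal degrees. These give at most $m^{O(1)}$ pairs (group, representation) of degree $\le m$: there are $O(\log m)$ Lie-type families and ranks with $q\le m^{O(1)}$, the alternating groups have $\mathrm{Alt}(n)$ with $n\le m+2$, and each such group has polynomially many irreducible representations of degree at most $m$. The number of components is at most $\log_2 m$, so the choices of $F^*(\hat G)$ multiply to $\exp(O(\log^2 m))$.

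\textbf{Step 2: $\hat G$ between $F^*$ and its normalizer.} Put $F=F^*(\hat G)$ and $N=N_{\SL_m(\C)}(F)$. Then $\hat G$ lies between $F$ and the finite group $N$ modulo scalars. Moreover $C_{\hat G}(F)\le F$, so $\hat G/F$ embeds in
\[
\mathrm{Out}(F)\;\lesssim\;\prod_i \Sp_{2a_i}(\F_{p_i})\times\Big(\prod_j \mathrm{Out}(L_j)\wr\mathrm{Sym}(\text{multiplicity})\Big),
\]
and the number of choices of $\hat G$ up to conjugacy is at most the number of subgroups of $N/F$ up to conjugacy. The $\mathrm{Out}(L_j)$ factors have order $O(\log m)$ and the symmetric groups have degree $O(\log m)$. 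The main contribution is $\Sp_{2a}(p)$ with $p^a\le m$. Here I use the bound of Borovik–Pyber–Shalev / Pyber that a group of order $n$ whose sections are controlled has at most $n^{O(\log n)}$ subgroups. More precisely, a subgroup of $\mathrm{GL}_D(\F_p)$ is generated by $O(D)$ elements, so there are at most $|\mathrm{GL}_D(p)|^{O(D)}=p^{O(D^3)}$ subgroups. With $D=2a\le 2\log_2 m$ and $p^a\le m$, this gives $p^{O(a^3)}=\exp(O(a^2\log m))=\exp(O(\log^3 m))$ for a single factor. Generating the whole of $N/F$ by $O(\log m)$ elements costs $|N/F|^{O(\log m)}$. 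We have $\log|N/F|=O(\log^2 m)$, because $|\Sp_{2a}(p)|\le p^{O(a^2)}$ and $a\log p\le\log m$. So this step gives $\exp(O(\log^3 m))$ choices.

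Multiplying Steps 1 and 2 and summing over $m'\le m$ gives $\exp(O(\log^4 m))$ with room to spare. The extra logarithm absorbs the generator count for a subgroup of $N/F$ uniformly: every subgroup of a group of order $n$ is generated by $\log_2 n=O(\log^2 m)$ elements, which gives at most $n^{\log_2 n}=\exp(O(\log^4 m))$ subgroups. This is exactly the claimed bound and needs no subtler generation result.

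\textbf{Main obstacle.} The hard part is Step 1 for quasisimple components: producing a clean $m^{O(1)}$ bound on pairs (quasisimple group, faithful irreducible projective representation of degree $\le m$) via CFSG minimal-degree bounds. I would quote Landazuri–Seitz, and for alternating groups use the fact that the degrees below $\binom{n-1}{2}$ come only from the natural and basic spin modules. A secondary point is checking that $|N/F|\le\exp(O(\log^2 m))$ also holds when tensor-induced components are permuted. This follows since there are at most $\log_2 m$ factors, so the permutation part has order at most $(\log_2 m)!$.
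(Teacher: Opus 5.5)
Your proof has a genuine gap: Step 1 assumes that $F=F^*(\hat G)$ acts irreducibly on $\C^m$, so that the tensor factors attached to the quasicomponents and components fill out all of $\C^m$. Primitivity only makes the restriction homogeneous, $\Res_F\C^m\cong U^{\oplus w}$. In general, therefore, $\C^m\cong U\otimes W$ with a multiplicity space $W$ on which $F$ acts trivially.

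The case $w>1$ really occurs. In \Cref{ex:multiplicity}, the generalized Fitting subgroup of a primitive subgroup of $\GL_6(\C)$ is $3^{1+2}$ (up to scalars), and it acts as $U^{\oplus 2}$ with $\dim U=3$. When $w>1$, your Step 2 collapses, for two reasons.
\begin{itemize}
\item The normalizer $N_{\SL_m(\C)}(F)$ contains $1\otimes\SL(W)$, so it is not finite modulo scalars.
\item The embedding of $\hat G/F$ into $\Out(F)$ bounds $\hat G$ only as an abstract group, not up to conjugacy. Each $g\in\hat G$ has the form $X_g\otimes Y_g$. The action of $g$ on $F$ determines $X_g$ up to scalars, but it says nothing about $Y_g$.
\end{itemize}
So counting subgroups of a fixed finite group $N/F$ does not bound the conjugacy classes of $\hat G$.

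The missing idea is to record the action on $W$ as extra data. Its image $Q_2\le\PGL(W)$ is a quotient of $Q/\bar R$. It is again quasiprimitive, of degree $w\le m/2$ dividing $m$, because an imprimitivity decomposition of $W$ would give one of $U\otimes W$ (\Cref{lem:mult}). The paper counts $Q_2$ recursively, along a chain of divisors of length at most $\log_2 m$. It then locates $Q$, which is the graph of a homomorphism $\psi\colon Q_1\to Q_2$, inside the finite group $\mathcal N\otimes Q_2$ modulo $\bar R\otimes1$; that quotient has order $\exp(O(\log^2m))$. This recursion is the source of the fourth logarithm, so your ``room to spare'' comes from the omission.

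Once the recursion is in place, your device of generating subgroups by $\log_2$ of the group order costs $\exp(O(\log^4m))$ per level, hence $\exp(O(\log^5m))$ in total. To reach $\log^4$ you need an $O(\log m)$ generation bound for primitive linear groups, as in \Cref{lem:gen}, or some other saving.

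Two smaller points. First, subgroups of $\GL_D(\F_p)$ are not $O(D)$-generated in general, since unipotent abelian subgroups can have rank about $D^2/4$. Second, the fact you quote for alternating groups only covers degrees small compared with $n^2$. When $n$ is small compared with $m$, you need a genuine count of small-degree representations of the covering groups, such as the one of Guralnick, Larsen and Tiep.
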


Equivalently, up to conjugacy and scalars, there are at most $\exp(O(\log^4m))$ primitive finite subgroups of $\GL_m(\C)$. This is the complex analogue of the counts of primitive permutation groups by Pyber and Shalev \cite{PS97} and of primitive linear groups over $\F_p$ by Jaikin-Zapirain and Pyber \cite{JZP11}, and the proof adapts theirs. We are not aware of a previous bound on $\QP(m)$ in the literature, and we do not expect our bound to be sharp.

\subsubsection*{Combining the two}
Every $\rho\in\Irr(G)$ of degree at most $k$ is induced from a primitive representation $\sigma$ of a subgroup $H\leq G$. Writing $j=|G:H|$ and $m=\sigma(1)$, we have $jm\leq k$. We recover $\rho$ from $H$ and $\sigma$, and count the representations $\sigma$ as in the primitive case, applied to $H$. As in the monomial case, there are at most $j(j!)^{d-1}$ choices for $H$. The lifts now differ by linear characters of $H$, so there are at most $|H:H'|\leq\ab_k(G)$ of them, and this is where abelianization growth enters. The surjections onto $Q$ now cost $|Q|^{d(H)-1}$, where $d(H)$ is the minimal number of generators of $H$, and $d(H)-1\leq(d-1)j$ by Schreier's index formula. Taken separately at their worst values, these bounds give $\exp(O(dk^2\log k))$, which is no better than the bound of de la Harpe, Robertson and Valette. But the constraint $jm\leq k$ makes the costs complementary, and the exponents add up to
\[
    (d-1)\left(j\log j+jm\log m\right)+O(dk) \leq (d-1)k\log k+O(dk) .
\]
The worst cases are the two ends, which are precisely the two cases above. At $j=1$ the representation $\rho$ is primitive, and the whole cost is Jordan's factorial, which \Cref{ex:altpower-intro} attains. At $j=k$ the representation $\rho$ is monomial, and the whole cost is the factorial in the subgroup count, which \Cref{ex:monomial-intro} attains up to a constant.

\begin{remark}
Our count is modelled on Sabatini's proof of \eqref{eq:upper}. He applies Jordan's theorem to $\rho(G)$ itself, and then has to identify $\rho(G)$ among all groups of order up to about $(k+1)!\cdot\ab_{(k+1)!}(G)$. Applying it to the primitive $\sigma$ instead, we only have to identify $Q$, whose order does not involve $G$ (\Cref{rem:sabatini} for more details).
\end{remark}

\subsection{Alternating sections and the logarithm}\label{sub:altfree-intro}

The examples in the previous subsection attain the logarithm in \Cref{thm:main}, and both are built from large alternating groups. This is not an accident, since both factorials in the proof come from symmetric groups. The subgroup count bounds the action of $G$ on the cosets of $H$, which has its image in $\Sym(j)$, and Jordan's bound is attained by the symmetric group of degree $m+1$. We look at the two extreme cases, groups without large alternating sections and groups with many of them.

\subsubsection*{Groups without large alternating sections}
For these groups, the logarithm disappears. We say that $G$ is \emph{$\Alt(c)$-free} if no section of $G$ is isomorphic to the alternating group $\Alt(c)$.

\begin{theorem}\label{thm:altfree}
Let $c\geq4$ and $d\geq2$, and let $G$ be a finite $d$-generated $\Alt(c+1)$-free group. Then
\[
    \Rep_k(G) \leq \exp\left(O_{c,d}(k)\right)\cdot \ab_k(G)
    \quad \text{for all } k\geq1 .
\]
\end{theorem}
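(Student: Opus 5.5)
We rerun the counting scheme behind \Cref{thm:main}, but replace the two factorial bounds by $\Alt(c+1)$-free analogues. Write every $\rho\in\Irr(G)$ of degree at most $k$ as $\rho=\Ind_H^G\sigma$, where $\sigma$ is a primitive irreducible representation of $H\leq G$. Put $j=|G:H|$ and $m=\sigma(1)$, so that $jm\leq k$. The number of such $\rho$ is then at most
\[
\sum_{jm\leq k}\#\{H:|G:H|=j\}\cdot\QP(m)\cdot\max_Q |Q|^{d(H)-1}\cdot|H:H'| .
\]
Here $Q$ runs over the quasiprimitive images of $H$ in $\PGL_m(\C)$, and we invoke \Cref{lem:orbits} for the surjections. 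The factor $|H:H'|$ is at most $\ab_k(G)$, and $\QP(m)\leq\exp(O(\log^4 m))=\exp(O(k))$ by \Cref{thm:count}. So it remains to prove two bounds for $\Alt(c+1)$-free groups. First, the number of subgroups of index $j$ should be $\exp(O_{c,d}(j))$. Second, the relevant $Q$ should satisfy $|Q|\leq\exp(O_c(m))$. With $d(H)-1\leq(d-1)j$, the second bound gives a surjection cost of $\exp(O_{c,d}(jm))=\exp(O_{c,d}(k))$, and summing over the at most $k^2$ pairs $(j,m)$ costs only a polynomial factor.

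\emph{Subgroup count.} A subgroup of index $j$ corresponds to a transitive action of $G$ on $j$ points with a marked point. So it suffices to bound the number of homomorphisms $G\to\Sym(j)$ with transitive image, which is at most $j\cdot|\mathcal{T}_j|\cdot\max|T|^{d}$, where $T$ runs over the transitive $\Alt(c+1)$-free subgroups of $\Sym(j)$ and $\mathcal{T}_j$ is the set of such subgroups. The image $T$ is a section of $G$, hence is $\Alt(c+1)$-free. By Babai--Cameron--Pálfy, together with the refinement that $\Alt(c+1)$-free permutation groups of degree $j$ have order at most $c^{O(j)}$ (Pyber's form, with constant $\exp(O_c(j))$), we get $|T|\leq\exp(O_c(j))$. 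Hence the number of homomorphisms is at most $\exp(O_c(dj))$. We could avoid counting the groups $T$ by counting $d$-tuples in $\Sym(j)$ generating an $\Alt(c+1)$-free group. That set is not a product set, but the bound $\#\{\text{subgroups of }\Sym(j)\}\leq\exp(O(j^2))$ is too weak, so we bound the count recursively. A transitive $T$ embeds in $T_1\wr T_2$ for a block system with $T_2$ primitive. Primitive $\Alt(c+1)$-free groups have order $j^{O_c(1)}$ and there are at most $j^{O_c(\log j)}$ of them up to conjugacy, and this should give $\exp(O_c(dj))$ by the same recursion that Lubotzky--Segal use for prosolvable groups. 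If this turns out to be delicate, we would instead cite the known result that $\Alt(c)$-free $d$-generated groups have subgroup growth $\exp(O_{c,d}(j))$.

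\emph{Quasiprimitive groups.} Let $Q\leq\PGL_m(\C)$ be quasiprimitive and $\Alt(c+1)$-free. It is a section of $H\leq G$, so it inherits the property. From the structure theory used in \Cref{thm:count}, the generalized Fitting part consists of a tensor product of extraspecial-type groups, of order $m^{O(1)}$, and of a central product of quasisimple groups acting tensor-decomposably. The quotient acts on these tensor factors by permutations and by groups of Lie type of bounded rank. The Alt-freeness excludes alternating quasisimple factors of large degree and bounds the permutation part by $\exp(O_c(\log m))$. The \emph{main obstacle} is the quasisimple layer. For $Q$ almost simple of Lie type of rank $r$ in a faithful representation of degree $m$, Landazuri--Seitz gives $m\geq q^{\Omega(r)}$ while $|Q|\leq q^{O(r^2)}$, so $\log|Q|=O(r\log m)$. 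Since $r$ can be as large as about $\log m$, this is not obviously $O(m)$. Still, $\log|Q|\leq O(\log^2 m)\leq O(m)$ comfortably, so Jordan-type bounds suffice. The genuinely dangerous case is alternating or symmetric tensor-permuting parts, and $\Alt(c+1)$-freeness kills exactly these. I therefore expect to conclude $|Q|\leq\exp(O_c(m))$ by following the proof of Collins' bound and checking that the factorial contributions come only from alternating sections of degree greater than $c$.
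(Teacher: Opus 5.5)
Your overall scheme is the paper's: rerun \Cref{prop:reduction} with an exponential bound on subgroups and a small bound on the $\Alt(c+1)$-free quasiprimitive quotients $Q$. Two points of detail need correcting.

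\emph{The subgroup count.} Your own argument here fails, so the citation is needed, not a fallback. Each subgroup of index $j$ is the stabilizer of the point $1$ under exactly $(j-1)!$ transitive homomorphisms $G\to\Sym(j)$. So the number of these homomorphisms is $(j-1)!\,a_j$, which is never $\exp(O_c(dj))$ once $a_j\geq1$. Correspondingly $\mathcal T_j$ is huge, since it already contains all $(j-1)!/\varphi(j)$ regular cyclic subgroups. Dividing by $(j-1)!$ leaves a sum over conjugacy classes of transitive $\Alt(c+1)$-free groups, and there are still far too many. Already for $j=2^n$ the transitive $2$-subgroups of $\Sym(j)$ fall into $\exp(\Omega(j^2/\log j))$ classes. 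So you need the Lubotzky--Segal theorem \factref{G}, which is exactly what the paper uses.

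\emph{The bound on $Q$.} Your target $\lvert Q\rvert\leq\exp(O_c(m))$ is weaker than the paper's $\lvert Q\rvert\leq m^{O_c(1)}$ from \Cref{prop:qaltfree}. You are right that it suffices, since $\lvert Q\rvert^{(d-1)j}\leq\exp(O_c(djm))$ and $jm\leq k$, and this is a genuinely lighter route. As written, though, it is only a plan, and you should not go through Collins's proof. The paper's lemmas already give it:
\begin{itemize}
\item \Cref{lem:norm} gives $\lvert Q/\bar R\rvert\leq\exp(O(\log^2m))$ for every quasiprimitive $Q$, with no hypothesis at all.
\item The extraspecial part satisfies $\lvert A\rvert\leq m^2$.
\item A component of Lie type has $\log\lvert S_j\rvert=O(\log^2m_j)$ by \Cref{lem:liedata}, and a sporadic one has bounded order.
\item An alternating component $S_j=\Alt(n)$ is a section of $G$, so $n\leq c$. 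This is the only place where the hypothesis enters this bound.
\end{itemize}
With at most $\log_2m$ components and $\prod_jm_j\leq m$, this gives $\log\lvert Q\rvert=O_c(\log^2m)$.

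Your description of $Q/\bar R$ as acting through groups of Lie type of bounded rank is not accurate. Its image in $\Aut(A)$ lies in $\prod_i\Sp_{2n_i}(p_i)$ with $n_i$ up to $\log_2m$. The crude bound on $\lvert\Aut(A)\rvert$ inside \Cref{lem:norm} makes this irrelevant.

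\emph{What the paper's stronger bound buys.} The polynomial bound needs more machinery: the classes $\mathcal B_{c'}$, the order bound for $\Alt(c+1)$-free permutation groups acting on the components, and Babai--Cameron--P\'alfy for the completely reducible symplectic action on $A$. In return it is a linear analogue of Babai--Cameron--P\'alfy of independent interest. It also gives the sharper surjection cost $\exp(O_{c,d}(k\log m/m))$, which the paper uses in \Cref{sub:altfreegap} to explain why extremal examples have bounded primitive degree.
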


The proof is the same count, with the two factorials replaced by exponentials. For the subgroups, this is the subgroup growth bound of Lubotzky and Segal \cite[Th.~3.3, Cor.~3.4]{LS03} for groups without large alternating sections. For the quasiprimitive groups, we show that an $\Alt(c+1)$-free quasiprimitive $Q\leq\PGL_m(\C)$ has order $m^{O_c(1)}$ (\Cref{prop:qaltfree}). This is a linear analogue of a theorem of Babai, Cameron and P\'alfy \cite{BCP82} on permutation groups.

The expanding groups in \Cref{ex:lz} are $\Alt(5)$-free. Together with \Cref{thm:altfree}, they answer Wigderson's question in the absence of large alternating sections.

\begin{corollary}\label{cor:sharp}
Let $c\geq4$, $d\geq2$ and $\epsilon>0$. Then every $\Alt(c+1)$-free $G\in\mathcal{C}_{d,\epsilon}$ satisfies $\Rep_k(G)= \exp(O_{c,d,\epsilon}(k))$ for all $k\geq1$. Conversely, if $d\geq d_0$ and $\epsilon$ is small enough, then for all large $k$ some $\Alt(5)$-free $G\in\mathcal{C}_{d,\epsilon}$ satisfies $\Rep_k(G)\geq \exp(\Omega(k))$.
\end{corollary}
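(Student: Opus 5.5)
The upper bound will come from combining two results stated earlier. Let $G\in\mathcal{C}_{d,\epsilon}$ be $\Alt(c+1)$-free. By \Cref{thm:altfree}, $\Rep_k(G)\leq\exp(O_{c,d}(k))\cdot\ab_k(G)$. By the Lubotzky--Weiss bound (\cite[Th.~3.6]{LW93}, recalled in \Cref{prop:LW}), $\ab_k(G)\leq\exp(O_\epsilon(dk))$. Multiplying the two gives $\Rep_k(G)\leq\exp(O_{c,d,\epsilon}(k))$. A matching lower bound of the form $\exp(\Omega(\cdot))$ is not claimed for every such $G$, so ``$=\exp(O(k))$'' only means this upper bound. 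No further work is needed here, since \Cref{thm:altfree} already contains the Alt-free counting.

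For the converse, I will use the groups $G_p=\F_5[T]\rtimes T$ with $T=\PSL_2(\F_p)$, for primes $p\geq7$ with $p\equiv 2\pmod 5$, as in \Cref{ex:lz} and \Cref{ex:lower}. These lie in $\mathcal{C}_{d_0,\epsilon_0}$ for some fixed $d_0$ and $\epsilon_0>0$. For $d\geq d_0$ and $\epsilon\leq\epsilon_0$ they still lie in $\mathcal{C}_{d,\epsilon}$: repeating a generator in the tuple keeps the gap bounded below, by a constant depending on $d/d_0$. For that reason I would first fix $d$ and then take $\epsilon$ small enough depending on $d$.

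I must check that $G_p$ is $\Alt(5)$-free. Every section of $G_p$ is an extension of a section of the elementary abelian $5$-group $\F_5[T]$ by a section of $T$. Since $\Alt(5)$ is simple and nonabelian, an $\Alt(5)$ section would have to be a section of $T$. The subgroups of $\PSL_2(\F_p)$ are known by Dickson's theorem. For $p\equiv 2\pmod 5$ we have $p^2\not\equiv 1\pmod 5$, so $5\nmid|T|$, and therefore $\Alt(5)$, whose order is divisible by $5$, is not a section of $T$.

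The same construction gives the lower bound. The index is $k_p=|T|$, and \eqref{eq:lower} yields $\Rep_{k_p}(G_p)\geq 5^{k_p}/k_p=\exp(\Omega(k_p))$. The statement asks for all large $k$, not only $k=|T|$, and this interpolation is the main obstacle. The orders $|T|=p(p^2-1)/2$ are sparse, so I would choose the largest admissible $p$ with $|T|\leq k$. The density of primes $p\equiv 2\pmod 5$, via Dirichlet's theorem or the prime number theorem in progressions, gives such $p$ with $|T|\geq ck$ for large $k$, and monotonicity of $\Rep_k$ in $k$ then gives $\Rep_k(G_p)\geq\Rep_{|T|}(G_p)\geq\exp(\Omega(k))$.

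If one prefers to avoid prime distribution, an alternative is to vary the module instead: use $\F_5[T]^{\,r}$ or suitable quotient modules to tune the index, checking that expansion survives with bounded $d$. I would try the prime-density route first.
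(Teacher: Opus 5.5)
Your proposal is correct and follows the paper's own proof. The upper bound multiplies \Cref{thm:altfree} by the Lubotzky--Weiss bound \eqref{eq:LW}, and the lower bound uses the wreath products $\F_5[T]\rtimes T$ of \Cref{ex:lower}, with $\Alt(5)$-freeness coming from $5\nmid|T|$ and monotonicity of $\Rep_k$ to interpolate between the orders $|T|$. Two small points: for the interpolation you need the prime number theorem in progressions, since Dirichlet's infinitude of primes alone does not bound the ratio of consecutive admissible primes; and your padding argument, which passes from $d_0$ to $d\geq d_0$ at the cost of shrinking $\epsilon$ by a factor $d_0/d$, makes explicit a step the paper leaves implicit.
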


So along $\Alt(c+1)$-free expanding sequences, both $\ab_k$ and $\Rep_k$ grow at most exponentially in $k$. For $\Alt(c+1)$-free sequences, these two conditions are thus necessary for $d$-expansion, but they are not sufficient, as the following example shows.

\begin{example}[see {\Cref{prop:notsufficient}} for details]\label{ex:segal-intro}
Fix a prime $p\geq7$ with $p\equiv\pm2\pmod5$, and let $W_n$ be the $n$-fold iterated wreath product of $\PSL_2(\F_p)$ in its action on the projective line. Since $5$ does not divide $|\PSL_2(\F_p)|$, these groups are $\Alt(5)$-free. For every $d\geq4$, their free subdirect powers $\Pi_d(W_n)$, as in \Cref{ex:monomial-intro}, satisfy $\ab_k\leq k^{O(1)}$ and $\Rep_k\leq\exp(O_d(k))$, but are not $d$-expanding. The reason is that $W_n$ is a quotient of a $4$-generated group of automorphisms of a rooted tree constructed by Segal \cite{Seg01} that is amenable \cite{BKN10}. The images of its generators are generating tuples of $W_n$ whose spectral gaps tend to $0$, and hence the spectral gap of \emph{every} generating $d$-tuple of $\Pi_d(W_n)$ tends to $0$ as well.
\end{example}

Sabatini asked whether $d$-generated groups without abelian composition factors, or more generally $d$-generated groups satisfying the bounds of Lubotzky and Weiss and of de la Harpe, Robertson and Valette, always form an expanding sequence \cite[Questions~1.1 and 4.4]{Sab25}. For $d$-expansion, the powers of alternating groups in \Cref{ex:altpower-intro} already give a negative answer (\Cref{ex:altpower-notexp}). The groups in the preceding example show more, namely that not even the bound $\Rep_k\leq\exp(O(k))$, which expansion forces in the $\Alt(5)$-free class, gives $d$-expansion. The original questions allow any bounded number of generators, and they remain open. For $p$ large enough, the groups $\Pi_d(W_n)$ also show that \Cref{thm:altfree} is sharp up to the constant in the exponent, for infinitely many $k$ (\Cref{prop:altfreegap}).

\subsubsection*{Groups with many alternating sections}
At the opposite end are groups with plenty of alternating sections. Here the logarithm in \Cref{thm:main} is attained, by the powers of alternating groups in \Cref{ex:altpower-intro}, and the question is whether it survives along expanding sequences.
The natural candidates are powers $\Alt(n)^{f(n)}$. At $k=n-1$, such a power has $\ab_k=1$ and $\Rep_k=1+f(n)$, so everything depends on how large the number $f(n)$ of factors can be along an expanding sequence.

If the powers are $d$-generated, then $f(n)$ is at most the largest $t$ for which $\Alt(n)^t$ is $d$-generated. We call this number \emph{Hall's ceiling}. By the theorem of Hall used in \Cref{ex:altpower-intro}, it is $\exp((1+o(1))(d-1)n\log n)$, and the groups in that example, with $n=k+1$, sit exactly at the ceiling. The powers at the ceiling are not $d$-expanding, since every generating $d$-tuple of $\Alt(n)$ occurs in some coordinate, including tuples with small spectral gap (\Cref{ex:altpower-notexp}).

The logarithm would therefore be present if $f(n)\geq|\Alt(n)|^\delta$ for a fixed $\delta>0$ along an expanding sequence, and we do not know whether this can happen.

\begin{question}\label{q:alt}
Is there $\delta>0$ such that, for some $f(n)\geq|\Alt(n)|^{\delta}$, the powers $\Alt(n)^{f(n)}$ form an expanding sequence?
\end{question}

Bartholdi and Kassabov \cite{BK25} give a \emph{lower} bound on how large $f(n)$ can be. For every $e\geq2$, they construct $(e+4)$-expanding powers of $\Alt(n)$ with $\log f(n)$ of order $(\log n)^{e+1}$, as quotients of a group with Kazhdan's property (T). This is far below Hall's ceiling, where $\log f(n) \sim (d-1)n\log n$. Our contribution here is an \emph{upper} bound, showing that expanding powers cannot come too close to the ceiling.

\begin{theorem}\label{thm:ceiling}
Let $d\geq2$ and $\epsilon>0$, and let $\Sigma_n$ be a generating $d$-tuple of $\Alt(n)^{f(n)}$ such that the Cayley graphs $\Cay(\Alt(n)^{f(n)},\Sigma_n)$ have spectral gap at least $\epsilon$. Then
\[
    \log f(n) \leq \left(d-1-\frac{\epsilon}{160}+o(1)\right)n\log n .
\]
\end{theorem}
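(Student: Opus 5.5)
Write $G=\Alt(n)^{f}$ with $f=f(n)$ and $\Sigma_n=(s_1,\dots,s_d)$. Projecting onto the coordinates gives $f$ generating $d$-tuples $\Sigma^{(1)},\dots,\Sigma^{(f)}$ of $\Alt(n)$. By Hall's theorem they are pairwise inequivalent under $\Aut(\Alt(n))$; otherwise $\Sigma_n$ would not generate the full power. The plan has three steps. First, show that every coordinate tuple $\Sigma^{(i)}$ inherits a quantitative restriction from the spectral gap of $\Sigma_n$. Second, show that only a fraction $n^{-\Omega(\epsilon n)}$ of all generating $d$-tuples of $\Alt(n)$ satisfy that restriction. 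Third, combine this with Dixon's count $|\Alt(n)|^{d}(1-o(1))$ of generating tuples, divided by $|\Aut(\Alt(n))|=n!$. That yields
\[
f\leq \frac{(n!/2)^{d}}{n!}\,n^{-(\epsilon/160-o(1))n}=\exp\left(\left(d-1-\tfrac{\epsilon}{160}+o(1)\right)n\log n\right).
\]

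For the first step I would use the combined representation. The $f$ copies of the natural $(n-1)$-dimensional standard representation, pulled back through the coordinate projections, are pairwise inequivalent irreducible representations of $G$. Their direct sum $\pi$ has no invariant vectors. The spectral gap of $\Cay(G,\Sigma_n)$ then bounds each summand: $\|\frac{1}{2d}\sum_j(\pi_i(s_j)+\pi_i(s_j)^{-1})\|\leq 1-\epsilon$. So each $\Sigma^{(i)}$, acting on $\{1,\dots,n\}$ by permutations, has spectral gap at least $\epsilon$ on the Schreier graph of the natural action, restricted to functions orthogonal to constants. A cleaner consequence comes from considering a single generator $s_j$ acting on a test function. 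For any set $A\subseteq\{1,\dots,n\}$ of size at most $n/2$, the Cheeger inequality gives $\sum_j|s_jA\setminus A|\geq \frac{\epsilon}{2}\cdot d|A|$ up to constants. In particular, the $d$ permutations cannot all nearly fix a common large partition. The concrete restriction I would extract is about fixed points. Summing the spectral inequality over the test functions $\delta_x-\frac1n$ gives $\frac{1}{d}\sum_j \mathrm{fix}(s_j)\leq (1-\epsilon')n$ with $\epsilon'\approx\epsilon/2$. Such a condition says the tuple moves at least an $\epsilon'$-fraction of the points on average.

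That restriction is too weak to be rare, since a random permutation moves almost all points. So I would instead use the Cheeger condition on sets $A$ that are unions of short cycles. In a random tuple there are many small sets that are nearly invariant under all the $s_j$, but the count of tuples for which \emph{no} such set exists should be small. A better route is probably counting directly. The number of tuples $(s_1,\dots,s_d)$ with $\sum_j|s_jA\triangle A|\le \eta |A|$ for a fixed set $A$ of size $a$ is about $(n!)^{d}\binom{n}{a}^{-d}n^{O(\eta a d)}$. Summing over $A$ of a suitable size $a\approx \epsilon n/C$ would then show that most tuples \emph{do} have a nearly invariant set, but that is the opposite of what I need. Hence the third step must be arranged differently. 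The tuples satisfying the restriction must be shown rare among tuples of $\Alt(n)^f$ subject to the joint constraint. The correct formulation uses the spectral gap on the whole group $G$, not coordinatewise.

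I therefore expect the main obstacle to be finding the right joint test function. My first attempt would use the product structure. Take $A\subseteq G$ to be a product-type set $\{g:\ g_i(x_i)\in B_i\}$ on a single coordinate, or a random union of such sets. Then apply the edge-expansion inequality of $\Cay(G,\Sigma_n)$, namely $|\partial A|\geq\frac{\epsilon}{2}\cdot 2d\,|A|$ for $|A|\leq|G|/2$. From this I would try to deduce an entropy bound: the joint distribution of the columns $(\Sigma^{(i)})_i$ must avoid a set of $n^{-\Omega(\epsilon n)}$ density in $\Alt(n)^d$. The count of distinct Aut-classes then loses the factor $n^{\epsilon n/160}$, and the constant $160$ would come from the Cheeger and averaging losses.
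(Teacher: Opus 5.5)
There is a genuine gap. The proposal never identifies what the spectral gap actually forbids, and the coordinatewise scheme of your first three steps cannot work. What the gap of $\Sigma_n$ says about representations factoring through a single coordinate is exactly that $\Cay(\Alt(n),\Sigma^{(i)})$ has gap at least $\epsilon$, and nothing more. So your route would need that only a fraction $n^{-\Omega(\epsilon n)}$ of the generating $d$-tuples of $\Alt(n)$ are $\epsilon$-expanding. No such statement is known, and it would say that random tuples of $\Alt(n)$ are overwhelmingly non-expanding, contrary to the usual expectation. For the restrictions you actually extract from the natural action (the Schreier gap on $[n]$, the Cheeger bound, the fixed-point bound), it is false outright. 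By Friedman's theorem \cite{Fri08}, all but an $o(1)$ fraction of generating $d$-tuples have Schreier gap on $[n]$ at least about $1-\sqrt{2d-1}/d$. Your last paragraph does not repair this. A test set defined through one coordinate only tests that coordinate, and the entropy bound on the columns is never formulated. Since every coordinate may individually be an excellent expander, the obstruction has to involve two coordinates at once.

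The paper's obstruction is that two distinct coordinates $Y=\Sigma^{(i)}$ and $Y'=\Sigma^{(j)}$ cannot be \emph{nearly conjugate}, that is, $Y'^c=(y_1\tau,y_2,\dots,y_d)$ for some $c\in\Sym(n)$ and some $\tau$ of small support. Pull back the standard representation of $\Sym(n)$ along coordinate $i$, and along coordinate $j$ followed by conjugation by $c$. Since $i\neq j$, the power maps onto $\Alt(n)^2$ through these two maps, so it acts transitively on $[n]^2$. By Burnside's lemma, the two pulled-back representations then share no constituent. The identity matrix, in the space of linear maps with the Hilbert--Schmidt norm, is a vector with no invariant component and Rayleigh quotient $\lvert\supp\tau\rvert/(d(n-1))$ (\Cref{lem:hrv,lem:intertwiner}).

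The other half is a pigeonhole argument that forces such a pair near Hall's ceiling. Take a code $\mathcal P\subseteq\Alt(n)$ of at least $n^{r/40}$ permutations of support at most $r$, whose pairwise quotients $\sigma^{-1}\sigma'$ have support at least $r/2$. Perturb the first entry of every $Z\in\Alt(n)^d$ by each element of $\mathcal P$, and double count against the union of the $\Sym(n)$-classes of the coordinate tuples, which has $f\cdot n!$ elements. If $f\geq3\,t_d(\Alt(n))\,n^{-r/40}$, some $Z$ has two perturbations in this union. A centralizer count (\Cref{lem:classes,lem:selfconj}) shows that for some such $Z$ the two perturbations are not conjugate. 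They therefore come from two distinct coordinates that are $2r$-nearly conjugate (\Cref{lem:nearconj}). With $r=\lfloor\epsilon n/4\rfloor$ the gap is below $\epsilon$. The constant $160=4\cdot40$ comes from this choice of $r$ and the code size, not from Cheeger losses.
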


The proof rests on an almost intertwiner, an idea of de la Harpe, Robertson and Valette \cite{HRV93}. Suppose that two coordinates of a generating $d$-tuple of the power agree after a conjugation, up to a permutation of small support in one entry. Pulling back the standard representation of $\Sym(n)$ along these two coordinates gives two representations with no common constituent, and yet the identity matrix is almost an intertwiner between them. This forces a small spectral gap. A counting argument then shows that two such coordinates must exist as soon as the number of factors is close to Hall's ceiling.

A natural place to look for a positive answer to \Cref{q:alt} is random generators, which give expanders in many other settings, for instance for $\PSL_2(\F_p)$ \cite{BG08}. This turns out to fail long before Hall's ceiling. For $d\geq3$, a uniformly random generating $d$-tuple of $\Alt(n)^{f(n)}$ has spectral gap tending to $0$ in probability as soon as $f(n)$ is superpolynomial in $n$ (\Cref{prop:random}). The reason is that $\Alt(n)$ has generating tuples with small spectral gap that are rare, but not too rare, and a power with many factors contains one of them in some coordinate. Even the powers of Bartholdi and Kassabov expand only with respect to their special generators and not with respect to random ones (\Cref{ex:bkrandom}).

For groups of Lie type in place of $\Alt(n)$, on the other hand, the analogue of \Cref{q:alt} has a positive answer, again through property (T). The powers $\PSL_3(\F_p)^{p^m}$ are quotients of a group with property (T) by Ershov and Jaikin-Zapirain \cite{EJ10}, and their number of factors is a fixed positive power of $|\PSL_3(\F_p)|$ (\Cref{ex:sl3}). The same holds in unbounded rank, for the groups $\SL_{3n}(\F_2)$, by an example of Kassabov (\Cref{ex:kassabov}).

\Cref{fig:landscape} summarizes how the conditions in this paper are ordered by \eqref{eq:lower}, \Cref{thm:main} and the bound of Lubotzky and Weiss, and where the examples are.

\begin{figure}[t]
\centering
\begin{tikzpicture}[font=\scriptsize]
    \def\E{(0.45,0.65) rectangle (5.6,2.75)}
    \def\F{(0.3,1.25) rectangle (8.45,3.35)}
    \useasboundingbox (0,0.35) rectangle (8.75,4.55);
    \draw[rounded corners=6pt] (0,0.35) rectangle (8.75,4.55);
    \draw[rounded corners=5pt] (0.15,0.5) rectangle (8.6,3.95);
    \fill[gray!7,rounded corners=4pt] \E;
    \draw[rounded corners=4pt] \F;
    \begin{scope}
        \clip[rounded corners=4pt] \F;
        \draw[rounded corners=4pt] \E;
    \end{scope}
    \begin{scope}[even odd rule]
        \clip (-1,-1) rectangle (11,5) [rounded corners=4pt] \F;
        \draw[dashed,rounded corners=4pt] \E;
    \end{scope}
    % regions
    \node[anchor=west] at (0.1,4.25) {$\Rep_k\leq\exp(O(k\log k))$};
    \node[anchor=west] at (0.25,3.65) {$\ab_k\leq\exp(O(k))$};
    \node[anchor=west] at (0.4,3.05) {$\Rep_k\leq\exp(O(k))$};
    \node[anchor=west] at (0.55,2.45) {$d$-expanding};
    % d-expanding, with small representations
    \foreach \x/\y/\t in {0.7/2.05/\Cref{ex:lower}, 3.2/2.05/\Cref{ex:sl3}, 0.7/1.65/\Cref{ex:kassabov}, 3.2/1.65/\cite{BK25}}{
        \fill (\x,\y) circle (1.5pt);
        \node[right] at (\x+0.05,\y) {\t};
    }
    % open
    \draw[fill=white] (0.7,0.95) circle (1.5pt);
    \node[right] at (0.75,0.95) {\Cref{q:alt}};
    % not d-expanding
    \fill (6.05,3.05) circle (1.5pt);
    \node[right] at (6.1,3.05) {\Cref{prop:notsufficient}};
    \fill (6.05,3.65) circle (1.5pt);
    \node[right] at (6.1,3.65) {\Cref{ex:altpower}};
    \fill (6.05,4.25) circle (1.5pt);
    \node[right] at (6.1,4.25) {\Cref{ex:window}};
\end{tikzpicture}
\caption{Sequences of $d$-generated groups, ordered by the conditions in this paper. The dashed boundary marks the open question whether some expanding sequence has representation growth faster than exponential in $k$.}
\label{fig:landscape}
\end{figure}

\subsection{Reader's guide}

We first prove \Cref{thm:main}, in \Cref{sec:count,sec:qp}. In \Cref{sec:count} we run the Clifford count, starting with monomial representations, and reduce the theorem to counting quasiprimitive groups. We also give the examples showing that the index $k$ and the leading term $(d-1)k\log k$ are optimal, among them the powers of alternating groups of \Cref{ex:altpower-intro}. The count of quasiprimitive groups, \Cref{thm:count}, takes up \Cref{sec:qp}, and it completes the proof. In \Cref{sec:altfree} we adapt the argument to groups without large alternating sections and prove \Cref{thm:altfree}. Here we also introduce free subdirect powers. Applied to iterated wreath products they show that \Cref{thm:altfree} is sharp, and applied to $\Alt(5)\wr\Alt(m)$ they give the groups of \Cref{ex:monomial-intro}. We then turn to expansion. In \Cref{sec:reduction} we recall the proof of the theorem of Lubotzky and Weiss and deduce \Cref{cor:quasilinear} and \Cref{cor:sharp}, with the groups of \Cref{ex:lz} for the lower bounds. We also show there that slow representation growth does not imply $d$-expansion, with the groups of \Cref{ex:segal-intro}. Finally, in \Cref{sec:powers} we study expanding powers of alternating groups and \Cref{q:alt}. We first show that random generators do not give expanding powers, then prove \Cref{thm:ceiling}, and end with the examples of Lie type.

\subsection{AI disclosure}\label{sub:disclosure}

This paper is the outcome of a long conversation with Anthropic's language models Claude Opus 5 and Claude Opus 5.5. We learned of Wigderson's question from Sabatini \cite{Sab23,Sab25}. When we asked the model about the upper bound \eqref{eq:upper} relating $\Rep_k$ and $\ab_k$, it suggested how one might do better. In the conversation that followed, it reduced the problem to quasiprimitive subgroups of $\PGL_m(\C)$ and to counting them. Its first argument was far more complicated than necessary, as were almost all of its arguments in this project. The versions presented here are simpler, partly through our own simplifications and partly through the model's, once asked to look for them. We also asked the model about examples that would clarify the relationship between $\ab_k$ and $\Rep_k$, and most of them are now in the paper. The text was also written in this back and forth, with the model drafting passages that we revised, and revising passages that we drafted.

The proofs and mathematical ideas in this paper were essentially all discovered by the model. The human contribution was to ask questions, digest the proofs, guide the model by deciding which paths to pursue, simplify the arguments or tell the model to do so, encourage the model when it believed an argument would not work (which often led to one after all), set up the narrative of the paper, and sharpen the exposition by putting explanations in a human-friendly format and order. We take full responsibility for the correctness of the paper.

\subsection{Acknowledgements}

We thank Luca Sabatini for the questions that started this work, and for helpful comments on an earlier version of this manuscript.

\section{Counting small representations}\label{sec:count}

In this section we reduce \Cref{thm:main} to a count of quasiprimitive groups. We first treat monomial representations, where the whole argument takes a few lines, then recall the Clifford theory we need, run the general count, and close with examples showing that the result is sharp.

\subsection{The monomial count}

We write $d(G)$ for the minimal number of generators of $G$, and $s_k(G)$ for the number of subgroups of $G$ of index at most $k$. For a subgroup $H\leq G$, we write $\widehat{H}$ for the group of linear characters of $H$, which has order $|H:H'|$.

\begin{proposition}\label{thm:monomial}
Let $G$ be a finite $d$-generated group and let $\Rep_k^{\mon}(G)$ denote the number of irreducible monomial representations of $G$ of degree at most $k$. Then
\[
    \Rep_k^{\mon}(G) \leq k^2 (k!)^{d-1}\cdot \ab_k(G) = \exp\left((d-1)k\log k+O(dk)\right)\cdot \ab_k(G) .
\]
\end{proposition}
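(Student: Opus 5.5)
The plan is an injection from irreducible monomial representations into pairs (subgroup, linear character), followed by counting the pairs. By definition, every irreducible monomial $\rho$ of degree $j\leq k$ has the form $\rho=\Ind_H^G\lambda$ for a subgroup $H$ with $|G:H|=j$ and a linear character $\lambda\in\widehat{H}$. So the map $(H,\lambda)\mapsto\Ind_H^G\lambda$ is a surjection from the set of such pairs with $|G:H|\leq k$ onto the set counted by $\Rep_k^{\mon}(G)$. We do not need it to be injective, because we are only after an upper bound.

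The pairs are counted in two steps.
\begin{enumerate}
\item For each fixed $H$ of index at most $k$, the number of choices for $\lambda$ is $|\widehat{H}|=|H:H'|\leq\ab_k(G)$, directly from the definition of $\ab_k$.
\item The number of subgroups $H$ is bounded using the subgroup count for $d$-generated groups \cite[Cor.~1.1.2]{LS03}, which gives at most $j(j!)^{d-1}$ subgroups of index $j$.
\end{enumerate}
Summing the second bound over $1\leq j\leq k$ gives
\[
s_k(G)\leq\sum_{j=1}^k j(j!)^{d-1}\leq k\cdot k(k!)^{d-1}=k^2(k!)^{d-1},
\]
since each of the $k$ terms is at most $k(k!)^{d-1}$. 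Multiplying by $\ab_k(G)$ yields the first inequality.

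For the asymptotic form we use Stirling's formula, $\log k!=k\log k-k+O(\log k)$, which gives
\[
(d-1)\log k!=(d-1)k\log k+O(dk).
\]
The extra factor $k^2$ contributes only $2\log k=O(k)$, which the $O(dk)$ term absorbs since $d\geq1$; for $d=1$ the bound is trivially of this form as well.

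None of the steps is a real obstacle. The only place that needs care is the subgroup count. If one prefers not to cite \cite{LS03}, the bound can be derived directly: subgroups of index $j$ correspond to transitive actions on $\{1,\dots,j\}$ in which the point $1$ has stabilizer $H$. Such an action is determined by the images of the $d$ generators in $\Sym(j)$, which gives $(j!)^d$ choices. Each subgroup arises from exactly $(j-1)!$ of these actions, namely from the different labellings of the cosets other than $H$ itself. Dividing, there are at most $(j!)^d/(j-1)!=j(j!)^{d-1}$ subgroups of index $j$.
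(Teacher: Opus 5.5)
Your proposal is correct and follows the paper's proof essentially step for step. You count pairs $(H,\lambda)$ with $|G:H|\le k$, bound the subgroups by the Lubotzky--Segal count $j(j!)^{d-1}$ summed over $j\le k$, and bound the characters by $|H:H'|\le\ab_k(G)$; your self-contained derivation of the subgroup count through transitive actions on $\{1,\dots,j\}$ is also correct, and it is the standard proof of the cited corollary.
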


\begin{proof}
Let $\rho\in\Irr(G)$ be monomial of degree at most $k$, so that $\rho=\Ind_H^G\lambda$ for some $H\leq G$ and some $\lambda\in\widehat{H}$. Then $|G:H|=\rho(1)\leq k$. The pair $(H,\lambda)$ determines $\rho$, so it suffices to count such pairs.
Since $G$ is $d$-generated, the number of subgroups of index exactly $j$ is at most $j(j!)^{d-1}$ by \cite[Cor.~1.1.2]{LS03}, and hence the number of subgroups of index at most $k$ is at most $k^2(k!)^{d-1}$.
Given such a subgroup $H$, the number of its linear characters is $|H:H'|\leq\ab_k(G)$, since $|G:H|\leq k$. The bound follows.
\end{proof}

The factor $k^2(k!)^{d-1}$ is of the right order, up to the constant in the exponent. In \Cref{ex:monomialsharp} we construct groups with $\Rep_k^{\mon}/\ab_k\geq\exp((1/5-o(1))(d-1)k\log k)$.

To pass from the monomial case to the general one, we must replace the linear characters of $H$ by \emph{primitive} representations of $H$, and count these. We first set up the necessary background, and then run the count in general, reducing \Cref{thm:main} to counting quasiprimitive subgroups of $\PGL_m(\C)$.

\subsection{Primitive groups and Clifford theory}\label{sub:primitive}

\subsubsection{Primitive groups}

A representation that is not induced from a proper subgroup has a very restricted shape, and this shape is a property of its image.
A finite subgroup $\Gamma\leq\GL(V)$ is \emph{imprimitive} if $V$ decomposes as $V=V_1\oplus\cdots\oplus V_r$ with $r\geq2$ in such a way that $\Gamma$ permutes the $V_i$. In other words, the matrix of each group element is block-monomial with respect to this decomposition.
If no such decomposition exists, $\Gamma$ is \emph{primitive} \cite[Sec.~15]{Sup76}.
A representation $\sigma$ of a finite group $H$ on $V$ is \emph{primitive} if the subgroup $\sigma(H)\leq\GL(V)$ is primitive. For irreducible $\sigma$, this says exactly that $\sigma$ is not induced from a representation of a proper subgroup of $H$.
A subgroup $Q\leq\PGL_m(\C)$ is \emph{quasiprimitive} if it is the image in $\PGL_m(\C)$ of a finite primitive subgroup $\Gamma\leq\GL_m(\C)$.
In this case $Q\cong\Gamma/Z(\Gamma)$, since by Schur's lemma $Z(\Gamma)$ is exactly the intersection of $\Gamma$ with the scalar matrices.

\subsubsection{Clifford theory}

We recall some basic facts from Clifford theory on the representations of a finite group and those of its subgroups, see \cite[Ch.~6]{Isa76} or \cite[Sec.~16]{Sup76}.

First, every $\rho\in\Irr(G)$ is induced from a primitive representation of a subgroup of $G$. Indeed, if $H\leq G$ is minimal such that $\rho=\Ind_H^G\sigma$ for some $\sigma\in\Irr(H)$, then $\sigma$ is primitive, and $\rho(1)=|G:H|\sigma(1)$.
Second, the restriction of a primitive representation to a normal subgroup $N$ is homogeneous. This means that the underlying space $V$ is a direct sum of pairwise isomorphic irreducible representations of $N$ \cite[Cor.~16.2.1]{Sup76}.
Third, if $\sigma$ is primitive, then every abelian normal subgroup of $\sigma(H)$ is cyclic and central \cite[Lem.~1]{Col08}, and hence consists of scalars by Schur's lemma.
This last fact is the key to the general count, since it ties $\rho$ tightly to the quasiprimitive quotient of $H$ in $\PGL_m(\C)$.

\begin{remark}\label{rem:names}
The terminology around quasiprimitivity is not uniform in the literature. We will use results from \cite{Col08,Wei12,LMM01}, where a finite subgroup of $\GL(V)$, for a vector space $V$ over a field $K$, is called quasiprimitive if its restriction to every normal subgroup is homogeneous. By the second fact above, every primitive $\Gamma\leq\GL_m(\C)$ has this property, so their theorems apply to the groups we meet here.
\end{remark}

\subsubsection{Lifting projective representations}

In the monomial case, the argument reduced to counting linear characters of subgroups of $G$. In the general case, we have to count primitive representations of subgroups of $G$, and it is easier to pass to their projective images along the projection $\pi\colon\GL_m(\C)\to\PGL_m(\C)$. We record two basic properties of this passage. The first is what ultimately brings the abelianization growth into the picture.

\begin{lemma}\label{lem:lifts}
Let $H$ be a finite group. If $\sigma_1,\sigma_2\colon H\to\GL_m(\C)$ are representations with $\pi\circ\sigma_1=\pi\circ\sigma_2$, then $\sigma_2=\nu\cdot\sigma_1$ for some $\nu\in\widehat{H}$. 
Consequently, any projective representation of $H$ has at most $|H:H'|$ lifts to a representation of $H$.
\end{lemma}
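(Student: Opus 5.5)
The plan is to compare the two representations pointwise and read off a scalar-valued function. Since $\pi\circ\sigma_1=\pi\circ\sigma_2$, for every $h\in H$ the matrices $\sigma_2(h)$ and $\sigma_1(h)$ have the same image in $\PGL_m(\C)$. Because the kernel of $\pi$ is the group of nonzero scalar matrices, there is a unique scalar $\nu(h)\in\C^\times$ with $\sigma_2(h)=\nu(h)\sigma_1(h)$. This defines a function $\nu\colon H\to\C^\times$. The claim $\sigma_2=\nu\cdot\sigma_1$ then holds by construction, and what remains is to show that $\nu$ is a homomorphism.

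For the homomorphism property, take $g,h\in H$ and expand $\sigma_2(gh)=\sigma_2(g)\sigma_2(h)$. This gives
\[
\nu(gh)\sigma_1(g)\sigma_1(h)=\nu(g)\nu(h)\sigma_1(g)\sigma_1(h).
\]
Here I use that scalars are central, so they can be moved freely past the matrices. The product $\sigma_1(g)\sigma_1(h)$ is invertible, so cancelling it gives $\nu(gh)=\nu(g)\nu(h)$. Thus $\nu\colon H\to\C^\times$ is a homomorphism into an abelian group. It therefore factors through $H/H'$, which means exactly that $\nu\in\widehat{H}$. No irreducibility or Schur's lemma is needed for this step.

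For the consequence, fix a projective representation $\bar\sigma\colon H\to\PGL_m(\C)$. If it has no lift, there is nothing to prove. Otherwise fix one lift $\sigma_1$. By the first part, every other lift equals $\nu\cdot\sigma_1$ for some $\nu\in\widehat{H}$. So the map $\nu\mapsto\nu\cdot\sigma_1$ from $\widehat{H}$ onto the set of lifts is surjective. Hence the number of lifts is at most $|\widehat{H}|=|H:H'|$.

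I do not expect a genuine obstacle here. The only point needing care is that the kernel of $\pi$ is exactly the scalar matrices, so that $\nu(h)$ is well defined.
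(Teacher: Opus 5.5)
Your proposal is correct and follows essentially the same route as the paper: define the scalar $\nu(h)$ pointwise from $\pi\circ\sigma_1=\pi\circ\sigma_2$ and use centrality of scalars to get $\nu(gh)=\nu(g)\nu(h)$. Your explicit deduction of the count, by fixing one lift and applying the first part, is the step the paper leaves implicit.
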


\begin{proof}
For each $h\in H$, the matrices $\sigma_1(h)$ and $\sigma_2(h)$ have the same image in $\PGL_m(\C)$, so $\nu(h)=\sigma_2(h)\sigma_1(h)^{-1}$ is a scalar. Scalars are central, and hence
\[
    \nu(hh') = \sigma_2(hh')\sigma_1(hh')^{-1} = \sigma_2(h)\sigma_2(h')\sigma_1(h')^{-1}\sigma_1(h)^{-1} = \nu(h)\nu(h')
\]
for all $h,h'\in H$.
So $\nu$ is a linear character of $H$.
\end{proof}

\begin{lemma}\label{lem:orbits}
Let $H$ be a finite group and let $Q\leq\PGL_m(\C)$. Let $\mathcal{P}$ be the set of pairs $(\varphi,\sigma)$ in which $\varphi\colon H\twoheadrightarrow Q$ is a surjection and $\sigma\colon H\to\GL_m(\C)$ is an irreducible representation with $\pi\circ\sigma=\varphi$. Let $c_x$ denote conjugation by $x$ in $Q$.
Then $Q$ acts freely on $\mathcal{P}$ by
\[
    x\cdot(\varphi,\sigma) = \left(c_x\circ\varphi,\ X\sigma X^{-1}\right),
    \quad \text{with } X\in\GL_m(\C) \text{ any lift of } x,
\]
and two pairs in the same orbit have isomorphic second coordinates.
\end{lemma}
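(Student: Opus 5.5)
We verify in turn that the formula is well defined, that it gives an action, that the action is free, and the final claim about orbits.

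\emph{Well defined.} Fix $(\varphi,\sigma)\in\mathcal{P}$ and $x\in Q$. Two lifts $X,X'$ of $x$ differ by a scalar, and conjugation by a scalar is trivial, so $X\sigma X^{-1}$ depends only on $x$. It is again an irreducible representation of $H$, being a conjugate of $\sigma$. Its image in $\PGL_m(\C)$ is
\[
    \pi\circ(X\sigma X^{-1}) = x\,(\pi\circ\sigma)\,x^{-1} = c_x\circ\varphi .
\]
Since $c_x$ is an automorphism of $Q$, the map $c_x\circ\varphi$ is again a surjection $H\twoheadrightarrow Q$. Hence the new pair lies in $\mathcal{P}$.

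\emph{Action axioms.} If $X$ lifts $x$ and $Y$ lifts $y$, then $XY$ lifts $xy$, and $c_{xy}=c_x\circ c_y$. It follows that $x\cdot(y\cdot(\varphi,\sigma))=(xy)\cdot(\varphi,\sigma)$. The identity acts trivially, taking the lift $X=1$.

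\emph{Freeness.} This is the key step. Suppose $x\cdot(\varphi,\sigma)=(\varphi,\sigma)$ and let $X$ be a lift of $x$. The second coordinate gives $X\sigma(h)X^{-1}=\sigma(h)$ for all $h\in H$, so $X$ commutes with $\sigma(H)$. As $\sigma$ is irreducible, Schur's lemma forces $X$ to be a scalar, and therefore $x=\pi(X)=1$.

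It is worth noting that the first coordinate alone would not give freeness. The condition $c_x\circ\varphi=\varphi$ together with surjectivity of $\varphi$ only says that $x\in Z(Q)$. Freeness really comes from the second coordinate together with irreducibility, and this is the point where the hypothesis that $\sigma$ is irreducible is used.

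\emph{Orbits.} If two pairs lie in the same orbit, their second coordinates are conjugate by an invertible matrix $X$. Hence they are isomorphic as representations.

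No step poses a real obstacle. The only care needed is in the well-definedness of the formula, where scalar ambiguity in the choice of lift is absorbed by conjugation. The likely downstream use is counting orbits: there are $|\mathcal{P}|/|Q|$ of them, which bounds the number of distinct $\sigma$ by (number of surjections)$\cdot|H:H'|/|Q|\le |Q|^{d(H)-1}|H:H'|$.
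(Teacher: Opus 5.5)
Your proof is correct and follows the paper's argument step by step. You get well-definedness because conjugation ignores scalars, the action axioms from multiplicativity of lifts, and freeness because a lift of a stabilizing $x$ centralizes $\sigma(H)$. The only cosmetic difference is that you conclude $X$ is scalar by Schur's lemma, whereas the paper uses Burnside's theorem that $\sigma(H)$ spans $\mathrm{M}_m(\C)$; for an irreducible complex representation these two facts are interchangeable.
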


\begin{proof}
First, the formula is well defined, since conjugation by $X$ does not depend on the choice of the lift. The new pair lies in $\mathcal{P}$, because a conjugate of an irreducible representation is irreducible and $\pi\circ(X\sigma X^{-1})=c_x\circ\pi\circ\sigma=c_x\circ\varphi$. It is an action because lifting is multiplicative up to scalars, which conjugation ignores. Now suppose that $x$ fixes a pair $(\varphi,\sigma)$. Then $X\sigma(h)X^{-1}=\sigma(h)$ for every $h\in H$, so $X$ centralizes $\sigma(H)$. Since $\sigma$ is irreducible, $\sigma(H)$ spans $\mathrm{M}_m(\C)$ by Burnside's theorem, so $X$ is a scalar and $x=1$. Thus the action is free. Finally, the representations $X\sigma X^{-1}$ and $\sigma$ are clearly isomorphic.
\end{proof}

\subsection{The general count}\label{sub:general-count}

We now turn to the general case, following the same plan as in the monomial one.
Let $\rho$ be an irreducible representation of $G$, and view it as induced from a primitive representation $\sigma$ of degree $m$ of a subgroup $H\leq G$.
In the monomial case, the pair $(H,\lambda)$ determined $\rho$, and counting it was a matter of counting subgroups and linear characters. Now $\sigma$ need not be linear, so we record it in two steps, first its image in $\PGL_m(\C)$ and then the lift back to $\GL_m(\C)$. We thus count the representations of $G$ of degree at most $k$ through the following four ingredients:
\begin{enumerate}
    \item the subgroup $H$,
    \item the projective image of $\sigma(H)$ in $\PGL_m(\C)$ up to conjugacy,
    \item the surjection of $H$ onto that image, and
    \item the lift of the projective representation to an ordinary one.
\end{enumerate}
We go through them in turn.

\subsubsection{The four ingredients}

The subgroup $H$ is handled exactly as in the monomial case. Its index $j=|G:H|$ satisfies $jm=\rho(1)\leq k$, so $j\leq k$, and there are at most $s_k(G)\leq k^2(k!)^{d-1}$ choices for $H$.

The projective image does not depend on $G$, so we may count all groups that could possibly occur. Set
\[
    \QP(m) = \#\left\{ \text{quasiprimitive } Q\leq\PGL_{m'}(\C) \ \mid \ m' \leq m \right\}\big/\text{conjugacy} .
\]
This is monotone in $m$ by construction, and it is finite.\footnote{Such a $Q$ has order at most $J(m)$ by \factref{J}, its preimage in $\SL_m(\C)$ is a finite group of order at most $mJ(m)$ acting irreducibly, there are finitely many groups of a given order, and each of them has finitely many irreducible representations of degree $m$ up to isomorphism.}
Every projective image we meet lies in degree $m=\sigma(1)\leq k$, so this ingredient costs at most $\QP(k)$.

The surjection is the expensive ingredient. A homomorphism from $H$ onto a group $Q$ is determined by the images of $d(H)$ generators, so there are at most $|Q|^{d(H)}$ of them. We therefore need to bound both the order of $Q$ and the number of generators of $H$, and the following two standard facts do this.

\begin{itemize}
\fact{J} \emph{Jordan's theorem.} A finite subgroup $\Gamma\leq\GL_m(\C)$ has an abelian normal subgroup of index at most $J(m)$, where $J(m)=(m+1)!$ for $m\geq71$ by Collins \cite[Th.~A]{Col07}. In all cases $\log J(m)=O(m\log m)$, and we take $J$ to be monotone in $m$. If $\Gamma$ is primitive, its abelian normal subgroups consist of scalars by the Clifford facts above, so $|\Gamma:Z(\Gamma)|\leq J(m)$.
A quasiprimitive subgroup $Q\leq\PGL_m(\C)$ therefore satisfies $|Q|\leq J(m)$. In fact $|Q|\leq(m+1)!$ for $m>12$ by \cite[Th.~A]{Col08}, with equality for the symmetric group of degree $m+1$.

\fact{S} \emph{Schreier's index formula.} If $|G:H|=j$ and $G$ is $d$-generated, then $d(H)\leq 1+j(d-1)$.
\end{itemize}

Neither bound is of any use on its own. At their worst values, $\log J(k)=O(k\log k)$ and $d(H)-1\leq k(d-1)$ together give $\exp(O(dk^2\log k))$. The point is that in the proof below they are never both large. This interplay between $j$ and $m$ is what brings the exponent down from $k^2\log k$ to $k\log k$.

The lift is handled by \Cref{lem:lifts}, since a given surjection has at most $|H:H'|\leq\ab_k(G)$ lifts.

\subsubsection{Combining the ingredients}

\begin{theorem}\label{prop:reduction}
Every finite $d$-generated group $G$ satisfies
\[
    \Rep_k(G) \leq \exp\left((d-1)k\log k+O(dk)\right)\cdot \QP(k)\cdot \ab_k(G)
    \quad \text{for all } k\geq1 .
\]
\end{theorem}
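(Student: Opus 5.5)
I will define an injective-enough map from $\Irr(G)_{\leq k}$ into tuples of the four ingredients and bound the number of tuples. Fix $\rho\in\Irr(G)$ with $\rho(1)\leq k$. By the first Clifford fact, I choose a subgroup $H$ minimal with $\rho=\Ind_H^G\sigma$, so that $\sigma$ is primitive. Put $j=|G:H|$ and $m=\sigma(1)$, so that $jm\leq k$. Let $Q$ be the image of $\sigma(H)$ in $\PGL_m(\C)$. This $Q$ is quasiprimitive by definition, and I conjugate so that $Q$ is a fixed representative of its conjugacy class. The surjection is then $\varphi=\pi\circ\sigma\colon H\twoheadrightarrow Q$. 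The triple $(H,Q,\varphi,\sigma)$ determines $\rho$. I count, for fixed $(j,m)$, the number of isomorphism classes of $\sigma$ that can arise, and then sum over the at most $k^2$ pairs $(j,m)$. This sum costs only a factor $e^{O(\log k)}$.

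Fix $H$, of which there are at most $s_k(G)\leq k^2(k!)^{d-1}$, and fix $Q$, of which there are at most $\QP(k)$. The pairs $(\varphi,\sigma)$ form the set $\mathcal{P}$ of \Cref{lem:orbits}. By \Cref{lem:lifts}, each $\varphi$ has at most $|H:H'|\leq\ab_k(G)$ lifts $\sigma$. The number of surjections $\varphi$ is at most $|Q|^{d(H)}$. By \Cref{lem:orbits}, $Q$ acts freely on $\mathcal{P}$, and orbits give isomorphic $\sigma$. Hence the number of isomorphism classes of $\sigma$ is at most
\[
|\mathcal{P}|/|Q|\leq |Q|^{d(H)-1}\ab_k(G).
\]
Here I use the free action to save one factor of $|Q|$. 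By \factref{S} we have $d(H)-1\leq j(d-1)$, and by \factref{J} we have $\log|Q|\leq\log J(m)$. I will use the sharp form $\log J(m)\leq m\log m+O(m)$, which holds by Collins for all $m$, with the small cases absorbed into the $O(m)$ term. This gives, for fixed $H$ and $Q$, the count $\exp\bigl((d-1)(jm\log m+O(jm))\bigr)\ab_k(G)$.

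Rather than bounding the number of $H$ by $s_k$ with index up to $k$, I count only subgroups of index exactly $j$. There are at most $j(j!)^{d-1}=\exp((d-1)j\log j+O(dj))$ of them. The total exponent for a pair $(j,m)$ is therefore
\[
(d-1)\bigl(j\log j+jm\log m\bigr)+O(d\,jm).
\]
The key inequality is $j\log j+jm\log m\leq jm\log j+jm\log m=jm\log(jm)\leq k\log k$, which uses $m\geq1$. Summing over pairs $(j,m)$ and multiplying by $\QP(k)$ and $\ab_k(G)$ yields the claim.

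The main obstacle is keeping the order bound sharp enough: I need $\log|Q|\leq m\log m+O(m)$ with leading constant exactly $1$, rather than $O(m\log m)$, to get the leading term $(d-1)k\log k$. I will rely on Collins' bound $|Q|\leq(m+1)!$ for $m>12$, with finitely many exceptions contributing $O(1)$. The other essential point is to use the free orbit count so that the exponent is $d(H)-1$ rather than $d(H)$; without it, a stray $\exp(k\log k)$ term would appear when $j=1$.
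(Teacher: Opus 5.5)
Your proposal is correct and follows the paper's proof essentially step by step: the same four ingredients, the free $Q$-action of \Cref{lem:orbits} to bring $|Q|^{d(H)}$ down to $|Q|^{d(H)-1}$, Schreier together with Collins' bound, and the count $j(j!)^{d-1}$ of subgroups of index exactly $j$. Your closing estimate $j\log j+jm\log m\leq jm\log(jm)\leq k\log k$ is a slightly quicker variant of the paper's $j\log j+k\log(k/j)\leq k\log k$. Your argument also covers $d=1$ directly, so the separate cyclic case in the paper is only a convenience.
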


\begin{proof}
If $d=1$, then $G$ is cyclic, so $\Rep_k(G)=|G:G'|=\ab_k(G)$ and the bound holds trivially. Assume from now on that $d\geq2$.

Fix once and for all a representative of each conjugacy class of quasiprimitive subgroups of $\PGL_{m}(\C)$ for $m\leq k$.
To each $\rho\in\Irr(G)$ of degree at most $k$ we attach the four ingredients above, a quadruple $(H,Q,\varphi,\sigma)$ with $\sigma$ a representation of $H$ lying over $\varphi$, and we count the possibilities for each in turn.

By the Clifford facts above, there are $H\leq G$ and a primitive $\sigma\in\Irr(H)$ with $\rho=\Ind_H^G\sigma$. Set $j=|G:H|$ and $m=\sigma(1)$, so that $jm=\rho(1)\leq k$. We hold $j$ and $m$ fixed while counting, and sum over the pairs $(j,m)$ at the end. Since $\sigma$ is irreducible, the scalars in $\sigma(H)$ form exactly its centre $Z(\sigma(H))$ by Schur's lemma.
Thus the image $\pi(\sigma(H))=\sigma(H)/Z(\sigma(H))$ is a quasiprimitive subgroup of $\PGL_m(\C)$.
Replacing $\sigma$ by a conjugate representation, which changes neither its equivalence class nor that of $\rho$, we may assume that this image is the chosen representative $Q$ of its conjugacy class. Let $\varphi=\pi\circ\sigma\colon H\twoheadrightarrow Q$. This attaches to $\rho$ a pair $(H,Q)$ and a pair $(\varphi,\sigma)$ in the set $\mathcal{P}_{H,Q}$ of \Cref{lem:orbits}.

The representation $\sigma$ determines $\rho=\Ind_H^G\sigma$. By \Cref{lem:orbits}, all pairs in the $Q$-orbit of $(\varphi,\sigma)$ have isomorphic second coordinates, so they all determine the same $\rho$. Hence $\rho$ can be recovered from $(H,Q)$ together with this orbit. Since the action is free, every orbit has $|Q|$ elements, and so
\[
    \Rep_k(G) \leq \sum_{(H,Q)}\frac{|\mathcal{P}_{H,Q}|}{|Q|} ,
\]
where the sum runs over the pairs that arise. Here $|\mathcal{P}_{H,Q}|$ is at most the number of surjections $H\twoheadrightarrow Q$ times the largest number of lifts of one of them.

Now fix a pair $(j,m)$ with $jm\leq k$, and count the pairs $(H,Q)$ with $|G:H|=j$ and $Q\leq\PGL_m(\C)$.
Write $a_j$ for the number of subgroups of $G$ of index exactly $j$.
There are at most $a_j\leq j(j!)^{d-1}$ choices for $H$ by \cite[Cor.~1.1.2]{LS03}, at most $\QP(m)$ choices for $Q$, and at most $|H:H'|\leq\ab_j(G)$ lifts of a given $\varphi$ by \Cref{lem:lifts}. As for the surjection, the count above gives at most $|Q|^{d(H)}$ homomorphisms, and the division by $|Q|$ leaves $|Q|^{d(H)-1}$.
Here $|Q|\leq J(m)$ by \factref{J}, since $\sigma(H)$ is primitive, and so \factref{S} gives $|Q|^{d(H)-1}\leq J(m)^{j(d-1)}$. Multiplying the four counts and summing over the pairs, we obtain
\[
    \Rep_k(G)  \leq  \sum_{jm\leq k} a_j\cdot \QP(m)\cdot J(m)^{j(d-1)}\cdot \ab_j(G) ,
\]
where the sum runs over the at most $k^2$ pairs of positive integers with $jm\leq k$.
By \factref{J} and Stirling's formula we have $\log J(m)\leq m\log m+O(m)$, while $\log a_j\leq\log j+(d-1)j\log j$. Using $\log j\leq k$ and $j(d-1)\cdot O(m)=O((d-1)k)$, we see that the subgroup and the surjection together contribute at most
\[
    (d-1)\left(j\log j+jm\log m\right)+O(dk)
\]
to the logarithm.
Since $jm\leq k$, the expression in brackets is bounded by
\[
    j\log j+jm\log m \leq j\log j+k\log(k/j) \leq k\log k .
\]
Finally, replace $\QP(m)$ by $\QP(k)$ and $\ab_j(G)$ by $\ab_k(G)$, and absorb the $k^2$ pairs into the error term. The logarithm of the prefactor is then $(d-1)k\log k+O(dk)$, with an absolute implied constant.
\end{proof}

The bound $k\log k$ on the expression in brackets is attained only at the two ends $j=1$ and $j=k$, which are the primitive and the monomial case. In between it is smaller, and near $j=k/\log k$ it drops to $O(k\log\log k)$.

\begin{remark}\label{rem:sabatini}
The count above is modelled on Sabatini's proof of \eqref{eq:upper} \cite[Prop.~2]{Sab23}.
Let us explain where the two arguments differ.

Sabatini starts with an irreducible representation $\rho$ of $G$ of degree at most $k$, and applies Jordan's theorem directly to its image $\rho(G)$. This gives an abelian normal subgroup of $\rho(G)$ of index at most $J(k)$. Its preimage $A_\rho\leq G$ has index at most $J(k)$ and abelian image, so
\[
    |\rho(G)| \leq J(k)\cdot |A_\rho:A_\rho' | \leq J(k)\cdot\ab_{J(k)}(G) =: N .
\]
The representation is then recorded in three pieces, as follows.
\begin{itemize}
    \item the abstract group $\rho(G)$, one of at most $N^{3d\log N}$ $d$-generated groups of order at most $N$ by \cite{Lub01},
    \item one of its at most $N$ irreducible representations, which fixes its embedding into $\GL_k(\C)$ up to conjugacy, and
    \item the homomorphism $G\to\rho(G)$, at a cost of $N^d$.
\end{itemize}
Multiplying gives $\Rep_k(G)\leq N^{4d\log N}$, and \eqref{eq:upper} follows.

So both arguments record a representation by factoring it through a fixed group, a surjection onto it and some abelian data. They differ in four respects.

\emph{Where the structure theory is applied.} Jordan's theorem applied to $\rho(G)$ produces a genuinely large abelian normal subgroup, and pulling it back forces the index $J(k)\approx(k+1)!$. We apply the structure theory not to $\rho$ but to a primitive $\sigma$ inducing it, and the abelian normal subgroups of a primitive group are scalar. There is then nothing large to pull back. The only subgroup involved is $H$, which has index $j\leq k$ because $jm=\rho(1)\leq k$. This is the improvement from $(k+1)!$ to $k$.

\emph{How the abelian data enters.} Sabatini includes the abelian part $\ab_{J(k)}(G)$ in the order $N$ of the group to be identified, so $\ab$ appears both in the base and in the exponent. This is why \eqref{eq:upper} has the shape $\ab^{O(d\log\ab)}$. In our argument this does not happen. The group to be identified is $Q\leq\PGL_m(\C)$, of order at most $J(m)$, while the abelian part comes from a linear character of $H$ and costs a factor $|H:H'|\leq\ab_k(G)$ by \Cref{lem:lifts}. The dependence on $\ab$ thus becomes linear.

\emph{What has to be identified.} Naming an abstract group of order $N$ among all of them costs $N^{O(d\log N)}$. This is the dominant term above, and since $N\geq J(k)$, it is already $\exp(\Omega(dk^2\log^2k))$. In our argument, we only have to locate a fixed representative of a conjugacy class of quasiprimitive subgroups of $\PGL_m(\C)$. By \Cref{thm:count} there are at most $\exp(O(\log^4m))$ of these, and $\exp(O(k))$ would already suffice.

\emph{The homomorphism.} In Sabatini's argument it costs $N^d$ with $N\geq J(k)$, while in ours it is restricted to $H$ and costs $|Q|^{d(H)-1}$, which the trade-off between $j$ and $m$ keeps within the prefactor.
\end{remark}

\subsection{Sharpness of the bound}\label{sub:two-sharp-examples}

\subsubsection{Wreath products}

The first example shows that $\Rep_k$ cannot be bounded by abelian data from subgroups of index smaller than $k$.

\begin{example}\label{ex:sharpindex}
Let $k$ and $p>k$ be primes, and let $G=\F_p[C_k]\rtimes C_k$ be the wreath product $\F_p\wr C_k$. The $C_k$-module $\F_p[C_k]$ is cyclic, so $G$ is $2$-generated, and $|G|=p^kk$. The index of a subgroup divides $p^kk$, and since $p>k$, the only such divisor smaller than $k$ is $1$. So every proper subgroup of $G$ has index at least $k$, and consequently
\[
    \ab_{k-1}(G) = |G:G'| = pk ,
\]
since the commutator subgroup $G'$ is the augmentation ideal of $\F_p[C_k]$.
On the other hand, $C_k$ fixes only $p$ of the characters of the abelian group $\F_p[C_k]$, namely the scalar multiples of the augmentation map $\F_p[C_k]\to\F_p$.
Since $k$ is prime, the orbits of $C_k$ on the remaining $p^k-p$ characters are all regular. So each of the $(p^k-p)/k$ orbits induces to an irreducible representation of $G$ of degree $k$, and hence $\Rep_k(G)\geq(p^k-p)/k$. Therefore
\[
    \frac{\Rep_k(G)}{\ab_{k-1}(G)} \geq \frac{p^k-p}{pk^2} \longrightarrow \infty
    \quad \text{as } p\to\infty \text{ with } k \text{ fixed} .
\]
So no bound of the form $\Rep_k(G)\leq F(k,d)\cdot\ab_{k-1}(G)$ can hold for all $d$-generated finite groups, whatever the function $F$.
Note that $\ab_k(G)\geq|\F_p[C_k]|=p^k$, so \Cref{thm:main} is consistent with this example.
\end{example}

\subsubsection{Powers of simple groups}\label{subsub:powers}

The second example shows that the leading term $(d-1)k\log k$ of the prefactor is sharp.\footnote{We will see in the next section that $\QP(k)=\exp(O(\log^4k))$, so the factor $\QP(k)$ in \Cref{prop:reduction} does not affect the sharpness of the prefactor.}
It is the example predicted by the proof of \Cref{prop:reduction} in the extreme case $j=1$. There the cost of the surjection is $|Q|^{d-1}$, where $Q$ is a quasiprimitive subgroup of $\PGL_k(\C)$, and by Collins's theorem $|Q|\leq(k+1)!$ with equality for $\Sym(k+1)$.
A group attaining the bound should thus have many small primitive representations with projective image close to $\Sym(k+1)$. The construction below takes a large direct power of $\Alt(k+1)$, whose small representations come from the projections onto the factors.

The construction works for any nonabelian simple group $S$, and we record it in this generality, since we use it again later. Write $\ell(S)$ for the smallest degree of a nontrivial irreducible representation of $S$, and $\iota(S)$ for the smallest index of a proper subgroup of $S$. Note that $\ell(S)<\iota(S)$, since the permutation representation of $S$ on the cosets of a subgroup of index $\iota(S)$ has a nontrivial constituent of degree less than $\iota(S)$. For $d\geq2$, write $t_d(S)$ for the largest $t$ for which $S^t$ is $d$-generated, and call it \emph{Hall's ceiling}. It can be computed using the following theorem of Hall.

\begin{theorem}[\cite{Hal36}]\label{thm:hall}
Let $S$ be a nonabelian finite simple group and let $d,t\geq1$. A $d$-tuple of elements of $S^t$ generates $S^t$ if and only if its $t$ coordinate projections are generating $d$-tuples of $S$ lying in pairwise distinct $\Aut(S)$-orbits.
\end{theorem}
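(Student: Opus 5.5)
The plan is to prove the two directions separately, with the ``if'' direction by induction on $t$ using Goursat's lemma for subdirect products. The ``only if'' direction is immediate. If the tuple $(x_1,\dots,x_d)$ generates $S^t$, then each coordinate projection generates $S$, since projections are surjective. Now suppose two coordinates $i\neq j$ lie in the same $\Aut(S)$-orbit, say $x^{(j)}_r=\alpha(x^{(i)}_r)$ for all $r$ and some $\alpha\in\Aut(S)$. Then the generated subgroup lies in the proper subgroup $\{g\in S^t : g_j=\alpha(g_i)\}$, which is a contradiction.

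For the converse, I let $G\leq S^t$ be the subgroup generated by the tuple. The aim is to show that a subgroup of $S^t$ projecting onto every factor is a direct product of diagonal subgroups, that is, of subgroups of the form $\{(g,\alpha_2(g),\dots)\}$ over blocks of coordinates. I would show this by induction on $t$. Write $S^t=S^{t-1}\times S$. By induction and the hypothesis restricted to the first $t-1$ coordinates (those projections are still pairwise inequivalent), the projection of $G$ to $S^{t-1}$ is all of $S^{t-1}$. Also, $G$ projects onto the last factor.

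By Goursat's lemma, $G$ corresponds to normal subgroups $N_1\trianglelefteq S^{t-1}$ and $N_2\trianglelefteq S$ together with an isomorphism $S^{t-1}/N_1\cong S/N_2$. Since $S$ is simple, $N_2$ is either $S$ or $1$. If $N_2=S$, then $G=S^{t-1}\times S$ and we are done. If $N_2=1$, then $S^{t-1}/N_1\cong S$. The normal subgroups of $S^{t-1}$ are products of factors, because $S$ is nonabelian simple, so $N_1$ is the kernel of the projection onto a single coordinate $i$. Then $G$ is the graph of a map $g\mapsto\alpha(g_i)$ with $\alpha\in\Aut(S)$. Hence the last coordinate of each generator equals $\alpha$ applied to its $i$-th coordinate, which contradicts the assumption that the orbits are distinct.

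The main technical point is the description of the normal subgroups of $S^{t-1}$. For this I would use the standard fact that any normal subgroup $N$ is the product of the factors it meets nontrivially. The argument is that if $n\in N$ has a nontrivial coordinate $i$, then commutating $n$ with elements supported on coordinate $i$ produces a nontrivial normal subgroup of that factor, which must be the whole factor, using nonabelian simplicity. Everything else is formal.
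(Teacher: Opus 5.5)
Your proof is correct, but it takes a more direct route than the paper does.

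\textbf{The paper's route.} In \Cref{rem:hall}, the paper passes to the free group $F$ on $d$ generators. It replaces each coordinate tuple $Y^{(i)}$ by its kernel $K_i=\ker\varphi_{Y^{(i)}}\trianglelefteq F$, and then quotes two facts from Hall:
\begin{enumerate}
\item two generating tuples lie in the same $\Aut(S)$-orbit if and only if their kernels coincide;
\item pairwise distinct $K_1,\dots,K_t$ with $F/K_i\cong S$ make the map $F\to\prod_iF/K_i$ surjective.
\end{enumerate}
The ``if'' direction is then immediate. The ``only if'' direction is an order argument: two coinciding kernels would embed $S^t$ in $S^{t-1}$.

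\textbf{Your route.} You work inside $S^t$ throughout. Your Goursat induction, combined with the description of normal subgroups of $S^{t-1}$ as products of coordinate subgroups, is in effect a proof of Hall's second fact in the language of tuples. It also makes his first fact unnecessary, because the diagonal you find is visibly the graph of an automorphism relating two coordinate tuples. Likewise, your ``only if'' argument exhibits the proper subgroup $\{g : g_j=\alpha(g_i)\}$ containing the generators, rather than counting orders.

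\textbf{Comparison.} Your version is self-contained and elementary. The paper's kernel formulation costs two citations, but it sets up the correspondence between $\Aut(S)$-orbits of generating $d$-tuples and normal subgroups $K\trianglelefteq F$ with $F/K\cong S$. The paper reuses exactly this correspondence later, for free subdirect powers and the identification $\Pi_d(S)=S^{\mathcal K}$ in \Cref{subsub:freepowers}.
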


\begin{remark} \label{rem:hall}
The above is a reformulation of two facts proved by Hall. Let $F$ be the free group on $x_1,\dots,x_d$. A $d$-tuple $Y=(y_1,\dots,y_d)$ of elements of a group defines the homomorphism $\varphi_Y$ from $F$ sending $x_l$ to $y_l$. The first fact is that two generating $d$-tuples $Y,Y'$ of $S$ lie in the same $\Aut(S)$-orbit if and only if $\ker\varphi_Y=\ker\varphi_{Y'}$ \cite[Th.~1.4]{Hal36}. So these orbits are in bijection with the normal subgroups $K\trianglelefteq F$ with $F/K\cong S$. The second fact is that for pairwise distinct such subgroups $K_1,\dots,K_t$, the natural map $F\to\prod_iF/K_i\cong S^t$ is surjective \cite[Sec.~1.6]{Hal36}. % Indeed, its image is a subdirect product of copies of $S$, hence a direct product of diagonal subgroups, and a diagonal linking two coordinates $i\neq j$ would force $K_i=K_j$.

To deduce \Cref{thm:hall}, let $\Sigma$ be a $d$-tuple of $S^t$, and let $Y^{(1)},\dots,Y^{(t)}\in S^d$ be its coordinate tuples. Composing $\varphi_\Sigma$ with the $i$th coordinate projection gives $\varphi_{Y^{(i)}}$, so $\ker\varphi_\Sigma=\bigcap_i\ker\varphi_{Y^{(i)}}$, and $\Sigma$ generates $S^t$ exactly when $\varphi_\Sigma$ is surjective. If the $Y^{(i)}$ generate $S$ and lie in pairwise distinct orbits, their kernels are pairwise distinct by the first fact, and $\varphi_\Sigma$ is surjective by the second. Conversely, if $\Sigma$ generates $S^t$, then every $Y^{(i)}$ generates $S$. If $Y^{(i)}$ and $Y^{(j)}$ with $i\neq j$ lay in the same orbit, then $\ker\varphi_{Y^{(i)}}=\ker\varphi_{Y^{(j)}}$ by the first fact, so $\ker\varphi_\Sigma$ is already the intersection of the other $t-1$ kernels. Then $S^t=F/\ker\varphi_\Sigma$ would embed in $S^{t-1}$, which is impossible.

Finally, an automorphism fixing a generating tuple is trivial, so $\Aut(S)$ acts freely on the generating $d$-tuples \cite[Sec.~1.3]{Hal36}. Writing $\phi_d(S)$ for the number of these tuples, there are $\phi_d(S)/\lvert\Aut(S)\rvert$ orbits, and \Cref{thm:hall} gives
\[
    t_d(S) = \frac{\phi_d(S)}{\lvert\Aut(S)\rvert} .
\]
\end{remark}

\begin{proposition}\label{prop:powers}
Let $S$ be a nonabelian finite simple group.
\begin{enumerate}
    \item If $k<\min\left\{\iota(S),\ \ell(S)^2\right\}$, then for every $t\geq1$ we have
    \[
        \ab_k(S^t) = 1
        \quad \text{and} \quad
        \Rep_k(S^t) = 1+t\left(\Rep_k(S)-1\right) .
    \]
    \item For every $d\geq2$ we have
    \[
        t_d(S) = (1-o(1))\frac{|S|^{d-1}}{\lvert\Out(S)\rvert} ,
    \]
    where $o(1)\to0$ as $|S|\to\infty$.
    \item If $t\leq\exp(C\ell(S))$ for some constant $C$, then $\Rep_k(S^t)\leq\exp(O_C(k))$ for all $k\geq1$.
\end{enumerate}
\end{proposition}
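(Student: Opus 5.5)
Part (1) rests on two facts about $S^t$. The first is that every subgroup $H\leq S^t$ of index at most $k<\iota(S)$ is all of $S^t$. Indeed, let $p_i$ be the $i$th coordinate projection. Then $|S:p_i(H)|\leq|S^t:H|<\iota(S)$, so $p_i(H)=S$ for every $i$, and $H$ is a subdirect product of copies of $S$. Such a subgroup is a product of diagonal subgroups, one for each block of a partition of the coordinates. A diagonal that links two or more coordinates has index at least $|S|>\iota(S)$. So every block is a singleton and $H=S^t$. Since $S^t$ is perfect, this gives $\ab_k(S^t)=1$.

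The second fact concerns the representations. Irreducible representations of $S^t$ are outer tensor products $\rho_1\otimes\cdots\otimes\rho_t$ with $\rho_i\in\Irr(S)$, and the degree is the product of the degrees. If two or more factors are nontrivial, the degree is at least $\ell(S)^2>k$. Those of degree at most $k$ are therefore the trivial one and those with exactly one nontrivial factor, of degree at most $k$. This gives $1+t(\Rep_k(S)-1)$.

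For (2), I would start from \Cref{rem:hall}, which gives $t_d(S)=\phi_d(S)/|\Aut(S)|$, and use $|\Aut(S)|=|S||\Out(S)|$. It then remains to show $\phi_d(S)=(1-o(1))|S|^d$. This is the step that relies on outside input: the probability that $d$ random elements generate $S$ tends to $1$. For $d=2$ this is the theorem of Dixon, Kantor--Lubotzky and Liebeck--Shalev, and larger $d$ follows since adding elements only helps. I would cite it rather than prove it. A sketch would bound the non-generation probability by $\sum_M|S:M|^{-d}$ over maximal subgroups $M$, which tends to $0$ by the known bounds on the number of maximal subgroups of each index.

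For (3), split according to $k$. If $k<\ell(S)$, only the trivial representation has degree at most $k$, so $\Rep_k(S^t)=1$. If $k\geq\ell(S)$, count the tuples $(\rho_1,\dots,\rho_t)$ with $\prod\rho_i(1)\leq k$. The number $r$ of nontrivial factors satisfies $\ell(S)^r\leq k$. So there are at most $\binom{t}{r}\leq t^r\leq\exp(C\ell(S)r)$ choices of support, and at most $\Rep_k(S)^r$ choices of factors. Summing over $r$ gives
\[
\Rep_k(S^t)\leq\sum_{r\leq\log k/\log\ell(S)}\bigl(e^{C\ell(S)}\Rep_k(S)\bigr)^r .
\]
We have $\ell(S)\,r\leq\ell(S)\log k/\log\ell(S)$. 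This is $O(k)$, since $k\geq\ell(S)$ and $x/\log x$ is increasing: $\ell(S)/\log\ell(S)\leq k/\log k$.

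The main obstacle is bounding $\Rep_k(S)$. I need $\Rep_k(S)\leq\exp(O(k))$, or at least $\log\Rep_k(S)\cdot\log k/\log\ell(S)=O(k)$. I would try a representation-growth bound for simple groups such as the Liebeck--Shalev estimate $\Rep_k(S)\leq k^{O(1)}$. Alternatively, I would use the crude bound $\Rep_k(S)\leq|S|$ together with a direct check in the regime $k\geq\ell(S)$. Combining, the logarithm is $O(r\,\ell(S))+O(r\log k)=O_C(k)$.
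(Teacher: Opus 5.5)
Your proposal is correct and essentially follows the paper's proof. Parts (2) and (3) match it: you reduce $t_d(S)$ to \Cref{rem:hall} together with the generation results of Dixon and Liebeck--Shalev, and you count supports, bounding $\log t$ times the number of nontrivial coordinates by $Ck$ via the monotonicity of $x/\log x$, plus $\Rep_k(S)\leq k^{O(1)}$, which the paper derives from $\zeta_S(2)=O(1)$. The only difference is in part (1), where you handle the subgroups through the structure of subdirect products rather than the paper's kernel of the coset action, and both arguments work.

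Do commit to the Liebeck--Shalev input, though. Your fallback $\Rep_k(S)\leq|S|$ would fail, for instance for $S=\Alt(n)$ with $k=n-1$, where $\log|S|\sim k\log k$.
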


\begin{proof}
(1) Put $G=S^t$. Let $H<G$ be a proper subgroup of index $j$, and let $K$ be the kernel of the action of $G$ on the cosets of $H$. Since $S$ is nonabelian simple, the normal subgroups of $G$ are products of coordinate subgroups, so $K$ is one of these, and it is proper. Hence some coordinate copy of $S$ survives in $G/K$, and therefore embeds in $\Sym(G/H)\cong\Sym(j)$. So $j\geq \iota(S)>k$. Thus $G$ itself is the only subgroup of index at most $k$, and $\ab_k(G)=1$ since $G$ is perfect.

An irreducible representation of $G$ is an external tensor product of irreducible representations of the factors, so its degree is the product of theirs. If two of them are nontrivial, the degree is at least $\ell(S)^2>k$. So an irreducible representation of $G$ of degree at most $k$ is either trivial, or nontrivial in exactly one coordinate, where it is one of the $\Rep_k(S)-1$ nontrivial irreducible representations of $S$ of degree at most $k$.

(2) A random pair of elements generates $S$ with probability tending to $1$ as $|S|\to\infty$, by Dixon \cite{Dix69} for the alternating groups and by \cite{LS95} in general. A $d$-tuple generates as soon as its first two entries do, and so
\[
    (1-o(1))|S|^d \leq \phi_2(S)\cdot|S|^{d-2} \leq \phi_d(S) \leq |S|^d .
\]
Since $t_d(S)=\phi_d(S)/\lvert\Aut(S)\rvert$ and $\lvert\Aut(S)\rvert=|S|\cdot\lvert\Out(S)\rvert$, the claim follows.

(3) We may assume $k\geq\ell(S)$, since otherwise $\Rep_k(S^t)=1$. As in (1), an irreducible representation of $S^t$ of degree at most $k$ is nontrivial in at most $b=\lfloor\log k/\log\ell(S)\rfloor$ coordinates. Liebeck and Shalev \cite[Th.~1.1(i)]{LS05} showed that $\zeta_S(s)=\sum_{\chi\in\Irr(S)}\chi(1)^{-s}$ tends to $1$ as $|S|\to\infty$, for every $s>1$. So $\zeta_S(2)\leq C'$ for an absolute constant $C'$, and hence $\Rep_k(S)\leq \zeta_S(2) k^2\leq C'k^2$. Choosing the coordinates and the representations on them gives
\[
    \Rep_k(S^t) \leq \sum_{j=0}^b\binom tj\left(\Rep_k(S)-1\right)^j \leq \sum_{j=0}^b\left(C'k^2t\right)^j \leq (b+1)\left(C'k^2t\right)^b .
\]
The logarithm of the right-hand side is $O(\log^2k)+b\log t$, and
\[
    b\log t \leq C\ell(S)\frac{\log k}{\log\ell(S)} \leq Ck ,
\]
since $x/\log x$ is increasing for $x\geq e$ and $k\geq\ell(S)$.
\end{proof}

A version of the following example with the exponent $d-2$ in place of $d-1$ is \cite[Example~18]{Sab23}, which goes back to Kassabov and Nikolov \cite{KN06}.

\begin{example}\label{ex:altpower}
Let $d\geq2$. For $n\geq7$, put $S=\Alt(n)$, $G_n=S^{t_d(S)}$ and $k=n-1$. We claim that
\[
    \ab_k(G_n) = 1,
    \qquad
    \Rep_k(G_n) = 1+t_d(S) = (1-o(1))\frac{|S|^{d-1}}{2} .
\]
Here $\iota(S)=n$, and for $n\geq7$ the standard module is the only nontrivial irreducible $S$-module of degree at most $n-1$. So $\ell(S)=k$ and $\Rep_k(S)=2$, and since $k<\min\{n,k^2\}$, the first two claims follow from \Cref{prop:powers}(1). For the third, $\lvert\Out(S)\rvert=2$, so \Cref{prop:powers}(2) gives $t_d(S)=(1-o(1))|S|^{d-1}/2$. Since $|S|=(k+1)!/2$, this is $(1-o(1))((k+1)!)^{d-1}/2^d$.
\end{example}

To record how far $\Rep_k$ can outrun $\ab_k$ within a class $\mathcal{C}$ of finite groups, we use the \emph{gap function}
\[
    \Delta_{\mathcal{C}}(k) = \sup_{G\in\mathcal{C}}\frac{\Rep_k(G)}{\ab_k(G)} .
\]
Together with \Cref{ex:altpower}, \Cref{thm:main} shows that the gap function of the class $\mathcal{C}_d$ of all finite $d$-generated groups is
\[
    \Delta_{\mathcal{C}_d}(k) = \exp\left((d-1)k\log k+O(dk)\right) .
\]

\section{The number of quasiprimitive subgroups}\label{sec:qp}

In this section we bound the number of conjugacy classes of quasiprimitive subgroups,
\[
    \QP(m) = \#\left\{ \text{quasiprimitive } Q\leq\PGL_{m'}(\C) \mid m' \leq m \right\}\big/\text{conjugacy}.
\]
In \Cref{prop:reduction}, this number is multiplied by the prefactor $\exp((d-1)k\log k+O(dk))$. For the purposes of \Cref{thm:main} it therefore suffices to prove that $\log \QP(m)=O(m)$, so that $\QP(k)$ is absorbed by the error term.
We prove the stronger bound $\log \QP(m)=O(\log^4m)$, which is \Cref{thm:count}. We have found no bound on $\QP(m)$ in the literature, beyond the folklore fact that it is finite \cite[Rem.~2.61]{Cal25}.\footnote{Complete classifications of the finite primitive linear groups are available only in low degrees, see \cite[Sec.~5]{Cal25} and the survey \cite{DF17}. The only asymptotic count we are aware of is due to Rossmann \cite[Prop.~3.3]{Ros16}. He shows that for a number field $K$ and every $\delta>0$, the primitive finite \emph{nilpotent} subgroups of $\GL_m(K)$ form only $O(m^{1+\delta})$ conjugacy classes.}

As noted in the introduction, a soft count is not enough. A quasiprimitive $Q\leq\PGL_m(\C)$ has order at most $(m+1)!$ by \factref{J}, but there are $\exp(\Theta(m^3\log^3m))$ groups of such orders \cite[Th.~4.5 and Cor.~16.21]{BNV07}, far above the target $\exp(O(m))$.
The loss comes from treating $Q$ as an abstract group of bounded order. A typical group of order at most $(m+1)!$ is nowhere near a quasiprimitive subgroup of $\PGL_m(\C)$, and a good count has to use the structural restrictions imposed by quasiprimitivity.

\begin{remark}
The same question has been studied for permutation groups, where Pyber and Shalev \cite{PS97} and later Jaikin-Zapirain and Pyber \cite{JZP11} bounded the number of primitive subgroups of $\Sym(n)$ up to conjugacy. By the O'Nan--Scott theorem, the expensive ones are of affine type $\F_p^b\rtimes P_0$, so this is really a count of irreducible subgroups $P_0\leq\GL_b(\F_p)$. Jaikin-Zapirain and Pyber reduce it further to primitive linear groups over $\F_p$, and count these through their generalized Fitting subgroups. Our proof is modelled on theirs, with $\C$ in place of $\F_p$, and we compare the two arguments in detail in \Cref{sub:permutation}.
\end{remark}

Here is our argument in brief. Let $Q$ be the image of a primitive $\Gamma\leq\GL_m(\C)$. The data we record come from the generalized Fitting subgroup $R=F^*(\Gamma)$. By work of Collins \cite{Col08}, $R$ is a central product of extraspecial and quasisimple groups, up to scalars, and $\Gamma$ acts on it faithfully modulo scalars. This keeps the quotient $\Gamma/R$ small. Moreover, $V=\C^m$ decomposes as a tensor product $U\otimes W$ on which $R$ acts through the first factor alone, and $U$ is in turn a tensor product with one tensor factor for each factor of the central product $R$. We record $Q$ up to conjugacy through four pieces of data.
\begin{enumerate}
    \item \emph{The degrees.} These are the dimensions of the tensor factors of $U$. They are integers at least $2$ whose product divides $m$, so there are few possible lists.
    \item \emph{The group $\bar R$.} This is the image of $R$ in $Q$, recorded up to conjugacy through each factor of $R$ in its own tensor factor. For an extraspecial factor this is essentially free. For a quasisimple factor it is a genuine counting problem, namely counting the conjugacy classes of irreducible quasisimple subgroups of $\GL_t(\C)$, and here we rely on the classification.
    \item \emph{The group $Q_2$.} This is the image of $\Gamma$ on $W$. It is again quasiprimitive, of degree $\dim W$, which is a proper divisor of $m$. We count it by induction, so this is the recursive term.
    \item \emph{The group $Q$ itself.} The first three pieces trap $Q$ between two known groups. From below, $Q$ contains $\bar R$. From above, $Q$ lies in $\mathcal N\otimes Q_2$, where $\mathcal N$ is the normalizer of $\bar R$ in $\PGL(U)$. So $Q$ amounts to a subgroup of a small finite group, which we record through a few of its generators.
\end{enumerate}
The first two pieces are cheap, and the fourth costs $\exp(O(\log^3m))$. The recursion in the third piece runs down a chain of divisors of $m$ of length at most $\log_2m$, and this gives the fourth logarithm.
The outline follows \cite[Prop.~5.9]{JZP11}, except that we take all tensor factors at once and recurse only on $W$, for reasons explained in \Cref{sub:permutation}.

Here is how this section is organized. \Cref{sub:structure} states Collins's structure theory and fixes the data. \Cref{sub:recursion} counts $Q$ through the four pieces, treating the quasisimple factors as a black box for the moment, and \Cref{sub:components} then counts the quasisimple factors. \Cref{sub:proofcount} puts the two together and proves \Cref{thm:count,thm:main}. \Cref{rem:truth} discusses how far the count is from the truth, and \Cref{sub:permutation} compares it with its counterpart over finite fields.

\subsection{The structure of a primitive linear group}\label{sub:structure}

We now make the structure described above precise. We state what we use in the form given by Collins, fix the data, and record some basic facts about quasisimple linear groups.

\subsubsection{Components and quasicomponents}

Let $\Gamma\leq\GL_m(\C)$ be finite and primitive, and write $Z=Z(\Gamma)$. Following \cite[p.~763]{Col08}, the \emph{components} of $\Gamma$ are its subnormal quasisimple subgroups $E_1,\dots,E_s$, and its \emph{quasicomponents} are the subgroups $O_p(\Gamma)$ that are not cyclic, where $O_p(\Gamma)$ is the largest normal $p$-subgroup of $\Gamma$. The \emph{Fitting subgroup} $F(\Gamma)$ is the largest nilpotent normal subgroup of $\Gamma$, namely the product of the subgroups $O_p(\Gamma)$ over all primes $p$, and the \emph{layer} $E(\Gamma)$ is the subgroup generated by the components. These two subgroups commute elementwise, and the \emph{generalized Fitting subgroup} is their product $R=F^*(\Gamma)=F(\Gamma)E(\Gamma)$. We write $\bar R=R/Z$ for its image in $Q=\Gamma/Z$.

\begin{itemize}
\fact{C} \emph{The structure of a primitive linear group} \cite[Lems.~1, 2 and p.~763]{Col08}. Let $P_1,\dots,P_r$ be the quasicomponents, for distinct primes $p_1,\dots,p_r$. Each of them has the form $P_i=Z(P_i)T_i$ with $T_i$ extraspecial of order $p_i^{1+2n_i}$, of exponent $p_i$ when $p_i$ is odd. The generalized Fitting subgroup is the central product
\[
    R = Z\cdot P_1\cdots P_r\cdot E_1\cdots E_s ,
\]
whose factors commute elementwise. Moreover $C_\Gamma(R)=Z$, since $C_\Gamma(R)\leq R$ holds for any finite group, so $C_\Gamma(R)=Z(R)$ is an abelian normal subgroup of $\Gamma$, and hence central.
\end{itemize}

In words, $R$ is a central product of extraspecial and quasisimple groups, up to scalars, and nothing outside the scalars centralizes it. So $Q=\Gamma/Z$ acts faithfully on $R$ by conjugation.

\subsubsection{The tensor decomposition and its data}\label{subsub:data}

Throughout, $m\geq2$ and $\Gamma\leq\GL(V)=\GL_m(\C)$ is a finite primitive subgroup with image $Q=\Gamma/Z$ in $\PGL_m(\C)$, where $Z=Z(\Gamma)$. In particular, $\Gamma$ acts irreducibly on $V$. Since $m\geq2$, it is therefore nonabelian, and $Q$ is nontrivial. The groups $R=F^*(\Gamma)$ and $\bar R=R/Z\trianglelefteq Q$, the components $E_1,\dots,E_s$ and the quasicomponents $P_i=Z(P_i)T_i$ are as in \factref{C}. We need one more piece, a decomposition of $V$.

\begin{lemma}\label{lem:shape}
There is an isomorphism $V\cong U\otimes W$ such that the following hold.
\begin{enumerate}
\item $R$ acts as $\rho_U\otimes1$ and faithfully on $U$, and $U$ is the Kronecker product of faithful irreducible modules for the factors of the central product $R$. The quasicomponent $P_i$ contributes a factor of degree $p_i^{n_i}$ and the component $E_j$ a factor of degree $m_j\geq2$.
\item Write $u=\dim U$ and $w=\dim W$. Then $m=uw$ and $u=\prod_ip_i^{n_i}\cdot\prod_jm_j$. Hence $\sum_jm_j\leq\prod_jm_j\leq u$ and there are at most $\log_2u$ factors in total. Moreover $u\geq2$, and hence $w\leq m/2$.
\item Every $g\in\Gamma$ has the form $g=X_g\otimes Y_g$ with $X_g\in\GL(U)$ and $Y_g\in\GL(W)$, the two determined by $g$ up to reciprocal scalars. In particular $Q$ has well-defined images $Q_1\leq\PGL_u(\C)$ on $U$ and $Q_2\leq\PGL_w(\C)$ on $W$.
\end{enumerate}
\end{lemma}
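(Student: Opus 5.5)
The plan is to obtain $V\cong U\otimes W$ from Clifford theory for the normal subgroup $R\trianglelefteq\Gamma$, and then to read off the finer structure of $U$ from the central product decomposition in \factref{C}.

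\emph{Step 1: the tensor decomposition.} Since $\Gamma$ is primitive, $V|_R$ is homogeneous by the second Clifford fact, so $V|_R\cong U^{\oplus w}$ for some irreducible $R$-module $U$. Because $U$ is $\Gamma$-invariant up to isomorphism, standard Clifford theory (\cite[Ch.~11]{Isa76}) gives projective representations $X$ of $\Gamma$ on $U$ extending $\rho_U$ and $Y$ of $\Gamma$ on $W=\Hom_R(U,V)$, with $V\cong U\otimes W$ and $g\mapsto X_g\otimes Y_g$. Concretely, for $g\in\Gamma$ the map $\rho_U(g^{-1}rg)$ is equivalent to $\rho_U(r)$, and we choose $X_g$ to realize this equivalence. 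By Schur, $X_g$ is unique up to a scalar. Then $(X_g^{-1}\otimes 1)g$ commutes with $R\otimes 1$, so it lies in $1\otimes\GL(W)$. This gives $g=X_g\otimes Y_g$, and the scalar ambiguity in the factorization is reciprocal. The factorization is multiplicative up to scalars, which yields the well-defined projective images $Q_1\le\PGL_u(\C)$ and $Q_2\le\PGL_w(\C)$. This proves (3).

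\emph{Step 2: the structure of $U$ and its faithfulness.} $R$ is a central product of $Z,P_1,\dots,P_r,E_1,\dots,E_s$. An irreducible representation of a central product is the Kronecker product of irreducible representations of the factors that agree on the identified central elements. This gives the factors of $U$, with $Z$ acting by scalars.

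For the factor coming from $P_i$, the restriction of $U$ to $T_i$ is irreducible and nontrivial on $Z(T_i)$. Otherwise $T_i$ would act through an abelian quotient, and by homogeneity and Schur $T_i$ would be scalar on $V$. That contradicts the fact that $P_i$ is noncyclic, because the abelian normal subgroups of $\Gamma$ are scalar. A faithful irreducible representation of an extraspecial group of order $p^{1+2n}$ has degree $p^n$, so this factor has degree $p_i^{n_i}$.

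For the factor coming from $E_j$, the representation is nontrivial, since $E_j$ is not central. It is therefore faithful modulo a central subgroup, and it has degree $m_j\ge2$ because $E_j$ is perfect.

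Faithfulness of $R$ on $U$ follows because $V|_R$ is a direct sum of copies of $U$ and $V$ is faithful. This proves (1).

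\emph{Step 3: the numerical claims.} The product formula $u=\prod_ip_i^{n_i}\prod_jm_j$ follows immediately from Step 2, and $m=uw$ holds by construction. Each factor is at least $2$, so the number of factors is at most $\log_2 u$. For integers at least $2$ we have $a+b\le ab$, which gives $\sum_j m_j\le\prod_j m_j$.

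The claim $u\ge2$ needs an argument. If $u=1$, then $R$ acts by scalars, so $R=Z$. But then $C_\Gamma(R)=\Gamma$, while \factref{C} gives $C_\Gamma(R)=Z$. So $\Gamma=Z$ would be abelian, contradicting irreducibility with $m\ge2$. Hence $u\ge2$ and $w\le m/2$. This proves (2).

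The main obstacle is the bookkeeping in Step 1, namely that the Clifford extension gives a genuine tensor factorization with the stated uniqueness. I would handle this directly with the Schur argument above rather than quoting a general theorem.
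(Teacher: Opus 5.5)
Your proof is correct and follows essentially the same route as the paper. Homogeneity gives $V\cong U\otimes W$, and the central-product structure splits $U$ into tensor factors; the paper derives this from Suprunenko's lemma on elementwise-commuting normal subgroups plus induction, rather than quoting the central-product fact. The factorization $g=X_g\otimes Y_g$ comes from a Schur commutant argument; the paper obtains $X_g$ from Burnside and Skolem--Noether, while you obtain it from $U^g\cong U$. Your argument for $u\geq2$ is the same as the paper's.

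Two small wording fixes. In the $P_i$ case, it is the $P_i$-tensor factor of $U$, not $U$ itself, whose restriction to $T_i$ is irreducible. Also, every tensor factor is genuinely faithful, which is what (1) asserts, not just faithful modulo a central subgroup as you say for $E_j$. This follows at once from faithfulness of $R$ on $U$, since an element acting trivially on its factor acts trivially on $U$.
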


\begin{proof}
By homogeneity \cite[Cor.~16.2.1]{Sup76}, we have $\Res^\Gamma_R V=U^{\oplus w}$ for an irreducible $R$-module $U$ of some degree $u$, and $m=uw$. In the resulting decomposition $V\cong U\otimes W$, the group $R$ acts as $\rho_U\otimes1$. So $R$ acts faithfully on $U$, since an element acting trivially on $U$ acts trivially on $V$. The factors of the central product $R$ commute elementwise by \factref{C}. Write $\rho_U(R)=HF$ with $H$ one factor and $F$ the product of the others. These are normal subgroups of $\rho_U(R)$ that commute elementwise, so $\Res^R_HU=L^{\oplus c}$ for an irreducible $H$-module $L$ by \cite[Lem.~16.4]{Sup76}. Thus $U\cong L\otimes M$ with $M=\C^c$ and $H$ acting as $\rho_L\otimes1$. Every element of $F$ commutes with $H$, so it lies in $\End_H(U)=\End_H(L)\otimes\End(M)$, which is $1\otimes\End(M)$ by Schur's lemma. So $F$ acts as $1\otimes\rho_M$ for a representation $\rho_M$ of $F$ on $M$. If $M'\leq M$ is an $F$-submodule, then $L\otimes M'$ is an $R$-submodule of $U$, so $M$ is irreducible. Both $L$ and $M$ are faithful, since an element of $H$ or $F$ acting trivially on its factor acts trivially on $U$. Applying the same argument to $F$ acting on $M$ and inducting on the number of factors, $U$ is the Kronecker product of faithful irreducible modules for the factors of $R$, of degrees multiplying to $u$. 
A quasicomponent $P_i=Z(P_i)T_i$ acts on its factor through $T_i$ modulo scalars, and a faithful irreducible representation of $T_i$ has degree $p_i^{n_i}$ \cite[p.~335]{Suz86}.
A component $E_j$ is nonabelian, so it contributes a factor of degree $m_j\geq2$. This proves (1).

The relations in (2) follow. The inequality $\sum_jm_j\leq\prod_jm_j$ holds because each $m_j\geq2$, and the bound on the number of factors holds because each factor has degree at least $2$. If $u$ were $1$, then $R$ would consist of scalars, and \factref{C} would give $\Gamma=C_\Gamma(R)=Z$, contradicting $Q\neq1$. So $u\geq2$ and $w\leq m/2$.

For (3), the image $\rho_U(R)$ spans $\End(U)\otimes1$ by Burnside's theorem, since $U$ is irreducible. Every $g\in\Gamma$ normalizes $R$, so conjugation by $g$, which is an algebra automorphism of $\End(V)$, maps the linear span of $R$ to itself. Thus it restricts to an automorphism of $\End(U)\otimes1$. By the Skolem--Noether theorem, it agrees there with conjugation by $X_g\otimes1$ for some $X_g\in\GL(U)$. Then $g(X_g\otimes1)^{-1}$ centralizes $\End(U)\otimes1$, and therefore lies in $1\otimes\GL(W)$ by Schur's lemma, as in the proof of (1). So $g=X_g\otimes Y_g$, and rescaling $X_g$ by $\lambda$ forces $Y_g$ to be rescaled by $\lambda^{-1}$. Hence the images $Q_1$ and $Q_2$ are well defined.
\end{proof}

Let us see how all of these groups fit together. For $X\in\GL(U)$ we write $\bar X$ for its image in $\PGL(U)$, and likewise on $W$. A scalar on $V$ can only be written as $X\otimes Y$ with both $X$ and $Y$ scalar, so the maps $g\mapsto\bar X_g$ and $g\mapsto\bar Y_g$ kill $Z$ and give surjections $Q\to Q_1$ and $Q\to Q_2$. The first of them is injective. Indeed, if $\bar X_g=1$, then $g$ acts on $V$ as $1\otimes Y$ for some $Y\in\GL(W)$, so it centralizes $R$, and $g\in C_\Gamma(R)=Z$ by \factref{C}. Hence $Q\cong Q_1$, and under this isomorphism $\bar R$ becomes the image of $\rho_U(R)$ in $\PGL(U)=\PGL_u(\C)$, a normal subgroup of $Q_1$. From now on we view $\bar R$ in $\PGL_u(\C)$ in this way, and write
\[
    \mathcal{N} = N_{\PGL_u(\C)}(\bar R)
\]
for its normalizer, so that $\bar R\trianglelefteq Q_1\leq\mathcal N$. The second map kills $\bar R$, since $R$ acts trivially on $W$, and so it induces a homomorphism $\psi\colon Q_1\to Q_2$ with $\bar R$ in its kernel. Thus $Q\leq\PGL(V)$ is the graph of $\psi$, namely the image of the elements $X\otimes Y$ with $\bar X\in Q_1$ and $\bar Y=\psi(\bar X)$. In words, $Q$ is abstractly the group $Q_1$ acting on $U$, and $\psi$ records how it acts on the multiplicity space $W$.
\Cref{fig:groups} summarizes the situation.

\begin{figure}[t]
\centering
\begin{tikzpicture}[>=stealth]
    \node (G) at (0,2.5) {$\Gamma$};
    \node (RR) at (-1.8,2.5) {$R$};
    \node (Q) at (0,1.4) {$Q$};
    \node (R) at (-1.8,1.4) {$\bar R$};
    \node (Q1) at (-1.8,0) {$Q_1$};
    \node (Q2) at (1.8,0) {$Q_2$};
    \node (N) at (-3.1,0) {$\mathcal N$};
    \node (U) at (-4.7,0) {$\PGL(U)$};
    \node (W) at (3.6,0) {$\PGL(W)$};
    \draw[->>] (G) -- (Q);
    \draw[->>] (RR) -- (R);
    \draw[{Hooks[right]}->] (RR) -- (G);
    \draw[{Hooks[right]}->] (R) -- (Q);
    \draw[{Hooks[right]}->] (R) -- (Q1);
    \draw[->] (Q) -- node[above left, font=\scriptsize, inner sep=1pt] {$\cong$} node[below right, font=\tiny, inner sep=0.5pt] {on $U$} (Q1);
    \draw[->>] (Q) -- node[below left, font=\tiny, inner sep=0.5pt] {on $W$} (Q2);
    \draw[->>] (Q1) -- node[below, font=\scriptsize] {$\psi$} (Q2);
    \draw[{Hooks[left]}->] (Q1) -- (N);
    \draw[{Hooks[left]}->] (N) -- (U);
    \draw[{Hooks[right]}->] (Q2) -- (W);
\end{tikzpicture}
\caption{The groups attached to a primitive $\Gamma\leq\GL(U\otimes W)$.}
\label{fig:groups}
\end{figure}

\begin{example}\label{ex:multiplicity}
Let $T=3^{1+2}$ be the extraspecial group of order $27$ and exponent $3$, acting faithfully and irreducibly on $U=\C^3$, and let $D$ be its normalizer in $\SL(U)$. The commutator map induces a nondegenerate alternating form on $T/Z(T)\cong\F_3^2$ with values in $Z(T)\cong\F_3$. Conjugation by $D$ preserves this form, and this gives a homomorphism $D\to\Sp_2(\F_3)=\SL_2(\F_3)$. This homomorphism is surjective with kernel $T$, so $D/T\cong\SL_2(\F_3)$. Composing with a faithful representation of $\SL_2(\F_3)$ on $W=\C^2$ gives a homomorphism $\varphi\colon D\to\GL(W)$. Let $\Gamma\leq\GL(U\otimes W)=\GL_6(\C)$ be the image of $D$ under $X\mapsto X\otimes\varphi(X)$. The following facts can be verified by a computation in \textsf{GAP}.
\begin{itemize}
    \item $\Gamma\cong D$ has order $648$, and it is a primitive subgroup of $\GL_6(\C)$ with centre $Z=Z(T)$ of order $3$.
    \item $R=T$ is the single quasicomponent, with $p_1=3$ and $n_1=1$, and there are no components.
    \item The decomposition of \Cref{lem:shape} is $V=U\otimes W$, with $u=3$ and $w=2$. So $\Res^\Gamma_RV=U^{\oplus2}$ is not irreducible.
    \item $Q_1=D/Z(T)$ is the Hessian group of order $216$ in $\PGL_3(\C)$, while $Q_2=\SL_2(\F_3)/\{\pm1\}\cong\Alt(4)$ is the tetrahedral group in $\PGL_2(\C)$.
    \item $\bar R=T/Z(T)\cong\F_3^2$, and its normalizer is $\mathcal N=Q_1$.
\end{itemize}
The homomorphism $\psi\colon Q_1\to Q_2$ sends the class of $X\in D$ to the image of $\varphi(X)$ in $\PGL(W)$. So it is the composite of the quotient map $D/Z(T)\to D/T\cong\SL_2(\F_3)$ with the quotient map $\SL_2(\F_3)\to\SL_2(\F_3)/\{\pm1\}$. Its kernel has order $18$ and contains $\bar R$ with index $2$, the extra coset coming from the central element $-1$ of $\SL_2(\F_3)$. So the kernel of $\psi$ can be strictly larger than $\bar R$. The three faithful representations of $\SL_2(\F_3)$ of degree $2$ differ by linear characters, so they give the same $\psi$ and hence the same $Q$. Thus $\bar R$ and $\mathcal N$ are determined by $T$ alone, while the embedding of $Q\cong Q_1$ into $\PGL(V)$ also depends on $\psi$.
\end{example}

\subsubsection{Quasisimple linear groups}

The components of $R$ will enter the count as quasisimple groups acting irreducibly in a degree of their own. We record the basic facts about their size that we will require.

\begin{lemma}\label{lem:basic}
Let $t\geq2$, and let $E\leq\GL_t(\C)$ be finite, quasisimple and irreducible. Put $S=E/Z(E)$. Then
\[
    |Z(E)|\leq t, \qquad |E|\leq\exp(O(t\log t)), \qquad \lvert\Out(S)\rvert=O(\log|E|) .
\]
\end{lemma}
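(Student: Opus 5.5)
The three bounds are independent of each other, so I would prove them separately, using only Schur's lemma, the facts \factref{J}, and the classification of finite simple groups where unavoidable.

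\emph{The centre.} Since $E$ acts irreducibly on $\C^t$, Schur's lemma makes $Z(E)$ act by scalars, so $Z(E)$ is cyclic. Because $E$ is quasisimple it is perfect, so $\det$ is trivial on $E$, and hence $E\leq\SL_t(\C)$. A scalar $\lambda I$ in $\SL_t(\C)$ satisfies $\lambda^t=1$, so $Z(E)$ embeds in the group of $t$th roots of unity, and $|Z(E)|\leq t$.

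\emph{The order.} By \factref{J}, $E$ has an abelian normal subgroup $A$ of index at most $J(t)$. Since $E$ is quasisimple, every proper normal subgroup is central, so $A\leq Z(E)$ (the case $A=E$ is impossible, as $E$ is nonabelian). Therefore $|E|\leq J(t)\cdot|Z(E)|\leq t\,J(t)$, and $\log|E|\leq\log J(t)+\log t=O(t\log t)$.

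\emph{The outer automorphisms.} This step needs the classification, and it is the only place where some care is required. I would argue type by type that $\lvert\Out(S)\rvert=O(\log|S|)$, and then use $|S|\leq|E|$.
\begin{itemize}
\item For alternating groups $\lvert\Out(S)\rvert\leq 4$.
\item For sporadic groups $\lvert\Out(S)\rvert\leq2$.
\item For a group of Lie type of rank $\ell$ over $\F_q$ with $q=p^f$, the outer automorphism group is generated by diagonal, field and graph automorphisms. Its order is at most $\gcd(\ell+1,q-1)\cdot 2f\cdot 6$ in general, and this estimate covers type $A_\ell$ including the unitary case; for $D_4$ the diagonal part is at most $4$.
\end{itemize}
On the other hand, $|S|\geq q^{\ell}$, and in fact $|S|\geq q^{\ell^2/2}$ roughly, so $\log|S|\geq \ell f\log p$. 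This dominates both $f$ and $\log(\ell+1)$ up to a constant, which gives $\lvert\Out(S)\rvert=O(\ell f)=O(\log|S|)$. The main obstacle is simply checking this uniformly across the families, with the twisted types handled by the same diagonal$\times$field$\times$graph estimate; I would cite the standard table of $\Out(S)$ for this.
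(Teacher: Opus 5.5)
Your proposal is correct and follows the paper's proof essentially step for step: Schur's lemma and perfectness give the centre bound, Jordan's theorem with quasisimplicity forces $A\le Z(E)$ and hence $|E|\le tJ(t)$, and a family-by-family check of $\Out(S)$ against the tables, compared with the growth of $\log|S|$ in $\ell f$, gives the third bound. One small slip: for unitary groups the diagonal part has order $\gcd(\ell+1,q+1)$, not $\gcd(\ell+1,q-1)$, so your displayed formula fails in general (for instance for $\PSU_{24}(23)$, where $\lvert\Out(S)\rvert=48>24$), but bounding the diagonal part by $\ell+1$, as the paper's estimate $6f(r+1)$ does, repairs this and leaves your conclusion $\lvert\Out(S)\rvert=O(\ell f)$ unchanged.
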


\begin{proof}
Set $Z=Z(E)$. By Schur's lemma, $Z$ consists of scalars. Since $E$ is perfect, we have $E\leq\SL_t(\C)$, so every scalar in $E$ has determinant $1$, and therefore $|Z|\leq t$. By \factref{J}, the group $E$ has an abelian normal subgroup $A$ with $|E:A|\leq J(t)$. Since $S$ is simple, its abelian normal subgroup $AZ/Z$ is trivial, so $A\leq Z$, and hence
\[
    |E| = |Z|\cdot|S| \leq tJ(t) = \exp\left(O(t\log t)\right) .
\]

For the outer automorphisms, it suffices to show that $\lvert\Out(S)\rvert=O(\log|S|)$. For the alternating and sporadic groups, the outer automorphism group has order at most $4$. For a group of Lie type over $\F_q$ with $q=p^f$ and of untwisted rank $r$, the tables \cite[Tables~5.1.A, 5.1.B]{KL90} give $\lvert\Out(S)\rvert\leq6f(r+1)$, the three factors coming from the graph, field and diagonal automorphisms. On the other hand, $\log|S|=\Omega(r^2f)$.
\end{proof}

\subsection{Counting the four pieces}\label{sub:recursion}

We now count $Q$ through the four pieces of data described at the start of the section. Let us first fix some notation.

Write $\qp(m)$ for the number of conjugacy classes of quasiprimitive subgroups of $\PGL_m(\C)$ in degree exactly $m$, so that $\QP(m)=\sum_{m'\leq m}\qp(m')$. The count will proceed by recursion on $\qp$, and it will descend through divisors of $m$.

The components enter the recursion through a single quantity. Write $\qs(t)$ for the number of $\GL_t(\C)$-conjugacy classes of irreducible quasisimple subgroups of $\GL_t(\C)$, and set
\[
    \QS(m) = \max\left\{ \prod_j\qs(m_j)  \ \middle|\  m_j\geq2 \text{ integers with } \prod_jm_j\leq m  \right\} .
\]
This is the largest possible product of component counts in degree $m$, and it is the only input about the components that the recursion uses. We bound it in \Cref{sub:components}.

Finally, write $\mu(m)$ for the largest exponent in the prime factorization of $m$, and set
\[
    \nu(m) = \mu(m)\log m+\log m\log\log m .
\]
Since $\mu(m)\leq\log_2m$, with equality at $m=2^n$, we always have $\nu(m)=O(\log^2m)$, while $\nu(m)=O(\log m\log\log m)$ when $m$ is squarefree.

\subsubsection{The four pieces}

Here is the summary of the costs of the four pieces, which will be proved in the rest of the subsection.
\begin{enumerate}
    \item The degrees of the tensor factors cost $\exp(O(\log m\log\log m))$, by \Cref{lem:degrees}.
    \item The group $\bar R$ up to conjugacy costs $m^2\cdot \QS(m)$, by \Cref{lem:factors}. This is the only place where the components are paid for.
    \item The multiplicity group $Q_2$ costs $\qp(w)$, where $w$ is a proper divisor of $m$, by \Cref{lem:mult}. This is the recursive term.
    \item The group $Q$ itself costs $\exp(O(\nu(m)\log m))$ once the other three pieces are fixed, by \Cref{lem:group}. This is where the surviving powers of the logarithm come from.
\end{enumerate}
The first two pieces are counted directly. The last two rest on \Cref{lem:norm}, which says that a primitive group is not much larger than its generalized Fitting subgroup, and neither is the normalizer $\mathcal N$.

\subsubsection{The degrees}

\begin{lemma}\label{lem:degrees}
The degrees $p_i^{n_i}$ of quasicomponents and $m_j$ of components take at most $\exp(O(\log m\log\log m))$ values. They determine $u$ and $w=m/u$.
\end{lemma}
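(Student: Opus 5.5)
The data to be counted is a finite collection of integers $\geq2$: the degrees $p_i^{n_i}$ of the quasicomponents, for distinct primes $p_i$, and the degrees $m_j$ of the components. By \Cref{lem:shape}(2) their product is $u$, which divides $m$. I would record the data as a pair: the divisor $u$ of $m$, and an unordered factorization of $u$ into factors $\geq2$, in which the prime-power factors coming from quasicomponents are marked. Since $w=m/u$, the data determines $u$ and $w$ immediately, which gives the last sentence of the statement.

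For the count, first bound the number of choices of $u$ by the divisor function $\tau(m)$. The standard estimate gives $\log\tau(m)=O(\log m/\log\log m)$, which is comfortably within budget. Next fix $u$ and bound the number of multiplicative partitions of $u$, that is, unordered factorizations into factors $\geq2$. Because there are at most $\log_2u$ factors, a crude argument suffices. Each factor is a divisor of $u$, so an unordered list of at most $\log_2 u$ divisors can be chosen in at most $\tau(u)^{\log_2u}\leq m^{\log_2 m}=\exp(O(\log^2 m))$ ways. This is too weak, so I would refine it.

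For the refinement, view an ordered factorization of $u$ as an assignment: each of the $\Omega(u)\leq\log_2 m$ prime factors of $u$, counted with multiplicity, is sent to one of at most $\log_2 m$ slots. This gives at most $(\log_2m)^{\log_2m}=\exp(O(\log m\log\log m))$ ordered factorizations. Unordered ones are fewer, so they obey the same bound. The marking of which factors are quasicomponents costs a further factor of at most $2^{\log_2m}=m$. Multiplying by $\tau(m)$ keeps the total at $\exp(O(\log m\log\log m))$.

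The only step needing care is this counting of factorizations: the naive divisor-tuple bound loses a full $\log m$ in the exponent. Distributing prime factors into at most $\log_2 m$ slots is what brings the bound down to $\log m\log\log m$.
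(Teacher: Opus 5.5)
Your proof is correct and follows the paper's argument. The key step in both is to encode the factorization by distributing the prime factors (with multiplicity) among at most $\log_2 m$ slots, and then to pay $2^f\le m$ for the quasicomponent/component labels. There are two cosmetic differences: you choose the divisor $u$ first at a cost of $\tau(m)$, whereas the paper adds a remainder slot for $m/u$; and you should also sum over the number $f$ of factors (or allow empty slots), which costs at most another factor $\log_2 m$.
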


\begin{proof}
The factorization of $u$ in \Cref{lem:shape}(2) is a sequence of $f\leq\log_2m$ integers at least $2$ whose product divides $m$. Write $m=\prod_\ell\ell^{e_\ell}$ over distinct primes $\ell$. 
A sequence of $f$ positive integers whose product divides $m$ amounts to a distribution of the $e_\ell$ copies of each prime $\ell$ among $f$ slots and a remainder. 
So there are
\[
    \prod_\ell\binom{e_\ell+f}{f}\leq(f+1)^{\sum_\ell e_\ell}\leq(\log_2m+1)^{\log_2m}
\]
such sequences. 
Each of the $f$ factors must also be labelled as coming from a quasicomponent or from a component, and this gives $2^f\leq m$ labellings. With at most $\log_2m$ choices of $f$, there are in total at most $m\log_2m\cdot(\log_2m+1)^{\log_2m}=\exp(O(\log m\log\log m))$ choices. The degrees multiply to $u$.
\end{proof}

\subsubsection{The group \texorpdfstring{$\bar R$}{R}}

\begin{lemma}\label{lem:factors}
With the degrees $p_i^{n_i}$ and $m_j$ fixed, the number of $\PGL_u(\C)$-conjugacy classes available to $\bar R$ is at most $m^2\cdot\QS(m)$.
\end{lemma}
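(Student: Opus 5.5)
The plan is to reconstruct $\bar R\leq\PGL_u(\C)$ up to conjugacy from its tensor factors. By \Cref{lem:shape}(1), $U=\bigotimes_i U_i\otimes\bigotimes_j U'_j$, where $P_i$ acts faithfully and irreducibly on $U_i$ of degree $p_i^{n_i}$, $E_j$ acts faithfully and irreducibly on $U'_j$ of degree $m_j$, and each factor acts trivially on the other tensor factors. So $\rho_U(R)$ is, up to scalars, the Kronecker product of the images of the factors. Conjugating independently in each tensor factor, that is, by $\bigotimes_i A_i\otimes\bigotimes_j B_j$, stays inside $\GL(U)$. Hence the $\PGL_u(\C)$-class of $\bar R$ is determined by the following data: the $\GL(U_i)$-class of the image of $T_i$ modulo scalars, the $\GL(U'_j)$-class of each $E_j$ modulo scalars, and the way the factors are matched to the fixed degree slots. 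Since the degrees are already fixed and ordered, the matching is part of that data. The scalars $Z$ and $Z(P_i)$ play no role, because we pass to the image in $\PGL$.

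For the extraspecial factors, we use that a faithful irreducible representation of $T_i$ is determined up to equivalence by its central character. There are at most $p_i-1$ central characters, and all of them give the same image subgroup up to conjugacy. Indeed, they differ by a Galois or automorphism twist, and in any case the image of $T_i$ in $\PGL(U_i)$ is the same group $\F_{p_i}^{2n_i}$ up to conjugacy. Since $p_i$ is odd or the structure is $2^{1+2n}$ of one of the two types, we get at most two classes per quasicomponent. Because there are distinct primes $p_i$ and $\prod p_i^{n_i}\leq m$, the total cost is at most $2^r\leq m$. For $p=2$ the two types ($+$ and $-$) give at most a factor $2$ each, and $Z(P_i)T_i$ acting irreducibly in degree $2^{n_i}$ has image modulo scalars determined up to conjugacy by the type. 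The argument above accounts for this.

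For the components, each $E_j$ is an irreducible quasisimple subgroup of $\GL_{m_j}(\C)$, so its class gives at most $\qs(m_j)$ choices. The product over $j$ is at most $\QS(m)$, because $\prod_j m_j\leq u\leq m$ by \Cref{lem:shape}(2).

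The remaining subtlety, and the step I expect to need the most care, is that the slot assignment is not canonical. Several components may share the same degree, and conjugacy in $\PGL_u(\C)$ may permute tensor factors. This only helps: it can merge classes but never splits them, since we are bounding from above. We must also check that the scalar ambiguity in gluing the factors does not create extra classes. The Kronecker product of subgroups modulo scalars is well defined, so it does not. Any leftover bookkeeping, such as whether the images of $Z(P_i)$ and the central products add a choice, I will absorb into the second factor $m$. Together with the factor $m$ from the quasicomponents, this gives the total bound $m^2\cdot\QS(m)$.
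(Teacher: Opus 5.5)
Your argument is essentially the paper's: split $U$ into the tensor factors of \Cref{lem:shape}(1), conjugate independently in each factor, and pay $\qs(m_j)$ per component, with $\prod_j\qs(m_j)\leq\QS(m)$. The one difference is at the quasicomponents, where the paper does not merge the $p_i-1$ faithful irreducible representations of each of the at most two types of $T_i$ but simply pays $\prod_i2(p_i-1)\leq2^{\log_2m}\prod_ip_i^{n_i}\leq m^2$, which is where the $m^2$ comes from; your merging is correct (automorphisms of $T_i$ act transitively on the nontrivial characters of $Z(T_i)$), but the reason you give, that the image is abstractly $\F_{p_i}^{2n_i}$, does not by itself give conjugacy in $\PGL(U_i)$, and there is no leftover bookkeeping to absorb, since $Z$ and the $Z(P_i)$ act by scalars.
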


\begin{proof}
By \Cref{lem:shape}(1), we have $U=U_1\otimes\dots\otimes U_f$, where the $a$th factor $X_a$ of the central product $R$ acts on $U_a$ through a subgroup $L_a\leq\GL(U_a)$ and trivially on the other tensor factors. Since $R$ is generated by the factors together with the scalars $Z$, and Kronecker products multiply factorwise, the image of $R$ in $\PGL(U)$ is the image of the group of all $x_1\otimes\dots\otimes x_f$ with $x_a\in L_a$. For $g_a\in\GL(U_a)$, conjugation by $g_1\otimes\dots\otimes g_f$ replaces each $L_a$ by $g_aL_ag_a^{-1}$. So the class of $\bar R$ in $\PGL_u(\C)$ is determined by the classes of the images $\bar L_a$ of the $L_a$ in the groups $\PGL(U_a)$, and we count these for the two kinds of factors.

Let first $P_i$ be a quasicomponent. Since $Z(P_i)$ acts by scalars, $P_i$ and $T_i$ have the same image modulo scalars. The group $T_i$ is extraspecial of order $p_i^{1+2n_i}$, so it has at most two isomorphism types, and each has exactly $p_i-1$ faithful irreducible representations of degree $p_i^{n_i}$ \cite[p.~335]{Suz86}. There are at most $\log_2m$ factors and $\prod_ip_i^{n_i}\leq u\leq m$, so the quasicomponents contribute at most
\[
    \prod_i2(p_i-1) \leq 2^{\log_2m}\prod_ip_i^{n_i} \leq m^2 .
\]

Consider now a component $E_j$. It acts faithfully on its factor as an irreducible quasisimple subgroup of $\GL_{m_j}(\C)$, and there are $\qs(m_j)$ classes of these.

Multiplying gives at most $m^2\prod_j\qs(m_j)$, and this is at most $m^2\QS(m)$ by the definition of $\QS$, since $\prod_jm_j\leq u\leq m$ by \Cref{lem:shape}(2).
\end{proof}

\subsubsection{The group \texorpdfstring{$Q_2$}{Q2}}

We first bound the size of $Q$ and of $\mathcal N$ modulo $\bar R$. This is used both here and later for the group $Q$ itself.

\begin{lemma}\label{lem:norm}
The normalizer $\mathcal{N}$ is finite. Moreover, both $|\mathcal{N}/\bar R|$ and the index $|\Gamma:R|=|Q/\bar R|$ are at most $\exp(O(\nu(m)))$.
\end{lemma}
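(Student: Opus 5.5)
The plan is to embed $\mathcal N/\bar R$ into a product of pieces controlled factor by factor. Since $Q=Q_1\leq\mathcal N$ contains $\bar R$, we have $|Q/\bar R|\leq|\mathcal N/\bar R|$. So it suffices to bound $|\mathcal N/\bar R|$, and finiteness of $\mathcal N$ will come out along the way.

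\textbf{First step: finiteness.} An element of $\mathcal N$ acts on $\bar R$ by conjugation. I claim the kernel of this action is trivial. If $\bar X$ centralizes $\bar R$, then a lift $X$ satisfies $XrX^{-1}=\lambda(r)r$ for all $r\in\rho_U(R)$, where $\lambda$ is a linear character of $\rho_U(R)$. Such $\lambda$ take finitely many values. The set of $X$ with a given $\lambda$ is, modulo scalars, a single coset of the centralizer of $\rho_U(R)$, and that centralizer is scalar by Schur since $U$ is irreducible. So the kernel is finite, of order at most $|\Hom(\rho_U(R),\C^\times)|$. Hence $\mathcal N$ embeds up to a finite kernel into $\Aut(\bar R)$, which is finite.

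\textbf{Second step: reduce to the factors.} To get the quantitative bound, let $\mathcal N$ act on the set of factors of the central product. These are the quasicomponent images $\bar T_i\cong\F_{p_i}^{2n_i}$, one for each prime, and the component images $\bar E_j$, which are simple. Conjugation permutes the $\bar E_j$ and fixes each Sylow-type piece $\bar T_i$, since $O_{p_i}$ is characteristic. Modulo the kernel of the action, $\mathcal N$ maps into
\[
\prod_i\Sp_{2n_i}(\F_{p_i})\times\Bigl(\prod_j\Aut(S_j)\Bigr)\rtimes\Sym(s).
\]
Inner automorphisms of the $S_j$ and the $\bar T_i$ themselves account for $\bar R$. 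This gives
\[
|\mathcal N/\bar R|\leq(\text{kernel})\cdot\prod_i|\Sp_{2n_i}(\F_{p_i})|\cdot\prod_j|\Out(S_j)|\cdot s!.
\]
The step needing care is that the centralizer in $\mathcal N$ of all the $\bar T_i$ and $\bar E_j$ is small. It is the $\lambda$-twisting group from the first step. Since the $E_j$ are perfect, $\lambda$ is trivial on them, and on $T_i$ it lives in $\Hom(T_i/Z,\C^\times)$. But those twists are realised by $T_i$ itself, so they are already in $\bar R$. Hence the kernel contributes nothing beyond $\bar R$.

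\textbf{Third step: estimate each term.} The symplectic term is $\log|\Sp_{2n}(\F_p)|=O(n^2\log p)$. With $p^{n}\mid m$ we have $n\leq\mu(m)$, so this term is $O(\mu(m)\cdot n\log p)$. Summing over $i$ and using $\sum_i n_i\log p_i\leq\log m$ gives $O(\mu(m)\log m)$. The outer automorphism term is $\log|\Out(S_j)|\leq\log O(\log|E_j|)=O(\log m_j)$ by \Cref{lem:basic}, and summing gives $O(\log m)$. The permutation term $s!$ has $s\leq\log_2 m$, so $\log s!=O(\log m\log\log m)$. Altogether $\log|\mathcal N/\bar R|=O(\nu(m))$.

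The main obstacle is the kernel analysis in the second step. One has to make sure that projective centralization, meaning commutation up to scalars, does not create a large extra group beyond $\bar R$. This is where extraspeciality, namely that $\Hom(T_i,\C^\times)$ is induced by inner conjugation in $T_i$, and perfectness of the components are used.
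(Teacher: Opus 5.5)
Your proposal is correct and follows essentially the paper's argument. Both map $\mathcal N$ to $\Out(\bar R)=\Aut(A)\times\Out(\bar E)$, control $C_{\mathcal N}(\bar R)$ through the scalar twists $\lambda\in\Hom(\bar R,\C^\times)$, and bound $\Aut(A)$, $s!$ and $\prod_j\lvert\Out(S_j)\rvert$ in the same way. Your shortcut $Q\cong Q_1\leq\mathcal N$ replaces the paper's parallel treatment of $Q$, and your refinements only improve constants: $C_{\mathcal N}(\bar R)=A$ by nondegeneracy of the extraspecial commutator pairing, and $\Sp_{2n_i}$ in place of $\GL_{2n_i}$. One slip: your opening claim that the kernel of the conjugation action is trivial is false whenever there is a quasicomponent, since the kernel contains $A=Z(\bar R)$. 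This is harmless, because you only use that the kernel is finite, and you later identify it correctly as $A$.
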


\begin{proof}
Write $\bar R=A\times\bar E$, where $A=\prod_i\F_{p_i}^{2n_i}$ is the image of the quasicomponents and $\bar E=\prod_jS_j$ is the product of the simple groups $S_j=E_j/Z(E_j)$. 
Here $A$ is abelian, while $\bar E$ is perfect and centreless. Hence $A=Z(\bar R)$ and $\bar E=[\bar R,\bar R]$ are both characteristic in $\bar R$, and so $\Out(\bar R)=\Aut(A)\times\Out(\bar E)$.
Both $\mathcal N$ and $Q$ act on $\bar R$ by conjugation, and we bound them in the same way. Let $K$ be either of them. Conjugation gives a homomorphism $K\to\Aut(\bar R)$ with kernel $C_K(\bar R)$. Since $\bar R\leq K$, composing it with the projection onto $\Out(\bar R)$ gives a homomorphism $K\to\Out(\bar R)$ with kernel $\bar R\,C_K(\bar R)$. Hence
\[
    |K/\bar R| \leq |C_K(\bar R)|\cdot\lvert\Out(\bar R)\rvert ,
\]
and it suffices to bound the two factors on the right. We first bound the centralizer and then $\Out(\bar R)$.

\emph{The centralizers.} Let $x\in\mathcal N$ centralize $\bar R$. Then $x$ lifts to some $g\in\GL(U)$ with $grg^{-1}=\lambda(r)r$ for all $r\in R$, where each $\lambda(r)$ is a scalar. Since scalars are central, $\lambda\colon R\to\C^\times$ is a homomorphism. It is trivial on the scalars, and hence factors through $\bar R$. Moreover, $\lambda$ determines $g$ up to an element of $C_{\GL(U)}(R)$, which consists of scalars by Schur's lemma, since $R$ is irreducible on $U$. So $\lambda$ determines $x$, and the centralizer of $\bar R$ in $\mathcal N$ has order at most $\lvert\Hom(\bar R,\C^\times)\rvert=|A|\leq m^2$, the components being perfect. The same argument applies to an element of $Q$ centralizing $\bar R$, lifted to $g\in\Gamma$, with $C_\Gamma(R)=Z$ from \factref{C} in place of Schur's lemma.

\emph{The automorphisms.} It remains to bound $\lvert\Out(\bar R)\rvert=\lvert\Aut(A)\rvert\cdot\lvert\Out(\bar E)\rvert$. By \Cref{lem:shape}(2), each $p_i^{n_i}$ divides $m$, and the primes $p_i$ are distinct. Hence $n_i\leq\mu(m)$ and $\sum_in_i\log p_i\leq\log m$, and so
\[
    \lvert\Aut(A)\rvert = \prod_i|\GL_{2n_i}(p_i)| \leq \exp\left(4\sum_in_i^2\log p_i\right) \leq \exp\left(4\mu(m)\log m\right) .
\]
For $\Out(\bar E)$, note that the simple factors $S_1,\dots,S_s$ are the minimal normal subgroups of $\bar E$, so every automorphism of $\bar E$ permutes them. This gives a homomorphism $\Aut(\bar E)\to\Sym(s)$, whose kernel consists of the automorphisms fixing each factor, namely $\prod_j\Aut(S_j)$. So this kernel has index at most $s!$, and it contains $\Inn(\bar E)=\prod_j\Inn(S_j)$. Hence
\[
    \lvert\Out(\bar E)\rvert \leq s!\cdot\prod_j\lvert\Out(S_j)\rvert .
\]
Since $s\leq\log_2m$, the factor $s!$ is at most $\exp(O(\log m\log\log m))$. Each $\Out(S_j)$ has order $O(\log|E_j|)=O(m_j\log m_j)$ by \Cref{lem:basic} in degree $m_j$, and since $\prod_jm_j\leq m$, the product is $m^{O(1)}$.

Multiplying the three bounds, we find that the orders of $\mathcal N/\bar R$ and of $Q/\bar R$ are both at most $\exp(O(\nu(m)))$. In particular $\mathcal N$ is finite, since $\bar R$ is. Finally, $|Q/\bar R|=|\Gamma:R|$ because $Z\leq R$.
\end{proof}

% The automorphisms of $A$ induced by $\Gamma$ or by $\mathcal N$ preserve the commutator pairing on each $T_i/Z(T_i)$, so they lie in $\prod_i\Sp_{2n_i}(p_i)$, as in Collins's description of $\Gamma/R$ \cite[Th.~5]{Col08}. We do not need this, since it only improves the constant in the exponent.

\begin{lemma}\label{lem:mult}
The group $Q_2$ is quasiprimitive, its degree $w$ is a proper divisor of $m$, and $|Q_2|\leq\exp(O(\nu(m)))$.
\end{lemma}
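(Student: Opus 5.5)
The plan is to read off all three claims from the tensor decomposition of \Cref{lem:shape} and the graph description of $Q$ just before \Cref{ex:multiplicity}. The divisibility is immediate. By \Cref{lem:shape}(2) we have $m=uw$ with $u\geq2$, so $w$ divides $m$ and $w\leq m/2<m$, and $w$ is a proper divisor. For the order, the map $Q\to Q_2$ is surjective and kills $\bar R$, since $R$ acts trivially on $W$. It therefore factors through $Q/\bar R$, and $|Q_2|\leq|Q/\bar R|\leq\exp(O(\nu(m)))$ by \Cref{lem:norm}.

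The substantive claim is quasiprimitivity. Let $\Gamma_2\leq\GL(W)$ be the group generated by the matrices $Y_g$ for $g\in\Gamma$, each taken up to scalars. To make it finite, I would choose the lifts $Y_g$ to have determinant $1$. This fixes $Y_g$ up to a $w$th root of unity, so $\Gamma_2$ lies in the preimage of $Q_2$ in $\SL(W)$, a finite group with image $Q_2$. Then $Q_2$ is quasiprimitive once this finite group $\widetilde\Gamma_2$ is shown to be primitive on $W$.

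I would argue primitivity by contradiction. Suppose $W=W_1\oplus\dots\oplus W_r$ with $r\geq2$ and $\widetilde\Gamma_2$ permuting the summands. Then $V=\bigoplus_a U\otimes W_a$. Each $g=X_g\otimes Y_g\in\Gamma$ sends $U\otimes W_a$ to $U\otimes Y_gW_a=U\otimes W_{a'}$, because $Y_g$ agrees with an element of $\widetilde\Gamma_2$ up to a scalar. So $\Gamma$ permutes the blocks $U\otimes W_a$, and this contradicts the primitivity of $\Gamma$. The degenerate case $w=1$ also has to be covered: there $Q_2$ is trivial in $\PGL_1(\C)$, the image of the trivial primitive group. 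Whether the degree-$1$ trivial group counts as quasiprimitive depends on the convention, and I would include it explicitly so that the recursion can bottom out.

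The main obstacle I expect is the bookkeeping of scalars. One has to check that the blocks being permuted is a property of the projective image, so that it is insensitive to rescaling each $Y_g$. One also has to check that $\widetilde\Gamma_2$ is finite and has image exactly $Q_2$. Both points look routine, since scalars preserve every subspace and the determinant-$1$ normalization bounds the group.
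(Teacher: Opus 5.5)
Your proposal is correct and follows the paper's proof essentially step by step: divisibility from \Cref{lem:shape}(2), the order bound by factoring $Q\to Q_2$ through $Q/\bar R$ and applying \Cref{lem:norm}, and quasiprimitivity by taking the full preimage of $Q_2$ in $\SL_w(\C)$ and lifting any block system $W=\bigoplus_a W_a$ to the blocks $U\otimes W_a$ of $V$, which would contradict the primitivity of $\Gamma$. The case $w=1$ needs no special convention, since a one-dimensional space has no decomposition into $b\geq2$ nonzero summands, so the trivial group is primitive vacuously.
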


\begin{proof}
That $w$ divides $m$ and $w\leq m/2$ is \Cref{lem:shape}(2). 
By \Cref{lem:shape}(1), $R$ acts trivially on the multiplicity space $W$, so the map $Q\to Q_2$ kills $\bar R$. Hence $Q_2$ is a quotient of $Q/\bar R$, and $|Q/\bar R|\leq\exp(O(\nu(m)))$ by \Cref{lem:norm}. 
For quasiprimitivity, let $\Gamma_2\leq\SL_w(\C)$ be the full preimage of $Q_2$ under the surjection $\SL_w(\C)\to\PGL_w(\C)$. This is a finite group of order $w|Q_2|$ with image $Q_2$. Suppose that $\Gamma_2$ permutes the summands of a decomposition $W=W_1\oplus\dots\oplus W_b$ with $b\geq2$. Every $g\in\Gamma$ acts as $X_g\otimes Y_g$ by \Cref{lem:shape}(3), and $Y_g$ is a scalar multiple of an element of $\Gamma_2$. So $\Gamma$ would permute the subspaces $U\otimes W_1,\dots,U\otimes W_b$, contradicting the primitivity of $\Gamma$. So $\Gamma_2$ is primitive, and $Q_2$ is quasiprimitive.
\end{proof}

\subsubsection{The group \texorpdfstring{$Q$}{Q}}

It remains to count the possibilities for $Q$ once the other three pieces are fixed. We view $Q$ as a subgroup of $\mathcal{N}\otimes Q_2$, and we count such subgroups through their generators. For this, we rely on the following result of Lucchini, Menegazzo and Morigi.

\begin{lemma}[{\cite[Th.~B]{LMM01}}]\label{lem:gen}
There is an absolute constant $D$ such that every finite primitive subgroup of $\GL_m(\C)$ with $m\geq2$ is generated by at most $D\log m$ elements.
\end{lemma}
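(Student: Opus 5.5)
This lemma is quoted from \cite[Th.~B]{LMM01}, and in the paper we simply use it. If I had to prove it from the structure of \Cref{sub:structure}, I would start from the bound
\[
    d(\Gamma) \leq d_\Gamma(R) + d(\Gamma/R),
\]
where $R=F^*(\Gamma)$ as in \factref{C} and $d_\Gamma(R)$ is the least number of elements of $R$ whose normal closure in $\Gamma$ is $R$. Generators of $\Gamma/R$ together with normal generators of $R$ generate $\Gamma$, so the bound holds. It then suffices to show that each of the two terms is $O(\log m)$.

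\emph{The normal generation of $R$.} The scalars $Z$ cost $O(1)$ generators, since finite subgroups of $\C^\times$ are cyclic. For the components, $\Gamma$ permutes $E_1,\dots,E_s$, and a quasisimple group is the normal closure of any non-central element. So one element per $\Gamma$-orbit of components suffices, which is at most $s\leq\log_2 m$ elements by \Cref{lem:shape}(2). For a quasicomponent, $T_i/Z(T_i)\cong\F_{p_i}^{2n_i}$ is an $\F_{p_i}\Gamma$-module. Elements of $T_i$ mapping onto module generators of it, together with $Z$, normally generate $P_i$ modulo $Z$. A module of dimension $2n_i$ needs at most $2n_i$ generators, so this costs at most $\sum_i 2n_i\leq 2\log_2 u\leq 2\log_2 m$ elements. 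Altogether $d_\Gamma(R)=O(\log m)$.

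\emph{The quotient $\Gamma/R$.} By \Cref{lem:norm}, $\Gamma/R$ embeds, after removing a centralizing part of order at most $m^2$, into $\prod_i\Sp_{2n_i}(p_i)\times\Out(\bar E)$. The part coming from $\Out(\bar E)$ is harmless. It is an extension of $\prod_j\Out(S_j)$, where each factor is $2$-generated by the tables of \cite{KL90}, by a subgroup of $\Sym(s)$ with $s\leq\log_2 m$. Any subgroup of $\Sym(s)$ is generated by $O(s)$ elements, so this part costs $O(\log m)$. The real difficulty is the image $X_i\leq\Sp_{2n_i}(p_i)$. Arbitrary subgroups of $\GL_{2n}(p)$ can need about $n^2$ generators, as unipotent radicals show, and summing $n_i^2$ over $i$ is not $O(\log m)$ when some $p_i^{n_i}$ is large.

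\emph{The main obstacle} is therefore to exclude large unipotent or otherwise badly generated images $X_i$, using primitivity. The first thing I would try is to show that $O_{p_i}(X_i)$ is trivial. A nontrivial normal $p_i$-subgroup of $X_i$ fixes a nonzero subspace of $T_i/Z(T_i)$, and that subspace is $\Gamma$-invariant. I would try to use this to produce a $\Gamma$-invariant abelian subgroup of $P_i$ strictly larger than $Z(P_i)$ that is normal in $\Gamma$. This would contradict the fact from the Clifford facts in \Cref{sub:primitive} that the abelian normal subgroups of a primitive group are scalar. This needs care, since an invariant subspace need not be totally isotropic; if it is not, I would pass to its radical, which is again $\Gamma$-invariant. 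Once $O_{p_i}(X_i)=1$, the module $T_i/Z(T_i)$ is acted on by a group with no normal $p_i$-subgroup. For such groups I would invoke the Kovács--Robinson type bound: a completely reducible subgroup of $\GL_n(p)$ is generated by $O(n)$ elements. Complete reducibility should follow from $O_{p_i}(X_i)=1$ after passing to the semisimplification and controlling the kernel. Summing $O(n_i)$ over $i$ would then give $O(\log m)$, and together with the first part this yields $d(\Gamma)=O(\log m)$. I expect this step to be the hardest, and it is exactly where the argument of \cite{LMM01} has to be followed in detail.
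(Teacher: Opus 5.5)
\emph{The citation.} Citing \cite[Th.~B]{LMM01} is exactly what the paper does; it gives no proof of its own.

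\emph{The first step of your sketch is false.} You claim $d(\Gamma)\leq d_\Gamma(R)+d(\Gamma/R)$. Let the $x_i$ lift generators of $\Gamma/R$ and let the $r_j\in R$ normally generate $R$ in $\Gamma$. Then $H=\langle x_i,r_j\rangle$ satisfies $HR=\Gamma$. But $H$ only contains the conjugates of the $r_j$ by elements of $H$, not by elements of $R$, so it need not contain $R$.

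The primitive group $\Gamma=\SL_2(\F_5)\leq\GL_2(\C)$ is a counterexample. Here $R=\Gamma$ and $d(\Gamma/R)=0$, and a single non-central element normally generates $\Gamma$, yet $\Gamma$ is not cyclic.

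The repair is to use $d(\Gamma)\leq d(\Gamma/R)+d(R)$ with ordinary generators, since $d(R)$ is just as small. $Z(P_i)$ is an abelian normal subgroup of $\Gamma$, hence scalar, so $R=Z\,T_1\cdots T_rE_1\cdots E_s$. Moreover:
\begin{itemize}
\item $d(Z)=1$;
\item $d(T_i)=2n_i$, because $\Phi(T_i)=Z(T_i)$;
\item $d(E_j)=2$, because $Z(E_j)\leq\Phi(E_j)$ in a perfect group and finite simple groups are $2$-generated.
\end{itemize}
Since $2^{\sum_in_i+s}\leq u\leq m$ by \Cref{lem:shape}(2), this gives $d(R)\leq1+2\log_2m$.

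\emph{The quotient $\Gamma/R$.} This part is on the right track but needs three adjustments.

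First, your route to $O_{p_i}(X_i)=1$ does not close when the radical of the fixed space $U$ is $0$. In that case you must use the invariant complement $U^\perp$: the $p_i$-group $O_{p_i}(X_i)$ would have nonzero fixed points there, lying in $U\cap U^\perp=0$. It is simpler to skip $O_{p_i}$ and the semisimplification altogether. The argument in the proof of \Cref{prop:qaltfree} shows directly that $A_i$ is a completely reducible module for the image of $Q$.

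Second, the image of $\Gamma/R$ in $\prod_i\Sp_{2n_i}(p_i)$ is only a subdirect product of the $X_i$, not their full product. So apply Kov\'acs--Robinson one prime at a time, along the normal subgroups $C(A_1\oplus\dots\oplus A_k)$. By Clifford's theorem these still act completely reducibly on $A_{k+1}$, and this gives at most $3\sum_in_i$ generators.

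Third, on the $\Out(\bar E)$ side you need a uniform bound on the number of generators of arbitrary subgroups of $\Out(S_j)$, not just of $\Out(S_j)$ itself. This does hold, since $\Out(S_j)$ is built from its diagonal, field and graph parts, each of bounded rank.

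With these changes your outline does give $d(\Gamma)=O(\log m)$.
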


A quotient needs no more generators than the group, so the same bound holds for $Q$.

\begin{lemma}\label{lem:group}
Fix the degrees, a subgroup of $\PGL_u(\C)$ for $\bar R$ and a subgroup of $\PGL_w(\C)$ for $Q_2$. Then the groups $Q$ arising from some $\Gamma$ as in \Cref{subsub:data} with these data are at most $\exp(O(\nu(m)\log m))$ in number.
\end{lemma}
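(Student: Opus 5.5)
The plan is to trap $Q$ inside a fixed finite group and then record it through a small generating set. With the degrees, $\bar R\leq\PGL_u(\C)$ and $Q_2\leq\PGL_w(\C)$ fixed, the normalizer $\mathcal N=N_{\PGL_u(\C)}(\bar R)$ is a fixed finite group by \Cref{lem:norm}. By the discussion after \Cref{lem:shape}, $Q\leq\PGL(V)$ is the graph of a homomorphism $\psi\colon Q_1\to Q_2$ with $\bar R\trianglelefteq Q_1\leq\mathcal N$. So $Q$ lies in the fixed finite group
\[
    \mathcal M=\{\,\overline{X\otimes Y} \mid \bar X\in\mathcal N,\ \bar Y\in Q_2\,\}\leq\PGL(V),
\]
which is a quotient of $\mathcal N\times Q_2$ (it is the image of $\mathcal N\times Q_2$ under the well-defined tensor map), and $Q$ contains the fixed subgroup $\bar R\otimes1$. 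Since we only need to count $Q$ up to conjugacy, it suffices to count the subgroups of $\mathcal M$ that can arise.

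Next I count these subgroups through generators. By \Cref{lem:gen} and the remark after it, $Q$ is generated by at most $D\log m$ elements. Every subgroup $Q$ with $\bar R\leq Q\leq\mathcal M$ is determined by $Q/\bar R$ inside $\mathcal M/\bar R$, and $Q/\bar R$ is generated by the images of at most $D\log m$ generators of $Q$. The number of such subgroups is therefore at most $|\mathcal M/\bar R|^{D\log m}$. This is the key point of the argument: we do not choose the generators in $\mathcal M$ itself, where $\bar R$ could be huge (up to about $(m+1)!$ through the components), but only modulo $\bar R$, which is already contained in $Q$.

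It then remains to bound
\[
    |\mathcal M/\bar R|\leq|\mathcal N/\bar R|\cdot|Q_2|.
\]
Indeed, $\mathcal M/(\bar R\otimes1)$ is a quotient of $(\mathcal N/\bar R)\times Q_2$, since the surjection $\mathcal N\times Q_2\to\mathcal M$ maps $\bar R\times1$ onto $\bar R\otimes1$. By \Cref{lem:norm}, $|\mathcal N/\bar R|\leq\exp(O(\nu(m)))$, and by \Cref{lem:mult}, $|Q_2|\leq\exp(O(\nu(m)))$. Hence the count is at most $\exp(O(\nu(m))\cdot D\log m)=\exp(O(\nu(m)\log m))$, as claimed.

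The main obstacle I expect is the bookkeeping of well-definedness. First, $\mathcal M$ must be a genuine finite group determined by the fixed data, and conjugacy conventions must be handled: the fixed representatives of $\bar R$ and $Q_2$ only pin $Q$ down up to conjugation by elements of the form $g\otimes h$, which is harmless because we overcount. Second, one must check that the bound on the number of generators applies to $Q/\bar R$, as a quotient of $Q$. A further subtlety is that the fixed data really do determine $\bar R\otimes1$ inside $\PGL(V)$, rather than $\bar R$ only up to the choice of identification $V\cong U\otimes W$. I would handle this by fixing the identification together with the representatives, and absorbing any remaining choices into the conjugacy.
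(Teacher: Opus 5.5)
Your proposal is correct and matches the paper's proof: you trap $Q$ between $\bar R\otimes1$ and $\mathcal N\otimes Q_2$, and then count $Q/(\bar R\otimes1)$ as a subgroup of a group of order at most $|\mathcal N/\bar R|\cdot|Q_2|\leq\exp(O(\nu(m)))$ generated by $\lceil D\log m\rceil$ elements, using \Cref{lem:gen,lem:norm,lem:mult}. The only cosmetic difference is that the paper notes that the Kronecker map $\PGL(U)\times\PGL(W)\to\PGL(V)$ is injective, so the ambient quotient is isomorphic to $(\mathcal N/\bar R)\times Q_2$ rather than merely a quotient of it, and the paper handles the identification and conjugacy issues you flag in \Cref{lem:recursion} by conjugating with elements $X\otimes Y$.
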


\begin{proof}
The Kronecker product $(X,Y)\mapsto X\otimes Y$ is a homomorphism $\GL(U)\times\GL(W)\to\GL(V)$, and $X\otimes Y$ is a scalar only if both $X$ and $Y$ are. So it induces an injective homomorphism
\[
    \kappa\colon\PGL(U)\times\PGL(W)\to\PGL(V),
    \qquad
    (\bar X,\bar Y)\mapsto\overline{X\otimes Y} .
\]
For subgroups $A\leq\PGL(U)$ and $B\leq\PGL(W)$ we write $A\otimes B=\kappa(A\times B)$, which is isomorphic to $A\times B$. As we saw after \Cref{lem:shape}, $Q$ is the image under $\kappa$ of the graph $\{(x,\psi(x))\mid x\in Q_1\}$ of $\psi$. This graph lies in $\mathcal N\times Q_2$, since $Q_1\leq\mathcal N$, and it contains $\bar R\times1$, since $\psi$ kills $\bar R$. Hence
\[
    \bar R\otimes1 \leq Q \leq \mathcal N\otimes Q_2 ,
\]
and $\mathcal N\otimes Q_2$ is finite by \Cref{lem:norm}. Therefore $Q$ is determined by the subgroup $\Lambda_0=Q/(\bar R\otimes1)$ of
\[
    \Lambda = (\mathcal N\otimes Q_2)/(\bar R\otimes1) \cong \mathcal N/\bar R\times Q_2 ,
\]
and $|\Lambda|=|\mathcal{N}/\bar R|\cdot|Q_2|\leq\exp(O(\nu(m)))$ by \Cref{lem:norm,lem:mult}. As a quotient of $Q$, the group $\Lambda_0$ is generated by $D'=\lceil D\log m\rceil$ elements by \Cref{lem:gen}. A subgroup generated by $D'$ elements is determined by a $D'$-tuple of its elements, so there are at most $|\Lambda|^{D'}$ of them. So at most
\[
    \exp\left(O(\nu(m))\right)^{D\log m+1} = \exp\left(O(\nu(m)\log m)\right)
\]
choices of $Q$ remain.
\end{proof}

\subsubsection{The recursion}

The recursion now follows by multiplying the four counts.

\begin{lemma}\label{lem:recursion}
For all $m\geq2$ we have
\[
    \log \qp(m) \leq \max_{w\mid m,\ w<m}\log \qp(w) + \log \QS(m) + O\left(\nu(m)\log m\right) .
\]
\end{lemma}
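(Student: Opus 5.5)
The plan is to define an encoding map from conjugacy classes of quasiprimitive subgroups of $\PGL_m(\C)$ to tuples of data, show it is injective, and multiply the counts of the four pieces. Fix once and for all representatives of the $\PGL_u(\C)$-classes of candidate groups $\bar R$ (for each admissible list of degrees) and of the $\PGL_w(\C)$-classes of quasiprimitive subgroups of $\PGL_w(\C)$. Given $Q$, pick a primitive $\Gamma\leq\GL_m(\C)$ with image $Q$, and take the decomposition $V\cong U\otimes W$ of \Cref{lem:shape}. The encoding then consists of the following data:
\begin{enumerate}
\item the list of degrees;
\item the chosen representative $\bar R_0$ of the class of $\bar R$;
\item the chosen representative $Q_{2,0}$ of the class of $Q_2$;
\item the subgroup of $\mathcal N_0\otimes Q_{2,0}$ obtained from $Q$ after conjugating.
\end{enumerate}

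The key point for injectivity is that the conjugations can be performed simultaneously. Choose $g\in\GL(U)$ with $g\bar Rg^{-1}=\bar R_0$ and $h\in\GL(W)$ with $hQ_2h^{-1}=Q_{2,0}$. Conjugating $\Gamma$ by $g\otimes h$ replaces $Q$ by a $\PGL_m(\C)$-conjugate whose data are exactly $(\bar R_0,Q_{2,0})$. The product $g\otimes h$ acts on $U$ and $W$ through $g$ and $h$ separately, since Kronecker products multiply factorwise. After this replacement, the argument of \Cref{lem:group} places $Q$ between $\bar R_0\otimes1$ and $\mathcal N_0\otimes Q_{2,0}$, where $\mathcal N_0$ is the normalizer of $\bar R_0$, and shows that $Q$ is recovered from its image in $\Lambda$. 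Since the conjugate obtained is in the same class as the original $Q$, the class of $Q$ is determined by the four pieces.

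The count then multiplies out as follows:
\begin{itemize}
\item the degrees cost $\exp(O(\log m\log\log m))$ by \Cref{lem:degrees}, and they determine $u$ and $w$;
\item given the degrees, $\bar R_0$ costs at most $m^2\QS(m)$ by \Cref{lem:factors};
\item $Q_{2,0}$ costs at most $\qp(w)$, since $Q_2$ is quasiprimitive of degree $w$, a proper divisor of $m$, by \Cref{lem:mult};
\item the subgroup $Q$ costs $\exp(O(\nu(m)\log m))$ by \Cref{lem:group}.
\end{itemize}
Summing over the divisor $w$ rather than taking the maximum costs a further factor of at most $m$. Taking logarithms, the terms $\log m\log\log m$, $2\log m$ and $\log m$ are all $O(\nu(m)\log m)$, because $\nu(m)\geq\log m$ when $\mu(m)\geq1$. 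This leaves $\log\qp(w)+\log\QS(m)+O(\nu(m)\log m)$, and bounding $\log\qp(w)$ by the maximum over proper divisors $w$ gives the claim.

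The step needing the most care is the simultaneous normalization above. The pieces must be canonical only up to choices that do not affect the final count. In particular, the tensor decomposition is defined only up to $\GL(U)\times\GL(W)$, and the set of data attached to $Q$ may depend on these choices. This does no harm, since we only need an injection from classes into data: we pick one choice per class. One must also check that \Cref{lem:group} applies with $\bar R_0$ and $Q_{2,0}$ fixed as actual subgroups, not merely classes, and this holds by its statement.
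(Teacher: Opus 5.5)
Your proposal is correct and follows the paper's proof essentially step by step: the same four pieces with the same cost lemmas, the same simultaneous conjugation by $X\otimes Y$ to move $\bar R$ and $Q_2$ to fixed representatives, and the same final appeal to \Cref{lem:group}. The extra factor $m$ for summing over $w$ is unnecessary, since the degree list already fixes $w$ and is counted in \Cref{lem:degrees}, but it is harmless.
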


\begin{proof}
Let $\Gamma$ be as in \Cref{subsub:data}, so that $Q$ is an arbitrary quasiprimitive subgroup of $\PGL_m(\C)$. By \Cref{lem:degrees}, the degrees take at most $\exp(O(\log m\log\log m))$ values, and they fix $w=m/u$, which is a proper divisor of $m$ by \Cref{lem:mult}. Then \Cref{lem:factors} leaves at most $m^2\QS(m)$ choices of $\bar R$ up to conjugacy, and \Cref{lem:mult} leaves at most $\qp(w)$ choices of $Q_2$. We fix one of each. Since $Q$ is only counted up to conjugacy, we may conjugate it by elements $X\otimes Y$ with $X\in\GL(U)$ and $Y\in\GL(W)$, which conjugate $\bar R$ by $X$ and $Q_2$ by $Y$ independently. So we may assume that $\bar R$ and $Q_2$ are the fixed representatives. Finally, \Cref{lem:group} leaves at most $\exp(O(\nu(m)\log m))$ choices of $Q$. Every quasiprimitive $Q\leq\PGL_m(\C)$ is counted in this way, up to conjugacy. Multiplying the four counts gives the recursion, since $\log m\log\log m$ and $\log m$ are both $O(\nu(m)\log m)$.
\end{proof}

\subsection{The quasisimple factors}\label{sub:components}

It remains to bound $\QS(m)$. Recall that $\qs(t)$ is the number of conjugacy classes of irreducible quasisimple subgroups of $\GL_t(\C)$, and write $\mathcal{S}(t)$ for the set of isomorphism types of the simple groups $E/Z(E)$ for such subgroups $E$. Every $S\in\mathcal{S}(t)$ thus has a faithful projective representation of degree $t$. The degrees of these representations for finite simple groups are well understood by the work of Seitz and Zalesskii \cite{SZ93}. Using these results, we shall bound the field size and the rank of a group of Lie type in $\mathcal{S}(t)$, and the number of groups in $\mathcal S(t)$. This will allow us to deduce that $\qs(t)=t^{O(1)}$.

\subsubsection{Groups of Lie type}

We use the standard notation $S={}^{\epsilon}X_r(q)$ for a simple group of Lie type, so that for instance ${}^2A_r(q)=\PSU_{r+1}(q)$ and ${}^2B_2(q)$ is a Suzuki group. We call $q$ its field size, and we call $r$, the rank of the untwisted root system $X_r$, its \emph{untwisted rank}. The minimal degrees of \cite{SZ93}, the outer automorphism groups of \cite{KL90} and the bounds on the number of conjugacy classes in \cite{FG12} are all stated in terms of these two parameters.

\begin{lemma}\label{lem:liedata}
Let $t\geq2$ and let $S\in\mathcal{S}(t)$ be of Lie type over $\F_q$ of untwisted rank $r$. Then $q=O(t)$ if $r=1$ and $q^{r-1}=O(t)$ if $r\geq2$, and in both cases
\[
    r \leq \log_2t+O(1),
    \qquad
    r\log q = O(\log t)
    \qquad \text{and} \qquad
    \log|S| = O(\log^2t) .
\]
\end{lemma}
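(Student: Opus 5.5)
The plan is to feed the Landazuri--Seitz--Zalesskii lower bounds on minimal degrees, as refined by Seitz and Zalesskii \cite{SZ93}, into the hypothesis that $S$ has a faithful projective representation of degree $t$. Since $S\in\mathcal S(t)$, some quasisimple cover of $S$ has a faithful irreducible representation of degree $t$. Hence $t$ is at least the minimal degree $\ell_{\mathrm{proj}}(S)$ of a nontrivial projective representation of $S$, and it suffices to know that $\ell_{\mathrm{proj}}(S)\geq c\,q^{r-1}$ for $r\geq2$, and $\geq c\,q$ for $r=1$, with an absolute $c>0$. From the tables this follows family by family: in type $A_r$ the bound is about $q^r$; in the other classical types it is about $q^{r-1}$ or larger (for example $(q^r-1)/2$ for $\PSp_{2r}(q)$ with $q$ odd, and $q^{2r-3}$ or so for the orthogonal groups); in type ${}^2A_r$ it is about $q^r$; for $\PSL_2(q)$ it is $(q-1)/\gcd(2,q-1)$; for the Suzuki and Ree groups and the exceptional types it is a much larger power of $q$ (for $E_8$ about $q^{27}$, for ${}^2B_2(q)$ about $q^{3/2}$). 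The finitely many exceptional small cases in the Landazuri--Seitz list, where the generic bound fails, are absorbed into the $O(\cdot)$. For twisted groups over $\F_{q^2}$ or $\F_{q^3}$ I would normalise $q$ so that the degree bound is stated in the same $q$; only the constants change.

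The remaining inequalities are then bookkeeping. For $r=1$, $q=O(t)$ gives $r\log q=O(\log t)$. For $r\geq2$, $q\geq2$ gives $2^{r-1}\leq q^{r-1}=O(t)$, so $r\leq\log_2t+O(1)$. Also $(r-1)\log q\leq\log t+O(1)$, and since $r\leq2(r-1)$ this gives $r\log q=O(\log t)$. Finally $|S|\leq q^{\dim X_r}$ with $\dim X_r=O(r^2)$, so $\log|S|=O(r^2\log q)=O(r\cdot r\log q)=O(\log t\cdot\log t)$, using $r=O(\log t)$ and $r\log q=O(\log t)$. For $r=1$ the bound is even $O(\log t)$.

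The main obstacle is the first step: getting a uniform statement of the minimal projective degree across all Lie-type families, including the twisted ones and the small exceptions, in the form $q^{r-1}$ with $r$ the untwisted rank. I would take the Seitz--Zalesskii table, check for each family that the listed bound dominates $c\,q^{r-1}$, and note that only finitely many groups fall outside the generic bounds. These contribute only to the implied constants.
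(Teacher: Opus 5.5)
Your proposal is correct and follows the paper's proof essentially step for step. Both arguments take the lower bounds $cq$ for $r=1$ and $cq^{r-1}$ for $r\geq2$ on the minimal projective degree from the Seitz--Zalesskii table, absorb the finitely many exceptions into the constant, and then conclude with $2^{r-1}\leq q^{r-1}$, $r\leq2(r-1)$ and $|S|\leq q^{O(r^2)}$.

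One small caveat concerns your remark that renormalising $q$ changes ``only the constants''. That holds for $r\log q=O(\log t)$, but not for $q^{r-1}=O(t)$, which is not preserved when $q$ is replaced by a power of itself. So $q$ must be the standard field size of ${}^{\epsilon}X_r(q)$, the one in which the table is stated.
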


\begin{proof}
Write $\hat\ell(S)$ for the smallest degree of a nontrivial irreducible projective representation of $S$ over $\C$, so that $\hat\ell(S)\leq\ell(S)$. We have $\hat\ell(S)\leq t$. Seitz and Zalesskii \cite[Table~I]{SZ93}, building on Landazuri and Seitz \cite{LS74}, give a lower bound for $\hat\ell(S)$ in terms of $q$ and the type of $S$ apart from a finite list of exceptions, which we absorb into the constants here. Inspecting the entries of the table, we find an absolute constant $c>0$ such that
\[
    \hat\ell(S)\geq cq \quad \text{if } r=1
    \qquad \text{and} \qquad
    \hat\ell(S)\geq cq^{r-1} \quad \text{if } r\geq2 .
\]
For $r=1$ this is the bound $(q-1)/2$ for $\PSL_2(q)$ with $q\neq9$. Since $\hat\ell(S)\leq t$, this gives $q=O(t)$ if $r=1$, and hence $r\log q=O(\log t)$. If $r\geq2$, it gives $q^{r-1}=O(t)$. Since $q\geq2$, it follows that $2^{r-1}=O(t)$, that is, $r\leq\log_2t+O(1)$. Moreover $r\leq2(r-1)$, and so $r\log q\leq2\log\left(q^{r-1}\right)=O(\log t)$. Finally, $|S|\leq q^{O(r^2)}$, and hence $\log|S|=O\left(r\cdot(r\log q)\right)=O(\log^2t)$.
\end{proof}

The last bound is what sets the groups of Lie type apart from the alternating groups. For $t\geq4$, the group $\Alt(t+1)$ lies in $\mathcal S(t)$ and has order $\exp(\Theta(t\log t))$. We return to this in \Cref{sec:altfree}.

\begin{lemma}\label{lem:simplecount}
For all $t\geq2$ we have $|\mathcal{S}(t)|=O(t)$. Moreover, the Schur multiplier $M(S)=H^2(S,\C^\times)$ of every $S\in\mathcal S(t)$ has order $O(\log t)$.
\end{lemma}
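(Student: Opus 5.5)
We split $\mathcal S(t)$ according to the classification of finite simple groups: alternating, sporadic, and Lie type. The sporadic groups contribute $O(1)$. For $S=\Alt(n)$ with $n\geq 9$, the smallest degree of a nontrivial projective representation is $n-1$; for $n<9$ the finitely many exceptions are absorbed into the constant. So $\Alt(n)\in\mathcal S(t)$ forces $n\leq t+1$, and the alternating groups contribute $O(t)$ types. The Lie type groups are the main part.

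For Lie type, we use \Cref{lem:liedata}. For each of the boundedly many families of a given untwisted rank $r$, the field size $q$ is a prime power with $q=O(t)$ when $r=1$ and $q^{r-1}=O(t)$ when $r\geq2$.
\begin{itemize}
\item For $r=1$ there are $O(t)$ choices of $q$ per family, and only boundedly many families ($A_1$, ${}^2B_2$, ${}^2G_2$, and the twisted rank-one cases), so $O(t)$ groups.
\item For $r\geq2$ the number of prime powers $q$ with $q\leq (Ct)^{1/(r-1)}$ is at most $(Ct)^{1/(r-1)}$.
\end{itemize}
Summing over $r$ from $2$ to $\log_2t+O(1)$, and over the $O(1)$ families for each $r$, gives
\[
\sum_{r\geq2}O\bigl((Ct)^{1/(r-1)}\bigr)=O(t)+O\bigl(t^{1/2}\log t\bigr)=O(t).
\]
The $r=2$ term dominates, and the remaining terms contribute at most $\sqrt{Ct}$ each over $O(\log t)$ values of $r$. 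Hence $|\mathcal S(t)|=O(t)$.

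For the Schur multipliers, we use the known tables (e.g.\ \cite[Table~5.1.D]{KL90}).
\begin{itemize}
\item For alternating and sporadic groups, $|M(S)|\leq 12$.
\item For groups of Lie type, $M(S)$ is the product of an exceptional part of bounded order and the diagonal part. The diagonal part has order at most $\gcd(r+1,q\pm1)\leq r+1$ in type $A$, and $O(1)$ in all other types.
\end{itemize}
By \Cref{lem:liedata}, $r\leq\log_2t+O(1)$, so $|M(S)|=O(\log t)$.

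The main obstacle is the uniform lower bound on projective degrees underlying \Cref{lem:liedata}. Given that lemma, the only remaining care is checking that the finitely many exceptional cases (small $q$, exceptional multipliers, small alternating groups) affect only the constants, and that summing the field-size counts over ranks does not introduce an extra $\log t$ factor in the leading $O(t)$ term. The computation above shows it does not, since only $r=1$ and $r=2$ contribute linearly.
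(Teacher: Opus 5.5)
Your proof is correct and follows the paper's argument. It uses the same split via the classification, the same application of \Cref{lem:liedata} and the same sum over untwisted ranks in which the $r=2$ term dominates, and the same $O(r)$ bound on multipliers from the tables of Kleidman--Liebeck. The one difference is the alternating case: you import the minimal projective degree $n-1$ of $\Alt(n)$ for $n\geq9$ to get $n\leq t+1$, whereas the paper gets $n=O(t)$ more cheaply from the order bound $n!/2\leq|E|\leq\exp(O(t\log t))$ of \Cref{lem:basic}. Separately, in the paper's untwisted-rank convention ${}^2B_2$ and ${}^2G_2$ have $r=2$ rather than $1$, but this changes nothing, since $q=O(t)$ in either case.
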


\begin{proof}
Let $S\in\mathcal{S}(t)$, say $S=E/Z(E)$ with $E\leq\GL_t(\C)$ quasisimple and irreducible. If $S=\Alt(n)$ with $n\geq5$ then $n!/2=|S|\leq|E|\leq\exp(O(t\log t))$ by \Cref{lem:basic}, so that $n=O(t)$ and there are $O(t)$ such $S$. The sporadic groups contribute $O(1)$. Otherwise $S$ is of Lie type over $\F_q$ of untwisted rank $r$, and \Cref{lem:liedata} gives $q=O(t)$ when $r=1$ and $q^{r-1}\leq c_0t$ when $r\geq2$, for some constant $c_0$, together with $r\leq\log_2t+O(1)$. Such an $S$ is determined by its family, its rank and $q$, and there are boundedly many families of each rank, so the number of these is at most a constant times
\[
    O(t) + \sum_{r=2}^{O(\log t)}\#\left\{q \ \middle| \ q^{r-1}\leq c_0t\right\}
    \leq O(t)+\sum_{r=2}^{O(\log t)}(c_0t)^{1/(r-1)}
    = O(t) .
\]
Here the first term counts the groups of rank one, while the sum is dominated by the term counting groups of rank two.

For the multipliers, we use \cite[Th.~5.1.4]{KL90}. The multiplier of an alternating group has order at most $6$, and that of a sporadic group is $O(1)$. For a group of Lie type of untwisted rank $r$, the multiplier has order at most $4(r+1)$, apart from a finite list of exceptions of order at most $48$. So $|M(S)|=O(r)=O(\log t)$ by \Cref{lem:liedata}.
\end{proof}

\subsubsection{Counting through the covering groups}

We now count the irreducible quasisimple subgroups $E\leq\GL_t(\C)$ up to conjugacy. The idea is to push each of them into the universal covering group of its simple quotient, where it becomes a representation of a single group and can be counted there. For a finite simple group $S$, we write $\widehat S$ for its universal covering group, and $\Irr_t(\widehat S)$ for the set of irreducible representations of $\widehat S$ of degree $t$ up to equivalence.

\begin{lemma}\label{lem:inflate}
For all $t\geq2$ we have
\[
    \qs(t) \leq \sum_{S\in\mathcal{S}(t)}|\Irr_t(\widehat S)| .
\]
\end{lemma}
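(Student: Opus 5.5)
The plan is to construct a map from the set of conjugacy classes of irreducible quasisimple subgroups $E\leq\GL_t(\C)$ into the disjoint union over $S\in\mathcal S(t)$ of the sets $\Irr_t(\widehat S)$, and then check that this map is injective. Let $E\leq\GL_t(\C)$ be such a subgroup and put $S=E/Z(E)$, so $S\in\mathcal S(t)$ by definition. Since $E$ is perfect, it is a perfect central extension of $S$. By the universal property of the universal covering group there is a surjection $\theta\colon\widehat S\twoheadrightarrow E$ lifting the identity of $S$, with central kernel. Composing with the inclusion gives a representation $\rho_E=\iota\circ\theta\colon\widehat S\to\GL_t(\C)$. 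It is irreducible because its image is $E$, which acts irreducibly, and it has degree $t$. We send the class of $E$ to the pair consisting of $S$ and the equivalence class of $\rho_E$ in $\Irr_t(\widehat S)$.

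The first thing to verify is that this map is well defined, since $\theta$ is a choice. Two choices of $\theta$ differ by an automorphism of $\widehat S$ that induces the identity on $S$, and I will use that such an automorphism is trivial. Indeed, if $\alpha(x)=x z(x)$ with $z$ central, then $z$ is a homomorphism to the centre. Since $\widehat S$ is perfect, $z$ is trivial. So $\rho_E$ depends only on $E$. Conjugating $E$ by $g\in\GL_t(\C)$ replaces $\rho_E$ by $g\rho_E g^{-1}$, which is equivalent to it. Hence the map is defined on conjugacy classes.

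Injectivity is immediate, because $E$ is recovered as the image $\rho_E(\widehat S)$. Equivalent representations have conjugate images, so the pair determines the conjugacy class of $E$. Summing over $S\in\mathcal S(t)$ then gives the inequality. The only step that needs any care is the well-definedness, which rests on the uniqueness of the lift through $\widehat S$. Even this could be avoided: we could choose one $\theta$ for each $E$ arbitrarily, and injectivity of the resulting assignment would still hold, since the image of the representation does not depend on the choice. So I do not expect any real obstacle.
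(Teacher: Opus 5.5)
Your proof is correct and is essentially the paper's argument: realize $E$ as the image of an irreducible degree-$t$ representation of $\widehat S$, and note that equivalent representations have conjugate images, so $S$ together with the equivalence class determines the conjugacy class of $E$. As you observe yourself, the well-definedness step is unnecessary, since an arbitrary choice of lift for each $E$ already gives an injective assignment. It would in any case have to address the choice of identification $E/Z(E)\cong S$, which is only determined up to $\Aut(S)$.
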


\begin{proof}
Let $E\leq\GL_t(\C)$ be quasisimple and irreducible, and put $S=E/Z(E) \in \mathcal S(t)$. Since $E$ is a perfect central extension of $S$, it is a quotient of $\widehat S$. Composing the quotient map with the inclusion $E\leq\GL_t(\C)$ gives an irreducible representation of $\widehat S$ of degree $t$ whose image is $E$. Equivalent representations have conjugate images, so the conjugacy class of $E$ is determined by $S$ and the equivalence class of this representation. Summing over $S$ gives the claim.
\end{proof}

It now remains to bound $|\Irr_t(\widehat S)|$ for each $S\in\mathcal S(t)$. A first bound is $|\Irr_t(\widehat S)|\leq|\Irr(\widehat S)|$, the number of conjugacy classes of $\widehat S$. For the groups of Lie type this bound is good enough, as the proof below shows. For the alternating groups it is not. Indeed, for $t\geq4$ the group $\Alt(t+1)$ lies in $\mathcal S(t)$ through its standard representation of degree $t$. On the other hand, $|\Irr(\Alt(t+1))|$ is roughly half the number of partitions of $t+1$, so it grows like $\exp(\Theta(\sqrt t))$, which is far from polynomial in $t$. For the alternating groups we therefore need a count of the representations of small degree alone, which we extract from Guralnick, Larsen and Tiep \cite{GLT12}.

\begin{lemma}\label{lem:repcount}
Let $t\geq2$ and let $S\in\mathcal{S}(t)$. Then $|\Irr_t(\widehat S)|=t^{O(1)}$.
\end{lemma}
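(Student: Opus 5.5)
We split according to the type of $S\in\mathcal S(t)$, as the classification allows: sporadic, alternating, or of Lie type.

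\emph{Sporadic groups.} Here $\widehat S$ is one of finitely many groups, so $|\Irr_t(\widehat S)|\leq\max_S|\Irr(\widehat S)|=O(1)$.

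\emph{Groups of Lie type.} We use the crude bound $|\Irr_t(\widehat S)|\leq|\Irr(\widehat S)|$, the number of conjugacy classes of $\widehat S$. Since $\widehat S$ is a central extension of $S$ by $M(S)$, each class of $S$ lifts to at most $|M(S)|$ classes of $\widehat S$. Hence
\[
    |\Irr(\widehat S)|\leq|M(S)|\cdot k(S),
\]
where $k(S)$ is the number of conjugacy classes of $S$. By \Cref{lem:simplecount}, $|M(S)|=O(\log t)$. The bound of Fulman and Guralnick \cite{FG12} gives $k(S)\leq q^{O(r)}$ for $S$ of untwisted rank $r$ over $\F_q$; the exceptional cases with $r$ bounded are absorbed into the constant. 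By \Cref{lem:liedata}, $r\log q=O(\log t)$, so $k(S)=t^{O(1)}$ and therefore $|\Irr_t(\widehat S)|=t^{O(1)}$.

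\emph{Alternating groups.} Let $S=\Alt(n)$, with $n=O(t)$ by \Cref{lem:simplecount}. For $n\leq 7$, $\widehat S$ is one of finitely many groups and there is nothing to prove. For $n\geq 8$, $\widehat S=2.\Alt(n)$, and $\Irr(\widehat S)$ splits into two kinds.
\begin{itemize}
\item The characters of $\Alt(n)$ itself. Each is a constituent of the restriction of some $\chi^\lambda\in\Irr(\Sym(n))$, and each such restriction has at most two constituents. So it suffices to count partitions $\lambda\vdash n$ with $\chi^\lambda(1)\leq 2t$.
\item The faithful (spin) characters. Each is a constituent of the restriction of a spin character of $2.\Sym(n)$, labelled by a strict partition of $n$. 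Again at most two constituents arise from each label, so it suffices to count strict partitions of degree at most $2t$.
\end{itemize}

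The main obstacle is counting the partitions of $n$ whose character degree is at most $2t$ while $n$ itself is as large as $O(t)$. The tool is the Guralnick--Larsen--Tiep estimate \cite{GLT12}, which bounds $\chi^\lambda(1)$ from below in terms of $\min(\lambda_1',\,n-\lambda_1)$ (and in terms of the analogous parameter for spin characters). It gives, for some constant $c>0$,
\[
    \chi^\lambda(1)\geq\binom{n}{s}^{c}\quad\text{when } s=\min(n-\lambda_1,\,n-\lambda_1')\leq n/2 .
\]
If $\chi^\lambda(1)\leq 2t$ and $n$ is large, this forces either $s=O(\log t/\log n)$, or $n=O(\log t)$.
\begin{itemize}
\item In the first case, $\lambda$ is determined by which of $\lambda$, $\lambda'$ has the long first row together with a partition of $s$. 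This gives at most $2p(s)\leq 2\cdot 2^{s}$ choices, and summing over $s$ yields $t^{O(1)}$.
\item In the second case, all $p(n)\leq\exp(O(\sqrt{n}))$ partitions are allowed, and $\exp(O(\sqrt{\log t}))=t^{o(1)}$.
\end{itemize}
The spin case is handled the same way, since a spin character of $2.\Alt(n)$ has degree at least $2^{\lfloor (n-2)/2\rfloor}$. A degree of at most $2t$ therefore forces $n=O(\log t)$, leaving $\exp(O(\sqrt{\log t}))$ strict partitions.

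Summing over the two kinds shows that the alternating groups also contribute $t^{O(1)}$, which completes the lemma.
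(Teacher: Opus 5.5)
Your treatment of the sporadic groups and the groups of Lie type is exactly the paper's: $|\Irr_t(\widehat S)|\le|M(S)|\cdot|\Irr(S)|$, with $|M(S)|=O(\log t)$ and the Fulman--Guralnick bound $|\Irr(S)|\le 15.2\,q^r=t^{O(1)}$ via $r\log q=O(\log t)$. The alternating groups are where you differ. The paper does no partition analysis and simply quotes \cite[Th.~5.3]{GLT12}, which gives $|\Irr_t(\widehat S)|\le t^{2.5}$ for the universal cover of $\Alt(n)$ directly. Since you cite that paper anyway, your count re-derives a weaker form of this theorem. It is more self-contained, needing only a hook-length lower bound for $\chi^\lambda(1)$ and the minimal degree of a spin character. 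But it is longer, and one step is wrong as written.

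\emph{The failing step.} The dichotomy ``either $s=O(\log t/\log n)$ or $n=O(\log t)$'' is false. Take $\lambda=(n-s,s)$ with $s=\lfloor n/\log n\rfloor$ and $t=\chi^\lambda(1)$. Then $\log t=(1+o(1))\,n\log\log n/\log n$, so both $n/\log t\to\infty$ and $s\log n/\log t\to\infty$.

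\emph{The repair.} You do not need the dichotomy. For $1\le s\le n/2$ you have $\binom ns\ge(n/s)^s\ge 2^s$, so $\binom ns^c\le 2t$ already gives $s=O(\log t)$. Hence each of the $O(\log t)$ admissible values of $s$ contributes at most $2p(s)\le 2^{s+1}=t^{O(1)}$ partitions.

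\emph{A missing case.} Your displayed estimate says nothing when $\lambda_1,\lambda_1'<n/2$, that is, when $s>n/2$. This case must be handled separately, for instance by branching. Assume $\lambda_1\ge\lambda_1'$ and remove cells outside the first row until you reach $\nu\subseteq\lambda$ with $\nu_1=\lambda_1=|\nu|/2$. Then $\chi^\lambda(1)\ge\chi^\nu(1)\ge\binom{2\lambda_1}{\lambda_1}^c$. Since $n\le\lambda_1\lambda_1'\le\lambda_1^2$, you have $\lambda_1\ge\sqrt n$, so this forces $n=O(\log^2 t)$. The bound $p(n)=\exp(O(\sqrt n))=t^{O(1)}$ then still suffices.

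With these two fixes the alternating case, and hence the lemma, goes through. Your spin case is fine as it stands.
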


\begin{proof}
Suppose first that $S=\Alt(n)$ with $n\geq5$. By \cite[Th.~5.3]{GLT12}, the universal covering group $\widehat S$ has $|\Irr_t(\widehat S)|\leq t^{2.5}$. In all other cases we use the bound $|\Irr_t(\widehat S)|\leq|\Irr(\widehat S)|$. Every conjugacy class of $\widehat S$ lies over a class of $S$, and the fibre over one class consists of at most $|Z(\widehat S)|=|M(S)|$ classes. Hence
\[
    |\Irr_t(\widehat S)| \leq |\Irr(\widehat S)| \leq |M(S)|\cdot|\Irr(S)| .
\]
For $S$ sporadic, both factors are $O(1)$. For $S$ of Lie type of untwisted rank $r$ over $\F_q$, we have $|M(S)|=O(\log t)$ by \Cref{lem:simplecount}. Moreover, $|\Irr(S)|\leq15.2\cdot q^r$ by Fulman and Guralnick \cite[Cor.~1.2]{FG12}, and
\[
    q^r = \exp(r\log q) = \exp\left(O(\log t)\right) = t^{O(1)}
\]
by \Cref{lem:liedata}. So $|\Irr_t(\widehat S)|=t^{O(1)}$ in every case.
\end{proof}

By combining the preceding three lemmas, we obtain the desired bound on $\qs(t)$.

\begin{proposition}\label{prop:lambda}
There is an absolute constant $C$ such that $\qs(t)\leq t^C$ for all $t\geq2$. Consequently $\QS(m)\leq m^C$ for all $m\geq2$.
\end{proposition}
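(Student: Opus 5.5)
The first claim follows by combining \Cref{lem:inflate}, \Cref{lem:simplecount} and \Cref{lem:repcount}. By \Cref{lem:inflate}, $\qs(t)$ is at most the sum of $|\Irr_t(\widehat S)|$ over $S\in\mathcal S(t)$. By \Cref{lem:simplecount} this sum has $O(t)$ terms, and by \Cref{lem:repcount} each term is at most $t^{C_1}$ for an absolute constant $C_1$. Hence $\qs(t)\leq C_0t^{C_1+1}$ for absolute constants $C_0,C_1$. Since $t\geq2$, we have $C_0\leq t^{\log_2C_0}$, so the constant is absorbed and $\qs(t)\leq t^C$ with $C=C_1+1+\log_2C_0$.

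For the second claim, take integers $m_j\geq2$ with $\prod_jm_j\leq m$. Applying the first claim to each factor gives
\[
    \prod_j\qs(m_j)\leq\prod_jm_j^C=\Bigl(\prod_jm_j\Bigr)^C\leq m^C .
\]
Taking the maximum over all such lists gives $\QS(m)\leq m^C$. The empty product equals $1\leq m^C$, so empty lists cause no trouble.

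The only delicate point is that the bound must be of the exact form $t^C$, with no multiplicative constant. A bound $\qs(t)\leq C_0t^{C'}$ would become $C_0^{f}m^{C'}$ after multiplying over the $f$ factors. This is still polynomial in $m$, since $f\leq\log_2m$, but the clean form avoids the issue. That is why the constant is absorbed using $t\geq2$ before taking products.
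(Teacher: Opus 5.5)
Your proof is correct and is essentially the paper's own argument: \Cref{lem:inflate,lem:simplecount,lem:repcount} give $\qs(t)\leq|\mathcal S(t)|\cdot t^{O(1)}=O(t)\cdot t^{O(1)}\leq t^C$, and then the bound $\prod_jm_j\leq m$ gives $\QS(m)\leq m^C$. Your explicit absorption of the multiplicative constant using $t\geq2$ is the step the paper leaves implicit in the phrase ``for a suitable constant $C$''.
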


% \begin{proof}
% By \Cref{lem:inflate,lem:simplecount,lem:repcount}, we have $\qs(t)\leq|\mathcal S(t)|\cdot t^{O(1)}=O(t)\cdot t^{O(1)}$, which is at most $t^C$ for a suitable constant $C$. For integers $m_j\geq2$ with $\prod_jm_j\leq m$, it follows that $\prod_j\qs(m_j)\leq\left(\prod_jm_j\right)^C\leq m^C$.
% \end{proof}

\subsection{Unrolling the recursion}\label{sub:proofcount}

We now unroll the recursion of \Cref{lem:recursion}, using \Cref{prop:lambda} at each level.

\begin{proof}[Proof of \Cref{thm:count}]
Fix $m$ and unroll \Cref{lem:recursion} down to $\qp(1)=1$, choosing at each step a divisor that realizes the maximum. This produces a chain $m=w_0>w_1>\dots>w_h=1$ in which each $w_{i+1}$ divides $w_i$, and is therefore at most $w_i/2$, so $h\leq\log_2m$. Every $w_i$ divides $m$, so $\mu(w_i)\leq\mu(m)$ and $\log w_i\leq\log m$, and hence $\nu(w_i)\leq\nu(m)$. By \Cref{prop:lambda}, the $h$ steps give
\[
    \log\qp(m) \leq \sum_{i=0}^{h-1}\left(C\log w_i+O\left(\nu(w_i)\log w_i\right)\right)
    = O\left(\nu(m)\log^2m\right) .
\]
Unwinding $\nu$, this shows that the quasiprimitive subgroups of $\PGL_m(\C)$ form at most $\exp\left(O\left((\mu(m)+\log\log m)\log^3m\right)\right)$ conjugacy classes. Summing over the degrees $m'\leq m$ costs a factor $m$ and replaces $\nu(m')$ by $\max_{m'\leq m}\nu(m')=O(\log^2m)$, which gives $\log \QP(m)=O(\log^4m)$.
\end{proof}

With the count of quasiprimitive subgroups in hand, we can return to \Cref{prop:reduction}, and \Cref{thm:main} follows by absorbing the factor $\QP(k)=\exp(O(\log^4k))$ into the error term.

\begin{remark}\label{rem:truth}
We do not expect \Cref{thm:count} to be sharp. Its four powers of $\log m$ all come from the recursion, one from the number of levels, one from the generation bound of \Cref{lem:gen}, and two from $\Aut(A)$ in \Cref{lem:norm}. The bound on $|\mathcal N/\bar R|$ behind the last two is sharp in order of magnitude. For $m=2^n$ with a single quasicomponent $2^{1+2n}$, the quotient $\mathcal N/\bar R\cong\Sp_{2n}(2)$ has order $\exp(\Omega(\log^2 m))$. So any improvement has to count the relevant subgroups of $\Sp_{2n_i}(p_i)$ directly rather than through their generators. 
% The components are not the obstacle. By Liebeck and Shalev \cite[Cor.~1.2]{LS07}, for almost all $t$ the only irreducible representation of degree $t$ of a quasisimple group is the standard representation of $\Alt(t+1)$, so $\qs(t)=1$ for almost all $t$. 
% In the other direction, the following example shows that $\log\qp(m)=\Omega(\log^2m/\log\log m)$ for infinitely many $m$.
\end{remark}

\subsection{Comparison with permutation groups and finite fields}\label{sub:permutation}

We now compare our argument with that of Jaikin-Zapirain and Pyber \cite{JZP11}. Their count comes from permutation groups. Pyber and Shalev \cite{PS97} counted the primitive subgroups of $\Sym(n)$ up to conjugacy, and the expensive ones turn out to be the affine groups $\F_p^b\rtimes\Gamma$ with $n=p^b$ and $\Gamma\leq\GL_b(\F_p)$ irreducible. This turns the problem into one about linear groups over $\F_p$. Jaikin-Zapirain and Pyber reduce it further to primitive linear groups, and show that the $d$-generated primitive subgroups of $\GL_b(\F_p)$ fall into $n^{O(d)}$ conjugacy classes \cite[Prop.~5.9]{JZP11}.

This bound is measured in the size $n=p^b$ of the space rather than in the dimension $b$, and this is the natural scale for the problem. By \cite[Prop.~1.7]{PS97}, already for $p=3$ there are $\exp(\Omega(b^2/\log b))$ irreducible subgroups of $\GL_b(\F_p)$ up to conjugacy, for infinitely many $b$. Measured in $b$, the logarithm of this count is far from polylogarithmic. Measured in $n$, it is $\Omega(\log^2n/\log\log n)$, which is polylogarithmic in $n$, just as the logarithm of our bound is polylogarithmic in $m$. Because $n$ is exponential in the dimension, a power of $n$ is also a generous target, and this is what the differences below come down to. Over $\C$ the situation is different. By Jordan's theorem \factref{J}, a quasiprimitive subgroup of $\PGL_m(\C)$ has order at most $J(m)$, so there are finitely many of them in each degree, and it makes sense to bound their number in terms of $m$ alone.

To compare the two counts, we transcribe their setting into our notation. Let $\Gamma\leq\GL(V)$ be a primitive subgroup with $V=\F_p^b$, so that $|V|=n$, and let $R=F^*(\Gamma)$ be its generalized Fitting subgroup. (Jaikin-Zapirain and Pyber write $P$ for $\Gamma$ and $W$ for $V$.) We suppress two technical points. The tensor products below are taken over the field $F=Z(\End_R(V))$ rather than over $\F_p$, and $\Gamma$ may act on them $F$-semilinearly. Neither affects the comparison.

Both counts record the group through the same four pieces of data, and the main difference lies in how the tensor factors are found. Each step of their induction splits off one tensor factor, and the structure of $R$ is used only at the end. Each step of our recursion removes the whole $R$-part at once, and recurses only on the multiplicity space, which $R$ does not see. We now go through the pieces and explain which steps carry over and which do not. \Cref{tab:steps} summarizes this.

\begin{table}[ht]
\centering
\small
\begin{tabular}{@{}llll@{}}
\toprule
 & over $\F_p$ & over $\C$ & transfers \\
\midrule
structure of $R$ & \cite[Lem.~5.2]{JZP11}, at the end & \factref{C}, at the start & yes \\
(1) the degrees & one split per step & all at once & changed \\
(2) the group $\bar R$ & crude, powers of $n$ & \Cref{prop:lambda} & no \\
(3) the group $Q_2$ & induct on larger factor & recurse on $W$ & changed \\
(4) the group $Q$ & generators in $N/R$ & generators in $\Lambda$ & two adjustments \\
\bottomrule
\end{tabular}
\bigskip
\caption{The count over $\F_p$ following \cite{JZP11}, and over $\C$.}
\label{tab:steps}
\end{table}

\emph{Pieces (1) and (3), the degrees and the group $Q_2$.} These are the pieces that handle the tensor decompositions, and this is where the two arguments differ most. The structure is the same in both settings. Over $\F_p$ too, $V$ is the tensor product of an irreducible $R$-module and a multiplicity space \cite[Lem.~5.1]{JZP11}, and the irreducible module splits over the factors of $R$. What differs is how the count uses this.

Over $\F_p$, the tensor decompositions are handled by induction on the dimension, in Family~2 of the proof of \cite[Prop.~5.9]{JZP11}. Suppose that $|\Gamma|$ is at most a fixed power of $n$ (the few larger primitive groups are identified by name \cite[Prop.~5.7]{JZP11}), and that $\Gamma$ preserves a decomposition $V=V_1\otimes V_2$ with $2\leq\dim V_1\leq\dim V_2$. Then $\Gamma$ lies in a tensor product $X\otimes Y$ of primitive groups $X\leq\GL(V_1)$ and $Y\leq\GL(V_2)$. They count $Y$ by induction and $X$ crudely, and then locate $\Gamma$ inside $X\otimes Y$ through $d$ generators, which is cheap because $X\otimes Y$ has order $n^{O(1)}$. So piece (1) is the choice of $\dim V_1$ and $\dim V_2$, and piece (3) is the inductive count of $Y$. The structure lemma \cite[Lem.~5.2]{JZP11} is needed only at the bottom, for a group that preserves no tensor decomposition at all, which is their Family~3.

Over $\C$, the same induction fails at the last step. Our target $\exp(O(\log^4m))$ only pays for choosing $O(\log m)$ elements from a group of order $\exp(O(\log^2m))$, as in \Cref{lem:group}. But $X\otimes Y$ can be far larger, for instance when $X$ contains $\Alt(a+1)$ acting through its standard representation of degree $a=\dim V_1$, which has order $\exp(\Theta(a\log a))$. So we locate $Q$ only modulo its large normal subgroup $\bar R$, above which everything is small by \Cref{lem:norm}. This requires knowing $\bar R$ in full, and then all of its tensor factors come with it. That is why \Cref{lem:shape} takes all tensor factors of $U$ at once, and why piece (1) chooses all of their degrees at once, in \Cref{lem:degrees}. Piece (2) then records $\bar R$, and piece (4) records $Q$ inside the small group $\Lambda$.

The only thing left to recurse on is the multiplicity space $W$. Its group $Q_2$ is a quotient of $Q/\bar R$, and it is quasiprimitive of degree $w\leq m/2$ by \Cref{lem:mult}. So the recursion runs through $Q_2$ alone.

\emph{Piece (2).} This is the piece that does not transfer, again because of the targets. Over $\F_p$, once the large groups of \cite[Prop.~5.7]{JZP11} are set aside, the factors of $R$ can be counted crudely through their orders, within $n^{O(1)}$. Over $\C$, a quasisimple linear group of degree $t$ can have order $\exp(\Theta(t\log t))$, so a count through orders is far too expensive, and we go through the covering groups instead, as in \Cref{sub:components}.
%\footnote{A count of the same kind appears for maximal subgroups over finite fields. Guralnick, Larsen and Tiep \cite[Th.~6.3]{GLT12}, sharpening Liebeck, Martin and Shalev \cite[Th.~1.3]{LMS05}, bound the number of conjugacy classes of almost simple irreducible maximal subgroups of a classical group of rank $r$ over $\F_q$ by $O(r^6+r\log\log q)$. Their treatment of the alternating groups goes through the projective representations of $\Alt(n)$ of small degree, just as \Cref{lem:repcount} does.} 
The extraspecial factors cause no such problem, and cost $m^2$ in \Cref{lem:factors}.

\emph{Piece (4).} Over $\F_p$, once $R$ is fixed up to conjugacy, the quotient $\Gamma/R$ is a $d$-generated subgroup of $N_{\GL(V)}(R)/R$, a group of order $n^{O(1)}$. Over $\C$, the group $\Lambda$ of \Cref{lem:group} takes its place, with two adjustments. First, $\mathcal N$ has to be shown to be finite, which is automatic over a finite field, and this is \Cref{lem:norm}. Second, the generation bound of \cite{LMM00} used in \cite{JZP11} is for permutation groups, and we replace it by the bound $O(\log m)$ for primitive subgroups of $\GL_m(\C)$ \cite[Th.~B]{LMM01}, which is \Cref{lem:gen}.

\section{Groups without large alternating sections}\label{sec:altfree}

The prefactor in \Cref{thm:main} is $\exp((d-1)k\log k+O(dk))$, and its factor $\log k$ has a clear source. In the proof of \Cref{prop:reduction}, everything costs $\exp(O(dk))$ except two factorials, Jordan's bound $|Q|\leq J(m)$ and the count $j(j!)^{d-1}$ of subgroups of index $j$. Both bound the image of a homomorphism, onto $Q$ in the first case and into $\Sym(j)$ through the action on cosets in the second, and in both cases the worst image is a symmetric group. These images are sections of $G$. 
So if $G$ has no large alternating sections, we can hope to replace both factorials by exponentials, and thereby remove the logarithm.
This is what we do in this section. For the subgroups, we rely on the following input of Lubotzky and Segal.

\begin{itemize}
\fact{G} \emph{Exponential subgroup growth.} Let $c\geq4$ and $d\geq2$, and let $\mathcal{A}_c$ be the class of finite groups with no section isomorphic to $\Alt(c+1)$. By \cite[Th.~3.3, Cor.~3.4]{LS03}, the free pro-$\mathcal{A}_c$ group on $d$ generators has at most $2^{(\sigma+o(1))n}$ open subgroups of index at most $n$, where $\sigma=(d-1)\log_2(c!)/(c-1)$. Every $d$-generated group in $\mathcal{A}_c$ is a quotient of it, and so satisfies the same bound.
\end{itemize}

For the quasiprimitive quotients, we prove the counterpart in \Cref{sub:altfree}. Combining the structure theory of \Cref{sub:structure} with the linear version of the theorem of Babai, Cameron and P\'alfy, we show in \Cref{prop:qaltfree} that $|Q|\leq m^{O_c(1)}$. With these two bounds in place, the proof of \Cref{prop:reduction} goes through as before and gives \Cref{thm:altfree}. Finally, \Cref{sub:altfreegap} shows that \Cref{thm:altfree} is sharp, with groups in this class whose gap function grows exponentially, and \Cref{sub:monomialsharp} uses the same construction to show that \Cref{thm:monomial} is sharp.

\subsection{Removing the logarithm}\label{sub:altfree}

The subgroups are taken care of by \factref{G}, so it remains to bound the quasiprimitive quotients. The bound we need is the linear counterpart of a theorem of Babai, Cameron and P\'alfy \cite{BCP82}, see also \cite[Cor.~IV]{PS97}, which says that a primitive permutation group without sections isomorphic to $\Alt(c+1)$ has order polynomial in its degree.

\begin{proposition}\label{prop:qaltfree}
Let $c\geq4$ and let $Q\leq\PGL_m(\C)$ be a quasiprimitive subgroup that is $\Alt(c+1)$-free. Then $|Q|\leq m^{O_c(1)}$.
\end{proposition}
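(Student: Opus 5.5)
We use the structure of \Cref{sub:structure}. Write $Q$ as the image of a primitive $\Gamma\leq\GL_m(\C)$, with $R=F^*(\Gamma)$ and $\bar R=R/Z\trianglelefteq Q$, and bound $|Q|=|\bar R|\cdot|Q/\bar R|$. The factor $|Q/\bar R|$ is at most $\exp(O(\nu(m)))$ by \Cref{lem:norm}, but this is superpolynomial, so it has to be sharpened using $\Alt(c+1)$-freeness. The factor $|\bar R|$ splits as $|A|\cdot\prod_j|S_j|$, with $A=\prod_i\F_{p_i}^{2n_i}$ of order at most $u^2\leq m^2$.

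\emph{The components.} Each $S_j=E_j/Z(E_j)$ is a section of $Q$, so it is $\Alt(c+1)$-free and has a faithful projective representation of degree $m_j$. If $S_j=\Alt(n)$, then $n\leq c$, so $|S_j|=O_c(1)$. Sporadic groups are bounded. If $S_j$ is of Lie type of untwisted rank $r$ over $\F_q$, then it contains sections $\Alt(n)$ with $n$ growing linearly in $r$; for example, Weyl group sections contain $\Sym(r+1)$ or similar. So $\Alt(c+1)$-freeness forces $r=O_c(1)$. Then $|S_j|\leq q^{O(r^2)}=q^{O_c(1)}$. Since $m_j\geq\hat\ell(S_j)\geq cq^{\max(1,r-1)}$ by the Seitz--Zalesskii bounds used in \Cref{lem:liedata}, we get $|S_j|\leq m_j^{O_c(1)}$. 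Multiplying over $j$ and using $\prod_jm_j\leq m$ gives $|\bar E|\leq m^{O_c(1)}$.

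\emph{The quotient $Q/\bar R$.} As in the proof of \Cref{lem:norm}, $|Q/\bar R|\leq |A|\cdot|\mathrm{im}(Q\to\Out(\bar R))|$. The image lies in $\Aut(A)\times\Out(\bar E)$ and is $\Alt(c+1)$-free.

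For $\Out(\bar E)$, the part inside $\prod_j\Out(S_j)$ has order $\prod_j O(\log|S_j|)\leq m^{O(1)}$. The permutation part is an $\Alt(c+1)$-free subgroup of $\Sym(s)$ with $s\leq\log_2m$. By Babai--Cameron--P\'alfy, or already by the crude bound $c^{s}$ from the composition-factor argument for permutation groups with bounded alternating sections, it has order at most $c^{O(s)}=m^{O_c(1)}$.

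For $\Aut(A)$, the image of $Q$ lies in $\prod_i\Sp_{2n_i}(p_i)$, as noted in the comment after \Cref{lem:norm}, and acts on each $\F_{p_i}^{2n_i}$. This is the main obstacle. We would use the linear version of Babai--Cameron--P\'alfy, namely that an $\Alt(c+1)$-free subgroup of $\GL_b(\F_p)$ has order at most $p^{O_c(b)}$ (Pyber--Shalev, \cite[Cor.~IV]{PS97}, via the affine primitive group $\F_p^b\rtimes X$ of degree $p^b$, reduced to the primitive case by standard arguments). With $b=2n_i$, this gives order at most $p_i^{O_c(n_i)}$. The product over $i$ is then at most $(\prod_ip_i^{n_i})^{O_c(1)}\leq m^{O_c(1)}$.

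Multiplying the bounds for $|A|$, $|\bar E|$, the centralizer factor and the two outer parts gives $|Q|\leq m^{O_c(1)}$. The delicate points are verifying that Lie type rank is bounded under $\Alt(c+1)$-freeness, and citing the correct form of the linear Babai--Cameron--P\'alfy bound for the symplectic action on $A$.
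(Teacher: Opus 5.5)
Most of your outline matches the paper's proof. You use the same splitting $|Q|=|\bar R|\cdot|Q/\bar R|$, the same bound $|C_Q(\bar R)|\leq|A|\leq m^2$, and the same handling of the permutation action on the $S_j$. For the components you justify bounded rank through Weyl group sections, where the paper uses the class $\mathcal B_{c'}$ of Lubotzky--Segal; either works.

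\textbf{The gap is in the step for $\Aut(A)$.} The bound you invoke, that an $\Alt(c+1)$-free subgroup of $\GL_b(\F_p)$ has order $p^{O_c(b)}$, is false as stated. The unitriangular group in $\GL_b(\F_p)$ is a $p$-group of order $p^{b(b-1)/2}$. A Sylow $p$-subgroup of $\Sp_{2n}(p)$ has order $p^{n^2}$, so passing to the symplectic group does not help. The reduction through the affine group $\F_p^b\rtimes X$ only works when $X$ is irreducible, since otherwise that permutation group is not primitive. From there you can reach completely reducible $X$ by splitting into irreducible summands, but no further. The loss is not harmless. For a single quasicomponent $2^{1+2n}$ with $m=2^n$, a bound like $2^{n^2}$ equals $m^{\log_2 m}$, which is superpolynomial.

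\textbf{The missing idea is complete reducibility.} You need that $A_i=\F_{p_i}^{2n_i}$ is a completely reducible module for the image $H_i$ of $Q$ in $\GL(A_i)$, and this is where primitivity of $\Gamma$ enters once more.
\begin{enumerate}
\item Since $Z(P_i)$ is an abelian normal subgroup of $\Gamma$, it is central. So conjugation by $\Gamma$ preserves the commutator form on $A_i$ exactly.
\item Suppose $U\leq A_i$ were a nonzero $H_i$-invariant totally isotropic subspace. Its preimage in $P_i$ would be an abelian normal subgroup of $\Gamma$ containing non-scalar elements, which primitivity forbids.
\item Now take a minimal invariant subspace $U$. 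The invariant subspace $U\cap U^\perp$ must then be zero, so $A_i=U\oplus U^\perp$ with the form nondegenerate on $U^\perp$. Induction gives complete reducibility.
\end{enumerate}
Only after this does the Babai--Cameron--P\'alfy bound for completely reducible linear groups, as in \cite[Th.~16.4.5]{LS03}, give $|H_i|\leq|A_i|^{O_c(1)}$. Your final product over $i$ then goes through.
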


\begin{proof}
Let $\Gamma\leq\GL_m(\C)$ be a finite primitive subgroup with image $Q$. For $m=1$ the group $Q$ is trivial, so we may assume that $m\geq2$, and then $\Gamma$ is as in \Cref{subsub:data}. Every section of $Q$ is $\Alt(c+1)$-free, and hence lies in the class $\mathcal B_{c'}$ of \cite[p.~352, Prop.~2]{LS03} for some $c'$ depending only on $c$. Recall from the proof of \Cref{lem:norm} that $|Q/\bar R|\leq|C_Q(\bar R)|\cdot|Q^{\Out}|$, where $Q^{\Out}$ is the image of $Q$ in $\Out(\bar R)=\Aut(A)\times\Out(\bar E)$, and that $|C_Q(\bar R)|\leq|A|\leq m^2$. We bound $|\bar R|$ and the two projections of $Q^{\Out}$ to $\Aut(A)$ and $\Out(\bar E)$.

\emph{The components.} The simple quotients $S_j=E_j/Z(E_j)$ of the components of $\Gamma$ are sections of $Q$, and so they lie in $\mathcal B_{c'}$. By the definition of this class \cite[p.~352]{LS03}, and since a classical group of untwisted rank $r$ has degree at least $r+1$, each $S_j$ is sporadic, alternating of degree at most $c$, or of Lie type of untwisted rank $r=O_c(1)$. In the first two cases $|S_j|=O_c(1)$, which is at most $m_j^{O_c(1)}$ since $m_j\geq2$. In the last case, $E_j$ acts faithfully and irreducibly on a tensor factor of degree $m_j$ by \Cref{lem:shape}(1), so $S_j\in\mathcal S(m_j)$, and \Cref{lem:liedata} gives $r\log q=O(\log m_j)$. Hence $|S_j|\leq q^{O(r^2)}=m_j^{O(r)}=m_j^{O_c(1)}$. Since $\prod_jm_j\leq m$ and $|A|\leq m^2$, we get $|\bar R|=|A|\cdot\prod_j|S_j|\leq m^{O_c(1)}$.

\emph{The action on $\bar E$.} The image of $Q^{\Out}$ in $\Out(\bar E)$ maps to $\Sym(s)$, permuting the factors $S_j$, and the kernel has order at most $\prod_j\lvert\Out(S_j)\rvert=m^{O(1)}$ as in the proof of \Cref{lem:norm}. The image in $\Sym(s)$ is an $\Alt(c+1)$-free permutation group of degree $s\leq\log_2m$, so its order is at most $(c!)^{(s-1)/(c-1)}\leq m^{\log_2(c!)}$ by \cite[Th.~16.4.8]{LS03}.

\emph{The action on $A$.} Write $A_i=\F_{p_i}^{2n_i}$ for the image of $P_i$ in $\bar R$, so that $A=\prod_iA_i$, and let $H_i$ be the image of $Q$ in $\GL(A_i)$. The group $Z(P_i)$ is an abelian normal subgroup of $\Gamma$, hence central, so conjugation by $\Gamma$ preserves the nondegenerate alternating form on $A_i$ given by commutators in $P_i$. No nonzero $H_i$-invariant subspace of $A_i$ is totally isotropic, since its preimage in $P_i$ would be an abelian normal subgroup of $\Gamma$ that is not central, contradicting primitivity. This makes $A_i$ a completely reducible $H_i$-module. Indeed, if $U\leq A_i$ is a minimal invariant subspace, then $U\cap U^\perp$ is invariant and totally isotropic, hence zero. So $A_i=U\oplus U^\perp$ with both summands invariant, the form is nondegenerate on $U^\perp$, and we may repeat the argument inside $U^\perp$.
Thus $H_i$ is a completely reducible subgroup of $\GL_{2n_i}(p_i)$ in the class $\mathcal B_{c'}$, and the theorem of Babai, Cameron and P\'alfy \cite{BCP82}, in the form \cite[Th.~16.4.5]{LS03}, gives $|H_i|\leq|A_i|^{O_c(1)}$. Since the primes $p_i$ are distinct, $\Aut(A)=\prod_i\GL(A_i)$, so the image of $Q^{\Out}$ in $\Aut(A)$ is a subgroup of $\prod_iH_i$, and its order is at most $|A|^{O_c(1)}\leq m^{O_c(1)}$.

Multiplying the four bounds gives $|Q|=|Q/\bar R|\cdot|\bar R|\leq m^{O_c(1)}$.
\end{proof}

We now combine \factref{G} with \Cref{prop:qaltfree} to get rid of the logarithm in the class of $\Alt(c+1)$-free groups.

\begin{proof}[Proof of \Cref{thm:altfree}]
We run the proof of \Cref{prop:reduction} with two changes. First, since $G$ is $\Alt(c+1)$-free and $d$-generated, \factref{G} bounds the number $\sum_ja_j\leq s_k(G)$ of subgroups of index at most $k$ by $\exp(O_{c,d}(k))$, in place of $a_j\leq j(j!)^{d-1}$. Second, every $Q$ that arises is a section of $G$, hence $\Alt(c+1)$-free, so $|Q|\leq m^{O_c(1)}$ by \Cref{prop:qaltfree}. After the division by $|Q|$ from \Cref{lem:orbits}, the surjection $\varphi$ then costs
\[
    |Q|^{d(H)-1} \leq \exp\left(O_c(\log m)\cdot O\left(k(d-1)/m\right)\right) = \exp\left(O_{c,d}(k)\right).
\]
The remaining factors, namely $\QP(k)$, the at most $k^2$ pairs $(j,m)$ and $\ab_j(G)\leq\ab_k(G)$, are as in \Cref{prop:reduction}, and multiplying gives the claim.
\end{proof}

\subsection{The gap function in this class}\label{sub:altfreegap}

Write $\mathcal{A}_{c,d}$ for the class of finite $d$-generated $\Alt(c+1)$-free groups, so that \Cref{thm:altfree} reads $\Delta_{\mathcal{A}_{c,d}}(k)\leq\exp(O_{c,d}(k))$. We now show that this bound is attained for infinitely many $k$, up to the constant in the exponent.

\subsubsection{Powers of simple groups}

In \Cref{ex:altpower} we saw that the general upper bound for the gap function of the class of \emph{all} finite $d$-generated groups was attained by a large power of $\Alt(k+1)$. 
The small representations of a power $S^t$ of a simple group are those that are nontrivial in a single coordinate, and its abelian sections of small index are trivial, by \Cref{prop:powers}(1). Taking $t=t_d(S)$ as large as $d$-generation allows, the gap is thus about $|S|^{d-1}$. For $S=\Alt(k+1)$ this is $(k!)^{d-1}$ up to a factor $\exp(O(dk))$.

The same construction is much weaker in $\mathcal A_{c,d}$, because an $\Alt(c+1)$-free simple group is small compared to the degrees of its representations. The argument for the components in the proof of \Cref{prop:qaltfree} shows that such a group has $|S|=\ell(S)^{O_c(1)}$. So powers of simple groups in $\mathcal A_{c,d}$ only give polynomial lower bounds for the gap function. 
Here is a concrete example. 

\begin{example}\label{ex:psl2power}
Let $p>11$ be a prime with $p\equiv\pm2\pmod5$, and put $S=\PSL_2(p)$ and $k=p-1$. The order of $S$ is prime to $5$. By the character table \cite[Table~5.4]{Bon11}, we have $\ell(S)\in\{(p-1)/2,(p+1)/2\}$, and $S$ has two irreducible representations of degree $\ell(S)$ and $\lfloor(p-1)/4\rfloor$ of degree $p-1$, so at least $k/4$ nontrivial ones of degree at most $k$. By Galois's theorem \cite[Satz~II.8.28]{Hup67}, we have $\iota(S)=p+1$. By \Cref{prop:powers}, the power $S^{t_d(S)}$ has $\ab_k=1$ and $\Rep_k\geq t_d(S)\cdot k/4$. With $\lvert\Out(S)\rvert=2$ and $|S|\geq k^3/2$, this gives
\[
    \Delta_{\mathcal{A}_{4,d}}(k) = \Omega_d\left(k^{3d-2}\right)
    \quad \text{for all } k=p-1 \text{ with } p \text{ as above} .
\]
We return to these groups in \Cref{sub:notsufficient}.
\end{example}

The right way to find extremal examples is to look back at the proof of \Cref{thm:altfree}. There, a representation induced from a primitive one of degree $m$ on a subgroup of index $j$ costs $\exp(O_{c,d}(j))$ for the subgroup by \factref{G}, and $\exp(O_{c,d}(k\log m/m))$ for the surjection onto $Q$. Both costs are largest when $m$ is bounded and $j$ is about $k$. An extremal example should thus have its small representations induced from subgroups of index about $k$, with a primitive part of bounded degree. In light of this, we keep the idea of the construction above (many different quotients each carrying a representation of degree $k$), and change the group that is repeated. In place of a simple group we take a group $G$ whose representation of degree $k$ is \emph{imprimitive}, induced from a subgroup of index about $k$ with a primitive part of bounded degree. Such a group can be exponentially large in $k$, in comparison with the polynomial size of primitive groups by \Cref{prop:qaltfree}.
The model is a wreath product $S\wr T$ with $S$ fixed and $T$ transitive of degree $r$, which we construct in \Cref{subsub:wreath}. We first extend \Cref{prop:powers} to arbitrary groups $G$, and then apply it to these wreath products.

\subsubsection{Free subdirect powers}\label{subsub:freepowers}

Let $G$ be an arbitrary finite group with $d \geq d(G)$. In order to extend \Cref{prop:powers} to $G$,
the abelianization growth of the power must stay small, and in place of the condition $k<\iota(S)$ we use that a group without abelian composition factors has $\ab_k\leq k^{\alpha}$ for an absolute constant $\alpha$ \cite[Th.~3]{Sab23}. Apart from this, the group should have many different quotients isomorphic to $G$. 

Let us consider the freest construction of this sort. Start with a free group $F$ on $d$ generators. Write $\Epi(F,G)$ for the set of surjections $F\twoheadrightarrow G$, and $\mathcal K$ for the set of their kernels, namely the normal subgroups $K\trianglelefteq F$ with $F/K\cong G$. Each $K\in\mathcal K$ is the kernel of exactly $\lvert\Aut(G)\rvert$ surjections, so $|\mathcal K| = \lvert\Epi(F,G)\rvert / \lvert\Aut(G)\rvert$.
The \emph{free subdirect power} of $G$ is the group\footnote{In Hall's paper \cite[Sec.~1.6]{Hal36}, this is the group $F/G(F)$.} 
\[
\Pi_d(G)= \frac{F}{\bigcap_{K\in\mathcal K} K}. 
\]
It is a subdirect power of $G$, every $d$-generated subdirect power of $G$ is a quotient of it, and every $K\in\mathcal K$ gives a different quotient of it isomorphic to $G$. For a nonabelian simple group $S$, the quotient maps embed $\Pi_d(S)$ in $S^{\mathcal K}$, and the images of the generators of $F$ form a $d$-tuple whose coordinate tuples lie in pairwise distinct $\Aut(S)$-orbits, since their kernels are distinct. So $\Pi_d(S)=S^{\mathcal K}$ by \Cref{thm:hall}, and since $|\mathcal K|=t_d(S)$, this is precisely the power $S^{t_d(S)}$ of \Cref{prop:powers}.

\begin{proposition}\label{prop:freepower}
Let $G$ be a finite group without abelian composition factors, let $d\geq d(G)$, and suppose that $G$ has a faithful irreducible representation of degree $k$. 
Let $\alpha$ be the absolute constant of \cite[Th.~3]{Sab23}.
Then $\Pi_d(G)$ is a finite $d$-generated group, every simple section of $\Pi_d(G)$ is a section of $G$, and
\[
    \frac{\Rep_k(\Pi_d(G))}{\ab_k(\Pi_d(G))} \geq \frac{\lvert\Epi(F,G)\rvert}{k^{\alpha}\,\lvert\Aut(G)\rvert}.
\]
\end{proposition}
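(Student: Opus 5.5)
Finiteness and $d$-generation come first. $\Pi_d(G)$ is a quotient of $F$, so it is $d$-generated. The set $\mathcal K$ is finite, since $\Epi(F,G)$ is finite: a surjection is determined by the images of the $d$ generators, giving at most $|G|^d$ choices. The map $F\to\prod_{K\in\mathcal K}F/K$ has kernel $\bigcap_{K\in\mathcal K}K$, so $\Pi_d(G)$ embeds in $G^{\mathcal K}$ and is finite. For the sections, a simple section of a subgroup of $G^{\mathcal K}$ is a composition factor of a subgroup of $G^{\mathcal K}$. Projecting onto coordinates and using a Jordan--H\"older argument along the chain of coordinate kernels, each composition factor of $\Pi_d(G)$ is a composition factor of a subgroup of $G$, that is, a section of $G$. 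In particular, since $G$ has no abelian composition factors, its subgroups might, so I would be careful here. What I actually need is that $\Pi_d(G)$ itself has no abelian composition factors. For this I would argue that $\Pi_d(G)$ is a subdirect product, so it has a normal series whose factors are subdirect-type sections: the intersections $N_i=\Pi_d(G)\cap(\text{kernel of the first } i \text{ projections})$ give quotients $N_{i}/N_{i+1}$ embedding as normal subgroups of $G$. Normal subgroups of a group without abelian composition factors also have none, since their composition factors are among those of $G$. Hence $\Pi_d(G)$ has no abelian composition factors.

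For the denominator, \cite[Th.~3]{Sab23} applies to $\Pi_d(G)$ because it has no abelian composition factors, giving $\ab_k(\Pi_d(G))\leq k^{\alpha}$. (If that theorem carries hypotheses like $d$-generation, these hold.)

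For the numerator, fix a faithful irreducible $\rho$ of $G$ of degree $k$. For each $K\in\mathcal K$, choose an isomorphism $\theta_K\colon F/K\to G$, and pull back $\rho$ along $\Pi_d(G)\twoheadrightarrow F/K\xrightarrow{\theta_K}G$. This gives an irreducible representation $\rho_K$ of degree $k$, whose kernel is exactly the image of $K$ in $\Pi_d(G)$ by faithfulness. Distinct $K$ give distinct kernels, hence nonisomorphic representations. Thus $\Rep_k(\Pi_d(G))\geq|\mathcal K|=|\Epi(F,G)|/|\Aut(G)|$, and dividing by the bound on $\ab_k$ gives the claim.

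The only delicate step is the composition-factor claim. The kernel-distinctness and counting steps are immediate from faithfulness and the formula for $|\mathcal K|$ recorded before the statement.
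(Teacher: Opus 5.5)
Your proposal is correct and follows the paper's proof: the same normal series of coordinate kernels, whose factors are normal subgroups of $G$ because $\Pi_d(G)$ is subdirect, then Sabatini's bound for $\ab_k$, and then the pullbacks of the faithful representation along the $|\mathcal K|$ quotient maps, which have distinct kernels. The one cosmetic difference is the simple-sections claim. You run the coordinate-kernel chain inside an arbitrary subgroup of $G^{\mathcal K}$, so your sentence should refer to composition factors of that subgroup rather than of $\Pi_d(G)$. The paper instead intersects a composition series of $\Pi_d(G)$ with the subgroup, and both arguments work.
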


\begin{proof}
Put $\Pi=\Pi_d(G)$. The set $\mathcal K=\{K_1,\dots,K_s\}$ is finite, and $\Pi$ embeds in $\prod_iF/K_i\cong G^s$, so $\Pi$ is finite. Put $N_j=(K_1\cap\dots\cap K_j)/\bigcap\mathcal K$. Then
\[
    N_{j-1}/N_j \cong (K_1\cap\dots\cap K_{j-1})K_j/K_j ,
\]
which is a normal subgroup of $F/K_j\cong G$. So $\Pi$ has a normal series whose factors are normal subgroups of $G$, and every composition factor of $\Pi$ is a composition factor of $G$. In particular, $\Pi$ has no abelian composition factors, and $\ab_k(\Pi)\leq k^\alpha$ by \cite[Th.~3]{Sab23}. Intersecting a composition series of $\Pi$ with a subgroup shows that every simple section of $\Pi$ is a section of a composition factor of $\Pi$, and hence a section of $G$.

For the representations, fix $K\in\mathcal K$ and compose $\Pi\to F/K\cong G$ with the faithful irreducible representation of $G$. This gives an irreducible representation of $\Pi$ of degree $k$ with kernel $K/\bigcap\mathcal K$. Different $K$ give different kernels, and hence nonisomorphic representations, so $\Rep_k(\Pi)\geq|\mathcal K|$.
\end{proof}

Note that $\lvert\Epi(F,G)\rvert=P_d(G)|G|^d$, where $P_d(G)$ is the probability that $d$ random elements generate $G$. If $Z(G) = 1$, then $\lvert\Aut(G)\rvert=|G|\cdot\lvert\Out(G)\rvert$. The bound is then
\begin{equation}\label{eq:freepower}
    \frac{\Rep_k(\Pi_d(G))}{\ab_k(\Pi_d(G))} \geq \frac{P_d(G)\,|G|^{d-1}}{k^{\alpha}\,\lvert\Out(G)\rvert} .
\end{equation}
So a building block $G$ gives an exponential gap as soon as it is exponentially large compared to the degree of its faithful representation, while $P_d(G)$ and $\Out(G)$ should not spoil it. Following the discussion above, we take a building block whose faithful representation is imprimitive.

\subsubsection{Iterated wreath products}\label{subsub:wreath}

Imprimitive linear groups are subgroups of wreath products. If $G\leq\GL_k(\C)$ permutes $r$ blocks of dimension $\ell=k/r$, then $G$ embeds in $\GL_\ell(\C)\wr\Sym(r)$. We therefore take $W=S\wr_X T$, where $S$ is a nonabelian simple group with a nontrivial irreducible representation $\theta$ of degree $\ell$, and $T\leq\Sym(X)$ is transitive of degree $r=|X|$. Then $W$ acts on the direct sum $V=\bigoplus_{y\in X}V_y$ of $r$ copies of $\C^\ell$, with the base group $S^X$ acting on $V_y$ through $\theta$ applied to the $y$-coordinate, and with $T$ permuting the copies as it permutes $X$. This representation of degree $k=\ell r$ is irreducible\footnote{Indeed, the $V_y$ are irreducible and pairwise non-isomorphic as modules for the base group, so every invariant subspace is a sum of some of the $V_y$, and $T$ permutes them transitively.} and faithful.\footnote{An element acting trivially maps every $V_y$ to itself, so its component in $T$ is trivial, and then its component in the base group is trivial since $\theta$ is faithful.} On the other hand,
\[
    |W| \geq |S|^r = \exp\left(\frac{\log|S|}{\ell}\cdot k\right)
\]
is exponential in $k$ when $\ell, S$ are fixed. The base group is a power of a nonabelian simple group, so it adds no abelian composition factors.
It remains to choose the top group $T$. It has to be a transitive group of unbounded degree, $\Alt(c+1)$-free and without abelian composition factors, which are the same conditions that we impose on $W$. The simplest choice is therefore to iterate the construction, and take for $T$ the previous building block, and thus form an iterated wreath product.
%\footnote{The bound $(c!)^{(n-1)/(c-1)}$ on the order of an $\Alt(c+1)$-free subgroup of $\Sym(n)$, which sets the constant in \factref{G}, is attained for $n=c^h$ by the iterated wreath product $\Sym(c)\wr\dots\wr\Sym(c)$.}

We now make this precise. Fix a prime $p\geq7$ with $p\equiv\pm2\pmod5$, put $S=\PSL_2(p)$, and let $\ell=\ell(S)\leq(p+1)/2$. Then $5\nmid|S|$. Let $X_1=\mathbf P^1(\F_p)$ with its action of $S$, and put $W_1=S$. For $n\geq2$, let
\[
    W_n = S\wr_{X_{n-1}}W_{n-1} = B_n\rtimes W_{n-1} ,
    \qquad B_n = S^{X_{n-1}} ,
\]
be the permutational wreath product, acting on $X_n=X_1\times X_{n-1}$ by $(x,y)^{(b,w)}=(x^{b_y},y^w)$. This action is faithful and transitive (by induction on $n$). Write $r_n=|X_n|=(p+1)^n$. All composition factors of $W_n$ are isomorphic to $S$, and
\[
    |W_n| = |S|^{1+r_1+\dots+r_{n-1}} = |S|^{(r_n-1)/p} .
\]
Moreover, $W_n=S\wr_{X_{n-1}}W_{n-1}$ with $W_{n-1}$ transitive on $X_{n-1}$, so the representation constructed above, with $\theta$ of degree $\ell$, is a faithful irreducible representation of $W_n$ of degree $\ell r_{n-1}$.

The group $W_n$ is generated by two random elements with probability bounded away from $0$, uniformly in $n$. Bhattacharjee \cite{Bha94} proved this for iterated wreath products of alternating groups in their natural actions, and Quick \cite[Th.~A]{Qui06} extended it to iterated wreath products of arbitrary nonabelian simple groups with faithful transitive actions. For our groups, his theorem gives $P_2(W_n)\geq(1-\eta_p)P_2(S)$ for all $n$, where $\eta_p\to0$ as $p\to\infty$. Since also $P_2(S)\to1$, we may fix $p$ large enough so that
\[
    P_d(W_n) \geq P_2(W_n) \geq \frac12
    \quad \text{for all } n\geq1 \text{ and } d\geq2 .
\]
We assume this in \Cref{prop:altfreegap}, where it makes every $W_n$ $2$-generated. \Cref{lem:wreath} and \Cref{sub:notsufficient} hold for every prime $p\geq7$ with $p\equiv\pm2\pmod5$. What remains are two elementary facts.

\begin{lemma}\label{lem:wreath}
Let $n\geq2$.
\begin{enumerate}
    \item $B_n$ is the unique minimal normal subgroup of $W_n$, and $C_{W_n}(B_n)=1$.
    \item $\lvert\Out(W_n)\rvert\leq2^{(r_n-1)/p}(p+1)^{n^2}$.
\end{enumerate}
\end{lemma}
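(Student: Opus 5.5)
For (1), I would first show $C_{W_n}(B_n)=1$ and then deduce uniqueness and minimality from it.

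\emph{The centralizer.} Let $(b,w)$ centralize $B_n=S^{X_{n-1}}$.
\begin{itemize}
\item Conjugation by $w\in W_{n-1}$ permutes the coordinate factors $S_y$ according to the action of $w$ on $X_{n-1}$. The base element $b$ normalizes each $S_y$. So $(b,w)$ sends $S_y$ to $S_{y^w}$, and centralizing forces $y^w=y$ for all $y$. Since $W_{n-1}$ acts faithfully on $X_{n-1}$, this gives $w=1$.
\item Then $b$ centralizes $S^{X_{n-1}}$. Since $Z(S)=1$, it follows that $b=1$.
\end{itemize}

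\emph{Minimal normal subgroups.} Let $N$ be a minimal normal subgroup of $W_n$.
\begin{itemize}
\item If $N\cap B_n=1$, then $[N,B_n]\leq N\cap B_n=1$, so $N\leq C(B_n)=1$, which is impossible. Hence $N\cap B_n\neq 1$.
\item The group $N\cap B_n$ is normal in $B_n$, so it is a product of some coordinate factors $S_y$. It is also $W_{n-1}$-invariant, and $W_{n-1}$ is transitive on $X_{n-1}$. So $N\cap B_n$ contains all the factors, that is, $N\cap B_n=B_n$.
\item By minimality, $N=B_n$. The same argument applied to any nontrivial normal subgroup shows it contains $B_n$, so $B_n$ is minimal and unique.
\end{itemize}

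For (2), I would bound $\Aut(W_n)$ by induction on $n$, using $|\Out(W_n)|=|\Aut(W_n)|/|W_n|$ since $Z(W_n)=1$.

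\emph{Setting up the embedding.} By (1), $B_n$ is characteristic and $C_{W_n}(B_n)=1$. Hence $\Aut(W_n)$ embeds in $\Aut(B_n)=\Aut(S)\wr\Sym(X_{n-1})$, as the normalizer there of the image of $W_n$. An automorphism $\alpha$ permutes the factors $S_y$ through a permutation $\pi\in\Sym(X_{n-1})$. This permutation normalizes the image of $W_{n-1}$ in $\Sym(X_{n-1})$, because the action of $W_n$ on the set of factors is the action of $W_{n-1}$ on $X_{n-1}$.

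\emph{Counting the $\Aut(S)$-part.} Modulo inner automorphisms coming from $B_n$, the remaining freedom in the $\Aut(S)$-coordinates is a function $X_{n-1}\to\Out(S)$ with $|\Out(S)|=2$. This contributes a factor of $2^{r_{n-1}}$.

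\emph{Counting the permutation part.} I expect this to be the delicate step. I need to bound $N_{\Sym(X_{n-1})}(W_{n-1})/W_{n-1}$. The plan is to show that this normalizer acts on $W_{n-1}$ by automorphisms, so that the quotient injects into $\Out(W_{n-1})$ times the centralizer $C_{\Sym(X_{n-1})}(W_{n-1})$. I would bound that centralizer by
\[
|C_{\Sym(X_{n-1})}(W_{n-1})|\leq r_{n-1},
\]
using that the centralizer of a transitive group is semiregular.

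\emph{Recursion.} Combining the two parts gives
\[
|\Out(W_n)|\leq 2^{r_{n-1}}\cdot r_{n-1}\cdot|\Out(W_{n-1})|.
\]
The base case is $|\Out(W_1)|=2$. Unrolling, the exponent of $2$ is $1+r_1+\dots+r_{n-1}=(r_n-1)/p$. The product of the factors $r_i=(p+1)^i$ is $(p+1)^{n(n-1)/2}\leq(p+1)^{n^2}$. This yields the stated bound.
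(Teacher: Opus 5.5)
Your proof is correct and follows essentially the paper's argument. In (1) you derive minimality from $C_{W_n}(B_n)=1$ and the structure of normal subgroups of $S^{X_{n-1}}$, where the paper uses a direct commutator argument. In (2) your bound $|N_{\Sym(X_{n-1})}(W_{n-1}):W_{n-1}|\leq|\Out(W_{n-1})|\cdot r_{n-1}$, via the semiregular centralizer, is the paper's observation that the permutation $\sigma$ of the coordinate subgroups is determined by $\bar\alpha$ and $\sigma(y_0)$, and it gives the same recursion $|\Out(W_n)|\leq 2^{r_{n-1}}\,r_{n-1}\,|\Out(W_{n-1})|$.
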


\begin{proof}
(1) The base group $B_n$ is a minimal normal subgroup, since a nontrivial normal subgroup of $W_n$ inside $B_n$ contains a coordinate subgroup,\footnote{Let $N\leq B_n$ be such a subgroup, and let $b\in N$ have a nontrivial coordinate $b_y$. Pick $s\in S$ not commuting with $b_y$, and let $c\in B_n$ be $s$ in the coordinate $y$ and trivial elsewhere. Then $[b,c]\in N$ is nontrivial and lies in the coordinate subgroup $S_y$ at $y$. So $N\cap S_y$ is a nontrivial normal subgroup of $S_y\cong S$, and hence $S_y\leq N$.} and $W_{n-1}$ permutes the coordinate subgroups transitively. An element $(b,w)$ centralizing $B_n$ fixes every coordinate, so $w=1$ since $W_{n-1}$ is faithful, and then $b\in Z(B_n)=1$. A second minimal normal subgroup would intersect $B_n$ trivially and hence centralize it, so there is none.

(2) Write $S_y$ for the coordinate subgroup of $B_n$ at $y\in X_{n-1}$, fix a point $y_0\in X_{n-1}$, and let $\alpha\in\Aut(W_n)$. By (1), $\alpha$ maps $B_n$ to itself, so it induces an automorphism $\bar\alpha$ of $W_n/B_n=W_{n-1}$. It also permutes the minimal normal subgroups of $B_n$, which are the coordinate subgroups, say $\alpha(S_y)=S_{\sigma(y)}$ for a permutation $\sigma$ of $X_{n-1}$. We claim that $\alpha$ is determined by three pieces of data, namely $\bar\alpha$, the point $\sigma(y_0)$, and the $r_{n-1}$ isomorphisms $\alpha|_{S_y}\colon S_y\to S_{\sigma(y)}$. This gives
\[
    \lvert\Aut(W_n)\rvert \leq \lvert\Aut(W_{n-1})\rvert\cdot r_{n-1}\cdot\lvert\Aut(S)\rvert^{r_{n-1}} .
\]
First, $\alpha$ is determined by its restriction to $B_n$, since $B_n$ is self-centralizing,\footnote{If $\alpha,\alpha'\in\Aut(W_n)$ agree on $B_n$, then $\beta=\alpha^{-1}\alpha'$ is trivial on $B_n$, so $wbw^{-1}=\beta(wbw^{-1})=\beta(w)b\beta(w)^{-1}$ for all $w\in W_n$ and $b\in B_n$. Thus $\beta(w)^{-1}w$ centralizes $B_n$, and hence $\beta(w)=w$.} and this restriction is determined by $\sigma$ and the isomorphisms $\alpha|_{S_y}$. Second, $\sigma$ is determined by $\bar\alpha$ and $\sigma(y_0)$. Indeed, an element $w\in W_n$ with image $\bar w\in W_{n-1}$ satisfies $w^{-1}S_yw=S_{y^{\bar w}}$. Applying $\alpha$ to this identity, and using that the image of $\alpha(w)$ in $W_{n-1}$ is $\bar\alpha(\bar w)$, gives $\sigma(y^{\bar w})=\sigma(y)^{\bar\alpha(\bar w)}$. Taking $y=y_0$, this determines $\sigma$ on all of $X_{n-1}$, since $W_{n-1}$ is transitive.

Finally, $W_n$ and $W_{n-1}$ have trivial centre, by (1) and since $S$ is simple, so $\lvert\Aut(W_n)\rvert=|W_n|\cdot\lvert\Out(W_n)\rvert$ and likewise for $W_{n-1}$. Since $|W_n|=|S|^{r_{n-1}}|W_{n-1}|$ and $\lvert\Aut(S)\rvert=2|S|$, the display gives $\lvert\Out(W_n)\rvert\leq2^{r_{n-1}}r_{n-1}\lvert\Out(W_{n-1})\rvert$. Unrolling this from $\lvert\Out(W_1)\rvert=2$ gives $\lvert\Out(W_n)\rvert\leq2^{1+r_1+\dots+r_{n-1}}(p+1)^{1+2+\dots+(n-1)}$, which is the claim, since $1+r_1+\dots+r_{n-1}=(r_n-1)/p$.
\end{proof}

\begin{proposition}\label{prop:altfreegap}
Let $c\geq4$ and $d\geq2$, and let $S=\PSL_2(p)$, $\ell=\ell(S)$, $r_n$ and $W_n$ be as above. Then
\[
    \Delta_{\mathcal{A}_{c,d}}(k) \geq \exp\left((1-o(1))\gamma k\right)
    \quad \text{with} \quad
    \gamma = \frac{2\left((d-1)\log|S|-\log2\right)}{p+1} > 0 ,
\]
for all $k$ of the form $\ell r_{n-1}$ with $n\geq2$.
\end{proposition}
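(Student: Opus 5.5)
The plan is to apply \Cref{prop:freepower} to $G=W_n$, with the faithful irreducible representation of degree $k=\ell r_{n-1}$ constructed above. First I check the hypotheses. All composition factors of $W_n$ are isomorphic to $S=\PSL_2(p)$, so $W_n$ has no abelian composition factors. By the choice of $p$ we have $P_2(W_n)\geq1/2$, so $d\geq2\geq d(W_n)$. Next I check that $\Pi_d(W_n)$ lies in $\mathcal A_{c,d}$. By \Cref{prop:freepower}, every simple section of $\Pi_d(W_n)$ is a section of $W_n$, and hence a section of $S$. Since $5\nmid|S|$, no section is isomorphic to $\Alt(5)$. As $\Alt(5)\leq\Alt(c+1)$, no section is isomorphic to $\Alt(c+1)$ either, because an $\Alt(c+1)$ section would contain an $\Alt(5)$ section. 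Finally, $\Pi_d(W_n)$ is $d$-generated by construction.

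For the counting, $Z(W_n)=1$ by \Cref{lem:wreath}(1), so \eqref{eq:freepower} applies and gives
\[
    \Delta_{\mathcal A_{c,d}}(k) \geq \frac{P_d(W_n)\,|W_n|^{d-1}}{k^\alpha\,\lvert\Out(W_n)\rvert} \geq \frac{|S|^{(d-1)(r_n-1)/p}}{2k^\alpha\,2^{(r_n-1)/p}(p+1)^{n^2}} ,
\]
where I used $|W_n|=|S|^{(r_n-1)/p}$ and the bound of \Cref{lem:wreath}(2). Taking logarithms, the main term is
\[
    \frac{r_n-1}{p}\left((d-1)\log|S|-\log2\right).
\]
The error terms are $\alpha\log k+n^2\log(p+1)+\log2$.

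Next I express everything in terms of $k$. From $r_n=(p+1)r_{n-1}$ and $k=\ell r_{n-1}$ we get $r_n=(p+1)k/\ell$. Since $p$ is fixed and $k\to\infty$ means $n\to\infty$, the error terms are $O(\log^2 k)=o(k)$, and $(r_n-1)/p=(1-o(1))(p+1)k/(p\ell)$. This gives the exponent
\[
    (1-o(1))\,\frac{(p+1)\left((d-1)\log|S|-\log2\right)}{p\,\ell}\,k .
\]
Using $\ell\leq(p+1)/2$, the coefficient is at least $2((d-1)\log|S|-\log2)/p$, which is at least the stated $\gamma$ (which even has $p+1$ in place of $p$). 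Positivity of $\gamma$ follows from $|S|\geq168>2$ and $d\geq2$. For $n\geq2$ the statement is asymptotic, so the $o(1)$ absorbs everything as $n\to\infty$.

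The main obstacle is not this arithmetic but making sure the hypotheses of \Cref{prop:freepower} genuinely hold. The relevant points are that the representation has degree exactly $k$ (it does, by the construction preceding \Cref{lem:wreath}), and that the $\Out$ bound is subexponential relative to the main term. Here the $2^{(r_n-1)/p}$ factor is exactly what produces the $-\log 2$ in $\gamma$, so it must be kept rather than absorbed. I would double-check that $\Aut(S)=2|S|$ for $p$ prime, since it underlies \Cref{lem:wreath}(2).
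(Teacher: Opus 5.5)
Your proposal is correct and follows the paper's proof essentially step for step: you apply \Cref{prop:freepower} via \eqref{eq:freepower} to $W_n$, use $P_d(W_n)\geq1/2$, $|W_n|=|S|^{(r_n-1)/p}$ and \Cref{lem:wreath}(2), and absorb the $O(\log^2k)$ error terms. The only cosmetic difference is that the paper bounds $(r_n-1)/p\geq r_{n-1}=k/\ell\geq 2k/(p+1)$ directly, whereas you compute $(r_n-1)/p$ asymptotically and obtain the marginally larger coefficient $2((d-1)\log|S|-\log 2)/p$.
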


\begin{proof}
Fix $n\geq2$ and put $k=\ell r_{n-1}$. We apply \Cref{prop:freepower} to $G=W_n$, which is $2$-generated and has a faithful irreducible representation of degree $k$. The order of $W_n$ is a power of $|S|$, and hence prime to $5$, so $W_n$ has no section isomorphic to $\Alt(5)$. By \Cref{prop:freepower}, neither does $\Pi_d(W_n)$, which therefore lies in $\mathcal A_{4,d}\subseteq\mathcal A_{c,d}$.
%\footnote{For these groups one can take $\alpha=1$ in \Cref{prop:freepower}, by an elementary argument. Dickson's list of the subgroups of $\PSL_2(p)$ \cite[Sec.~260]{Dic01} shows that $|U|\cdot|U:U'|\leq|S|$ for every $U\leq S$. Induction on $e$ extends this to $|U:U'|\leq|S^e:U|$ for every $U\leq S^e$, and induction along a chief series, as in the proof of \cite[Lemma~26]{Sab23}, gives $|H:H'|\leq|L:H|$ for every subgroup $H$ of a finite group $L$ whose composition factors are all isomorphic to $S$.} 
Since $Z(W_n)=1$ by \Cref{lem:wreath}(1), the bound \eqref{eq:freepower} applies. Using $P_d(W_n)\geq1/2$, the order of $W_n$ and \Cref{lem:wreath}(2), we get
\[
    \log\frac{\Rep_k(\Pi_d(W_n))}{\ab_k(\Pi_d(W_n))} \geq \left((d-1)\log|S|-\log2\right)\frac{r_n-1}{p}-n^2\log(p+1)-\log2-\alpha\log k .
\]
Finally $(r_n-1)/p\geq r_{n-1}=k/\ell\geq2k/(p+1)$, and $n=O(\log k)$ since $k=\ell(p+1)^{n-1}\geq2^{n-1}$, so the remaining terms are $O(\log^2k)=o(k)$.
\end{proof}

Together with \Cref{thm:altfree}, this shows that
\[
    \Delta_{\mathcal{A}_{c,d}}(k)=\exp\left(\Theta_{c,d}(k)\right)
\]
for infinitely many $k$.

\subsection{Monomial representations}\label{sub:monomialsharp}

The wreath product construction of \Cref{sub:altfreegap} also shows that the monomial count of \Cref{thm:monomial} is sharp, up to the constant in the exponent. For this we leave the $\Alt(c+1)$-free class, and take the top group to be an alternating group in its natural action. The representation of \Cref{subsub:wreath} is then monomial as soon as the representation $\theta$ of the base is. So we need a nonabelian simple group with a monomial irreducible representation of small degree, and $\Alt(5)$ has one of degree $5$.

\begin{example}\label{ex:monomialsharp}
Let $d\geq2$, let $m$ be large, and put $k=5m$. Let $\theta$ be the irreducible representation of $\Alt(5)$ of degree $5$. It is monomial, induced from a nontrivial linear character of $\Alt(4)$, so its space has a basis of $5$ lines permuted by $\Alt(5)$. Now let $W=\Alt(5)\wr_{[m]}\Alt(m)$, with $\Alt(m)$ acting naturally on $[m]=\{1,\dots,m\}$. By \Cref{subsub:wreath}, $\theta$ gives a faithful irreducible representation of $W$ of degree $k=5m$ on $m$ copies of the space of $\theta$. It is monomial, since the $5m$ lines in these copies are permuted by $W$. We claim that the free subdirect power $P=\Pi_d(W)$, which is a finite $d$-generated group, satisfies
\[
    \frac{\Rep_k^{\mon}(P)}{\ab_k(P)} \geq \exp\left(\left(\frac15-o(1)\right)(d-1)k\log k\right) .
\]

We use the bound \eqref{eq:freepower} for $G=W$, which has no abelian composition factors. It holds with $\Rep_k^{\mon}$ in place of $\Rep_k$, since the representations of $\Pi_d(W)$ that it counts are pulled back from the representation above along the quotient maps $\Pi_d(W)\to W$, and so they are monomial. By Quick's theorem \cite[Th.~A]{Qui06}, applied to $W$ with top group $\Alt(m)$ acting naturally on $[m]$ and base group $\Alt(5)$, and since $P_2(\Alt(m))\to1$, we have $P_d(W)\geq P_2(W)\geq1/2$ for all large $m$. In particular, $W$ is $d$-generated. The proof of \Cref{lem:wreath} also applies to $W$, since $\Alt(m)$ is transitive on $[m]$. It gives $Z(W)=1$ and
\[
    \lvert\Aut(W)\rvert \leq \lvert\Aut(\Alt(m))\rvert\cdot m\cdot\lvert\Aut(\Alt(5))\rvert^m = m!\cdot m\cdot120^m ,
\]
and since $|W|=60^m\cdot m!/2$, it follows that $\lvert\Out(W)\rvert\leq2m\cdot2^m$. So \eqref{eq:freepower} gives
\[
    \log\frac{\Rep_k^{\mon}(P)}{\ab_k(P)} \geq (d-1)\log|W|-O(m)-\alpha\log k = (d-1)m\log m-O(dm) .
\]
With $m=k/5$, this is $(1/5-o(1))(d-1)k\log k$, as claimed.
\end{example}

\section{Expanding sequences}\label{sec:reduction}

We now return to expanding sequences, which motivated this work through the questions of Wigderson and Sabatini recalled in the introduction. The previous sections bound the ratio $\Rep_k/\ab_k$ for arbitrary $d$-generated groups, and expansion plays no role in them. To bound $\Rep_k$ along expanding sequences, it therefore remains to bound $\ab_k$. This is a theorem of Lubotzky and Weiss, and it is the only property of expansion that we use in this section. Its proof is short and natural, so we include it in \Cref{sub:LW}, to make clear exactly what is used. In \Cref{sub:bounds} we deduce \Cref{cor:quasilinear} and \Cref{cor:sharp}, and show with wreath products that the resulting bounds are sharp up to the logarithm. Finally, \Cref{sub:notsufficient} shows that none of these growth conditions suffices for $d$-expansion.

We use the Cayley graphs, spectral gaps and classes $\mathcal{C}_{d,\epsilon}$ of \Cref{sub:expanding}. Let $s\in G$ act on the functions on $G$ by $(sf)(g)=f(gs)$, so that the normalized Laplacian of $\Cay(G,\Sigma)$ is $L=1-(2d)^{-1}\sum_i(s_i+s_i^{-1})$. Since each $s_i$ acts unitarily, expanding the squares gives $\langle Lf,f\rangle=(2d)^{-1}\sum_i\lVert f-s_if\rVert^2$. So the spectral gap is the minimum of the \emph{Rayleigh quotient}
\[
    \frac{\langle Lf,f\rangle}{\langle f,f\rangle} = \frac{1}{2d}\sum_{i=1}^d\frac{\lVert f-s_if\rVert^2}{\lVert f\rVert^2}
\]
over the nonzero functions $f$ on $G$ orthogonal to the constants, and every such $f$ bounds it from above. The \emph{expansion constant} of an expanding sequence is a uniform lower bound $\epsilon$ on the spectral gaps.

\subsection{Abelianization growth}\label{sub:LW}

Let $G$ be a finite group with a generating tuple $\Sigma$. The \emph{Kazhdan constant} of $(G,\Sigma)$ is
\[
    \kappa(G,\Sigma) = \inf_{\rho,\,v}\ \max_{s\in\Sigma}\|\rho(s)v-v\| ,
\]
where $\rho$ runs over the unitary representations of $G$ without nonzero invariant vectors, and $v$ over the unit vectors in the space of $\rho$. In words, every unit vector of such a representation is moved by at least $\kappa(G,\Sigma)$ by some generator. Write $\Sigma=(s_1,\dots,s_d)$, and let $\rho$ be a unitary representation of $G$ without nonzero invariant vectors. Then $\rho$ is a sum of nontrivial irreducible representations, each of which occurs in the functions on $G$ orthogonal to the constants. So every unit vector $v$ of $\rho$ has Rayleigh quotient
\[
    \frac{1}{2d}\sum_{l=1}^d \lVert\rho(s_l)v-v\rVert^2,
\]
and the spectral gap of $\Cay(G,\Sigma)$ is the infimum of these Rayleigh quotients over all such $\rho$ and $v$. In particular, if $\Cay(G,\Sigma)$ has spectral gap at least $\epsilon$, then $\kappa(G,\Sigma)\geq\sqrt{2\epsilon}\geq\epsilon$. So every member of $\mathcal C_{d,\epsilon}$ has a generating set of $d$ elements with Kazhdan constant at least $\epsilon$. Conversely, a lower bound on the Kazhdan constant gives a lower bound on the spectral gap, depending only on $d$ \cite[Th.~4.3.2]{Lub94}.

The same definition applies to a finitely generated infinite group $\Gamma$ with a finite generating tuple $\Sigma$, with $\rho$ running over the unitary representations of $\Gamma$ on Hilbert spaces without nonzero invariant vectors. The group $\Gamma$ has \emph{Kazhdan's property (T)} if $\kappa(\Gamma,\Sigma)>0$. A representation of a finite quotient $G$ of $\Gamma$ is also a representation of $\Gamma$, so the image of $\Sigma$ in $G$ has Kazhdan constant at least $\kappa(\Gamma,\Sigma)$. Hence the finite quotients of a group with property (T), with the images of a fixed generating tuple, form a family of expanders \cite[Prop.~3.3.1]{Lub94}. This observation of Margulis is the source of several expanding sequences in the later sections of this paper.

The theorem of Lubotzky and Weiss bounds the abelian sections of a finite group $G$ in terms of $\kappa(G,\Sigma)$. We include its short proof, slightly adapted from \cite[Props.~3.9, 3.10]{LW93}.

\begin{proposition}[{Lubotzky--Weiss \cite[Th.~3.6]{LW93}}]\label{prop:LW}
Let $G$ be a finite group generated by a set $\Sigma$ of $d$ elements, and put $\kappa=\kappa(G,\Sigma)$ and $C=\lceil2\pi/\kappa\rceil+1$. Then every subgroup $H\leq G$ satisfies
\[
    |H:H'| \leq C^{d|G:H|} .
\]
In particular, $\ab_k(G)\leq C^{dk}$ for all $k\geq1$.
\end{proposition}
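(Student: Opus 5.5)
We plan a packing argument in the space of characters of $H$. Fix $H\leq G$ of index $j=|G:H|$, and let $\lambda$ range over the linear characters in $\widehat H$, of which there are $|H:H'|$. Each induces to $\rho_\lambda=\Ind_H^G\lambda$, a unitary representation of degree $j$ on the space of functions $f\colon G\to\C$ with $f(hg)=\lambda(h)f(g)$. The goal is to show that two distinct characters $\lambda\neq\mu$ give representations that are ``far apart'' on the generators, and that there is only room for $C^{dj}$ such points.

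\textbf{Coordinates.} Choose a transversal and write $\rho_\lambda(s)$, for $s\in\Sigma$, as a $j\times j$ monomial matrix. Its support pattern is the permutation action of $s$ on $H\backslash G$, independent of $\lambda$. Its nonzero entries are values $\lambda(h_{s,i})\in S^1$ for certain elements $h_{s,i}\in H$, namely the Schreier generators. So $\lambda$ is encoded by a vector $\Phi(\lambda)\in (S^1)^{dj}$ of these values. By Schreier's lemma the $h_{s,i}$ generate $H$, so $\Phi$ is injective on $\widehat H$.

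\textbf{Separation.} For $\lambda\neq\mu$, the representation $\rho_\lambda\otimes\bar\rho_\mu$ restricted appropriately (equivalently, $\Ind_H^G(\lambda\bar\mu)$ with $\lambda\bar\mu\neq1$) has no nonzero $G$-invariant vectors, by Frobenius reciprocity. Let me instead use the vector $v$ given by the diagonal: on $\End(\C^j)$, with $G$ acting by $X\mapsto\rho_\lambda(g)X\rho_\mu(g)^{-1}$, take $v=I/\sqrt j$. An invariant vector would be an intertwiner $\rho_\mu\to\rho_\lambda$. Using Mackey, the intertwiners are spanned by double cosets, and one must check that the invariant part is orthogonal to, or can be projected away from, $I$. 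The cleaner route is to restrict to the $G$-invariant subspace of diagonal matrices, which is the space of $\Ind_H^G(\lambda\bar\mu)$. There $v$ is the constant function $1/\sqrt j$ times the section, and this rep has no invariant vectors. The Kazhdan constant then gives some $s$ with $\|\rho(s)v-v\|\geq\kappa$. Computing this, $\|\rho(s)v-v\|^2=\frac1j\sum_i|\lambda(h_{s,i})-\mu(h_{s,i})|^2$, so $\max_{s,i}|\lambda(h_{s,i})-\mu(h_{s,i})|\geq\kappa$. Thus the points $\Phi(\lambda)$ are $\kappa$-separated in the sup metric on the torus $(S^1)^{dj}$.

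\textbf{Packing.} Partition $S^1$ into $C-1\geq 2\pi/\kappa$ arcs of length at most $\kappa$. In fact we want strictly less than chord distance $\kappa$, which the $+1$ in $C$ accommodates. Then two $\kappa$-separated points cannot lie in the same product cell, so $|H:H'|\leq C^{dj}$. Taking the maximum over $|G:H|\leq k$ gives $\ab_k(G)\leq C^{dk}$.

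\textbf{Main obstacle.} The delicate step is the separation step: correctly identifying the representation without invariant vectors, namely $\Ind_H^G(\lambda\bar\mu)$ on the diagonal. One has to check that the vector built from the ones-vector is a unit vector there and that the displacement formula reads off exactly the Schreier-generator values. The arc-length versus chord-length bookkeeping in the final packing is routine.
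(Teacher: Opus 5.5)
Your proposal is correct and is essentially the paper's argument: you record each $\lambda\in\widehat H$ by the $dj$ diagonal entries of the monomial matrices $\rho_\lambda(s)$, apply the Kazhdan constant to the constant unit vector in $\Ind_H^G(\lambda\bar\mu)$ (which has no invariant vectors by Frobenius reciprocity), and pack the torus with boxes of side less than $\kappa$. The paper works in $\Ind_H^G(\lambda\bar\mu)$ from the start, so two parts of your write-up are unnecessary: the detour through $\End(\C^j)$, where $G$ can have invariant vectors, for instance when $\Ind_H^G\lambda\cong\Ind_H^G\mu$, and the separate injectivity argument via Schreier's lemma, which the separation step already implies.
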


\begin{proof}
Let $j=|G:H|$, and let $T$ be a set of representatives of the left cosets of $H$. For a linear character $\chi\in\widehat H$, the induced representation $\rho_\chi=\Ind_H^G\chi$ has an orthonormal basis $(e_t)_{t\in T}$ on which $g\in G$ acts by
\[
    \rho_\chi(g)e_t = \chi(h)\,e_{t'} , \qquad \text{where } gt=t'h \text{ with } t'\in T \text{ and } h\in H .
\]
So each $\rho_\chi(s)$ with $s\in\Sigma$ is the permutation matrix $P_s$ of $s$ on the cosets, which is independent of $\chi$, times a diagonal matrix whose entries are values $\chi(h)$ at elements $h\in H$ that depend only on $s$ and $t$. These $dj$ entries form a point of a torus of dimension $dj$, which determines $\chi$, and we divide this torus into $C^{dj}$ boxes of side $2\pi/C<\kappa$. If two distinct characters $\chi_1,\chi_2$ lie in the same box, then $\chi=\chi_1\overline{\chi_2}$ is nontrivial, and each $\rho_\chi(s)$ is $P_s$ times a diagonal matrix $D_s$ with entries within $\kappa$ of $1$. Let $f$ be the unit vector with all coordinates equal to $j^{-1/2}$. It is fixed by every $P_s$, so $\rho_\chi(s)f-f=P_s(D_s-1)f$, and all coordinates of $(D_s-1)f$ have absolute value less than $\kappa j^{-1/2}$. Hence $\|\rho_\chi(s)f-f\|<\kappa$ for every $s\in\Sigma$. But $\rho_\chi$ has no nonzero invariant vectors, by Frobenius reciprocity, and this contradicts the definition of $\kappa$. Hence the $|H:H'|$ linear characters of $H$ lie in distinct boxes, and $|H:H'|\leq C^{dj}$.
\end{proof}

Expansion enters this proof only through the Kazhdan constant of $G$ on the representations $\Ind_H^G\chi$, with $\chi$ a nontrivial linear character of a subgroup $H$ of index at most $k$, that is, on monomial representations of degree at most $k$. With $\kappa\geq\epsilon$, we obtain that every $G\in\mathcal{C}_{d,\epsilon}$ satisfies
\begin{equation}
    \label{eq:LW}
    \log \ab_k(G) \leq dk\log\left(\lceil2\pi/\epsilon\rceil+1\right) = O_\epsilon(dk).
\end{equation}

\subsection{Representation growth}\label{sub:bounds}

We now deduce \Cref{cor:quasilinear} and \Cref{cor:sharp}. The upper bounds follow directly from \Cref{prop:LW} together with \Cref{thm:main,thm:altfree}. For the lower bounds, we recall an expanding sequence with large abelian sections, due to Lubotzky and \.Zuk \cite{LZ03}.

\subsubsection{Upper bounds}

\begin{proof}[Proof of \Cref{cor:quasilinear}]
Let $G\in\mathcal{C}_{d,\epsilon}$. Multiplying the bound of \Cref{thm:main} with \eqref{eq:LW} gives
\[
    \log\Rep_k(G) \leq (d-1)k\log k+O(dk)+dk\log\left(\lceil2\pi/\epsilon\rceil+1\right) ,
\]
which is the claim. In particular, the expansion constant only enters through the term $O_\epsilon(dk)$.
\end{proof}

\begin{remark}\label{rem:methods}
The bound $\Rep_k(G)\leq\exp(O_\epsilon(dk^2))$ of \cite{HRV93} for $G\in\mathcal{C}_{d,\epsilon}$ is thus far from best possible. 
Their result is stated for the finite quotients of a fixed group with property (T), but the proof only uses the Kazhdan constant and the number of generators. It is a packing argument similar to the one in the proof of \Cref{prop:LW}, see \cite[Lem.~10 and proof of Prop.~IV]{HRV93} and \cite[proof of Prop.~2.8]{Pis17}. Here is how it goes in brief, in a slightly simplified form. A representation $\rho$ of degree $k$ is recorded by the tuple $(\rho(s))_{s\in\Sigma}$ of unitary matrices, a point of $U(k)^d$, which has dimension $dk^2$. If $\rho_1$ and $\rho_2$ are inequivalent irreducible representations of degree $k$, then $\rho_1\otimes\overline{\rho_2}$ has no nonzero invariant vectors, by Schur's lemma. It acts on $\mathrm{M}_k(\C)$ by $X\mapsto\rho_1(g)X\rho_2(g)^{-1}$, and applying the Kazhdan constant to the identity matrix shows that $\|\rho_1(s)-\rho_2(s)\|\geq\kappa\sqrt k$ for some $s\in\Sigma$, in the Hilbert--Schmidt norm. In other words, the tuples of pairwise inequivalent representations are at distance at least $\kappa$ from each other in the metric $\operatorname{dist}(A,B) = \max_{s\in\Sigma}k^{-1/2}\|A_s-B_s\|$ on $U(k)^d$. Balls of radius $\kappa/2$ around them are disjoint, and comparing volumes bounds their number by $\exp(O_\kappa(dk^2))$. 
The lack of sharpness in this argument comes from the fact that almost none of these points in $U(k)^d$ are representations of expanders. We reuse this argument in a slightly more general form in \Cref{lem:hrv}.
\end{remark}

\subsubsection{An expanding sequence with large abelian and representation growth}

The following groups are wreath products $C_5\wr T$ with an abelian base group, the counterpart of the wreath products $S\wr_XT$ of \Cref{sub:altfreegap} with $C_5$ in place of $S$ and $T$ acting regularly. Unlike the groups $W_n$ there, they have large abelian sections of small index, and these produce their many small representations.

\begin{example}\label{ex:lower}
Let $(p_n)_{n\geq1}$ be the primes $p\geq7$ congruent to $2$ modulo $5$, so that $5$ divides none of the orders $|\PSL_2(p_n)|=p_n(p_n^2-1)/2$. Put $T_n=\PSL_2(p_n)$, which is an expanding sequence with the standard two generators by Selberg's theorem \cite[Th.~4.4.2]{Lub94}, and let
\[
    G_n = C_5\wr T_n = \F_5[T_n]\rtimes T_n
\]
be the wreath product with $T_n$ acting on itself by right multiplication. (These are the wreath products of \Cref{ex:sharpindex} with $p=5$ and $T_n$ in place of $C_k$.) By \cite[Th.~5.13(1)]{LZ03}, the sequence $(G_n)_{n\geq1}$ is expanding, so it is $d_0$-expanding for some $d_0$. Each $G_n$ is generated by three elements, because the regular module is cyclic, but these three generators do not give expanders \cite[Th.~5.13(2)]{LZ03}. We claim that for every large $k$ some $G_n$ satisfies
\[
    \log \ab_k(G_n) \geq (1-o(1))k\log5
    \quad \text{and} \quad
    \log \Rep_k(G_n) \geq (1-o(1))k\log5 ,
\]
and that the groups $G_n$ are $\Alt(5)$-free.

Write $k_n=|T_n|$. The subgroup $\F_5[T_n]$ is abelian of index $k_n$ and order $5^{k_n}$, so
\[
    \ab_{k_n}(G_n) \geq 5^{k_n}
    \quad \text{and, by \eqref{eq:lower},} \quad
    \Rep_{k_n}(G_n) \geq \frac{5^{k_n}}{k_n} .
\]
Both growth functions are nondecreasing in $k$. By the prime number theorem in arithmetic progressions, the primes congruent to $2$ modulo $5$ satisfy $p_{n+1}/p_n\to1$. Since $k_n=p_n(p_n^2-1)/2$, also $k_{n+1}/k_n\to1$. For $k_n\leq k<k_{n+1}$ we therefore have $\ab_k(G_n)\geq5^{k_n}$ and $\Rep_k(G_n)\geq5^{k_n}/k_n$ with $k_n=(1-o(1))k$, which gives the claimed bounds for all large $k$.

Finally, every simple section of $G_n$ is a section of one of its composition factors, as in the proof of \Cref{prop:freepower}. These are $C_5$ and $T_n$, and since $|T_n|$ is prime to $5$, neither has a section isomorphic to $\Alt(5)$.
\end{example}

The example shows three things. First, as remarked by Lubotzky and \.Zuk \cite{LZ03} and by Sabatini \cite[Sec.~2.3]{Sab25}, the exponential bound of \Cref{prop:LW} cannot be improved to a polynomial one. Together with \Cref{prop:LW}, it gives
\[
    \sup_{G\in\mathcal{C}_{d,\epsilon}}\log\ab_k(G) = \Theta_{d,\epsilon}(k)
\]
for every $d\geq d_0$ and every $\epsilon$ small enough for $\mathcal{C}_{d,\epsilon}$ to contain the sequence of \Cref{ex:lower}. Second, $\sup_{G\in\mathcal{C}_{d,\epsilon}}\log\Rep_k(G)\geq(1-o(1))k\log5$ for all large $k$, so \Cref{cor:quasilinear} is sharp up to the logarithm, which answers \Cref{q:wigderson} up to the same logarithm. Third, since the groups are $\Alt(5)$-free, they give the lower bound in \Cref{cor:sharp}. In the $\Alt(c+1)$-free class the logarithm disappears, and the resulting bound $\log\Rep_k=O(k)$ is sharp up to the constant.

\begin{proof}[Proof of \Cref{cor:sharp}]
For the upper bound, multiply the bound of \Cref{thm:altfree} with \eqref{eq:LW}. For the lower bound, use \Cref{ex:lower}.
\end{proof}

\subsubsection{An example with very large abelianization growth}

Along an expanding sequence, \Cref{prop:LW} gives $\ab_k\leq\exp(O(k))$, and \Cref{cor:quasilinear} gives $\Rep_k\leq\exp(O(k\log k))$. For $d$-generated groups, the first condition automatically implies the second by \Cref{thm:main}. The converse fails for general sequences, as the following modification of a construction from \cite{Sab23} shows.

\begin{example}\label{ex:window}
Let $n\geq5$, let $S=\Alt(n)$, and let $p$ be a prime with $n<p\leq2n$. Since $p\nmid|S|$, the group algebra $\F_p[S]$, on which $S$ acts by left multiplication, splits as $\F_p\oplus I$, where $I$ is the augmentation submodule. Put $G_n=I\rtimes S$. This is analogous to the construction of Sabatini \cite[Ex.~9, Prop.~23]{Sab23} with the regular action of $S$ in place of its action on $n$ points. As a direct summand of the cyclic module $\F_p[S]$, the module $I$ is cyclic, so $G_n$ is $3$-generated. We claim that
\[
    \log\ab_k(G_n)=O(k\log k) \quad \text{for all } k\geq2 \text{ and } n ,
    \qquad \text{while} \qquad
    \ab_{|S|}(G_n)\geq n^{|S|-1} .
\]
So $\Rep_k(G_n)\leq\exp(O(k\log k))$ by \Cref{thm:main}, but no bound $\ab_k(G_n)\leq \exp (O(k))$ holds uniformly in $n$.

In order to see this, let $H\leq G_n$ have index at most $k$, put $W=H\cap I$, and let $\bar H=H/W$ be the image of $H$ in $S$. Since $W$ is abelian and normal in $H$, the group $H$ acts on $W$ through $\bar H$, and $[W,H]=[W,\bar H]$ lies in $W\cap H'$. Moreover $H'W/W=\bar H'$. Hence
\[
    |H:H'| = |H:H'W|\cdot|H'W:H'| = |\bar H:\bar H'|\cdot|W:W\cap H'| \leq |\bar H:\bar H'|\cdot|W_{\bar H}| ,
\]
where $W_{\bar H}=W/[W,\bar H]$ are the coinvariants. Since $p\nmid|\bar H|$, averaging over $\bar H$ gives a decomposition $W=W^{\bar H}\oplus[W,\bar H]$, so $\dim W_{\bar H}=\dim W^{\bar H}\leq\dim I^{\bar H}$. The $\bar H$-invariants of $\F_p[S]$ are the functions on $S$ that are constant on the cosets $\bar Ht$, so $\dim I^{\bar H}=|S:\bar H|-1$. If $\bar H=S$, this gives $|H:H'|\leq|S:S'|=1$. Otherwise $k\geq|G_n:H|\geq|S:\bar H|\geq n$, and $|\bar H:\bar H'|\leq|S:\bar H|^\alpha$ by \cite[Th.~3]{Sab23}, so
\[
    |H:H'| \leq k^\alpha p^{k-1} \leq k^\alpha(2k)^{k-1} = \exp\left(O(k\log k)\right) .
\]
For the lower bound, the abelian subgroup $I$ has index $|S|$ and order $p^{|S|-1}\geq n^{|S|-1}$.
\end{example}

\subsection{Slow representation growth does not imply \texorpdfstring{$d$}{d}-expansion}\label{sub:notsufficient}

We now show that neither $\ab_k\leq\exp(O(k))$ nor the stronger bound $\Rep_k\leq\exp(O(k))$ implies that a sequence of $d$-generated groups is $d$-expanding. The examples are free subdirect powers from \Cref{subsub:freepowers}.

We bound spectral gaps through Schreier graphs. Let a finite group $G$ act on a finite set $\Omega$ with $|\Omega| \geq 2$ on the right, and let $\Sigma=(s_1,\dots,s_d)$ be a tuple of elements of $G$. The \emph{Schreier graph} $\Sch(\Omega,\Sigma)$ has vertex set $\Omega$ and, for each $i$, an edge from $\omega$ to $\omega^{s_i}$ and one from $\omega$ to $\omega^{s_i^{-1}}$. Its normalized Laplacian and spectral gap are defined as for Cayley graphs, with $G$ acting on the functions on $\Omega$ by $(g\cdot w)(\omega)=w(\omega^g)$. If $G$ is transitive on $\Omega$, then the spectral gap of $\Cay(G,\Sigma)$ is at most that of $\Sch(\Omega,\Sigma)$. Indeed, fix $\omega_0\in\Omega$, and send a function $w$ on $\Omega$ to the function $f(g)=w(\omega_0^g)$ on $G$. Every point of $\Omega$ is $\omega_0^g$ for exactly $|G|/|\Omega|$ elements $g$, so this multiplies all norms by the same factor and preserves orthogonality to the constants. It also maps $s\cdot w$ to $sf$, so it preserves Rayleigh quotients.

We will use the following basic property of the free subdirect power $\Pi_d(G)$ of a finite group $G$.

\begin{lemma}\label{lem:freepowergap}
Let $G$ be a finite group and let $d\geq d(G)$. For every generating $d$-tuple $\Sigma$ of $\Pi_d(G)$ and every generating $d$-tuple $Y$ of $G$, there is a surjection $\Pi_d(G)\to G$ that maps $\Sigma$ to $Y$. Consequently, the spectral gap of $\Cay(\Pi_d(G),\Sigma)$ is at most that of $\Cay(G,Y)$.
\end{lemma}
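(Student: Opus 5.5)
The plan is to use the universal property of the free subdirect power and then compare Cayley graphs through a quotient. Write $\Pi=\Pi_d(G)=F/N$ with $N=\bigcap_{K\in\mathcal K}K$, and let $x_1,\dots,x_d$ be the free generators of $F$.

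\emph{Constructing the surjection.} Since $\Sigma=(\sigma_1,\dots,\sigma_d)$ generates $\Pi$, the homomorphism $\beta\colon F\to\Pi$ with $x_l\mapsto\sigma_l$ is surjective. Similarly $\gamma\colon F\to G$ with $x_l\mapsto y_l$ is surjective, so $\ker\gamma\in\mathcal K$. I want to show that $\gamma$ factors through $\beta$, that is, $\ker\beta\leq\ker\gamma$.

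The point is that $\Pi$ is relatively free in the variety generated by the quotients $F/K$. Choose $\Sigma'$ to be any lift of $\Sigma$ to $F$, giving an endomorphism $\tilde\beta\colon F\to F$, $x_l\mapsto\tilde\sigma_l$, with $\beta=q\circ\tilde\beta$ where $q\colon F\to\Pi$ is the quotient. For every $K\in\mathcal K$, the composite $F\xrightarrow{\tilde\beta}F\to F/K\cong G$ is a homomorphism onto $G$, since $\Sigma$ generates $\Pi$ and $\Pi$ maps onto $F/K$. Hence its kernel $\tilde\beta^{-1}(K)$ again lies in $\mathcal K$. It follows that $\tilde\beta(N)\leq N$, so $\tilde\beta$ induces an endomorphism $\bar\beta$ of $\Pi$ with $\bar\beta(x_lN)=\sigma_l$. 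This map $\bar\beta$ is surjective because $\Sigma$ generates $\Pi$, and therefore it is an automorphism, since $\Pi$ is finite.

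The desired surjection is then $\theta=\bar\gamma\circ\bar\beta^{-1}$, where $\bar\gamma\colon\Pi\to F/\ker\gamma\cong G$ is the coordinate projection sending $x_lN\mapsto y_l$. Indeed, $\theta(\sigma_l)=\bar\gamma(x_lN)=y_l$. I expect this step, namely checking that $\tilde\beta$ preserves $N$, to be the only real content. It is exactly the statement that $N$ is fully invariant, which follows because precomposing a surjection onto $G$ with $\tilde\beta$ gives a surjection onto $G$.

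\emph{The spectral gap.} Let $\theta\colon\Pi\twoheadrightarrow G$ map $\Sigma$ to $Y$. I will let $\Pi$ act on $\Omega=G$ on the right by $\omega^g=\omega\,\theta(g)$. This action is transitive, and its Schreier graph $\Sch(\Omega,\Sigma)$ is exactly $\Cay(G,Y)$. The transitivity comparison stated just before the lemma then gives that the spectral gap of $\Cay(\Pi,\Sigma)$ is at most that of $\Sch(\Omega,\Sigma)=\Cay(G,Y)$. If $|G|=1$, the statement is vacuous or trivial, so this case can be ignored.
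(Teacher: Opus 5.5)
Your proof is correct, but it is organized differently from the paper's. Both start from the same observation: for each $K\in\mathcal K$, the composite $F\to\Pi_d(G)\to F/K$ is a surjection onto a copy of $G$.

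The paper then argues by pigeonhole. Since $\beta$ is onto, these $|\mathcal K|$ composites have pairwise distinct kernels, all lying in the finite set $\mathcal K$, so they exhaust $\mathcal K$. In particular, the pullback of one coordinate kernel is $\ker\gamma$. That coordinate projection, followed by the identification with $G$ induced by $\gamma$, sends $\Sigma$ to $Y$.

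You instead use the observation, in the form $\tilde\beta^{-1}(K)\in\mathcal K$, to show that $\tilde\beta$ preserves $N$. You then use finiteness of $\Pi$ rather than of $\mathcal K$ to upgrade the induced surjective endomorphism to an automorphism $\bar\beta$. This costs one extra step, but it proves more: $\Aut(\Pi_d(G))$ acts transitively on the generating $d$-tuples of $\Pi_d(G)$, with the standard tuple $(x_lN)$ sent to $\Sigma$. The paper's bijection of $\mathcal K$ is then just the permutation of coordinate kernels induced by $\bar\beta$. Your spectral gap step is the same as the paper's.

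One correction to your framing. $\Pi$ is not in general relatively free in the variety generated by $G$, and $N$ is not fully invariant in $F$. For instance, take $G$ nonabelian simple and $d=2$. Then $\Pi=G^{t_2(G)}$ is perfect, so $NF'=F$, and some $n\in N$ has exponent sums $(1,0)$. The endomorphism $x_1\mapsto x_1$, $x_2\mapsto 1$ sends $n$ to $x_1$, which is not in $N$. What you actually prove, and all you need, is that $N$ is invariant under endomorphisms of $F$ that lift generating $d$-tuples of $\Pi$. Your justification correctly uses exactly this surjectivity, so only the parenthetical claims should be rephrased.
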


\begin{proof}
Write $\Pi_d(G)=F/\bigcap\mathcal K$ as in \Cref{subsub:freepowers}, and let $\varphi\colon F\to\Pi_d(G)$ be the surjection that maps the free generators to $\Sigma$. Composing $\varphi$ with the quotient maps $\Pi_d(G)\to F/K$ for $K\in\mathcal K$ gives $|\mathcal K|$ surjections from $F$ onto copies of $G$, with pairwise distinct kernels. In particular, one of them is the kernel $K$ of the surjection $\psi\colon F\to G$ that maps the free generators to $Y$. Composing the quotient map $\Pi_d(G)\to F/K$ with the isomorphism $F/K\to G$ induced by $\psi$ gives a surjection that maps $\Sigma$ to $Y$.
For the spectral gap, let $\Pi_d(G)$ act on $G$ by right multiplication through this surjection. This action is transitive, and its Schreier graph with respect to $\Sigma$ is $\Cay(G,Y)$, since $\Sigma$ maps to $Y$ entry by entry.
\end{proof}

Call a sequence of finite groups \emph{uniformly $d$-expanding} if there is $\epsilon>0$ such that the Cayley graphs with respect to all their generating $d$-tuples have spectral gap at least $\epsilon$. 
By \Cref{lem:freepowergap}, if $(\Pi_d(G_n))$ is $d$-expanding, then $(G_n)$ is uniformly $d$-expanding.\footnote{We do not know whether the converse holds. For instance, the groups $\PSL_2(p)$ are uniformly expanding for a set of primes of density one \cite{BG10}, but we do not know whether the groups $\Pi_d(\PSL_2(p))=\PSL_2(p)^{t_d(\PSL_2(p))}$ of \Cref{ex:psl2power} are $d$-expanding along these primes. Even powers with $p\sim|\PSL_2(p)|^{1/3}$ factors are open \cite[Question~4.5]{Sab25}.}
So a single sequence of generating $d$-tuples of $G_n$ with spectral gap tending to $0$ prevents $(\Pi_d(G_n))$ from being $d$-expanding.

\begin{example} \label{ex:altpower-notexp}
Consider the alternating groups. By \Cref{subsub:freepowers}, the free subdirect power is $\Pi_d(\Alt(n))=\Alt(n)^{t_d(\Alt(n))}$. However, this group is not $d$-expanding, since $\Alt(n)$ has generating $d$-tuples with spectral gap tending to $0$ as $n \to \infty$, for instance $Y=((1\,2\,3),c,1,\dots,1)$, where $c$ is the cycle $(1\,2\,\cdots\,n)$ or $(2\,3\,\cdots\,n)$, according to whether $n$ is odd or even, followed by $d-2$ identities. This follows by the argument of \cite[Ex.~4.3.3C]{Lub94}, which treats $\Sym(n)$ with a transposition and an $n$-cycle. The small representations of $\Pi_d(\Alt(n))$, however, grow as fast as $\exp(\Theta(k\log k))$ with $k = n - 1$ by \Cref{ex:altpower}. Since $\Pi_d(\Alt(n))$ has no abelian composition factors, this answers the versions of \cite[Questions~1.1 and 4.4]{Sab25} with a fixed number $d$ of generators in the negative.
\end{example}

To get examples with $\Rep_k\leq \exp ( O(k) )$, we reuse the iterated wreath products $W_n$ of \Cref{subsub:wreath} as building blocks. Their non-expanding generating tuples come from an amenable group acting on a rooted tree, as we explain below.

\subsubsection{Iterated wreath products and bounded automatic automorphisms}

The automorphisms of the rooted tree $X^*$ over a finite set $X$ are the permutations of the finite words over $X$ that preserve prefixes. Such an automorphism $g$ permutes the first letters, and at every word $w$ it induces an automorphism $g_w$ of the subtree below $w$, which we identify with $X^*$ and call the \emph{section} of $g$ at $w$. If $S\leq\Sym(X)$, the automorphisms all of whose sections permute the first letters through $S$ act on the words of length $n$ through the $n$-fold iterated permutational wreath product $S\wr\cdots\wr S$. For $S=\PSL_2(p)$ acting on $X=\mathbf P^1(\F_p)$, this is precisely the group $W_n$ of \Cref{subsub:wreath}.

Segal \cite{Seg01} found finitely generated groups of tree automorphisms that map onto all these wreath products at once. Let $\sigma_1,\sigma_2$ generate $S$, and fix distinct points $x_1,x_2\in X$. Let $a_i$ apply $\sigma_i$ to the first letter of a word and leave the other letters unchanged. Let $b_i$ fix the first letter, with section $a_i$ at $x_1$, section $b_i$ at $x_2$, and trivial sections elsewhere. In other words, $b_i$ applies $\sigma_i$ to the letter that follows a prefix $x_2^jx_1$ with $j\geq0$, and fixes the words without such a prefix. The group $\Gamma=\langle a_1,a_2,b_1,b_2\rangle$ is the group $A$ of \cite[Lem.~1]{Seg01} for the constant sequence $S_n=S$, with $x_1$ and $x_2$ the points $1$ and $l_0$ there. If $S$ is perfect and transitive, and distinct points of $X$ have distinct stabilizers in $S$, then the stabilizer $\St_\Gamma(1)$ of the first level is the direct product of copies of $\Gamma$, one acting on the subtree below each letter \cite[Lem.~1]{Seg01}. The rooted elements $\langle a_1,a_2\rangle\cong S$ meet $\St_\Gamma(1)$ trivially and map onto the action of $\Gamma$ on the first letters, so $\Gamma=\St_\Gamma(1)\rtimes\langle a_1,a_2\rangle$. Conjugation by a rooted element permutes the subtrees below the letters as it permutes $X$, so $\Gamma\cong\Gamma\wr_XS$, and by induction the image of $\Gamma$ on the words of length $n$ is the whole iterated wreath product.

The generators of $\Gamma$ are of a very restricted kind. An automorphism is \emph{automatic} if it has finitely many distinct sections, and \emph{bounded} if the number of words $w$ of length $n$ with $g_w\neq1$ is bounded independently of $n$. The sections of $a_i$ at nonempty words are trivial. The sections of $b_i$ are $b_i$ at the words $x_2^j$, $a_i$ at the words $x_2^jx_1$, and $1$ elsewhere, so for every $n\geq1$ exactly two words of length $n$ have a nontrivial section. Bartholdi, Kaimanovich and Nekrashevych proved that the group of all bounded automatic automorphisms is amenable \cite[Th.~1.2]{BKN10}, and they list Segal's groups among its subgroups \cite[Ex.~1.7]{BKN10}. Together, this gives the following.

\begin{theorem}[\cite{Seg01, BKN10}]\label{thm:segal}
Let $S\leq\Sym(X)$ be a perfect transitive group generated by two elements, in which distinct points have distinct stabilizers. Then $\Gamma=\langle a_1,a_2,b_1,b_2\rangle$ is amenable, and its image in $\Sym(X^n)$ is the $n$-fold iterated wreath product of $S$, for every $n\geq1$.
\end{theorem}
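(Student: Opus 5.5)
The theorem has two parts, amenability of $\Gamma$ and surjectivity onto each iterated wreath product. I would get the first from \cite{BKN10} and the second from Segal's structure lemma together with an induction on $n$.

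\emph{Amenability.} I would check that $a_1,a_2,b_1,b_2$ are bounded automatic automorphisms of $X^*$, and then quote \cite[Th.~1.2]{BKN10}: the group of all bounded automatic automorphisms is amenable, and amenability passes to subgroups. For $a_i$ this is immediate, since all its sections at nonempty words are trivial. For $b_i$, the recursive definition (section $a_i$ at $x_1$, $b_i$ at $x_2$, trivial elsewhere) shows by induction on word length that the sections are $b_i$ at $x_2^j$, $a_i$ at $x_2^jx_1$, and $1$ otherwise. So there are only finitely many distinct sections, and exactly two words of each length $n\geq1$ have a nontrivial section. Both properties are preserved under products and inverses, so it suffices to check them on generators. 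The subgroup relation is also \cite[Ex.~1.7]{BKN10}.

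\emph{Surjectivity.} The hypotheses on $S$ (perfect, transitive, $2$-generated, distinct points with distinct stabilizers) are those of \cite[Lem.~1]{Seg01} for the constant sequence $S_n=S$. That lemma gives $\St_\Gamma(1)=\prod_{x\in X}\Gamma_x$, where each $\Gamma_x\cong\Gamma$ acts on the subtree below $x$ via sections. The rooted subgroup $\langle a_1,a_2\rangle\cong S$ acts faithfully on the first letters and meets $\St_\Gamma(1)$ trivially. Since the generators $b_i$ fix the first level, $\Gamma$ acts on the first letters through $\langle\sigma_1,\sigma_2\rangle=S$. Hence $\Gamma=\St_\Gamma(1)\rtimes\langle a_1,a_2\rangle$. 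Conjugation by rooted elements permutes the factors $\Gamma_x$ as $S$ permutes $X$, which identifies $\Gamma$ with the permutational wreath product $\Gamma\wr_X S$, compatibly with the actions on $X^*=X\times X^*$.

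I then induct on $n$. For $n=1$, the image of $\Gamma$ on $X$ is $S$. If the image of $\Gamma$ on $X^{n-1}$ is $W_{n-1}$, the decomposition $\Gamma\cong\Gamma\wr_XS$ shows that the image on $X^n=X\times X^{n-1}$ is $W_{n-1}\wr_X S$. This is the $n$-fold iterated wreath product, with the bracketing taken from the top; the two bracketings of an iterated permutational wreath product agree by associativity.

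The main obstacle is Segal's lemma, namely that $\St_\Gamma(1)$ is the full direct product rather than a subdirect product. If I had to prove it, I would show that for each $x$ the rigid stabilizer contains a copy of $\Gamma$. The plan would be to form commutators of $b_i$-conjugates by rooted elements, using perfectness of $S$ to generate everything. The distinct-stabilizer condition would be used to separate the sections at $x_1$ and $x_2$, so that the resulting elements are supported below a single letter. Here I would simply cite \cite[Lem.~1]{Seg01}.
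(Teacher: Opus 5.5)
Your proposal is correct and follows essentially the same route as the paper. You check that $a_i,b_i$ are bounded automatic and quote \cite[Th.~1.2]{BKN10} for amenability, and you use \cite[Lem.~1]{Seg01} together with $\Gamma=\St_\Gamma(1)\rtimes\langle a_1,a_2\rangle$ to get $\Gamma\cong\Gamma\wr_X S$, then induct on $n$. Your remark that the two bracketings of the iterated wreath product agree by associativity makes explicit a point the paper leaves implicit.
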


We deduce that the groups $\Pi_d(W_n)$ are not $d$-expanding.

\begin{proposition}\label{prop:notsufficient}
Let $d\geq4$, and let $W_n$ be the iterated wreath products of \Cref{subsub:wreath}. Then the groups $\Pi_d(W_n)$ are $d$-generated and satisfy
\[
    \ab_k\leq k^\alpha
    \qquad \text{and} \qquad
    \Rep_k\leq\exp(O_d(k))
    \qquad \text{for all } k ,
\]
but they are not $d$-expanding.
\end{proposition}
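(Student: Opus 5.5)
The proof has three parts: the two growth bounds for $\Pi_d(W_n)$, and then the failure of $d$-expansion.

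\emph{Generation and abelianization.} By construction $\Pi_d(W_n)=F/\bigcap\mathcal K$ is a quotient of the free group $F$ of rank $d$, so it is $d$-generated. The set $\mathcal K$ is nonempty, since $W_n$ is $2$-generated and $d\geq4$; we fix $p$ as in \Cref{subsub:wreath}, so that $P_2(W_n)\geq1/2$. All composition factors of $W_n$ are isomorphic to $\PSL_2(p)$, so $W_n$ has no abelian composition factors. The first part of the proof of \Cref{prop:freepower} then shows that $\Pi_d(W_n)$ has none either, and \cite[Th.~3]{Sab23} gives $\ab_k\leq k^\alpha$.

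\emph{Representation growth.} The same proof shows that every simple section of $\Pi_d(W_n)$ is a section of $W_n$. Since $5\nmid|W_n|$, the group $\Pi_d(W_n)$ is $\Alt(5)$-free, that is, it lies in $\mathcal A_{4,d}$. Then \Cref{thm:altfree} with $c=4$ gives
\[
\Rep_k\leq\exp(O_d(k))\cdot\ab_k\leq\exp(O_d(k))\cdot k^\alpha=\exp(O_d(k)).
\]

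\emph{Non-expansion.} By \Cref{lem:freepowergap}, it suffices to find, for each $n$, a generating $d$-tuple $Y_n$ of $W_n$ such that the spectral gap of $\Cay(W_n,Y_n)$ tends to $0$. The plan is as follows.

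First, check the hypotheses of \Cref{thm:segal} for $S=\PSL_2(p)$ acting on $\mathbf P^1(\F_p)$. This action is transitive, and $S$ is perfect and $2$-generated. Distinct points have distinct stabilizers: the stabilizer of a point is a Borel subgroup, which fixes exactly that one point.

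\Cref{thm:segal} then gives an amenable group $\Gamma=\langle a_1,a_2,b_1,b_2\rangle$ that maps onto every $W_n$. Let $Y_n$ be the image of $(a_1,a_2,b_1,b_2)$, padded with $d-4$ identities. This tuple generates $W_n$.

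Next, $\Cay(W_n,Y_n)$ is the Schreier graph of $\Gamma$ acting on $\Gamma/N_n$, where $N_n$ is the kernel of $\Gamma\to W_n$. The padding identities contribute only loops; these rescale the Laplacian by $4/d$ but do not affect whether the gap tends to $0$.

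The main step, and the expected obstacle, is to get small spectral gaps from amenability. Since $\Gamma$ is infinite and amenable, Følner sets give functions $f\in\ell^2(\Gamma)$ with $\|f-sf\|/\|f\|<\delta$ for every generator $s$. These have to be transferred to the finite quotients $\Gamma/N_n$ while staying orthogonal to the constants. The plan is the standard one:
\begin{enumerate}
\item choose a finite Følner set $A$ with $|A\,s\,\triangle\,A|<\delta|A|$ for all four generators $s$;
\item choose $n$ so large that $A$ maps injectively into $W_n$ (possible because $\bigcap_n N_n=1$, as $\Gamma$ acts faithfully on the tree), and also $|A|\leq|W_n|/2$;
\item on $W_n$, take the indicator of the image of $A$ minus its mean.
\end{enumerate}
The Rayleigh quotient of this function is $O(\delta)$, because the mean-subtraction costs at most a factor $2$ when $|A|\leq|W_n|/2$. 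Letting $\delta\to0$ along $n$, the spectral gap of $\Cay(W_n,Y_n)$ tends to $0$, and hence so does that of every generating $d$-tuple of $\Pi_d(W_n)$.
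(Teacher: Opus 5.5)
Your proof is correct and follows the paper's argument: the composition factors of $\Pi_d(W_n)$ are copies of $\PSL_2(p)$, so \cite[Th.~3]{Sab23} and \Cref{thm:altfree} give the two bounds, and non-expansion comes from \Cref{thm:segal} together with \Cref{lem:freepowergap}; the only difference is that you prove the Lubotzky--Weiss step by an explicit F\o lner-set transfer, where the paper cites \cite[Th.~3.1]{LW93}. You also do not need to fix $p$ large to get $\mathcal K\neq\emptyset$, since the image of $(a_1,a_2,b_1,b_2)$ already shows that $W_n$ is $4$-generated, so the proposition holds for every prime $p\geq7$ with $p\equiv\pm2\pmod5$, as the paper intends.
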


\begin{proof}
By the proof of \Cref{prop:freepower}, every composition factor of $\Pi_d(W_n)$ is a composition factor of $W_n$, and hence isomorphic to $\PSL_2(p)$, whose order is prime to $5$, so $\Pi_d(W_n)$ has no abelian composition factors and is $\Alt(5)$-free. Hence $\ab_k\leq k^\alpha$ by \cite[Th.~3]{Sab23}, and $\Rep_k\leq\exp(O_d(k))$ by \Cref{thm:altfree}. For the expansion, let $Y_n$ be the $d$-tuple consisting of the images in $W_n$ of the four generators of \Cref{thm:segal}, followed by $d-4$ identities. The group $S=\PSL_2(p)$ is perfect and generated by two elements, and it acts $2$-transitively on $\mathbf P^1(\F_p)$, so distinct points have distinct stabilizers. By \Cref{thm:segal}, the groups $W_n$ are therefore quotients of a single amenable group, and $Y_n$ is the image of a fixed generating tuple. By \cite[Th.~3.1]{LW93}, the spectral gaps of $\Cay(W_n,Y_n)$ tend to $0$. By \Cref{lem:freepowergap}, so do those of $\Cay(\Pi_d(W_n),\Sigma)$ for every generating $d$-tuple $\Sigma$ of $\Pi_d(W_n)$.
\end{proof}

Like the powers of alternating groups in \Cref{ex:altpower-notexp}, these groups answer the versions of \cite[Questions~1.1 and 4.4]{Sab25} with a fixed number $d$ of generators in the negative. We do not know whether they are $d'$-expanding for some $d'>d$, so the original questions remain open for them.

\section{Expanding powers of simple groups}\label{sec:powers}

In this section we ask whether the remaining logarithm in \Cref{cor:quasilinear} is really there, and study the expanding powers of simple groups that could produce it.

\subsection{Can the logarithm really be present?}\label{sub:log}

For expanding sequences, \Cref{cor:quasilinear} determines an upper bound on the representation growth that is tight up to a single logarithm. Write $\Delta_{d,\epsilon}$ for the gap function of the class $\mathcal{C}_{d,\epsilon}$. Every $G\in\mathcal{C}_{d,\epsilon}$ satisfies $\Rep_k(G)\leq\Delta_{d,\epsilon}(k)\cdot\ab_k(G)$, and $\log\ab_k(G)=O_{d,\epsilon}(k)$ by \eqref{eq:LW}. Conversely, since $\ab_k(G)\geq1$, we have $\Delta_{d,\epsilon}(k)\leq\sup_{G\in\mathcal{C}_{d,\epsilon}}\Rep_k(G)$. So the logarithm is absent, that is, $\sup_{G\in\mathcal{C}_{d,\epsilon}}\log\Rep_k(G)=O_{d,\epsilon}(k)$, if and only if
\[
    \Delta_{d,\epsilon}(k) = \exp\left(O_{d,\epsilon}(k)\right) .
\]
We do not know whether this holds.

Among all $d$-generated groups, the logarithm can be present. It is attained by the powers $\Alt(n)^{t_d(\Alt(n))}$ at Hall's ceiling in \Cref{ex:altpower}, but these are not $d$-expanding by \Cref{ex:altpower-notexp}. By \Cref{thm:altfree}, the logarithm can only come from large alternating sections, so the natural candidates among expanding sequences are again powers of alternating groups, but with fewer factors. The question is therefore how many factors an expanding power of $\Alt(n)$ can have.

At $k=n-1$, a power $\Alt(n)^{f(n)}$ has $\ab_k=1$ and $\Rep_k=1+f(n)$ by \Cref{prop:powers}(1), and if it is $d$-generated, then $f(n)$ is at most Hall's ceiling $t_d(\Alt(n))=\exp((1+o(1))(d-1)n\log n)$ by \Cref{prop:powers}(2). The logarithm would be present if there were an expanding sequence of such powers with $f(n)\geq|\Alt(n)|^\delta$ for a fixed $\delta>0$. Indeed, along such a sequence $\log\Delta_{d,\epsilon}(k)\geq\log f(k+1)=\Omega(k\log k)$, and together with \Cref{cor:quasilinear} this would give $\sup_{G\in\mathcal{C}_{d,\epsilon}}\log\Rep_k(G)=\Theta(k\log k)$ for infinitely many $k$. In other words, the bound of \Cref{thm:main} would be attained along an expanding sequence.

Expanding sequences of powers of alternating groups do exist. The largest multiplicities known are due to Bartholdi and Kassabov. For $e\geq2$, they construct a group with property (T), generated by $e+4$ elements, that maps onto $\Alt(n)^{f(n)}$ with $n=p^{(e+2)j}-1$ and $f(n)=p^{j^{e+1}}$, for a fixed prime $p$ and all $j\geq1$ \cite[Cor.~14]{BK25}. The images of its generators make these powers an expanding sequence \cite[Prop.~3.3.1]{Lub94}, and $\log f(n)$ is of order $(\log n)^{e+1}$. So $f(n)$ is superpolynomial in $n$ but still quasipolynomial, far below Hall's ceiling $\exp((1+o(1))(d-1)n\log n)$. In particular $\log f(n)=O(n)$, so these powers satisfy $\Rep_k\leq\exp(O(k))$ by \Cref{prop:powers}(3), and hence do not exhibit the logarithm.

At the other end, there is no expansion at Hall's ceiling, as we saw in \Cref{ex:altpower-notexp}. In between these values, we do not know for which $f(n)$ expansion occurs. For the purposes of this paper, the most relevant question is whether it can occur with $\log f(n)$ at least a fixed fraction of $\log t_d(\Alt(n))$. This is \Cref{q:alt} from the introduction.

% A multiplicity $f(n)=\exp(n^\alpha)$ with $\alpha<1$ would already give $\Delta_{d,\epsilon}(k)\geq\exp(k^\alpha)$ for infinitely many $k$, far above the gaps known along expanding sequences, but still short of the logarithm.

We give two partial answers. In \Cref{subsub:random} we show that random generating $d$-tuples with $d\geq3$ do not give expanders as soon as the number of factors is superpolynomial in $n$, so a positive answer to \Cref{q:alt} needs special generators. In \Cref{sub:proofceiling} we show that expansion forces the number of factors to stay below Hall's ceiling by a fixed fraction on a logarithmic scale. Namely, if the powers in \Cref{q:alt} are $d$-expanding with expansion constant $\epsilon$, then $\delta\leq d-1-\epsilon/160$.
In \Cref{subsub:lietype} we show that the analogous question has a positive answer for groups of Lie type of rank at least two.

\subsection{Random generators}\label{subsub:random}

A positive answer to \Cref{q:alt} cannot come from random generators. The reason is that $\Alt(n)$ has generating tuples with small spectral gap that are not too rare, and a power with many factors contains such a tuple in one of its coordinates.

\subsubsection{Leaky tuples}\label{subsub:leaky}

We first construct the generating tuples of $\Alt(n)$ with small spectral gap. To show that they generate, we use the following criterion.

\begin{lemma}\label{lem:altgen}
Let $[n]=A\sqcup B$ with $2\leq|A|<|B|$ and $|B|\geq5$. Let $y_1,y_2\in\Alt(n)$ preserve $A$, write $y_i=(u_i,v_i)$ with $u_i\in\Sym(A)$ and $v_i\in\Sym(B)$, and let $z\in\Alt(n)$ map exactly one point of $A$ into $B$. If $u_1$ is a cycle of length $|A|$ and $\langle v_1,v_2\rangle\geq\Alt(B)$, then $\langle y_1,y_2,z\rangle=\Alt(n)$.
\end{lemma}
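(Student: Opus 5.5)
Put $G=\langle y_1,y_2,z\rangle$. The plan is to show first that $G$ is transitive, then that it is primitive, and finally that it contains $\Alt(n)$, via Jordan-type arguments.

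\emph{Step 1: $G\geq\Alt(B)$.} Let $K=\langle y_1,y_2\rangle$, which preserves $A$ and $B$ and projects onto a subgroup of $\Sym(B)$ containing $\Alt(B)$. The subgroup $M=K\cap(1\times\Sym(B))$ is normal in $K$. Its image in $\Sym(B)$ is normal in $\langle v_1,v_2\rangle$ and has index at most $|K\text{-image on }A|\leq|A|!$. Since $|A|<|B|$ and $|B|\geq5$, the only normal subgroups of $\Sym(B)$ or $\Alt(B)$ of index less than $|B|!/2$ contain $\Alt(B)$. One has to check that $|A|!<|\Alt(B)|$, which holds since $|A|<|B|$ and $|B|\geq 5$. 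Hence $M\geq\Alt(B)$, embedded as permutations fixing $A$ pointwise, so $G$ contains $\Alt(B)$.

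\emph{Step 2: transitivity.} The group $K$ is transitive on $A$, because $u_1$ is an $|A|$-cycle. It is also transitive on $B$, because it contains $\Alt(B)$. The element $z$ sends some point of $A$ into $B$, so $G$ has a single orbit on $[n]$.

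\emph{Step 3: primitivity.} Suppose $\mathcal B$ is a nontrivial block system for $G$. The group $\Alt(B)$ is $2$-transitive on $B$ (here $|B|\geq5$) and fixes $A$ pointwise. A block meeting $B$ in at least two points and having a point elsewhere would be moved onto itself by $\Alt(B)$ in a way that forces it to contain all of $B$. Working this out, the blocks meeting $B$ are either singletons inside $B$ or contain $B$. If some block contains $B$, then it has size greater than $n/2$, since $|B|>n/2$. As blocks partition $[n]$ into equal sizes, this block is all of $[n]$, contradicting nontriviality. So the blocks meeting $B$ are singletons, and by equal block sizes every block is a singleton. Hence $G$ is primitive.

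\emph{Step 4: conclusion.} Now $G$ is primitive and contains a $3$-cycle, coming from $\Alt(B)$. By Jordan's theorem, $G\geq\Alt(n)$. All generators are even, so $G=\Alt(n)$.

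Note that $z$ mapping exactly one point across was only used to give transitivity, and the $|A|$-cycle was only used for transitivity on $A$. The main obstacle is the normal-subgroup index estimate in Step 1. If that estimate fails for small $|B|$, the fallback is to use a commutator trick: since $\Alt(B)$ is perfect, the subgroup $[K,K]$ projects onto $\Alt(B)$. Then $K^{(|A|!)}$-type arguments, using Goursat's lemma on $K\leq\Sym(A)\times\Sym(B)$, show that the fibre product must split. This uses that $\Alt(B)$ has no nontrivial quotient embedding into $\Sym(A)$ when $|A|<|B|$.
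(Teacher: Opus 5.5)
Your proof is correct, and its second half takes a different route from the paper.

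Step 1 is essentially the paper's argument: the fibre of $\langle y_1,y_2\rangle$ over $1\in\Sym(A)$ is normal in $\langle v_1,v_2\rangle$ of index at most $|A|!<|\Alt(B)|$, hence contains $\Alt(B)$. So the fallback in your last paragraph is not needed.

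After that, the paper avoids Jordan's theorem on primitive groups. It fixes $x,y\in B$ and produces every generator $(x\,y\,p)$ of $\Alt(n)$ directly:
\begin{itemize}
\item Since $z$ moves exactly one point of $A$ into $B$, one has $B^z=(B\setminus\{c\})\cup\{a\}$, so $\Alt(B)^z=\Alt(B^z)\leq G$ contains a $3$-cycle $(x'\,y'\,a)$ with $x',y'\in B$.
\item Conjugating by powers of $y_1$ moves $a$ through all of $A$, because $u_1$ is an $|A|$-cycle.
\item Conjugating by $\Alt(B)$ then moves $x',y'$ to $x,y$.
\end{itemize}
That argument is elementary and self-contained, and it is where the hypotheses ``exactly one point'' and ``$u_1$ is a full cycle'' are genuinely used.

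Your route instead establishes transitivity and primitivity and then quotes Jordan's classical theorem that a primitive group containing a $3$-cycle contains the alternating group. In exchange, as you observe, it needs weaker hypotheses: only that $\langle u_1,u_2\rangle$ is transitive on $A$, and that $z$ maps at least one point of $A$ into $B$.

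One small point in Step 3: the mixed case deserves an explicit sentence. A block containing a point of $A$ is fixed by $\Alt(B)$, since $\Alt(B)$ fixes that point. So it contains all of $B$ or none of it. Together with the $2$-transitivity argument for blocks lying inside $B$, this gives exactly your dichotomy: a block meeting $B$ is either a singleton or contains $B$.
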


\begin{proof}
Let $G=\langle y_1,y_2,z\rangle$, and let $N\leq\Sym(B)$ consist of the $v$ with $(1,v)\in\langle y_1,y_2\rangle$. By Goursat's lemma, $N$ is normal in $\langle v_1,v_2\rangle$, with quotient isomorphic to a quotient of $\langle u_1,u_2\rangle\leq\Sym(A)$, which has order at most $|A|!$. As $2\leq|A|<|B|$, this is smaller than $|B|!/2=|\Alt(B)|$, so $N\neq1$. As $\langle v_1,v_2\rangle$ is $\Alt(B)$ or $\Sym(B)$, whose nontrivial normal subgroups all contain $\Alt(B)$, we get $N\geq\Alt(B)$. Thus $G$ contains $\Alt(B)$, acting trivially on $A$.

Fix distinct $x,y\in B$. Since $\Alt(n)$ is generated by the $3$-cycles $(x\,y\,p)$ with $p\notin\{x,y\}$, it suffices to find these in $G$. For $p\in B$ they lie in $\Alt(B)$. As $z$ maps exactly one point of $A$ into $B$, it maps $B$ onto $(B\setminus\{c\})\cup\{a\}$ for some $c\in B$ and $a\in A$. So $G$ contains $\Alt(B)^z=\Alt(B^z)$, and hence a $3$-cycle $(x'\,y'\,a)$ with $x',y'\in B\setminus\{c\}$. Conjugating it by powers of $y_1$, which preserves $B$ and acts on $A$ as the cycle $u_1$, and then by $\Alt(B)$, which is $2$-transitive on $B$, we find $(x\,y\,p)$ in $G$ for every $p\in A$.
\end{proof}

\begin{lemma}\label{lem:leaky}
Let $d\geq3$ and $h\geq2$. For all large $n$, depending on $h$, a uniformly random generating $d$-tuple of $\Alt(n)$ has a Cayley graph with spectral gap at most $2/(dh)$ with probability at least $n^{-dh}$.
\end{lemma}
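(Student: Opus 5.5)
We construct an explicit family of ``leaky'' generating $d$-tuples, each with a small set $A$ that is nearly invariant, and show that this family carries probability at least $n^{-dh}$ among generating tuples. Set $a=h$ (or any fixed size with $2\leq a<n/2$) and fix a set $A\subset[n]$ with $|A|=a$, $B=[n]\setminus A$. We call a $d$-tuple $(y_1,\dots,y_d)$ \emph{leaky} if $y_1,\dots,y_{d-1}$ preserve $A$ and $y_d=z$ maps exactly one point of $A$ into $B$. (Alternatively, $y_d$ could also preserve $A$ except for one transposition-like move; the exact shape is flexible.) We also require that $y_1$ acts on $A$ as an $a$-cycle and that $\langle v_1,v_2\rangle\geq\Alt(B)$. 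Then \Cref{lem:altgen} shows that the tuple generates $\Alt(n)$, since $d\geq3$ leaves room for the two $A$-preserving elements and $z$.

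\emph{Spectral gap.} Take the Schreier graph on $[n]$ (or on $a$-subsets, but points suffice). The test function $w=\mathbf 1_A-\frac an$ is orthogonal to constants. Only the one generator $z$ and its inverse move points across the cut, and each crosses exactly one edge out of $A$. The Rayleigh quotient is then $\frac1{2d}\sum_i\|w-s_iw\|^2/\|w\|^2$. Each of $z,z^{-1}$ contributes $2$ changed coordinates of size $1$, so the numerator is at most $2\cdot2/(2d)=2/d$. The denominator is $\|w\|^2=a(1-a/n)$. Since $a=h$, the quotient is at most $2/(dh)\cdot(1-h/n)^{-1}$. To get exactly $2/(dh)$, I would take $|A|=h+1$ (or note the bound is slightly loose) and adjust the constant. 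Transitivity then transfers the bound from the Schreier graph to the Cayley graph, as explained before \Cref{lem:freepowergap}.

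\emph{Probability.} A uniform $d$-tuple of $\Alt(n)$ generates with probability $1-o(1)$ by Dixon, so it suffices to lower-bound the probability that a uniform $d$-tuple is leaky for \emph{some} $h$-set $A$, conditioned generation being harmless. For a fixed $A$:
\begin{itemize}
\item an element preserves $A$ with probability $\binom nh^{-1}\geq n^{-h}$;
\item an element maps exactly one point of $A$ to $B$ with probability about $h\binom{n}{h}^{-1}\cdot(n-h)h\geq n^{-h+1}$.
\end{itemize}
The conditions on $u_1$ being an $h$-cycle and $\langle v_1,v_2\rangle\geq\Alt(B)$ hold with probability bounded below by a constant depending on $h$; the latter follows from Dixon applied to $\Sym(B)$, using that the parity conditions are fine. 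The $d$ elements are independent, so the probability is at least $c_h n^{-dh+1}$, which exceeds $n^{-dh}$ for large $n$. Conditioning on generation changes this by at most a factor $1+o(1)$.

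\emph{Main obstacle.} The main obstacle is making the Rayleigh quotient come out at most $2/(dh)$ exactly rather than up to $1+O(h/n)$. I would handle this by choosing $|A|=h$ but using the unnormalized indicator on the Schreier graph of $h$-subsets, or simply by taking $|A|$ one larger, since the probability bound has slack of a factor $n$. The independence and parity bookkeeping in the probability estimate are routine.
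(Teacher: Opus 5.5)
Your plan matches the paper's proof: leaky tuples for a fixed $h$-set $A$, the test function $w=\mathbf 1_A-h/n$ on the Schreier graph of points, \Cref{lem:altgen} for generation, and Dixon's theorem with the parity adjustment for $\langle v_1,v_2\rangle\geq\Alt(B)$. The probability side works as you set it up. The exact value is $\PP(E_1)=h(n-h)\binom{n}{h}^{-d}$, one factor $h$ fewer than you wrote, which is harmless, and it leaves a slack of order $n$ over $n^{-dh}$.

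The flaw is in the spectral gap, where you count $z$ and $z^{-1}$ separately. In $\frac1{2d}\sum_{i=1}^d\lVert w-s_i\cdot w\rVert^2/\lVert w\rVert^2$ there is one term per generator. The inverse edges are already absorbed into the normalization, since $\lVert w-s_i^{-1}\cdot w\rVert=\lVert w-s_i\cdot w\rVert$. Equivalently, exactly two undirected edges of $\Sch([n],Y)$ cross between $A$ and $B$: one at the point of $A$ that $z$ sends into $B$, and one at the point of $B$ that $z$ sends into $A$. So $\lVert w-z\cdot w\rVert^2=\lVert\mathbf 1_A-\mathbf 1_{A^{z^{-1}}}\rVert^2=2$, and
\[
    \frac{1}{2d}\cdot\frac{2}{h(n-h)/n}=\frac{n}{dh(n-h)}\leq\frac{2}{dh}
    \quad\text{for } n\geq2h ,
\]
with a factor $2$ to spare. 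The ``main obstacle'' you identify does not exist, and $|A|=h$ works as it stands.

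The repairs you suggest would not have worked, so as written your argument does not reach $2/(dh)$.
\begin{itemize}
\item For a fixed $A$ of size $h+1$, the same count gives $\PP(E_1)=(h+1)(n-h-1)\binom{n}{h+1}^{-d}$. This is of order $n^{1-d(h+1)}=o(n^{-dh})$ because $d\geq2$. Enlarging $A$ by one point costs a factor of order $n$ in each of the $d$ coordinates, while the slack is only a single factor of order $n$.
\item On the Schreier graph of $h$-subsets, the function $\mathbf 1_{\{A\}}-\binom{n}{h}^{-1}$ has Rayleigh quotient $1/\bigl(d(1-\binom{n}{h}^{-1})\bigr)>1/d\geq2/(dh)$, since two of the $2d$ edges at the vertex $A$ leave it.
\end{itemize}
The correct fix is simply to redo the edge count, after which your argument is exactly the paper's.
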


\begin{proof}
Write $S=\Alt(n)$, put $A=\{1,\dots,h\}$ and $B=[n]\setminus A$, and let $H=S\cap(\Sym(A)\times\Sym(B))$ be the stabilizer of $A$ in $S$. Call a tuple $Y=(y_1,\dots,y_d)\in S^d$ \emph{leaky} if $y_1,\dots,y_{d-1}\in H$ and $y_d$ maps exactly one point of $A$ into $B$. So a leaky tuple preserves $A$, except that $y_d$ lets a single point leak out. We show that leaky tuples have small spectral gap, and that a uniformly random tuple is leaky and generating with probability at least $n^{-dh}$.

\emph{Leaky tuples have small spectral gap.} Let $Y$ be leaky, and take $w=\mathbf 1_A-h/n$, which is orthogonal to the constants and has $\lVert w\rVert^2=h(n-h)/n$. It is invariant under $y_1,\dots,y_{d-1}$. Since $y_d$ maps exactly one point of $A$ out of $A$, the sets $A$ and $A' = A^{y_d^{-1}}$ differ in one point on each side, so $w-y_d\cdot w=\mathbf 1_A-\mathbf 1_{A'}$ has squared norm $2$. The Rayleigh quotient of $w$ on $\Sch([n],Y)$ is therefore
\[
    \frac{1}{2d}\sum_{i=1}^d\frac{\lVert w-y_i\cdot w\rVert^2}{\lVert w\rVert^2}
    = \frac{1}{2d}\cdot\frac{2}{h(n-h)/n}
    = \frac{n}{dh(n-h)} \leq \frac{2}{dh}
\]
for $n\geq2h$, and as $S$ is transitive on $[n]$, this bounds the spectral gap $\epsilon(Y)$ of $\Cay(S,Y)$.

\emph{Leaky generating tuples are not too rare.} Let $Y$ be a uniformly random element of $S^d$, and consider the following three events:
\begin{description}
    \item[$E_1$] $y_1,\dots,y_{d-1}\in H$, and 
    $y_d$ maps exactly one point of $A$ into $B$,
    \item[$E_2$] $u_1$ is a $h$-cycle, where $y_i=(u_i,v_i)$ with $u_i\in\Sym(A), v_i\in\Sym(B)$ for $i=1,2$,
    \item[$E_3$] $\langle v_1,v_2\rangle\geq\Alt(B)$.
\end{description}
Note that $H$ has index $\binom nh$ in $S$, and the image of $A$ under $y_d$ is a uniformly random $h$-subset of $[n]$, so
\[
\PP(E_1)= h(n-h) \binom{n}{h}^{-d}.
\]
Given $E_1$, the permutations $u_1,u_2$ are independent and uniform in $\Sym(A)$, so $\PP(E_2\mid E_1)=1/h$. Given also $u_1,u_2$, the $v_i$ are independent, with $v_i$ uniform among the permutations of $B$ with the same sign as $u_i$, since $y_i$ is even. By Dixon's theorem \cite{Dix69}, two independent uniform elements of $\Sym(B)$ generate a group containing $\Alt(B)$ with probability $1-o(1)$, and prescribing their signs, an event of probability $1/4$, multiplies the probability of failure by at most $4$. So $\PP(E_3\mid E_1,E_2)=1-o(1)$. On all three events, $Y$ is leaky, and for large $n$ it generates $S$ by \Cref{lem:altgen} with $z=y_d$. Hence
\[
    \PP\left(\epsilon(Y)\leq\frac{2}{dh} \middle| \langle Y\rangle=S\right)
    \geq \PP(Y\text{ leaky}, \langle Y\rangle=S)
    \geq h(n-h) \binom{n}{h}^{-d}\cdot\frac1h\cdot(1-o(1)) ,
\]
which is at least $n^{-dh}$ for large $n$.
\end{proof}

\begin{remark}
Leaky tuples in the proof above are a special instance of a phenomenon that Friedman studied in his proof of Alon's second eigenvalue conjecture \cite{Fri08}. He studies random regular graphs that are the Schreier graphs $\Sch([n],Y)$ of tuples $Y$ of independent uniform permutations of $[n]$. A \emph{tangle} is a fixed finite graph whose edges are labelled by $1,\dots,d$, and it occurs in $\Sch([n],Y)$ if it embeds there preserving the labels \cite[Def.~4.1]{Fri08}. Friedman proves that a tangle with $v$ vertices and $e\geq v$ edges occurs with probability of order $n^{-(e-v)}$ \cite[Th.~4.7]{Fri08}.

For a leaky tuple, the labelled graph that $\Sch([n],Y)$ induces on $A$ is a tangle with $h$ vertices and $dh-1$ undirected edges, namely $h$ edges for each of $y_1,\dots,y_{d-1}$ and $h-1$ for $y_d$. Friedman's count shows that such a tangle occurs with probability of order $n^{-(d-1)h+1}$. This is larger than our bound $n^{-dh}$, because his count includes occurrences on every $h$-subset of $[n]$, while we fix $A=\{1,\dots,h\}$. Our proof, however, still needs a restriction to even permutations, and it needs to show that the tuple generates $\Alt(n)$, which is not included in Friedman's count.
\end{remark}

\subsubsection{Random generators of powers}\label{subsub:randompowers}

A power with many factors contains a tuple with small spectral gap in one of its coordinates, and this is enough to spoil its spectral gap.

\begin{proposition}\label{prop:random}
Let $d\geq3$ and $h\geq2$. Let $\Sigma$ be a uniformly random generating $d$-tuple of $\Alt(n)^t$, where $t=t(n)$ satisfies $n^{dh+1}\leq t\leq t_d(\Alt(n))$. Then, with probability tending to $1$ as $n\to\infty$, the spectral gap of $\Cay(\Alt(n)^t,\Sigma)$ is at most $2/(dh)$.
\end{proposition}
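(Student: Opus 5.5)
By \Cref{thm:hall}, a uniformly random generating $d$-tuple $\Sigma$ of $S^t$, with $S=\Alt(n)$, has coordinate tuples $Y^{(1)},\dots,Y^{(t)}$ that are generating $d$-tuples of $S$ in pairwise distinct $\Aut(S)$-orbits. The plan is to find one coordinate $Y^{(i)}$ that is leaky in the sense of \Cref{lem:leaky}. The projection onto that coordinate is a surjection $S^t\to S$ mapping $\Sigma$ to $Y^{(i)}$. As in the proof of \Cref{lem:freepowergap}, the Schreier graph of $S^t$ acting on $S$ through it is $\Cay(S,Y^{(i)})$, so the spectral gap of $\Cay(S^t,\Sigma)$ is at most $2/(dh)$.

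To describe the law of the coordinates, note that $\Aut(S)$ acts freely on the set $\mathcal G$ of generating $d$-tuples of $S$. Let $\mathcal O$ be its set of orbits, so $|\mathcal O|=t_d(S)$. A uniform generating tuple of $S^t$ is then obtained as follows. Choose an injective sequence of $t$ orbits uniformly among all such sequences. Then choose, independently and uniformly, a representative in each chosen orbit. Both steps are uniform because all orbits have the same size.

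Next we transfer the property of being leaky to orbits. Call an orbit good if it contains a tuple of spectral gap at most $2/(dh)$. The spectral gap of $\Cay(S,Y)$ is invariant under automorphisms, since an automorphism gives a graph isomorphism. So goodness depends only on the orbit, and we may forget about the representatives. By \Cref{lem:leaky}, a uniform element of $\mathcal G$ is good with probability at least $n^{-dh}$. Since all orbits have equal size, the fraction of good orbits is $q\geq n^{-dh}$.

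It remains to show that among $t$ orbits drawn uniformly without replacement, at least one is good, with probability tending to $1$. Drawing without replacement only helps. The probability that none of the $t$ orbits is good is
\[
\prod_{i=0}^{t-1}\frac{(1-q)|\mathcal O|-i}{|\mathcal O|-i}\leq(1-q)^t\leq\exp(-tn^{-dh})\leq\exp(-n),
\]
using $t\geq n^{dh+1}$. This tends to $0$. The upper bound $t\leq t_d(S)$ only guarantees that generating $d$-tuples of $S^t$ exist.

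The main obstacle is justifying that choosing the orbits injectively and uniformly gives exactly the uniform generating tuple of $S^t$. This follows from \Cref{thm:hall}: the generating $d$-tuples of $S^t$ are exactly the sequences $(Y^{(1)},\dots,Y^{(t)})$ of elements of $\mathcal G$ in pairwise distinct orbits. Each injective sequence of orbits therefore corresponds to the same number $|\Aut(S)|^t$ of generating tuples.
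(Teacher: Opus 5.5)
Your proposal is correct and follows the paper's proof essentially step by step: you reduce to a single slow coordinate via the Schreier graph of the coordinate projection, use freeness of the $\Aut(S)$-action together with \Cref{thm:hall} to see that the coordinate orbits form a uniform injective sequence of $t$ orbits, and bound the probability of missing every slow orbit by $(1-q_n)^t\leq\exp(-n)$. The only thing to tidy is your opening phrase about finding a \emph{leaky} coordinate, since leakiness is tied to the fixed set $A$ and is not $\Aut(S)$-invariant; what you actually use, correctly, is the invariant property of having spectral gap at most $2/(dh)$.
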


\begin{proof}
Write $S=\Alt(n)$, and call a generating $d$-tuple of $S$ \emph{slow} if its Cayley graph has spectral gap at most $2/(dh)$. By \Cref{lem:leaky}, a uniformly random generating $d$-tuple of $S$ is slow with probability $q_n\geq n^{-dh}$ for large $n$. If some coordinate tuple of $\Sigma$ is slow, then so is $\Sigma$, since $S^t$ acts transitively on $S$ by right multiplication through that coordinate, with Schreier graph the Cayley graph of the coordinate tuple. It thus remains to show that some coordinate tuple of $\Sigma$ is slow with probability tending to $1$. Being slow is invariant under $\Aut(S)$, which acts freely on the $\phi_d(S)=t_d(S)\lvert\Aut(S)\rvert$ generating $d$-tuples of $S$ by \Cref{rem:hall}. So a proportion $q_n$ of the $t_d(S)$ orbits consist of slow tuples. By \Cref{thm:hall}, every sequence of $t$ distinct orbits comes from exactly $\lvert\Aut(S)\rvert^t$ generating $d$-tuples of $S^t$, one for each choice of a tuple in each orbit. So the orbits of the coordinate tuples $Y^{(1)},\dots,Y^{(t)}$ of $\Sigma$ form a uniformly random sequence of $t$ distinct orbits. Hence the probability that no $Y^{(i)}$ is slow is
\[
    \prod_{i=0}^{t-1}\frac{(1-q_n)t_d(S)-i}{t_d(S)-i} \leq (1-q_n)^t \leq\exp(-tn^{-dh}) \leq\exp(-n) ,
\]
since each factor is at most $1-q_n$, and $q_n\geq n^{-dh}, t\geq n^{dh+1}$.
\end{proof}

Applying this for every fixed $h$, we see that for $d\geq3$, random generating $d$-tuples of $\Alt(n)^{f(n)}$ are not expanders as soon as $f(n)$ is superpolynomial in $n$, that is, $\log f(n)/\log n\to\infty$. More precisely, for every $\epsilon>0$, the probability that the Cayley graph of a uniformly random generating $d$-tuple of $\Alt(n)^{f(n)}$ has spectral gap at least $\epsilon$ tends to $0$ as $n \to \infty$. Indeed, given $\epsilon$, choose $h$ with $2/(dh)<\epsilon$. Since $f(n)$ is superpolynomial, it exceeds $n^{dh+1}$ for all large $n$, and \Cref{prop:random} applies with this $h$. 
% In particular, however the tuples for different $n$ are chosen jointly, the resulting sequence of Cayley graphs is almost surely not a family of expanders. 
We record two consequences.

\begin{example}\label{ex:ceilingrandom}
Let $d\geq2$, and consider the groups $\Pi_d(\Alt(n))=\Alt(n)^{t_d(\Alt(n))}$ of \Cref{ex:altpower}, which sit at Hall's ceiling for $d$ generators. By \Cref{ex:altpower-notexp}, they are not $d$-expanding. One may hope that more generators help, since $\Pi_d(\Alt(n))$ is $d'$-generated for every $d'\geq d$, and Hall's ceiling for $d'$ generators is far higher. Random generators do not help. For every $d'>d$, the multiplicity $t_d(\Alt(n))$ is superpolynomial in $n$, so by \Cref{prop:random}, uniformly random generating $d'$-tuples of $\Pi_d(\Alt(n))$ are not expanders either. We do not know whether $(\Pi_d(\Alt(n)))$ is $d'$-expanding for some $d'>d$.
\end{example}

\begin{example}\label{ex:bkrandom}
Consider the expanding powers $\Alt(n)^{f(n)}$ of Bartholdi and Kassabov from \Cref{sub:log}. These are $d$-expanding for every $d\geq e+4$. Their multiplicities $f(n)$ are superpolynomial in $n$. By \Cref{prop:random}, uniformly random generating $d$-tuples of the same groups are not expanders. So these powers form a family of expanders with respect to suitable generating $d$-tuples, but not with respect to random generating $d$-tuples.

This gives an interesting perspective on the question of Lubotzky and Weiss whether expansion is a property of the groups alone \cite[Problem~1.1]{LW93}. The answer is known to be negative, as first shown by Alon, Lubotzky and Wigderson \cite{ALW01}. The alternating groups give a simple example. They are expanding with respect to carefully constructed generating sets of bounded size \cite{Kas07}, but not with respect to a $3$-cycle and a long cycle, as in \Cref{ex:altpower-notexp}. For $\Alt(n)$, the non-expanding generating sets are special and highly structured, and whether random generating pairs give expanders is a well-known open question \cite[Problem~2.28]{Lub12}. For the powers of $\Alt(n)$ above, the roles are reversed. The expanding generating tuples are special, coming from a group with property (T), while random generating tuples do not expand.
\end{example}

\begin{remark}\label{rem:beta}
The proof of \Cref{prop:random} uses only one property of the multiplicity $t$, and it applies to powers of any nonabelian finite simple group $S$. For $\epsilon>0$, let $\beta_{d,\epsilon}(S)$ be the probability that a uniformly random generating $d$-tuple of $S$ has a Cayley graph with spectral gap at most $\epsilon$. If $t\leq t_d(S)$ and $t\,\beta_{d,\epsilon}(S)\to\infty$, then a uniformly random generating $d$-tuple of $S^t$ has spectral gap at most $\epsilon$ with probability tending to $1$. For $S=\Alt(n)$ and $\epsilon=2/(dh)$ with $h\geq2$, \Cref{lem:leaky} gives $\beta_{d,\epsilon}(S)\geq n^{-dh}=n^{-2/\epsilon}$ for large $n$. This is $|S|^{-o(1)}$ for fixed $\epsilon$, so every multiplicity that is superpolynomial in $n$ satisfies this condition. For groups $S = G(q)$ of Lie type of bounded rank, $\beta_{d,\epsilon}(S)$ is much smaller, namely $\beta_{d,\epsilon}(S)\leq C|S|^{-\delta}$ for suitable $\epsilon,\delta,C>0$ depending only on the rank of $G$ \cite{BGGT15}. Moreover, for $S=G(p)$ with $p$ prime, Becker and Breuillard show that $\beta_{d,\epsilon}(S)=0$ for suitable $\epsilon$ for all $p$ outside a small set of exceptional primes \cite{BB26}. So for most primes, no multiplicity $t$ satisfies this condition.
\end{remark}

\subsection{Expansion near Hall's ceiling}\label{sub:proofceiling}

We show in this subsection that expansion forces the multiplicity of a power of $\Alt(n)$ to stay well below Hall's ceiling, which is $\exp((1+o(1))(d-1)n\log n)$ for $d$ generators. This is \Cref{thm:ceiling} from the introduction. Its proof gives slightly more. Let $n\geq1000$ and let $r$ be an integer with $40\leq r\leq n/2$. Then for every $d\geq2$, every generating $d$-tuple of $\Alt(n)^t$ with $t\geq3\,t_d(\Alt(n))\,n^{-r/40}$ has spectral gap at most $2r/n$. In particular, within a polynomial factor of the ceiling the spectral gap tends to $0$.

We now prove \Cref{thm:ceiling}. The coordinate tuples of a generating $d$-tuple of $\Alt(n)^t$ are $t$ pairwise non-conjugate generating $d$-tuples of $\Alt(n)$ by \Cref{thm:hall}, since $\Aut(\Alt(n)) = \Sym(n)$ for $n \geq 7$. The proof has two steps.
\begin{enumerate}
    \item When $t$ is large, two of the $t$ generating $d$-tuples of $\Alt(n)$ must be nearly conjugate, in a sense made precise below.
    \item Two nearly conjugate coordinates force a small spectral gap.
\end{enumerate}

\subsubsection{Nearly conjugate coordinates}\label{subsub:intertwiner}

We begin with the second step. Recall from \Cref{rem:methods} that a spectral gap keeps inequivalent irreducible representations apart. This is a lemma of de la Harpe, Robertson and Valette \cite[Lem.~10]{HRV93}, following Wassermann \cite[Cor.~2]{Was91}. We use it in the following form, with the spectral gap in place of the Kazhdan constant, and for representations without a common irreducible constituent.

\begin{lemma}\label{lem:hrv}
Let $G$ be a finite group, let $\Sigma=(s_1,\dots,s_d)$ be a generating tuple of $G$, and let $\theta,\theta'$ be unitary representations of $G$ on the same space of dimension $m$, without a common irreducible constituent. Then the spectral gap of $\Cay(G,\Sigma)$ is at most
\[
    \frac{1}{2dm}\sum_{l=1}^d\lVert\theta(s_l)-\theta'(s_l)\rVert_{\mathrm{HS}}^2 .
\]
\end{lemma}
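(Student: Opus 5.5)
Take the identity matrix $I\in\mathrm{M}_m(\C)$ as a test vector for the representation $\tau=\theta\otimes\bar\theta'$ of $G$ on $\mathrm{M}_m(\C)$, $\tau(g)X=\theta(g)X\theta'(g)^{-1}$, equipped with the Hilbert--Schmidt inner product. This representation is unitary, since $\theta$ and $\theta'$ are. Its invariant vectors are exactly the intertwiners from $\theta'$ to $\theta$. Because $\theta$ and $\theta'$ have no common irreducible constituent, Schur's lemma shows that the only such intertwiner is $0$. So $\tau$ has no nonzero invariant vectors.

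As recalled in \Cref{sub:LW}, $\tau$ is then a sum of nontrivial irreducible representations of $G$, each occurring in the functions on $G$ orthogonal to the constants. The spectral gap of $\Cay(G,\Sigma)$ is the infimum of the Rayleigh quotients
\[
    \frac{1}{2d}\sum_{l=1}^d\frac{\lVert\tau(s_l)v-v\rVert^2}{\lVert v\rVert^2}
\]
over nonzero vectors $v$ of such representations. To make this precise, I would note that each vector of an isotypic copy embeds isometrically, up to a common scalar, into $L^2(G)\ominus\C$ via matrix coefficients $g\mapsto\langle\tau(g)v,w\rangle$. A cleaner route is to decompose $v=\sum v_i$ into irreducible pieces. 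The numerator and denominator are both additive over the orthogonal decomposition, so the quotient for $v$ is at least the minimum over the $v_i$. Each irreducible piece is realized inside $L^2(G)$ orthogonal to the constants, compatibly with the right-translation action used in the definition of $L$.

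Plugging in $v=I$ gives $\lVert I\rVert_{\mathrm{HS}}^2=m$. The numerator terms are
\[
    \lVert\theta(s_l)\theta'(s_l)^{-1}-I\rVert_{\mathrm{HS}}=\lVert\theta(s_l)-\theta'(s_l)\rVert_{\mathrm{HS}},
\]
because right multiplication by the unitary $\theta'(s_l)$ preserves the Hilbert--Schmidt norm. This yields exactly the stated bound.

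The only point needing care is the identification of the spectral gap with the infimum of Rayleigh quotients over all representations without invariant vectors, including the direction of the action (left versus right multiplication). This was already stated in \Cref{sub:LW}, so I would cite it, adding a remark that each irreducible unitary representation of $G$ and its dual both occur in the right-regular representation.
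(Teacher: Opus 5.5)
Your proof is correct and is the paper's own argument: the action $X\mapsto\theta(g)X\theta'(g)^{-1}$ on $\mathrm{M}_m(\C)$ with the Hilbert--Schmidt inner product has no nonzero invariant vectors by Schur's lemma, and the identity matrix is the test vector whose Rayleigh quotient is the displayed quantity. Your extra justification that Rayleigh quotients in a representation without invariant vectors bound the spectral gap is sound. It rests on additivity over an orthogonal decomposition, together with matrix-coefficient embeddings of each nontrivial irreducible piece into the functions on $G$ orthogonal to the constants, equivariant for right translation. This only spells out what the paper takes from \Cref{sub:LW}.
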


\begin{proof}
The group $G$ acts unitarily on the linear maps of the space, with the Hilbert--Schmidt inner product, by $g\cdot M=\theta(g)M\theta'(g)^{-1}$. Its invariant vectors are the intertwiners from $\theta'$ to $\theta$, and there are none, by Schur's lemma. The identity $I$ has $\lVert I\rVert_{\mathrm{HS}}^2=m$ and $\lVert s\cdot I-I\rVert_{\mathrm{HS}}=\lVert\theta(s)-\theta'(s)\rVert_{\mathrm{HS}}$. So its Rayleigh quotient is the displayed quantity, and it bounds the spectral gap.
\end{proof}

We will apply this when $\theta$ and $\theta'$ come from a single representation of a group $H$, pulled back along two homomorphisms $G\to H$ that agree on all generators but one. The bound is then a character ratio.

\begin{lemma}\label{lem:charratio}
Let $G$ be a finite group, let $\Sigma=(s_1,\dots,s_d)$ be a generating tuple of $G$, and let $\rho,\rho'\colon G\to H$ be homomorphisms to a finite group $H$. Suppose that there is $\tau\in H$ with
\[
    \rho'(s_1) = \rho(s_1)\tau
    \qquad \text{and} \qquad
    \rho'(s_l) = \rho(s_l)
    \quad \text{for } l\geq2.
\]
Let $\psi$ be a unitary representation of $H$ with character $\chi$, such that $\psi\circ\rho$ and $\psi\circ\rho'$ have no common irreducible constituent. Then the spectral gap of $\Cay(G,\Sigma)$ is at most
\[
    \frac1d\left(1-\frac{\operatorname{Re}\chi(\tau)}{\chi(1)}\right) .
\]
\end{lemma}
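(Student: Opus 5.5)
We apply \Cref{lem:hrv} with $\theta=\psi\circ\rho$ and $\theta'=\psi\circ\rho'$. These are unitary representations of $G$ on the space of $\psi$, of dimension $m=\chi(1)$, and by hypothesis they have no common irreducible constituent. So the spectral gap of $\Cay(G,\Sigma)$ is at most
\[
    \frac{1}{2d\chi(1)}\sum_{l=1}^d\lVert\psi(\rho(s_l))-\psi(\rho'(s_l))\rVert_{\mathrm{HS}}^2 .
\]
For $l\geq2$ the two matrices coincide, so only the term $l=1$ survives.

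For $l=1$, we write $\psi(\rho'(s_1))=\psi(\rho(s_1))\psi(\tau)$. Since $\psi(\rho(s_1))$ is unitary and the Hilbert--Schmidt norm is invariant under multiplication by unitaries, we get
\[
    \lVert\psi(\rho(s_1))-\psi(\rho(s_1))\psi(\tau)\rVert_{\mathrm{HS}}=\lVert I-\psi(\tau)\rVert_{\mathrm{HS}} .
\]
Next we expand $\lVert I-U\rVert_{\mathrm{HS}}^2=\operatorname{tr}((I-U)^*(I-U))=2\operatorname{tr}I-2\operatorname{Re}\operatorname{tr}U$ for the unitary $U=\psi(\tau)$. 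This gives $2\chi(1)-2\operatorname{Re}\chi(\tau)$.

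Substituting into the bound, the spectral gap is at most $\frac{1}{2d\chi(1)}\cdot 2(\chi(1)-\operatorname{Re}\chi(\tau))$, which is the claimed quantity. There is no real obstacle here. The only points to check are the unitary invariance of the Hilbert--Schmidt norm and that the hypothesis of \Cref{lem:hrv} is exactly the assumption on $\psi\circ\rho$ and $\psi\circ\rho'$.
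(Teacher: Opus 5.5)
Your proposal is correct and follows the paper's proof exactly: apply \Cref{lem:hrv} to $\theta=\psi\circ\rho$ and $\theta'=\psi\circ\rho'$, and note that only the $l=1$ term survives. That term reduces by unitary invariance to $\lVert I-\psi(\tau)\rVert_{\mathrm{HS}}^2=2\chi(1)-2\operatorname{Re}\chi(\tau)$.
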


\begin{proof}
Put $\theta=\psi\circ\rho$ and $\theta'=\psi\circ\rho'$. They agree on $s_l$ for $l\geq2$, and since $\psi(\rho(s_1))$ and $\psi(\tau)$ are unitary,
\[
    \lVert\theta(s_1)-\theta'(s_1)\rVert_{\mathrm{HS}}^2 = \lVert\psi(\rho(s_1))\left(I-\psi(\tau)\right)\rVert_{\mathrm{HS}}^2 = \lVert I-\psi(\tau)\rVert_{\mathrm{HS}}^2 = 2\chi(1)-2\operatorname{Re}\chi(\tau) .
\]
\Cref{lem:hrv} with $m=\chi(1)$ gives the claim.
\end{proof}

So the spectral gap is small as soon as $\operatorname{Re}\chi(\tau)/\chi(1)$ is close to $1$. We will apply this with $H = \Sym(n)$ and the standard representation, where this ratio is governed by the support of $\tau$. Our permutations act on the right, so that $p^{\beta\gamma}=(p^\beta)^\gamma$, and we write $\supp\sigma$ for the set of points moved by $\sigma\in\Sym(n)$.

\begin{lemma}\label{lem:intertwiner}
Let $G$ be a finite group, let $\Sigma=(s_1,\dots,s_d)$ be a generating tuple of $G$, and let $\rho,\rho'\colon G\to\Sym(n)$ be homomorphisms such that $G$ acts transitively on $[n]^2$ by $(p,q)^g=(p^{\rho(g)},q^{\rho'(g)})$. Suppose that there is $\tau\in\Sym(n)$ with
\[
    \rho'(s_1) = \rho(s_1)\tau
    \qquad \text{and} \qquad
    \rho'(s_l) = \rho(s_l)
    \quad \text{for } l\geq2.
\]
Then the spectral gap of $\Cay(G,\Sigma)$ is at most $\lvert\supp\tau\rvert/(d(n-1))$.
\end{lemma}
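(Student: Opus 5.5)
We apply \Cref{lem:charratio} with $H=\Sym(n)$ and $\psi$ the standard representation of $\Sym(n)$, that is, the permutation representation on $\C^n$ minus the trivial one. Its character is $\chi(\sigma)=\lvert\operatorname{fix}\sigma\rvert-1$, with $\chi(1)=n-1$. For our $\tau$ this gives
\[
    1-\frac{\operatorname{Re}\chi(\tau)}{\chi(1)} = \frac{(n-1)-(n-\lvert\supp\tau\rvert-1)}{n-1} = \frac{\lvert\supp\tau\rvert}{n-1} .
\]
Once the hypothesis of \Cref{lem:charratio} is verified, it yields exactly the bound $\lvert\supp\tau\rvert/(d(n-1))$. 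The relations between $\rho(s_l)$ and $\rho'(s_l)$ are precisely those required there.

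The remaining step, and the only real content, is to show that $\theta=\psi\circ\rho$ and $\theta'=\psi\circ\rho'$ have no common irreducible constituent. This is where the transitivity of $G$ on $[n]^2$ enters. Write $\pi=\Res\circ\rho$ and $\pi'$ for the two permutation representations of $G$ on $\C^n$, so that $\pi=1\oplus\theta$ and $\pi'=1\oplus\theta'$.

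First I would check that $\dim\Hom_G(\pi',\pi)$ equals the number of $G$-orbits on $[n]\times[n]$ under the diagonal action $(p,q)\mapsto(p^{\rho(g)},q^{\rho'(g)})$. A $G$-equivariant matrix $M$ with $\pi(g)M=M\pi'(g)$ has entries $M_{pq}$ that are constant on these orbits, by the standard argument for permutation modules. Transitivity therefore gives $\dim\Hom_G(\pi',\pi)=1$.

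On the other hand, $\Hom_G(1\oplus\theta',1\oplus\theta)$ contains the trivial-to-trivial map. It also contains $\Hom_G(1,\theta)$, $\Hom_G(\theta',1)$ and $\Hom_G(\theta',\theta)$ as direct summands. Hence
\[
    \dim\Hom_G(\theta',\theta)\leq 1-1=0 ,
\]
so $\theta$ and $\theta'$ share no irreducible constituent, as needed. Both representations are unitary, being restrictions of permutation matrices to the orthogonal complement of the constants.

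The main obstacle is making this orbit-counting identification precise. One has to check that the action on $[n]^2$ is well defined with the right-action conventions, and that $\Hom_G(\pi',\pi)$ corresponds to functions on $[n]^2$ invariant under exactly this action. Everything else is a direct substitution into \Cref{lem:charratio}.
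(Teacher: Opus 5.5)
Your proof is correct and is the paper's argument: apply \Cref{lem:charratio} with the standard character $\chi(\sigma)=n-1-\lvert\supp\sigma\rvert$, and derive the no-common-constituent hypothesis from transitivity on $[n]^2$. The only difference is cosmetic. The paper counts orbits by Burnside's lemma as $\langle\chi_\pi\chi_{\pi'},1\rangle=\langle\chi_\pi,\chi_{\pi'}\rangle$, while you identify $\Hom_G(\pi',\pi)$ directly with the matrices constant on orbits, which is the same computation.
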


\begin{proof}
Let $\psi$ be the representation of $\Sym(n)$ on the vectors in $\C^n$ with coordinate sum $0$. Its character is $\chi(\sigma)=n-1-\lvert\supp\sigma\rvert$. The permutation representations of $G$ on $\C^n$ through $\rho$ and $\rho'$ have characters $\chi_\pi=1+\chi\circ\rho$ and $\chi_{\pi'}=1+\chi\circ\rho'$. A pair $(p,q)$ is fixed by $g$ exactly when $p$ and $q$ both are, so the permutation character of $[n]^2$ is $\chi_\pi\chi_{\pi'}$. By Burnside's lemma, the number of orbits of $G$ on $[n]^2$ is $\langle\chi_\pi\chi_{\pi'},1\rangle=\langle\chi_\pi,\chi_{\pi'}\rangle$. As $G$ is transitive on $[n]^2$, this is $1$, so $\psi\circ\rho$ and $\psi\circ\rho'$ have no common irreducible constituent. \Cref{lem:charratio} now bounds the spectral gap by $d^{-1}\left(1-(n-1-\lvert\supp\tau\rvert)/(n-1)\right)=\lvert\supp\tau\rvert/(d(n-1))$.
\end{proof}

\subsubsection{Many coordinates are nearly conjugate}\label{subsub:nearconj}

We write $g^c=c^{-1}gc$ for conjugation, and for a tuple $Z=(z_1,\dots,z_d)$ of permutations we write $Z^c=(z_1^c,\dots,z_d^c)$ and $Z\cdot\sigma=(z_1\sigma,z_2,\dots,z_d)$, the perturbation of the first entry of $Z$ by $\sigma$. For $s\geq0$, two tuples $Z,Z'$ are \emph{$s$-nearly conjugate} if $Z'^c=Z\cdot\tau$ for some $c\in\Sym(n)$ and some $\tau$ with $\lvert\supp\tau\rvert\leq s$. By \Cref{lem:intertwiner}, if two coordinates of a generating $d$-tuple of a power of $\Alt(n)$ are $s$-nearly conjugate, then its spectral gap is at most $s/(d(n-1))$.

We now explain the strategy for the first step, which is to find two nearly conjugate coordinates. We fix a parameter $r$, which is small compared to $n$, and a set $\mathcal P$ of permutations $\sigma$ with $\lvert\supp\sigma\rvert\leq r$, such that $\lvert\supp\sigma^{-1}\sigma'\rvert\geq r/2$ for all distinct $\sigma,\sigma'\in\mathcal P$. We perturb every tuple $Z\in\Alt(n)^d$ by each element of $\mathcal P$. If $\mathcal Y$ is a large set of pairwise non-conjugate generating tuples, then a counting argument gives some $Z$ with two perturbations $Z\cdot\sigma$ and $Z\cdot\sigma'$ that are conjugate to tuples $Y,Y'\in\mathcal Y$. Since $Z\cdot\sigma'=(Z\cdot\sigma)\cdot\sigma^{-1}\sigma'$, the tuples $Y$ and $Y'$ are nearly conjugate, and $r/2\leq\lvert\supp\sigma^{-1}\sigma'\rvert\leq2r$. If $Y$ and $Y'$ are distinct, the upper bound and \Cref{lem:intertwiner} give a spectral gap of at most $2r/(d(n-1))$, which is small. They are the same tuple only if $Z\cdot\sigma$ is conjugate to its own perturbation by $\sigma^{-1}\sigma'$, and the lower bound $r/2$ makes this rare. We first record the three ingredients of this argument, namely an estimate on conjugacy classes, a supply of such perturbations, and a bound on the tuples that are conjugate to a perturbation of themselves. Then we carry out the counting in \Cref{lem:nearconj}.

\begin{lemma}\label{lem:classes}
Let $n\geq120$ and $s>0$. The sum of $|\mathcal K|^{-1}$ over the conjugacy classes $\mathcal K$ of $\Sym(n)$ whose elements $\sigma$ have $\lvert\supp\sigma\rvert\geq s$ is at most $2(2e/\sqrt n)^{s}$.
\end{lemma}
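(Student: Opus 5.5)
Conjugacy classes of $\Sym(n)$ are indexed by cycle types, and by the orbit--stabilizer theorem $|\mathcal K|^{-1}=|C(\sigma)|/n!$. For a class with $a_i$ cycles of length $i$, the centralizer has order $\prod_i i^{a_i}a_i!$. Write the cycle type as a fixed part with $a_1=n-u$ fixed points, where $u=\lvert\supp\sigma\rvert\geq s$, and a fixed-point-free permutation on $u$ points with cycle type $\lambda\vdash u$ having no parts $1$. Then
\[
    \frac1{|\mathcal K|}=\frac{(n-u)!\,z_\lambda}{n!},
\]
where $z_\lambda$ is the centralizer order of type $\lambda$ in $\Sym(u)$. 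Summing over $\lambda$ and then over $u\geq s$ gives
\[
    \sum_{\mathcal K}\frac1{|\mathcal K|}=\sum_{u\geq s}\frac{(n-u)!}{n!}\sum_{\lambda}z_\lambda .
\]
So the plan is to bound $\sum_\lambda z_\lambda$ over derangement types of $u$, and then sum a geometric-type series in $u$.

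For the inner sum, note that $z_\lambda=u!/|\mathcal K_\lambda|$, so $\sum_\lambda z_\lambda\leq u!\cdot\#\{\lambda\}\cdot\max_\lambda |\mathcal K_\lambda|^{-1}$. This is too lossy. Instead, bound $z_\lambda$ directly. Since $\lambda$ has no parts $1$, we have $\sum_i a_i\leq u/2$. Then
\[
    z_\lambda=\prod_i i^{a_i}a_i!\leq\prod_i i^{a_i}\cdot\Bigl(\sum_i a_i\Bigr)!,
\]
and $\prod_i i^{a_i}\leq e^{u/e}$ (maximize $\prod i$ subject to $\sum i a_i=u$), or more crudely $\leq 2^{u/2}\cdot\max$. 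The dominant term is when all cycles are transpositions: $z=2^{u/2}(u/2)!$. I expect $\sum_\lambda z_\lambda\leq C\cdot 2^{u/2}(u/2)!\cdot(\text{small factor})$, and the crude estimate $z_\lambda\leq u^{u/2}$ together with the number of partitions $p(u)\leq e^{\pi\sqrt{2u/3}}$ should suffice: $\sum_\lambda z_\lambda\leq p(u)\,u^{u/2}$. Combined with $(n-u)!/n!\leq (n-u)^{-u}$, the $u$-th term is at most $p(u)\,(\sqrt u/(n-u))^{u}$.

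The main obstacle is getting the clean constant $2e/\sqrt n$ uniformly for all $u$ up to $n$, not just $u\ll n$. For $u\leq n/2$, the bound $u\leq n$ gives $\sqrt u/(n-u)\leq 2/\sqrt n$, and $p(u)\leq e^{u}$ absorbs into the factor $e$. This yields a term at most $(2e/\sqrt n)^u$, and summing a geometric series with ratio $2e/\sqrt n\leq1/2$ (using $n\geq120$) gives $2(2e/\sqrt n)^s$. For $u>n/2$, I would instead use the trivial bound: the total contribution of all classes is at most the number of classes times $\max|\mathcal K|^{-1}$, and classes with large support have size at least roughly $n!/(2^{n/2}(n/2)!\,n^{O(\sqrt n)})$, which is superexponentially small and dominated by $(2e/\sqrt n)^{u}$. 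Checking that these two regimes glue under the stated constant, and that $n\geq 120$ is enough for the ratio bound, is the routine final step; if the split at $n/2$ leaks constants, I would sharpen via the exact bound $z_\lambda\leq 2^{u/2}(u/2)!$ for derangement types, which holds since merging cycles only decreases the centralizer.
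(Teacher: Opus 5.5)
Your estimate for supports $u\le n/2$ is correct, but the regime $u>n/2$ is not handled, and the sketch you give for it relies on a false bound. It is not true that every class with support larger than $n/2$ has size at least about $n!/(2^{n/2}(n/2)!\,n^{O(\sqrt n)})$. For $n=4m$, a product of $m+1$ disjoint transpositions has centralizer of order $(2m-2)!\,2^{m+1}(m+1)!$. This exceeds $2^{2m}(2m)!\,n^{C\sqrt n}$ for every fixed $C$ once $m$ is large, since the ratio is about $(m+1)!/(2^{m+1}m^2n^{C\sqrt n})$.

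Your fallback also does not fit the target. Bounding the tail by the number of classes times the largest $|\mathcal K|^{-1}$ gives no bound for each support size separately. So it cannot feed into the geometric series that produces the factor $2$ in $2(2e/\sqrt n)^s$. Sharpening $z_\lambda$ to $2^{u/2}(u/2)!$ does not help either, because the loss is in the other factor.

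The trouble comes from the estimate $(n-u)!/n!\le(n-u)^{-u}$, which degenerates as $u\to n$ and forces the split. Instead, write $n!/(n-u)!=\binom nu u!$ and use $\binom nu\ge(n/u)^u$ and $u!\ge(u/e)^u$. This gives $n!/(n-u)!\ge(n/e)^u$ for every $u\le n$. Combine it with $z_\lambda\le\prod_i(ia_i)^{a_i}\le u^{\sum_ia_i}\le u^{u/2}$. You get $|\mathcal K|^{-1}\le(e\sqrt u/n)^u\le(e/\sqrt n)^u$ uniformly in $u$, with no case distinction. There are at most $2^u$ derangement types of $u$, so the sum is at most $\sum_{u\ge s}(2e/\sqrt n)^u\le2(2e/\sqrt n)^s$, using $2e/\sqrt n\le1/2$ for $n\ge120$. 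This is exactly the paper's argument.
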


\begin{proof}
Let $\mathcal K$ be a class consisting of the permutations with support of size $j$ and with $b_i$ cycles of length $i$ for each $i\geq2$. Then $|\mathcal K|=\binom nj j!/z$, where
\[
    z = \prod_i i^{b_i}b_i! \leq \prod_i(ib_i)^{b_i} \leq j^{\sum_ib_i} \leq j^{j/2} ,
\]
since $\sum_iib_i=j$, and hence $\sum_ib_i\leq j/2$. As $j!\geq(j/e)^j$ and $\binom nj\geq(n/j)^j$, this gives
\[
    |\mathcal K| \geq \left(\frac{n}{e\sqrt j}\right)^j \geq \left(\frac{\sqrt n}{e}\right)^j .
\]
A class of permutations with support of size $j$ is determined by its cycle type on the support, which is a partition of $j$, so there are at most $2^j$ such classes. Hence the sum in question is at most $\sum_{j\geq s}(2e/\sqrt n)^j$, and this is at most $2(2e/\sqrt n)^{s}$ since $2e/\sqrt n\leq1/2$ for $n\geq120$.
\end{proof}

\begin{lemma}\label{lem:code}
Let $n\geq 80$ and let $r$ be an integer with $3\leq r\leq n/2$. Then there is a set $\mathcal P\subseteq\Alt(n)$ of size at least $n^{r/40}$, such that $\lvert\supp\sigma\rvert\leq r$ for all $\sigma\in\mathcal P$, and $\lvert\supp\sigma^{-1}\sigma'\rvert\geq r/2$ for all distinct $\sigma,\sigma'\in\mathcal P$.
\end{lemma}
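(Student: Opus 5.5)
The plan is a greedy, Gilbert--Varshamov style packing argument. Use the metric $\operatorname{dist}(\sigma,\sigma')=\lvert\supp\sigma^{-1}\sigma'\rvert$ on $\Sym(n)$. It is symmetric, satisfies the triangle inequality since $\supp(\alpha\beta)\subseteq\supp\alpha\cup\supp\beta$, and is invariant under left multiplication.

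Fix a large candidate set $\mathcal A\subseteq\Alt(n)$ of elements with support at most $r$. I would take the products of exactly $a=\lfloor r/3\rfloor$ disjoint $3$-cycles. These are even and have support $3a\leq r$.

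Build $\mathcal P$ greedily. Pick any element of $\mathcal A$, discard all elements of $\mathcal A$ at distance less than $r/2$ from it, and repeat. By left invariance, each chosen $\sigma$ discards at most
\[
B=\#\{\pi\in\Sym(n) : \lvert\supp\pi\rvert<r/2\}
\]
candidates, namely the elements $\sigma\pi$. Hence $|\mathcal P|\geq|\mathcal A|/B$. By construction $\mathcal P\subseteq\Alt(n)$, its elements have support at most $r$, and distinct elements are at distance at least $r/2$.

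It then remains to compare two counts.

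\emph{Upper bound on $B$.} A permutation with support of size $j$ is determined by its support and a permutation of it, so there are at most $\binom nj j!\leq n^j$ of them. Hence
\[
B\leq\sum_{j<r/2}n^j\leq2n^{\lceil r/2\rceil-1}.
\]

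\emph{Lower bound on $|\mathcal A|$.} We have
\[
|\mathcal A|=\frac{n!}{(n-3a)!\,3^a\,a!}\geq\frac{(n-3a)^{3a}}{3^a a!}.
\]
Since $3a\leq r\leq n/2$, this is at least $(n/2)^{3a}/(3^a a^a)$. Using $a\leq r/3\leq n/6$, we get $(n/2)^{a}/(3a)^a\geq(n/2)^a/(n/2)^a=1$, so roughly $|\mathcal A|\geq(n/2)^{2a}$. This is $n^{2r/3-O(1)}$ up to a factor $2^{-2a}$, and that factor is absorbed using $n\geq80$.

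Dividing gives a bound of the form $|\mathcal P|\geq n^{2a-r/2}\cdot2^{-O(a)}\cdot\tfrac12$. For large $r$ the exponent $2a-r/2\approx r/6-2$ comfortably exceeds $r/40$, since $n\geq80$ absorbs the powers of $2$.

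I expect the main obstacle to be the constants at the low end, roughly $3\leq r\leq 20$. There $2\lfloor r/3\rfloor-\lceil r/2\rceil+1$ can be $0$, for instance at $r=3,4$, while we still need $|\mathcal P|\geq n^{r/40}>1$.

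To handle this I would first sharpen $B$ to count only permutations with support strictly less than $r/2$, which for $r\leq 5$ means support at most $2$, giving $B\leq1+\binom n2$. I would also sharpen $\mathcal A$, for example by using even cycles of length $r$ or $r-1$ (whichever is odd), of which there are about $n^{r}/r$. That gives $|\mathcal A|/B\gtrsim n^{r/2}/r$, which beats $n^{r/40}$ for all $r\geq3$ and $n\geq80$.

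If the $3$-cycle family is too wasteful, I would switch entirely to this single-cycle family. With it, $|\mathcal A|\geq\binom{n}{r'}(r'-1)!\geq(n/2)^{r'}/r'$ for $r'\in\{r-1,r\}$ odd. Dividing by $B\leq2n^{\lceil r/2\rceil-1}$ leaves $n^{r/2-O(1)}2^{-r}/r$, and checking that this is at least $n^{r/40}$ for $n\geq80$ and $3\leq r\leq n/2$ is a routine final estimate.
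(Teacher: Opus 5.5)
Your final argument is correct and is essentially the paper's proof. You run a greedy (Gilbert--Varshamov) packing in the metric $\lvert\supp\sigma^{-1}\sigma'\rvert$ over odd cycles of length $r$ or $r-1$, with balls of size at most $2n^{\lceil r/2\rceil-1}$; the paper instead takes a maximal such set and covers the even permutations of support exactly $r$ by balls of radius less than $\lceil r/2\rceil$. The $3$-cycle detour is unnecessary, and the single-cycle estimate you leave as routine does close: it gives $n^{(r+1)/2}/(2^{r+1}r)$ for odd $r$ and $n^{r/2}/(2^r(r-1))$ for even $r$, and both are at least $n^{r/40}$ because $n^{19/40}\geq8$ for $n\geq80$.
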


\begin{proof}
Put $M=\lceil r/2\rceil$, and let $\mathcal P\subseteq\Alt(n)$ be a maximal set of permutations $\sigma$ with $\lvert\supp\sigma\rvert\leq r$, such that $\lvert\supp\sigma^{-1}\sigma'\rvert\geq M$ for all distinct $\sigma,\sigma'\in\mathcal P$. By maximality, every even permutation $\pi$ with $\lvert\supp\pi\rvert=r$ is of the form $\sigma\eta$ with $\sigma\in\mathcal P$ and $\lvert\supp\eta\rvert<M$. Indeed, if $\pi\in\mathcal P$, take $\sigma=\pi$ and $\eta=1$. Otherwise $\mathcal P\cup\{\pi\}$ violates the defining condition, so there is $\sigma\in\mathcal P$ such that $\eta=\sigma^{-1}\pi$ has $\lvert\supp\eta\rvert<M$. Now, there are at least $\binom nr(r-1)!/2\geq(n/2)^r/(2r)$ even permutations with support of size $r$, for instance the $r$-cycles if $r$ is odd, and the products of a transposition and a disjoint $(r-2)$-cycle if $r$ is even. On the other hand, for each $j$ at most $\binom nj j!\leq n^j$ permutations have support of size $j$, so summing over $j<M$, there are at most $Mn^{M-1}$ permutations with support of size less than $M$. This gives
\[
    |\mathcal P| \geq \frac{(n/2)^r}{2rMn^{M-1}} \geq \frac{n^{(r+1)/2}}{2r^2\cdot2^r} \geq \left(\frac{\sqrt n}{8}\right)^r \geq n^{r/40} ,
\]
where the third step uses $2r^2\leq4^r$, and the last one uses $n^{19/40}\geq8$, which holds for $n\geq80$.
\end{proof}

\begin{lemma}\label{lem:selfconj}
Let $d\geq2, n\geq120, s\geq1$, and let $\eta\in\Alt(n)$ with $\lvert\supp\eta\rvert\geq s$. Then the number of $Z\in\Alt(n)^d$ such that $Z\cdot\eta$ is conjugate to $Z$ under $\Sym(n)$ is at most $8(2e/\sqrt n)^{s/2}|\Alt(n)|^d$.
\end{lemma}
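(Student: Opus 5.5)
The condition unwinds to a commutator equation, and I would count the solutions by summing over the conjugating element. Suppose $Z\cdot\eta$ is conjugate to $Z$, say $(Z\cdot\eta)=Z^c$ for some $c\in\Sym(n)$. This means two things. First, $z_l^c=z_l$ for $l\geq2$. Second, $z_1\eta=c^{-1}z_1c$, that is, $\eta=z_1^{-1}c^{-1}z_1c$. So $c$ centralizes $z_2,\dots,z_d$, and $\eta$ is the product of the conjugate $(c^{-1})^{z_1}$ with $c$. I would bound the number of such $Z$ by summing, over all $c\in\Sym(n)$, the number of $Z$ for which this particular $c$ works. This overcounts, but only in the safe direction.

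\emph{Step 1: restrict $c$.} The permutation $\eta=(c^{-1})^{z_1}c$ is supported on $\supp c\cup(\supp c)^{z_1}$, so $\lvert\supp\eta\rvert\leq2\lvert\supp c\rvert$. Since $\lvert\supp\eta\rvert\geq s$, only elements $c$ with $\lvert\supp c\rvert\geq s/2$ contribute. This is where the hypothesis on $\eta$ enters.

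\emph{Step 2: count $Z$ for a fixed $c$.} Let $\mathcal K$ be the class of $c$ in $\Sym(n)$, so that $|C_{\Sym(n)}(c)|=n!/|\mathcal K|$.
\begin{itemize}
\item Each of $z_2,\dots,z_d$ lies in $C_{\Alt(n)}(c)$, which has order at most $n!/|\mathcal K|=2|\Alt(n)|/|\mathcal K|$.
\item The entry $z_1$ must satisfy $(c^{-1})^{z_1}=\eta c^{-1}$. The solutions in $\Sym(n)$ form either the empty set or a single coset of $C_{\Sym(n)}(c)$, so there are at most $2|\Alt(n)|/|\mathcal K|$ of them in $\Alt(n)$.
\end{itemize}
So a fixed $c$ contributes at most $(2/|\mathcal K|)^d|\Alt(n)|^d$. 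Summing over the $|\mathcal K|$ elements of each class gives $2^d|\mathcal K|^{1-d}|\Alt(n)|^d$ per class.

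\emph{Step 3: remove the $2^d$ and sum.} The main obstacle is the factor $2^d$, which comes from comparing centralizers in $\Alt(n)$ with $|\Alt(n)|$ rather than $n!$. It would be fatal for large $d$. It disappears because $d\geq2$ and every nontrivial class has $|\mathcal K|\geq2$. Indeed,
\[
2^d|\mathcal K|^{1-d}=\frac{4}{|\mathcal K|}\left(\frac{2}{|\mathcal K|}\right)^{d-2}\leq\frac{4}{|\mathcal K|}.
\]
Summing over the classes with support at least $s/2$ and applying \Cref{lem:classes} with $s/2$ in place of $s$ (valid since $n\geq120$) bounds the total by
\[
4\cdot2\left(\frac{2e}{\sqrt n}\right)^{s/2}|\Alt(n)|^d = 8\left(\frac{2e}{\sqrt n}\right)^{s/2}|\Alt(n)|^d ,
\]
which is the claim.
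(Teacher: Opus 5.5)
Your proof is correct and follows the paper's argument essentially step by step. You sum over the conjugating permutation, use $\lvert\supp\eta\rvert\le 2\lvert\supp c\rvert$ to restrict to $\lvert\supp c\rvert\ge s/2$, bound each entry by a centralizer (or a coset of one) of order at most $n!/\lvert\mathcal K\rvert$, and finish with \Cref{lem:classes} at $s/2$. The only cosmetic difference is that the paper bounds $d-2$ of the centralizer factors by $\lvert\Alt(n)\rvert$ outright, which is exactly what your observation $2/\lvert\mathcal K\rvert\le 1$ for nontrivial classes achieves.
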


\begin{proof}
Write $S=\Alt(n)$, and suppose that $Z\cdot\eta=Z^a$ with $Z=(z_1,\dots,z_d)$ and $a\in\Sym(n)$, where $a\neq1$ since $\eta\neq1$. Then $a$ centralizes $z_2,\dots,z_d$, which leaves at most $|C_S(a)|$ choices for each of them. Moreover $z_1^a=z_1\eta$ says that $(a^{-1})^{z_1}=\eta a^{-1}$, which leaves at most $|C_S(a)|$ choices for $z_1$. This identity also shows that $\lvert\supp\eta a^{-1}\rvert=\lvert\supp a\rvert$, and since $\eta=(\eta a^{-1})a$, we get $\lvert\supp\eta\rvert\leq2\lvert\supp a\rvert$. So $\lvert\supp a\rvert\geq s/2$, and the number of $Z$ in question is at most
\[
    \sum_a|C_S(a)|^d \leq |S|^{d-2}\sum_{a}|C_S(a)|^2 ,
\]
where $a$ runs over the permutations with $\lvert\supp a\rvert\geq s/2$, and we use $|C_S(a)|\leq|S|$ for all but two factors.\footnote{This crude estimate suffices for our purposes, and it keeps the bound independent of $d$. Keeping all the centralizers would only improve the constant $1/160$ in \Cref{thm:ceiling}.} Grouping the elements $a$ by their conjugacy classes $\mathcal K$ in $\Sym(n)$, and using $|C_S(a)|\leq n!/|\mathcal K|$, we bound this by
\[
    |S|^{d-2}\sum_{\mathcal K}|\mathcal K|\left(\frac{n!}{|\mathcal K|}\right)^2 = 4|S|^d\sum_{\mathcal K}|\mathcal K|^{-1} \leq 8\left(\frac{2e}{\sqrt n}\right)^{s/2}|S|^d ,
\]
where the last step is \Cref{lem:classes}.
\end{proof}

\begin{lemma}\label{lem:nearconj}
Let $d\geq2$ and $n\geq1000$, and let $r$ be an integer with $40\leq r\leq n/2$. Then every set of at least $3\,t_d(\Alt(n))\,n^{-r/40}$ pairwise non-conjugate generating $d$-tuples of $\Alt(n)$ contains two distinct tuples that are $2r$-nearly conjugate.
\end{lemma}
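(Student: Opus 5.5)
Let $S=\Alt(n)$, let $\mathcal Y$ be the given set of pairwise non-conjugate generating $d$-tuples, with $|\mathcal Y|\geq3\,t_d(S)\,n^{-r/40}$, and let $\mathcal P$ be the set of \Cref{lem:code}, with $|\mathcal P|\geq n^{r/40}$. The plan is a double count of incidences. Write $\widetilde{\mathcal Y}$ for the union of the $\Sym(n)$-conjugacy classes of the tuples in $\mathcal Y$. Since $\Aut(S)=\Sym(n)$ acts freely on generating tuples (\Cref{rem:hall}), each class has exactly $|\Aut(S)|=\phi_d(S)/t_d(S)$ elements. Hence $|\widetilde{\mathcal Y}|\geq3\,n^{-r/40}\phi_d(S)$.

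Count the pairs $(Z,\sigma)\in S^d\times\mathcal P$ with $Z\cdot\sigma\in\widetilde{\mathcal Y}$. For each fixed $\sigma$, the map $Z\mapsto Z\cdot\sigma$ is a bijection of $S^d$ because $\sigma$ is even, so there are exactly $|\mathcal P|\cdot|\widetilde{\mathcal Y}|\geq3\phi_d(S)\geq3(1-o(1))|S|^d$ such pairs. The paper only gives $\phi_d(S)=(1-o(1))|S|^d$ asymptotically, so I will use an explicit bound such as $\phi_d(S)\geq\phi_2(S)|S|^{d-2}\geq\tfrac23|S|^d$ for $n\geq1000$. This follows from any explicit Dixon-type estimate, and if necessary the constant $3$ absorbs it. The count then exceeds $2|S|^d$, so by pigeonhole more than half of the tuples $Z$ (in measure) have at least two distinct $\sigma,\sigma'\in\mathcal P$ with $Z\cdot\sigma$ and $Z\cdot\sigma'$ in $\widetilde{\mathcal Y}$. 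More precisely, the number of triples $(Z,\sigma,\sigma')$ with $\sigma\neq\sigma'$ is at least $N-|S|^d$, where $N$ is the pair count. This gives at least $|S|^d$ such triples.

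For such a triple, put $\eta=\sigma^{-1}\sigma'$. Then $Z\cdot\sigma'=(Z\cdot\sigma)\cdot\eta$, and $r/2\leq|\supp\eta|\leq2r$. Let $Y,Y'\in\mathcal Y$ be the representatives conjugate to $Z\cdot\sigma$ and $Z\cdot\sigma'$. If $Y\neq Y'$, then writing $Y=(Z\cdot\sigma)^{c}$ and conjugating appropriately, we get that $Y'$ and $Y$ are $2r$-nearly conjugate, since the perturbation conjugates to $\eta^c$, of the same support. This is what we want.

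It remains to show that not all triples have $Y=Y'$. That case means $W=Z\cdot\sigma$ is $\Sym(n)$-conjugate to $W\cdot\eta$. Fix $\sigma,\sigma'$; since $Z\mapsto W$ is a bijection, \Cref{lem:selfconj} with $s=r/2$ bounds the number of such $Z$ by $8(2e/\sqrt n)^{r/4}|S|^d$. Summing over the at most $|\mathcal P|^2$ ordered pairs gives at most $8|\mathcal P|^2(2e/\sqrt n)^{r/4}|S|^d$ bad triples.

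This is where I expect the real obstacle: $|\mathcal P|$ may be much larger than $n^{r/40}$, and then the bound fails. The fix is to first shrink $\mathcal P$ to a subset of size exactly $\lceil n^{r/40}\rceil$, which keeps both properties of \Cref{lem:code}. Then the number of bad triples is at most $8n^{r/20}(2e)^{r/4}n^{-r/8}|S|^d$. For $n\geq1000$ and $r\geq40$ this is far below $|S|^d$: the exponent of $n$ is $-3r/40$, and $(2e)^{r/4}n^{-3r/40}<1/8$ since $(2e)^{10}<1000^3$. The pigeonhole step must also be redone with $|\mathcal P|=\lceil n^{r/40}\rceil$, which is fine since only the lower bound was used. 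Hence some good triple exists with $Y\neq Y'$, and the lemma follows. The careful bookkeeping of constants, including the explicit lower bound on $\phi_d(S)$, is the part to check.
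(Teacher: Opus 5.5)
Your proposal is correct and is essentially the paper's proof: the same double count over a subset of $\mathcal P$ of size $\lceil n^{r/40}\rceil$, the same bound $\sum_Z N(Z)(N(Z)-1)\geq|S|^d$ on triples, and the same use of \Cref{lem:selfconj} to show that fewer than $|S|^d$ triples collapse, with the explicit bound $\phi_2(S)\geq\frac23|S|^2$ taken in the paper from \cite[Cor.~1.3]{MRD15}. Two constants need fixing. First, $\lceil n^{r/40}\rceil^2$ is not bounded by $n^{r/20}$, so use $\lceil n^{r/40}\rceil\leq2n^{r/40}$ and a factor $32$ instead of $8$. Second, $(2e)^{10}<1000^3$ alone does not give a bound below $1/8$, but $(2e)^{10}/10^9\approx0.023$ gives $32(2e)^{r/4}n^{-3r/40}<3/4$ for all $r\geq40$ and $n\geq1000$.
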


\begin{proof}
Write $S=\Alt(n)$, and let $\mathcal Y$ be a set of $t\geq3\,t_d(S)\,n^{-r/40}$ pairwise non-conjugate generating $d$-tuples of $S$. Let $\mathcal P$ be a subset of size $K=\lceil n^{r/40}\rceil$ of the set in \Cref{lem:code}, so that $n^{r/40}\leq K\leq2n^{r/40}$. Let $\overline{\mathcal Y}$ be the union of the conjugacy classes of the tuples in $\mathcal Y$. Recall from \Cref{rem:hall} that $\Aut(S)=\Sym(n)$ acts freely on the generating $d$-tuples of $S$, so each of these $t$ classes has $n!$ elements, and that $t_d(S)=\phi_d(S)/n!$. Moreover, $\phi_d(S)\geq\phi_2(S)|S|^{d-2}$, as a $d$-tuple generates as soon as its first two entries do. A random pair of elements of $S$ generates $S$ with probability greater than $1-2.468/n\geq2/3$ \cite[Cor.~1.3]{MRD15}, so $\phi_2(S) \geq (2/3)|S|^2$. This gives
\[
    K|\overline{\mathcal Y}| = Kt\cdot n! \geq 3 t_d(S) \cdot n! \geq 2|S|^d .
\]
For $Z\in S^d$, let $N(Z)$ be the number of $\sigma\in\mathcal P$ with $Z\cdot\sigma\in\overline{\mathcal Y}$. By double counting, we have $\sum_ZN(Z)=K|\overline{\mathcal Y}|$. Thus the number of triples $(Z,\sigma,\sigma')$ with $\sigma\neq\sigma'$ and $Z\cdot\sigma,Z\cdot\sigma'\in\overline{\mathcal Y}$ is
\[
    \sum_ZN(Z)\left(N(Z)-1\right) 
    \geq \sum_Z N(Z) - \sum_Z 1
    = K|\overline{\mathcal Y}|-|S|^d \geq |S|^d .
\]
Call such a triple \emph{collapsing} if $Z\cdot\sigma$ and $Z\cdot\sigma'$ are conjugate. Put $Z'=Z\cdot\sigma$ and $\eta=\sigma^{-1}\sigma'$, so that $Z\cdot\sigma'=Z'\cdot\eta$, and the triple is collapsing if $Z'$ and $Z'\cdot\eta$ are conjugate. Note that each triple is determined by $\sigma$, $\eta$ and $Z'$, and $\lvert\supp\eta\rvert\geq r/2$. So by \Cref{lem:selfconj}, with at most $K$ choices for $\sigma$ and then at most $K$ for $\eta$, the number of collapsing triples is at most
\[
    8K^2\left(\frac{2e}{\sqrt n}\right)^{r/4}|S|^d \leq 32(2e)^{r/4}n^{-3r/40}|S|^d,
\]
where we have used $K\leq2n^{r/40}$. This is less than $|S|^d$. Indeed, $(2e)^{1/4}n^{-3/40}<1$ for $n\geq1000$, so the coefficient $32(2e)^{r/4}n^{-3r/40}$ decreases in $r$ and in $n$, and at $r=40$ and $n=1000$ it is $32(2e)^{10}10^{-9}<3/4$. So some triple is not collapsing. For it, $\lvert\supp\eta\rvert\leq2r$, and $Z'$ and $Z'\cdot\eta$ lie in $\overline{\mathcal Y}$ but are not conjugate. So $Z'=Y^a$ and $Z'\cdot\eta=Y'^{a'}$ for two distinct tuples $Y,
Y'\in\mathcal Y$ and some $a,a'\in\Sym(n)$. Thus
\[
Y'^{a' a^{-1}} = (Z'\cdot\eta)^{a^{-1}} = Y\cdot\eta^{a^{-1}},
\]
and hence $Y'$ is conjugate to $Y\cdot\eta^{a^{-1}}$ with $\lvert\supp \eta^{a^{-1}} \rvert=\lvert\supp\eta\rvert$. So $Y$ and $Y'$ are $2r$-nearly conjugate.
\end{proof}

\subsubsection{Conclusion}\label{subsub:proofceiling}

\begin{proof}[Proof of \Cref{thm:ceiling}]
Let $n\geq1000$ and let $40\leq r\leq n/2$. We show that if $t\geq3\,t_d(\Alt(n))\,n^{-r/40}$, then every generating $d$-tuple $\Sigma$ of $\Alt(n)^t$ has spectral gap at most $2r/n$. By \Cref{thm:hall}, the coordinate tuples $Y^{(1)},\dots,Y^{(t)}$ of $\Sigma$ are pairwise non-conjugate generating $d$-tuples of $\Alt(n)$. So \Cref{lem:nearconj} gives $i\neq j$ such that $Y^{(i)}$ and $Y^{(j)}$ are $2r$-nearly conjugate, say $(Y^{(j)})^c=Y^{(i)}\cdot\tau$ with $\lvert\supp\tau\rvert\leq2r$. Let $\rho$ be the projection of $\Alt(n)^t$ onto the coordinate $i$, and $\rho'$ the projection onto the coordinate $j$ followed by conjugation by $c$. Then $\rho'(s_l)=\rho(s_l)$ for $l\geq2$ and $\rho'(s_1)=\rho(s_1)\tau$. Since $i\neq j$ and $\Alt(n)$ is transitive, $\Alt(n)^t$ acts transitively on $[n]^2$ through $\rho$ and $\rho'$. So by \Cref{lem:intertwiner}, the spectral gap of $\Cay(\Alt(n)^t,\Sigma)$ is at most $2r/(d(n-1))\leq2r/n$.

Now let $\epsilon$ and $f(n)$ be as in the theorem. We take $r=\lfloor\epsilon n/4\rfloor$, which lies between $40$ and $n/2$ for $n\geq\max\{1000,160/\epsilon\}$. Then $2r/n<\epsilon$, so
\[
    f(n) < 3t_d(\Alt(n))\,n^{-r/40} \leq 3t_d(\Alt(n))\,n^{1/40}\exp\left(-\frac{\epsilon}{160}n\log n\right) .
\]
Since $\log t_d(\Alt(n))=(1+o(1))(d-1)n\log n$, the claim follows.
\end{proof}

\subsection{Powers of groups of Lie type}\label{subsub:lietype}

The analogue of \Cref{q:alt} for nonabelian finite simple groups $S$ in place of $\Alt(n)$ asks for an expanding sequence of powers $S^{f(S)}$ with $f(S)\geq|S|^\delta$ for a fixed $\delta>0$. For the groups $\PSL_3(\F_p)$ and $\SL_{3n}(\F_2)$, this question has a positive answer, by the following two examples. Both come from property (T).

\begin{example}\label{ex:sl3}
Fix $m\geq1$, let $p_1<p_2<\cdots$ be the primes, and put
\[
    G_n = \PSL_3(\F_{p_n})^{p_n^m} .
\]
Write $p=p_n$. Identify $(\F_p)^{p^m}$ with the ring of all functions $\F_p^m\to\F_p$, with pointwise operations, and consider the evaluation homomorphism
\[
    \Z[x_1,\dots,x_m] \to (\F_p)^{p^m} ,
    \qquad
    f \mapsto \left(a\mapsto \bar f(a)\right) ,
\]
where $\bar f$ is the reduction of $f$ modulo $p$, evaluated at the point $a=(a_1,\dots,a_m)\in\F_p^m$. It is surjective, since every function $\F_p^m\to\F_p$ is a polynomial, the indicator function of a point $a$ being $\prod_i(1-(x_i-a_i)^{p-1})$. For a commutative ring $R$, write $\EL_3(R)$ for the subgroup of $\GL_3(R)$ generated by the elementary matrices $e_{ij}(r)=1+rE_{ij}$ with $i\neq j$ and $r\in R$, where $E_{ij}$ is the matrix unit. The evaluation homomorphism maps $\EL_3(\Z[x_1,\dots,x_m])$ onto $\EL_3((\F_p)^{p^m}) = \SL_3(\F_p)^{p^m}$, since $\SL_3(\F_p)$ is generated by elementary matrices. The latter group maps onto $G_n$. The group $\EL_3(\Z[x_1,\dots,x_m])$ has property (T) \cite[Th.~1.1]{EJ10}, and it is generated by the $d=6(m+1)$ elementary matrices $e_{ij}(1)$ and $e_{ij}(x_l)$ \cite[(6.1)]{EJ10}. So the groups $G_n$ form a $d$-expanding sequence.

Write $S=\PSL_3(\F_p)$. Since $|S|=p^{8+o(1)}$, the number of factors and Hall's ceiling for $d$ generators, given by \Cref{prop:powers}(2), are
\[
    p^m = |S|^{(1+o(1))m/8}
    \qquad \text{and} \qquad
    t_{d}(S) = |S|^{(1+o(1))(6m+5)} .
\]
So the number of factors is a fixed positive power of $|S|$, a fraction of about $1/48$ of the ceiling on a logarithmic scale for large $m$. The number of factors is $p^m=\exp(O(\ell(S)))$, so $\Rep_k(G_n)\leq\exp(O(k))$ for all $k$ by \Cref{prop:powers}(3).

The same construction works with $\PSL_r(\F_p)$ in place of $\PSL_3(\F_p)$, for any fixed $r\geq3$, since $\EL_r(\Z[x_1,\dots,x_m])$ also has property (T).
\end{example}

\begin{example}\label{ex:kassabov}
In \cite[Rem.~2.5]{Pis17}, Pisier records the following example of Kassabov,
\[
    G_n = \SL_{3n}(\F_2)^{2^{n^2}} .
\]
As $n$ varies, these groups are three-generated and form an expanding sequence. This follows by the same procedure as in the previous example, now evaluating the free associative ring $\Z\langle x,y,z\rangle$ at suitable triples of matrices in $\mathrm{M}_n(\F_2)$ instead of polynomials at points, and using $\EL_3(\mathrm{M}_n(\F_2))=\SL_{3n}(\F_2)$. So the groups $G_n$ are quotients of $\EL_3(\Z\langle x,y,z\rangle)$, which has property (T) \cite[Th.~1.1]{EJ10} and is generated by three elements \cite[Rem.~2.5]{Pis17}. Since $|\SL_{3n}(\F_2)|=2^{9n^2+O(1)}$, the number of factors is about the ninth root of the order of a factor. Put $S=\SL_{3n}(\F_2)$ and $k=\ell(S)$, which is of order $2^{3n}$ by \cite{LS74}. Since $\ell(S)<\iota(S)$, \Cref{prop:powers}(1) gives $\ab_k(G_n)=1$ and $\Rep_k(G_n)\geq2^{n^2}=\exp(\Omega(\log^2k))$. So along this expanding sequence the gap function $\Delta_{d,\epsilon}(k)$ is superpolynomial in $k$. On the other hand, $2^{n^2}=\exp(O(\ell(S)))$, so $\Rep_k(G_n)\leq\exp(O(k))$ for all $k$ by \Cref{prop:powers}(3).
\end{example}

We do not know whether a multiplicity that is a fixed positive power of $|S|$ can be reached along an expanding sequence with $S=\PSL_2(p)$, see \Cref{ex:psl2power} and \cite[Question~4.5]{Sab25}. Our two arguments against expansion of large powers of alternating groups both fail for $\PSL_2(p)$. The argument of \Cref{thm:ceiling} turns a pair of coordinates that are conjugate up to a perturbation $\tau$ into a test vector whose Rayleigh quotient is $d^{-1}(1-\operatorname{Re}\chi(\tau)/\chi(1))$, for any nontrivial irreducible character $\chi$, by \Cref{lem:charratio}. In $\Alt(n)$ the standard character makes this small when $\tau$ has small support. In $\PSL_2(p)$ there is no analogue of small support, since every nontrivial element $\tau$ has $|\chi(\tau)|/\chi(1)=O(p^{-1/2})$ for every nontrivial $\chi$ \cite[Table~5.4]{Bon11}. So the Rayleigh quotient is $(1+O(p^{-1/2}))/d$ whatever $\tau$ is, and it does not tend to $0$. The argument of \Cref{prop:random} needs generating tuples with small spectral gap, and for most primes there are none, see \Cref{rem:beta}. So it might well be that powers of $\PSL_2(p)$ are $d$-expanding up to a constant fraction of Hall's ceiling on a logarithmic scale, as in \Cref{ex:sl3,ex:kassabov}, or even at the ceiling itself.

\raggedbottom % the bibliography entries do not stretch
\bibliographystyle{alpha}
\bibliography{repgrowth}

\end{document}